\newtheorem{theorem}{Theorem}[section]
\newtheorem{lemma}[theorem]{Lemma}
\newtheorem{proposition}[theorem]{Proposition}
\newtheorem{corollary}[theorem]{Corollary}
\theoremstyle{definition}
\newtheorem{definition}[theorem]{Definition}
\newtheorem{remark}[theorem]{Remark}
\numberwithin{equation}{section}
\acrodef{KPZ}{Kardar--Parisi--Zhang}
\acrodef{SHE}{Stochastic Heat Equation}
\acrodef{LDP}{Large Deviation Principle}
\newcommand{\argmax}{\mathop{\mathrm{argmax}}} 
\renewcommand{\Pr}{\mathbb{P}}	
\newcommand{\Ex}{\mathbb{E}}	
\renewcommand{\d}{\mathrm{d}}	
\newcommand{\ind}{\mathbf{1}}	
\newcommand{\md}{\mathcal{M}}
\newcommand{\mx}{\mathcal{P}}
\newcommand{\W}{W}
\newcommand{\U}{U}
\newcommand{\nonintbb}{\mathsf{NonInt}\mbox{-}\mathsf{BrBridge}}
\newcommand{\nonintbm}{\mathsf{NonInt}\mbox{-}\mathsf{BM}}
\newcommand{\cdrp}{\mathsf{CDRP}}
\newcommand{\m}{\mathsf}
\newcommand{\wl}{W_{\operatorname{\ell}}}
\renewcommand{\wr}{W_{\operatorname{r}}}
\newcommand{\dbm}{\mathsf{DBM}}
\newcommand{\bb}{\mathcal{R}_{\operatorname{bb}}}
\newcommand{\Nice}{\mathsf{Nice}}
\newcommand{\hk}{p}					
\newcommand{\sd}{\textcolor{blue}}
\newcommand{\bme}{\mathfrak{B}_{\operatorname{me}}}
\newcommand{\Con}{\mathrm{C}} 
\newcommand{\N}{\mathbb{N}}
\newcommand{\R}{\mathbb{R}} 
\newcommand{\Z}{\mathbb{Z}} 
\newcommand{\e}{\varepsilon}
\newcommand{\g}{\Lambda}
\newcommand{\h}{\mathfrak{h}}
\newcommand{\calA}{\mathcal{A}}
\newcommand{\calD}{\mathcal{D}}
\newcommand{\calH}{\mathcal{H}}
\newcommand{\calM}{\mathcal{M}}
\newcommand{\calL}{\mathcal{L}}
\newcommand{\calR}{\mathcal{R}}
\newcommand{\calZ}{\mathcal{Z}}
\newcommand{\calF}{\mathcal{F}}
\newcommand{\B}{B}
\newcommand{\V}{V}
\newcommand{\til}{\widetilde}
\renewcommand{\bar}{\overline}
\title[Localization of the CDRP]{Localization of the continuum directed random polymer}
\author[S.\ Das]{Sayan Das}
\address{S.\ Das,
	Department of Mathematics, Columbia University,
	\newline\hphantom{\quad \ \ S. Das}
	2990 Broadway, New York, NY 10027 USA
}
\email{sayan.das@columbia.edu}
\author[W.\ Zhu]{Weitao Zhu}
\address{W.\ Zhu,
	Department of Mathematics, Columbia University,
	\newline\hphantom{\quad \ \ S. Das}
	2990 Broadway, New York, NY 10027 USA
}
\email{weitao.zhu@columbia.edu}
\begin{document}
	
	\begin{abstract} We consider the continuum directed random polymer (CDRP) model that arises as a scaling limit from $1+1$ dimensional directed polymers in the intermediate disorder regime. We show that for a point-to-point polymer of length $t$ and any $p\in (0,1)$, the quenched density of {the point on the path which is $pt$ distance away from the origin} when centered around its random mode $\md_{p,t}$ converges in law to an explicit random density function as $t\to\infty$ without any scaling. Similarly in the case of point-to-line polymers of length $t$, the quenched density of the endpoint of the path when centered around its random mode $\md_{*,t}$ converges in law to an explicit random density. The limiting random densities are proportional to $e^{-\calR_\sigma(x)}$ where $\calR_\sigma(x)$ is a two-sided {3D} Bessel process with appropriate diffusion coefficient $\sigma$.  In addition, the laws of the random modes $\md_{*,t}$, $\md_{p,t}$ themselves converge in distribution upon $t^{2/3}$ scaling to the maximizer of $\operatorname{Airy}_2$ process {minus a parabola} and points on the geodesics of the directed landscape respectively.
		
		Our localization results stated above provide an affirmative case of the folklore “favorite region” conjecture. {Our proof techniques also allow us to prove properties of the KPZ equation such as ergodicity and limiting Bessel behaviors around the maximum.}
	\end{abstract}

	\maketitle
	{
		\hypersetup{linkcolor=black}
		\setcounter{tocdepth}{1}
		\tableofcontents
	}	




	\section{Introduction}

	{The continuum directed random polymer ($\cdrp$) is a continuum version of the discrete directed polymer measures modeled by a path interacting with a space-time white noise that first appeared in \cite{akq}. It arises as a scaling limit of the 1+1 dimensional directed polymers in the ``intermediate disorder regime'' and can be defined through the Kardar-Parisi-Zhang (KPZ) equation with narrow wedge initial data (see Section \ref{sec:cdrp}). A folklore favorite region conjecture on directed polymers states that under strong disorder, the midpoint (or any other point) distribution of a point-to-point directed polymer is asymptotically localized in a region of stochastically bounded diameter ( \cite{cha}, \cite{batesch}, Section \ref{sec:intro}).}
	
	In light of this conjecture, we initiate such study of the $\cdrp$'s long-time localization behaviors. Our main result, stated in Section \ref{sec:cdrp}, asserts that any point at a fixed proportional location on the point-to-point $\cdrp$ relative to its length converges to an explicit density function when centered around its almost surely unique random mode. 
	A similar result for the endpoint of point-to-line $\cdrp$ is also obtained, confirming the favorite region conjecture for the $\cdrp.$ In this process, through the connections between the $\cdrp$ and the KPZ equation with narrow wedge initial data, we have shown properties such as ergodicity and Bessel behaviors around the maximum for the latter. These and other results are summarized in Section \ref{sec:results} and explained in fuller detail in Section \ref{sec:cdrp}.
	
 {As an effort to understand the broader localization phenomena}, our main theorems (Theorems \ref{t:main}, \ref{t:main2}) confirm the favorite region conjecture for the \textit{first} non-stationary integrable model and are the \textit{first} to obtain pointwise localization along the \textit{entire path} (Theorem \ref{t:main}). The first rigorous localization result for directed polymers in random environment appeared in \cite{carmonaHu1} and proved the existence of ``favorite sites" in the Gaussian environment, which was later extended to general environments in \cite{cometsy}. This notion of localization is known as the \textit{strong localization} and is weaker than the favorite region conjecture (See Section \ref{sec:intro} for discussions on different notions of localizations). The only other case where the favorite region conjecture is proved, is the mid and endpoint localizations of the point-to-point and point-to-line one-dimensional stationary log-gamma polymers in \cite{lgamma}. The proofs of the results and the specificity of the locales (mid/endpoints) in \cite{lgamma} relied on the stationary boundary condition of the model, which reduced the endpoint distribution to exponents of simple random walks \cite{sep}. For $\cdrp$, the absence of a similar stationarity necessitates a new approach towards the favorite region conjecture, which extends to every point on the polymer's path. Conversely, as we do not rely on integrability other than the Gibbs property, our proof for the $\cdrp$ has the potential to generalize to other integrable models. Other works that have considered localization along the whole path include the pathwise localization of the parabolic Anderson polymer model in \cite{comets2013overlaps} and that of the discrete polymer in Gaussian environments \cite{bates2020localization} \cite{bates2021full}. Lastly, accompanying our localization results, we establish the convergence of the scaled favorite points to the almost sure unique maximizer of the $\operatorname{Airy}_2$ process minus a parabola and the geodesics of the directed landscape respectively (Theorem \ref{t:favpt}).
	
	 {Finally, from the perspective of the KPZ universality class, our paper is an innovative application of several fundamental new techniques and results that have recently emerged in the community.} These include the Brownian Gibbs resampling property \cite{CH16}, the weak convergence from the KPZ line ensemble to the Airy line ensemble
	\cite{qs20}, the tail estimates of the KPZ equation with narrow wedge initial data \cite{utail,ltail,cgh} as well as probabilistic properties of the Airy line ensemble from \cite{dv18}. In particular, although the Gibbs property has been utilized before in works such as \cite{dv18,chh,cgh,chhm}, we overcome a unique challenge of quantifying the Gibbs property precisely on a \textit{symmetric} random interval around the joint local maximizer of two independent copies of the KPZ equation with narrow wedge initial data. This issue is resolved after we  prescribe the joint law of the KPZ equations around the desired interval. 
	A more detailed description of our main technical innovations is available in Section \ref{sec:pf}.
	
	Presently, we begin with the background of our model and related key concepts.
	
	\subsection{Introducing the $\cdrp$ through discrete directed lattice polymers}\label{sec:intro}
	 {Directed polymers in random environments were first introduced in statistical physics literature by  Huse and Henley \cite{huse} to study the phase
	boundary of the Ising model with random impurities. Later, it was mathematically reformulated as a random walk in a random environment by Imbrie and Spencer \cite{imb} and Bolthausen \cite{bol}. Since then immense progress has been made in understanding this model (see 
	 \cite{comets} for a general introduction and \cite{gia,batesch} for partial surveys). }
	
	In the $(d+1)$- dimensional discrete polymer case, the random environment is specified by a collection of zero-mean i.i.d.~random variables $\{\omega=\omega(i,j) \mid (i,j)\in \Z_{+}\times \Z^d\}$. Given the environment, the energy of the $n$-step nearest neighbour random walk $(S_i)_{i=0}^n$ starting and ending at the origin (one can take the endpoint to be any suitable $\mathbf{x}\in \R^d$ as well) is given by 
	$H_n^{\omega}(S):=\sum_{i=1}^n \omega(i,S_i).$
	The \textbf{point-to-point} polymer measure on the set of all such paths is then defined as 
	\begin{align}\label{eq:disGibbs}
		\Pr_{n,\beta}^{\omega}(S)=\frac1{Z_{n,\beta}^{\omega}} e^{\beta H_n^{\omega}(S)} \Pr(S),
	\end{align}
	where $\Pr(S)$ is the uniform measure on set of all $n$-step nearest neighbour paths starting and ending at origin, $\beta$ is the inverse temperature, and $Z_{n,\beta}^{\omega}$ is the partition function. Meanwhile, one can also consider the \textbf{point-to-line} polymer measures where the endpoint is `free' and the reference measure $\Pr$ is given by $n$-step simple symmetric random walks. In the polymer measure, there is a competition between the \textit{entropy} of paths and the \textit{disorder strength} of the environment. Under this competition, two distinct regimes appear depending on the inverse temperature $\beta$ \cite{comy}:
	
	\begin{itemize}
		\item \textit{Weak Disorder}: When $\beta$ is small or equivalently  in high temperature regime, intuitively the disorder strength diminishes. 
		The walk is dominated by the entropy and exhibits diffusive behaviors. This type of entropy domination is termed as \textit{weak disorder}.
		
		\item  \textit{Strong Disorder}: 
		If $\beta$ is large and positive or equivalently the temperature is low but remains positive, the polymer measure concentrates on singular paths with high energies and the diffusive behavior is no longer guaranteed. This type of disorder strength domination is known as the \textit{strong disorder}.
	\end{itemize}
 The precise definitions of weak and strong disorder regimes are available in \cite{comy}. Furthermore, there exists a critical inverse temperature $\beta_c(d)$ that depends on the dimension $d$ such that weak disorder holds for $0\le \beta<\beta_c$  and strong disorder for $\beta>\beta_c$. When $d=1$ or $d=2$, $\beta_c=0$, i.e. all positive $\beta$ fall into the strong disorder regime.

	The rest of the article focuses on $d=1$. While for $\beta=0$, the path fluctuations are of the order $\sqrt{n}$ via Brownian considerations, the situation is much more complex in the strong disorder regime. The following two phenomena are conjectured:
	
	\begin{itemize}
		\item \textit{Superdiffusivity}: Under strong disorder, the polymer measure is believed to be in the KPZ universality class and paths have typical fluctuations of the order $n^{2/3}$. This widely conjectured phenomenon is physics literature is known as superdiffusion (see  \cite{huse}, \cite{hhf}, \cite{kardar1986dynamic}, \cite{ks2}) and has been rigorously proven in specific situations (see \cite{lnp},\cite{mp}, \cite{joh},\cite{carmonaHu1}, \cite{om}). But much remains unknown especially for $d \ge 2.$ 
		\item \textit{Localization and the favorite region conjecture}: The polymer exhibits certain localization phenomena under strong disorder. 
		{The favorite region conjecture speculates that any point on the path of a point-to-point directed polymer is asymptotically localized in a region of stochastically bounded diameter} (see \cite{cha}, \cite{bates} for partial survey.)
	\end{itemize}
	
We remark that there exist many different notions of localizations. In addition to the favorite region one discussed above and the strong localization in \cite{carmonaHu1} mentioned earlier, the atomic localization \cite{vargas} and the geometric localization \cite{batesch} were studied in \cite{batesch} for simple random walks and later extended to general reference walks in \cite{bates}. Both of \cite{batesch}  and \cite{bates} provided sufficient criteria for the existence of  the `favorite region' of order one for the endpoint in arbitrary dimension. Yet in spite of the sufficiency, it is unknown how to check them for standard directed polymers.  We refer the readers to Bates' thesis \cite{batesphd} for a more detailed survey on this topic.

	Meanwhile, even though the critical inverse temperature $\beta_c(1)=0$ for $d= 1$, one might scale the inverse temperature with the length of the polymer critically to capture the transition between weak and strong disorder. In this spirit, the seminal work of \cite{akq2} considered \textit{an intermediate disordered regime} where $\beta=\beta_n = n^{-1/4}$ and $n$ is the length of the polymer. \cite{akq2} showed that the partition function $Z_{n,\beta_n}^{\omega}$ has a universal scaling limit given by the solution of the Stochastic Heat Equation (SHE) for $\omega$ with finite exponential moments. Furthermore, under the diffusive scaling, the polymer path itself converges to a universal object called the Continuous Directed Random Polymer (denoted as $\cdrp$ hereafter) which appeared first in \cite{akq} and depended on a continuum external environment given by the space-time white noise. 
	
	More precisely, given a white noise $\xi$ on $[0,t]\times \R$, $\cdrp$ is a path measure on the space of $C([0,t])$ (continuous functions on $[0,t]$) for each realization of $\xi$. Conditioned on the environment, the $\cdrp$ is a continuous Markov process with the same quadratic variation as the Brownian motion but is singular w.r.t.~the Brownian motion (\cite{akq2}). Due to this singularity w.r.t.~the Wiener measure, expressing the $\cdrp$ path measure in a Gibbsian form similar to \eqref{eq:disGibbs} is challenging. Instead, one can construct a consistent family of finite dimensional distributions using the partition functions which uniquely specify the path measure (see \cite{akq} or Section \ref{sec:cdrp}). 
	
	As the $\cdrp$ sits between weak and strong disorder regimes, it exhibits weak disorder type behaviors in the short-time regime $(t\downarrow 0)$ and strong ones in the long-time regime $(t \uparrow \infty)$. Indeed, the log partition function of $\cdrp$ is Gaussian in the short time limit (see \cite{acq}), which provides evidence for weak disorder. Upon varying the endpoint of the $\cdrp$ measure, the log partition function becomes a random function of the endpoint and converges to the parabolic $\operatorname{Airy}_2$ process under the $1:2:3$ KPZ scaling (see \cite{qs20,vir20}) with the characteristic $2/3$ exponent. This alludes to the superdiffusivity in the strong disorder regime. That said, the theory of universality class alone does not shed insight on the possible localization phenomena of the $\cdrp$ measures.

	\subsection{Summary of Results} \label{sec:results} The purpose of the present article is to study the localization phenomena for the long-time $\cdrp$ measure. The following summarizes our results, which we elaborate on individually in Section \ref{sec:cdrp}.
	Our first two results affirm the favorite region conjecture which so far has only been proven for the mid and endpoints of the log-gamma polymer model in \cite{lgamma}.
	\begin{itemize}
		\item For a point-to-point $\cdrp$ of length $t$,  the quenched density of its $pt$-point with fixed $p\in (0,1)$ when centered around its {almost sure unique }mode (which is the maximizer of the probability density function) $\md_{p,t}$,  
		converges weakly to a density proportional to $e^{-\calR_2(x)}$. Here, $\calR_2$ is a two-sided 3D-Bessel process with diffusion coefficient $2$ defined in \eqref{def:bessel}(Theorem \ref{t:main}). 
		
		\item For a point-to-line $\cdrp$ of length $t$, the quenched density of its endpoint when centered around its almost sure unique mode $\md_{*,t}$ converges weakly to a density proportional to $e^{-\calR_1(x)}$. $\calR_1$ is a two-sided 3D-Bessel process with diffusion coefficient $1$ (Theorem \ref{t:main2}). 
	\end{itemize}

	\begin{itemize}
		\item The random mode $\md_{*,t}$ of length-$t$ point-to-line $\cdrp$'s endpoint upon {$2^{-1/3}t^{2/3}$} scaling converges in law to the unique maximum of the $\operatorname{Airy}_2$ process minus a parabola; the random mode $\md_{p,t}$ of the $pt$ point of point-to-point $\cdrp$ of length $t$ upon $t^{2/3}$ scaling converges to $\Gamma(p\sqrt{2})$, the Directed landscape's geodesic from $(0,0)$ to $(0,p\sqrt{2})$ (Theorem \ref{t:favpt}). 
	\end{itemize}
	
	Next, the well-known KPZ equation {with narrow wedge initial data} forms the log-partition function of the $\cdrp$. Our main results below shed light on some of its local information:
	\begin{itemize}
		\item \textit{Ergodicity}: The spatial increment of the KPZ equation {with narrow wedge initial data} as $t \rightarrow \infty$ converges weakly to a standard two-sided Brownian motion (Theorem \ref{t:ergodic}).
		
		\item The sum of two independent copies of the KPZ equation {with narrow wedge initial data} when re-centered around its maximum converges to a two-sided 3D-Bessel process with diffusion coefficient $2$ (Theorem \ref{t:bessel}).
	\end{itemize}
	
	These results provide a comprehensive characterization of the localization picture for the $\cdrp$ model. We present the formal statements of the results in the next subsection.

	\subsection{The model and the main results} \label{sec:cdrp}  In order to define the $\cdrp$ model we use the stochastic heat equation (SHE) with multiplicative noise as our building blocks. Namely, we consider a four-parameter random field $\calZ(x,s;y,t)$ defined on 
	\begin{align*}
		\R_{\uparrow}^4:= \{(x,s;y,t)\in \R^4 : s<t\}.
	\end{align*}
	For each $(x,s)\in \R\times \R$, the field $(y,t)\mapsto \calZ(x,s;y,t)$ is the solution of the SHE starting from location $x$ at time $s$, i.e., the unique solution of
	\begin{align*}
		\partial_t\calZ=\tfrac12\partial_{xx}\calZ+\calZ\cdot \xi, \qquad (y,t) \in \R \times (s,\infty),
	\end{align*}
	with Dirac delta initial data $\lim_{t\downarrow s}\calZ(x,s;y,t)=\delta(x-y).$ Here $\xi=\xi(x,t)$ is the space-time white noise. The SHE itself enjoys a well-developed solution theory based on It\^o integral and chaos expansion \cite{bertini1995stochastic,walsh1986introduction} also \cite{corwin2012kardar,quastel2011introduction}. 
	Via the Feynmann-Kac formula (\cite{hhf, comets}) the four-parameter random field can be written in terms of chaos expansion as
	\begin{align}\label{eq:chaos}
		\calZ(x,s;y,t) = \sum_{k=0}^{\infty}  \int_{\Delta_{k,s,t}} \int_{\R^k} \prod_{\ell=1}^{k+1} \hk(y_{\ell}-y_{\ell-1},s_{\ell}-s_{\ell-1}) \xi(y_{\ell},s_{\ell})  d\vec{y} \,d \vec{s},
	\end{align}
	with $\Delta_{k,s,t}:=\{(s_\ell)_{\ell=1}^k : s<s_1<\cdots<s_k<t\}$, $s_0=s, y_0=x, s_{k+1}=t$, and $y_{k+1}=y$. Here $\hk(x,t):=(2\pi t)^{-1/2}\exp(-x^2/(2t))$ denotes the standard heat kernel. The field $\calZ$ satisfies several other properties including the Chapman-Kolmogorov equations \cite[Theorem 3.1]{akq}. For all $0\le s<r<t$, and $x,y\in \R$ we have
	\begin{align}\label{eq:chapman}
		\calZ(x,s;y,t)=\int_{\R} \calZ(x,s;z,r)\calZ(z,r;y,t) dz.
	\end{align}
	For all $(x,s;y,t)\in \R_{\uparrow}^4$, we also set
	\begin{align}\label{eq:chapman2}
		\calZ(x,s;*,t):=\int_{\R} \calZ(x,s;y,t)dy.
	\end{align}
	
	\begin{definition}[Point-to-point $\cdrp$]\label{def:cdrp} 
		Conditioned on the white noise $\xi$, let $\Pr^{\xi}$ be a measure $C([s,t])$ whose finite dimensional distribution is given by
		\begin{align}\label{eq:cdrp}
			\Pr^{\xi}(X({t_1})\in dx_1, \ldots, X({t_k})\in dx_k)=\frac1{\calZ(x,s;y,t)}\prod_{j=0}^k \calZ(x_j,t_j,;x_{j+1},t_{j+1})dx_1\ldots dx_k.
		\end{align}
		for $s=t_0\le t_1<\cdots<t_k\le t_{k+1}=t$, with $x_0=x$ and $x_{k+1}=y$. 
		
		The measure $\Pr^{\xi}$ also depends on $x$ and $y$ but we suppress it from our notations. We will also use the notation $\cdrp(x,s;y,t)$ and write $X \sim \cdrp(x,s;y,t)$ when  $X(\cdot)$ is a random continuous function on $[s,t]$ with $X(s)=x$ and $X(t)=y$ and its finite dimensional distributions given by \eqref{eq:cdrp} conditioned on $\xi$.
	\end{definition}
	
	\begin{definition}[Point-to-line $\cdrp$]\label{def:cdrp2} 
		Conditioned on the white noise $\xi$, we also let $\Pr_{*}^{\xi}$ be a measure $C([s,t])$ whose finite dimensional distributions are given by
		\begin{align}\label{eq:cdrp2}
			\Pr_*^{\xi}(X({t_1})\in dx_1, \ldots, X({t_k})\in dx_k)=\frac1{\calZ(x,s;*,t)}\prod_{j=0}^k \calZ(x_j,t_j,;x_{j+1},t_{j+1})dx_1\ldots dx_k.
		\end{align}
		for $s=t_0\le t_1<\cdots<t_k\le t_{k+1}=t$, with $x_0=x$ and $x_{k+1}=*$. 
	\end{definition}
	\begin{remark} Note that the Chapman-Kolmogorov equations \eqref{eq:chapman} and \eqref{eq:chapman2} ensure that the finite dimensional distributions in \eqref{eq:cdrp} and \eqref{eq:cdrp2} are consistent, and that $\Pr^{\xi}$ and $\Pr_*^{\xi}$ are probability measures. {The measure $\Pr_*^{\xi}$ also depends on $x$ but we again suppress it from our notations. We similarly use $\cdrp(x,y;*,t)$ to refer to $\Pr_*^{\xi}$.}
	\end{remark}
	
	\begin{theorem}[Pointwise localization for point-to-point $\cdrp$] \label{t:main}  Fix any $p\in (0,1)$. Let $X\sim \cdrp(0,0;0,t)$ and let $f_{p,t}(\cdot)$ denotes the density of $X(pt)$ which depends on the white noise $\xi$.  Then, for all {$t> 0$} the random density $f_{p,t}$ has almost surely a unique mode $\md_{p,t}$. Furthermore, as $t\to \infty$, we have the following convergence in law
		\begin{align}\label{e:main}
			f_{p,t}(x+\md_{p,t}) \stackrel{d}{\to} r_2(x):=\frac{e^{-\calR_2(x)}}{\int\limits_{\R} e^{-\calR_2(y)} d y},
		\end{align}
		in the uniform-on-compact topology. Here $\calR_2(\cdot)$ is a two-sided 3D-Bessel process with diffusion coefficient $2$ {defined in Definition \ref{def:bessel}}. 
	\end{theorem}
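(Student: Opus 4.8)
The plan is to turn the statement into a static question about two \emph{independent} copies of the Hopf--Cole solution of the KPZ equation with narrow wedge data, and then to read off the Bessel limit from the behaviour of their sum near its maximum.

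\emph{Reduction.} The single--time marginal of \eqref{eq:cdrp} together with the Chapman--Kolmogorov relation \eqref{eq:chapman} gives the explicit quenched density
\begin{align*}
 f_{p,t}(x)=\frac{\calZ(0,0;x,pt)\,\calZ(x,pt;0,t)}{\calZ(0,0;0,t)} .
\end{align*}
The two factors in the numerator are measurable with respect to the white noise on the disjoint strips $[0,pt]\times\R$ and $[pt,t]\times\R$, hence independent; and using the time--stationarity and time/space--reflection symmetries of the white noise and of the SHE, the pair $\bigl(\calZ(0,0;\cdot\,,pt),\ \calZ(\cdot\,,pt;0,t)\bigr)$, as a random element of $C(\R)^2$, is distributed as a pair of \emph{independent} narrow wedge partition functions at times $pt$ and $(1-p)t$ respectively. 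Writing $\calH_s(x):=\log\calZ(0,0;x,s)$ and using $\calZ(0,0;0,t)=\int_{\R}\calZ(0,0;z,pt)\calZ(z,pt;0,t)\,\d z$, we get
\begin{align*}
 f_{p,t}(\,\cdot\,)\ \stackrel{d}{=}\ \frac{\exp\bigl(\calH^{(1)}_{pt}(\,\cdot\,)+\calH^{(2)}_{(1-p)t}(\,\cdot\,)\bigr)}{\int_{\R}\exp\bigl(\calH^{(1)}_{pt}(y)+\calH^{(2)}_{(1-p)t}(y)\bigr)\,\d y},
\end{align*}
with $\calH^{(1)},\calH^{(2)}$ independent, and $\md_{p,t}=\argmax\bigl(\calH^{(1)}_{pt}+\calH^{(2)}_{(1-p)t}\bigr)$.

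\emph{Existence and a.s.\ uniqueness of the mode.} For fixed $s>0$ one has $\calH_s(x)=-x^2/(2s)+\calG_s(x)$ where $\calG_s$ is, on compacts, mutually absolutely continuous with Brownian motion and has fluctuations of order $s^{1/3}$ over windows of size $s^{2/3}$; together with the upper tail estimate for the narrow wedge KPZ equation \cite{utail,cgh} this forces $\calH^{(1)}_{pt}+\calH^{(2)}_{(1-p)t}$ to attain its supremum on a compact set almost surely, so $\md_{p,t}$ exists and is finite for every $t>0$. Uniqueness follows from the Brownian Gibbs property of the KPZ line ensemble \cite{CH16}: on every compact interval the law of $\calH^{(i)}_s$, and hence of the sum, is mutually absolutely continuous with respect to Wiener measure, whose argmax is almost surely unique; a union over a countable cover of $\R$, together with the tightness of $\md_{p,t}$, finishes the argument.

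\emph{The limit.} Recentre at $\md_{p,t}$ and set $s_1=pt$, $s_2=(1-p)t$. Tightness of $\md_{p,t}/t^{2/3}$ (established alongside Theorem \ref{t:favpt}) gives, on any fixed compact window,
\begin{align*}
 -\frac{(x+\md_{p,t})^2}{2s_i}+\frac{\md_{p,t}^2}{2s_i}=-\frac{x^2}{2s_i}-\frac{\md_{p,t}}{s_i}\,x=O_{\Pr}\bigl(t^{-1/3}\bigr)\ \longrightarrow\ 0 ,
\end{align*}
so the parabolic parts wash out with \emph{no} spatial rescaling, which is the mechanism behind \eqref{e:main}. It then remains to prove that
\begin{align*}
 g_t(x):=\bigl(\calH^{(1)}_{pt}+\calH^{(2)}_{(1-p)t}\bigr)(\md_{p,t})-\bigl(\calH^{(1)}_{pt}+\calH^{(2)}_{(1-p)t}\bigr)(x+\md_{p,t})\ \stackrel{d}{\to}\ \calR_2(x)
\end{align*}
uniformly on compacts. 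By the ergodicity of the KPZ equation (Theorem \ref{t:ergodic}), each spatial increment process of $\calH^{(i)}_{s_i}$ about a base point converges as $t\to\infty$ to a standard two--sided Brownian motion, so the sum of the two independent copies converges to a two--sided Brownian motion of diffusion coefficient $1+1=2$; such a Brownian motion observed near its minimum decomposes, by Williams' path decomposition (Pitman's theorem), into two independent $3$D Bessel processes issued from the argmin, which is precisely $\calR_2$ of Definition \ref{def:bessel}. Exponentiating, $f_{p,t}(x+\md_{p,t})=e^{-g_t(x)}/\int_{\R}e^{-g_t(y)}\,\d y$, and the KPZ upper tail estimate plus the residual parabola make $\{e^{-g_t}\}_{t}$ uniformly integrable, so $\int_{\R}e^{-g_t}\to\int_{\R}e^{-\calR_2}<\infty$ and \eqref{e:main} follows.

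\emph{Main obstacle.} The genuine difficulty is that the window $[\md_{p,t}-K,\md_{p,t}+K]$ over which the Brownian/Bessel behaviour in the last step must hold is \emph{random and symmetric about a point determined jointly by both copies}, whereas the local Brownianity and Gibbs resampling estimates are naturally stated at deterministic base points. Transferring them to this data--dependent interval requires: (i) a quantitative tightness of $\md_{p,t}$ at scale $t^{2/3}$, uniform enough to localize the joint argmax; (ii) a precise description of the joint conditional law of $(\calH^{(1)},\calH^{(2)})$ given the location and height of the maximum of their sum---which we prescribe as a pair of independent Brownian meanders emanating from the argmax, the symmetric gluing of the two meanders being the source of the $3$D Bessel; and (iii) the stability of this conditional description under $t\to\infty$. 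Carrying this out is where the weak convergence of the KPZ line ensemble to the Airy line ensemble \cite{qs20}, the narrow wedge tail bounds \cite{utail,ltail,cgh}, and the probabilistic estimates on the Airy line ensemble \cite{dv18} enter; it is the step flagged in the introduction as ``quantifying the Gibbs property precisely on a symmetric random interval,'' and it is the principal obstacle.
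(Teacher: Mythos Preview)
Your reduction to two independent narrow-wedge KPZ solutions and the uniqueness argument are essentially the paper's (Lemma \ref{lem1}), and the overall architecture---Bessel convergence on compacts plus tail control of the normalising integral---also matches. The genuine gap is in how you propose to obtain the Bessel limit.

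The ergodicity route does not work, and you correctly flag this. Theorem \ref{t:ergodic} gives Brownian increments about the \emph{deterministic} point $0$, not about the random, $t$-dependent argmax $\md_{p,t}$; and a two-sided Brownian motion on $\R$ has no maximum, so there is no limiting object to which Williams' decomposition could be applied after the fact. The paper never takes a limit first and then conditions on a maximum; it conditions \emph{at finite $t$} on the max data, identifies the resulting conditional law exactly, and only then sends $t\to\infty$.

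Your proposed resolution (ii)---that the joint conditional law of $(\calH^{(1)},\calH^{(2)})$ given the argmax of their sum is ``a pair of independent Brownian meanders emanating from the argmax''---is incorrect, and this is the crux. Neither $\calH^{(1)}$ nor $\calH^{(2)}$ has any reason to stay on one side of its value at the argmax of the \emph{sum}, which is not the argmax of either individually. The correct description, established in the paper as Proposition \ref{propA}, is that for two independent Brownian bridges $\bar B_1,\bar B_2$ with joint argmax $M$, the pair
\[
\bigl(\bar B_1(M)-\bar B_1(M\pm x),\ \bar B_2(M\pm x)-\bar B_2(M)\bigr)
\]
on each side of $M$ is a two-level \emph{non-intersecting} Brownian bridge ($\nonintbb$), not a pair of independent meanders. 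The difference of its two components is a Bessel bridge (Lemma \ref{l:nibtobb}), and after the diffusive rescaling $\nonintbb\to\dbm$ (Proposition \ref{lemmaC}) one obtains the two-sided $3$D Bessel via Lemma \ref{l:dtob}. Working with the \emph{pair} rather than only the sum is not optional: the Radon--Nikodym derivatives coming from the $\mathbf H_t$-Gibbs property involve $\h_{pt,\uparrow}^{(1)}-\h_{pt,\uparrow}^{(2)}$ and $\h_{qt,\downarrow}^{(1)}-\h_{qt,\downarrow}^{(2)}$ separately, and showing they tend to $1$ on the random window (Lemma \ref{l:rn1}) requires control of each $\calH^{(i)}$ individually there, which the $\nonintbb$ description supplies and a meander-for-the-sum description would not.

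The tail step also needs more than ``uniform integrability from the residual parabola'': the parabola only controls the deep tail $|x|\gtrsim t^{2/3}$, whereas on the shallow tail $K\le |x|\le M t^{2/3}$ one must use the $\nonintbb$ separation estimate of Proposition \ref{pgamma}; this is the content of Proposition \ref{p:ctail}.
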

	\begin{theorem}[Endpoint localization for point-to-line $\cdrp$] \label{t:main2} Let $X\sim \cdrp(0,0;*,t)$ and let $f_{t}(\cdot)$ denotes the density of $X(t)$ which depends on the white noise $\xi$.  Then for {$t>0$}, the random density $f_{t}$ has almost surely a unique mode $\md_{*,t}$. Furthermore, as $t \to \infty$, we have the following convergence in law
		\begin{align}\label{e:main2}
			f_{*,t}(x+\md_{*,t}) \stackrel{d}{\to} r_1(x):=\frac{e^{-\calR_1(x)}}{\int\limits_{\R} e^{-\calR_1(y)} d y},
		\end{align}
		in the uniform-on-compact topology. Here $\calR_1(\cdot)$ is a two-sided 3D-Bessel process with diffusion coefficient $1$ {defined in Definition \ref{def:bessel}}. 
	\end{theorem}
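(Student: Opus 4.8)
The plan is to reduce Theorem \ref{t:main2} to a local description of the narrow wedge KPZ solution near its maximizer. Writing $\calH_t(y):=\log\calZ(0,0;y,t)$, equation \eqref{eq:cdrp2} gives $f_{*,t}(y)=e^{\calH_t(y)}\big/\int_\R e^{\calH_t(z)}\,dz$, so $\md_{*,t}$ is exactly the maximizer of $\calH_t$; setting $G_t(x):=\calH_t(\md_{*,t})-\calH_t(\md_{*,t}+x)\ge 0$ we have
\begin{align*}
f_{*,t}(x+\md_{*,t})=\frac{e^{-G_t(x)}}{\int_\R e^{-G_t(z)}\,dz}.
\end{align*}
It therefore suffices to establish: (a) $\calH_t$ has an a.s.\ unique global maximizer; (b) $G_t\to\calR_1$ in law in the uniform-on-compact topology; and (c) uniform-in-$t$ tail control on $e^{-G_t}$, so that $\int_\R e^{-G_t}$ converges jointly with the behavior on compacts and the continuous mapping theorem yields \eqref{e:main2}. (Theorem \ref{t:main} will follow from the same scheme with $\calH_t$ replaced by the sum $\calH_{pt}+\til{\calH}_{(1-p)t}$ of two independent narrow wedge solutions, whose local Brownian behavior has doubled variance, producing $\calR_2$.)

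For (a) I would invoke the Brownian Gibbs property of the KPZ line ensemble \cite{CH16,qs20}: its top curve $\calH_t$ is, on any compact interval, absolutely continuous with respect to Brownian motion of diffusion coefficient $1$, which has an a.s.\ unique argmax on any interval; combined with the uniform parabolic upper bound $\calH_t(y)\le-\tfrac{y^2}{2t}+\tfrac{t}{24}+(\text{Gaussian-tailed error})$ from the one-point tail estimates \cite{utail,ltail,cgh}, the global maximum is attained in a random compact set and is a.s.\ unique. The same inputs give that $\md_{*,t}t^{-2/3}$ is tight, and together with the line-ensemble-to-Airy-line-ensemble convergence \cite{qs20} that it converges, under the scaling of Theorem \ref{t:favpt}, to the a.s.\ unique maximizer of $\operatorname{Airy}_2$ minus a parabola.

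Step (b) is the heart of the argument and the main obstacle. The strategy is to prescribe the joint law of $\calH_t$ on a large symmetric window $[\md_{*,t}-L,\md_{*,t}+L]$, together with $\md_{*,t}$, the boundary data $\calH_t(\md_{*,t}\pm L)$, and the second curve $\calH_t^{(2)}$ of the KPZ line ensemble on that window. By the $H$-Brownian Gibbs property with $H(x)=e^x$, conditionally on the second curve and the boundary values the top curve is a Brownian bridge $\calB$ reweighted by the factor $\propto\exp\big(-\int_{\md_{*,t}-L}^{\md_{*,t}+L}e^{\calH_t^{(2)}(y)-\calB(y)}\,dy\big)$. The delicate point---and the reason the standard strong Gibbs property cannot be applied verbatim---is that the conditioning interval is centered at the \emph{random} point $\md_{*,t}$, which is not a stopping point for the natural left-to-right filtration; I would handle this by disintegrating over $\{\md_{*,t}=m\}$, so that conditionally on this event and on the prescribed environment, $\calH_t|_{[m-L,m+L]}$ is the above reweighted Brownian bridge further conditioned to attain its maximum at the interior point $m$. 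Letting $t\to\infty$ with $L=L(t)\to\infty$ slowly, the gap $\calH_t^{(1)}-\calH_t^{(2)}$ near the top curve's maximum grows like $t^{1/3}$ while the top curve fluctuates only by $O(\sqrt{L(t)})$ across the window, so the reweighting factor tends to $1$ and the local law becomes that of a Brownian motion of diffusion coefficient $1$ conditioned on its argmax. By Williams' path decomposition at the maximum of Brownian motion (equivalently Denisov's decomposition), the pre- and post-maximum pieces are independent Bessel$(3)$ bridges whose local limits at the origin, glued across the two sides, form precisely the two-sided $3$D Bessel process $\calR_1$ of Definition \ref{def:bessel}; this gives $G_t\to\calR_1$ uniformly on compacts. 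I expect the rigorous justification of the random-interval conditioning, and of the interchange of the $t$ and $L$ limits, to be the most technical part.

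Finally, for (c) it suffices to show that $\int_{|x|\ge M}e^{-G_t(x)}\,dx\to 0$ in probability as $M\to\infty$, uniformly in $t$. For $|x|$ up to order $t^{2/3}$ this follows from the parabolic upper bound on $\calH_t$ and a matching lower bound on $\calH_t(\md_{*,t})$ (again from \cite{utail,ltail,cgh} and the tightness of $\md_{*,t}t^{-2/3}$), which forces $G_t(x)\ge\phi(|x|)$ for an increasing and unbounded $\phi$ with overwhelming probability; for $|x|\gg t^{2/3}$ the far-tail Gaussian decay of the SHE gives an even stronger bound. Combining (a)--(c) with the continuous mapping theorem completes the proof of \eqref{e:main2}, and running the same argument with $\calH_{pt}+\til{\calH}_{(1-p)t}$ in place of $\calH_t$ proves Theorem \ref{t:main}.
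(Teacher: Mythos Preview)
Your overall plan coincides with the paper's: write $f_{*,t}(x+\md_{*,t})=e^{-G_t(x)}/\int_\R e^{-G_t}$, establish uniqueness of $\md_{*,t}$ via Brownian Gibbs plus parabolic decay (this is Lemma~\ref{lem1}), prove $G_t\Rightarrow\calR_1$ on compacts via the Gibbs property and a Denisov/Williams-type decomposition at the maximum (this is Theorem~\ref{t:bessel} for $k=1$, whose proof uses Proposition~\ref{p:bbdecomp} and Corollary~\ref{lemmaC1}), and then control the tail of $\int e^{-G_t}$ to pass to the full integral. Two points deserve comment.

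\medskip
\noindent\textbf{On (b).} Your proposal to apply the Gibbs property on the random window $[\md_{*,t}-L,\md_{*,t}+L]$ and then disintegrate on $\{\md_{*,t}=m\}$ runs into the stopping-domain issue you flag. The paper sidesteps this: it first reduces the global maximizer to a local one on a \emph{deterministic} interval $[-Mt^{2/3},Mt^{2/3}]$, applies the $\mathbf{H}_t$-Brownian Gibbs property there, and only then conditions the resulting free Brownian bridge on its max data via Proposition~\ref{p:bbdecomp}, which produces two independent Bessel bridges on random subintervals. A further Markovian restriction to a tiny window of length $t^{1/2}$ around the maximizer, together with uniform separation from the second curve, shows the Radon--Nikodym weight tends to $1$, and Corollary~\ref{lemmaC1} converts the Bessel bridges into the two-sided $\calR_1$. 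This ordering---Gibbs on a fixed interval first, decomposition at the max second---is what makes the argument go through cleanly.

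\medskip
\noindent\textbf{On (c): a genuine gap.} Your tail argument does not cover the ``shallow tail'' regime $K\le|x|\le\delta t^{2/3}$. The parabolic upper bound $\calH_t(y)\le -y^2/(2t)+O(t^{1/3})$ and the lower bound on $\calH_t(\md_{*,t})$ yield only $G_t(x)\ge \tfrac{(\md_{*,t}+x)^2-\md_{*,t}^2}{2t}-O(t^{1/3})$; for $|x|=o(t^{2/3})$ the parabolic contribution is $o(t^{1/3})$ and is swamped by the error, so no growing lower bound $\phi(|x|)$ follows. The paper handles this regime separately (see the discussion of the ``Dyson/transition regime'' and Proposition~\ref{pgamma}, Corollary~\ref{pgamma1}): after conditioning on the max data, the law of $G_t$ on $[0,M-\Phi]$ is that of a Bessel bridge (up to the RN weight), and one shows via comparison with a genuine Bessel process and Motoo's theorem that $G_t(x)\ge |x|^{1/4}$ for all large $|x|$ with high probability, uniformly in $t$. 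This is the missing ingredient needed to make $\int_{K}^{\delta t^{2/3}}e^{-G_t}\to 0$ as $K\to\infty$ uniformly in $t$; the deep tail $|x|\ge A t^{2/3}$ is indeed handled by the parabolic decay as you describe (cf.\ Step~1 of the proof of Proposition~\ref{p:ctail}).
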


	\begin{remark} In Proposition \ref{p:besselwd} we show that for a two-sided 3D-Bessel process $\calR_{\sigma}$ with diffusion coefficient $\sigma>0$, $\int_{\R} e^{-\calR_{\sigma}(y)} dy$ is finite almost surely. Thus $r_1(\cdot)$ and  $r_2(\cdot)$ defined in \eqref{e:main2} and \eqref{e:main} respectively are valid random densities. Theorems \ref{t:main} and \ref{t:main2} derive explicit limiting probability densities for the quenched distributions of the endpoints of the point-to-line polymers and the $pt$-point of point-to-point polymers when centered around their respective modes, providing a complete description of the localization phenomena in the $\cdrp$ model. More concretely, it shows that the corresponding points are concentrated in a microscopic region of order one around their ``favorite points" (see Corollary \ref{cor:tight}).
	\end{remark}
	
	We next study the random modes $\md_{*,t}$ and $\md_{p,t}$. The ``favorite point" $\md_{p,t}$ is of the order $t^{2/3}$ and converges in distribution upon scaling. The limit is given in terms of the directed landscape constructed in \cite{dov,mqr} which arises as an universal full scaling limit of several zero temperature models \cite{dv21}. Below we briefly introduce this limiting model in order to state our next result. 
	
	The directed landscape $\calL$ is a random continuous function $\R_{\uparrow}^4 \to \R$ that satisfies the metric composition law
	\begin{align}\label{def:metcomp}
		\calL(x,s;y,t)=\max_{z\in \R} \left[\calL(x,s;z,r)+\calL(z,r;y,t)\right],
	\end{align} 
	with the property that $\calL(\cdot,t_i;\cdot,t_i+s_i^3)$ are independent for any set of disjoint intervals $(t_i,t_i+s_i^3)$, and as a function in $x,y$, $\calL(x,t;y,t+s^3)\stackrel{d}{=}s\cdot\mathcal{S}(x/s^2,y/s^2)$, where $\mathcal{S}(\cdot,\cdot)$ is a {parabolic} Airy Sheet. We omit definitions of the {parabolic} Airy Sheet (see Definition 1.2 in \cite{dov}) except that {$\mathcal{S}(0,\cdot)\stackrel{d}{=}\mathcal{A}(\cdot)$} where {$\mathcal{A}$ is the parabolic $\operatorname{Airy}_2$ process and $\mathcal{A}(x)+x^2$ is the (stationary) $\operatorname{Airy}_2$ process constructed in \cite{ps02}}
	
	\begin{definition}[Geodesics of the directed landscape] \label{def:geo}
		For $(x,s;y,t)\in \R_{\uparrow}^4$, a geodesic from $(x,s)$ to $(y,t)$ of the directed landscape is a random continuous function $\Gamma: [s,t]\to \R$ such that $\Gamma(s)=x$ and $\Gamma(t)=y$ and for any  $s\le r_1< r_2 < r_3 \le t$ we have
		\begin{align*}
			\calL(\Gamma(r_1),r_1;\Gamma(r_3),r_3)=\calL(\Gamma(r_1),r_1;\Gamma(r_2),r_2)+\calL(\Gamma(r_2),r_2;\Gamma(r_3),r_3).
		\end{align*}
		Thus geodesics precisely contain the points where the equality holds in \eqref{def:metcomp}. Given any $(x,s;y,t)\in \R_{\uparrow}^4$, by Theorem 12.1 in \cite{dov}, it is known that almost surely there is a unique geodesic $\Gamma$ from $(x,s)$ to $(y,t)$.
	\end{definition}

	We are now ready to state our favorite point scaling result.
	
	\begin{theorem}[Favorite Point Limit]\label{t:favpt} Fix any $p\in (0,1)$. Consider  $\md_{p,t}$ and $\md_{*,t}$ defined almost surely in Theorems \ref{t:main} and \ref{t:main2} respectively.  As $t\to \infty$ we have
		\begin{align*}
			{2^{-1/3}}t^{-2/3}\md_{*,t} \stackrel{d}{\to} \mathcal{M}, \qquad t^{-2/3}\md_{p,t} \stackrel{d}{\to} \Gamma(p\sqrt{2})
		\end{align*}
		where $\mathcal{M}$ is the almost sure unique maximizer of the $\operatorname{Airy}_2$ process minus a parabola, and $\Gamma:[0,\sqrt{2}]\to \R$ is the almost sure unique geodesic of the directed landscape from $(0,0)$ to $(0,\sqrt{2})$.	
	\end{theorem}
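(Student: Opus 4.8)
The plan is to recognize each favorite point as the argmax of an explicit functional of the stochastic heat equation, to transport the known $1:2:3$ scaling limits of the KPZ equation through the argmax map, and to supply by hand the $t$-uniform parabolic decay that makes that map continuous.

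\emph{Step 1: reduction to argmaxes of partition functions.} By Definitions \ref{def:cdrp} and \ref{def:cdrp2}, for $X\sim\cdrp(0,0;*,t)$ the quenched density of $X(t)$ is $f_{*,t}(y)=\calZ(0,0;y,t)/\calZ(0,0;*,t)$, while for $X\sim\cdrp(0,0;0,t)$ the quenched density of $X(pt)$ is $f_{p,t}(z)=\calZ(0,0;z,pt)\,\calZ(z,pt;0,t)/\calZ(0,0;0,t)$. Hence, with a.s.\ uniqueness of the modes coming from Theorems \ref{t:main}--\ref{t:main2},
\begin{align*}
\md_{*,t}=\argmax_{y\in\R}\log\calZ(0,0;y,t),\qquad
\md_{p,t}=\argmax_{z\in\R}\big[\log\calZ(0,0;z,pt)+\log\calZ(z,pt;0,t)\big].
\end{align*}
The field $y\mapsto\log\calZ(0,0;y,t)$ is the Hopf--Cole solution of the KPZ equation with narrow wedge initial data; by the time-translation and time-reversal symmetries of the stochastic heat equation, the two summands in the expression for $\md_{p,t}$ form an \emph{independent} pair of such solutions, run for times $pt$ and $(1-p)t$ respectively.

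\emph{Step 2: transporting the scaling limits.} For the point-to-line mode I would invoke \cite{qs20,vir20}: the rescaled profile $g_t(x):=2^{1/3}t^{-1/3}\big(\log\calZ(0,0;2^{1/3}t^{2/3}x,t)+\tfrac t{24}\big)$ converges in law, uniformly on compacts, to the parabolic $\operatorname{Airy}_2$ process $\calA$. Since $g_t$ is obtained from $\log\calZ(0,0;\cdot,t)$ by a fixed positive affine change of the space variable and an affine change of values, $\argmax g_t=2^{-1/3}t^{-2/3}\md_{*,t}$ \emph{exactly} for each $t$. For the point-to-point mode, applying the same convergence (now at times $pt$ and $(1-p)t$) together with the independence from Step 1, the rescaled and recentered sum $w\mapsto c_t^{-1}\big(\log\calZ(0,0;t^{2/3}w,pt)+\log\calZ(t^{2/3}w,pt;0,t)+\tfrac t{24}\big)$, for the matching normalization $c_t=\Theta(t^{1/3})$, converges in law (uniformly on compacts) to the process $w\mapsto\calL(0,0;w,p\sqrt2)+\calL(w,p\sqrt2;0,\sqrt2)$, the two summands being the restrictions of a single directed landscape $\calL$ to the disjoint time windows $[0,p\sqrt2]$ and $[p\sqrt2,\sqrt2]$ --- the identification of the dilation, the centering $\tfrac t{24}$, and the directed-landscape time $\sqrt2$ being a routine KPZ scaling computation. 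Since adding a constant and multiplying by a positive constant leave the argmax unchanged, $\argmax_w$ of this pre-limit sum equals $t^{-2/3}\md_{p,t}$. Finally, by the metric composition law \eqref{def:metcomp} the limit process is maximized precisely at the position of the geodesic from $(0,0)$ to $(0,\sqrt2)$ at time $p\sqrt2$, i.e.\ at $\Gamma(p\sqrt2)$, and this maximizer is a.s.\ unique by the geodesic uniqueness of \cite{dov}; likewise $\calA$ has an a.s.\ unique maximizer $\mathcal{M}$, by the local absolute continuity of $\calA$ with respect to Brownian motion \cite{CH16} (Brownian motion minus a parabola having an a.s.\ unique maximum).

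\emph{Step 3: argmax convergence.} Granting the tightness established in Step 4, the theorem follows from the elementary deterministic fact that if $g_n\to g$ uniformly on compacts, $g(x)\to-\infty$ as $|x|\to\infty$ with a unique maximizer $x^*$, and the maximizers of $g_n$ stay in a fixed compact, then $\argmax g_n\to x^*$. Combined with a Skorokhod coupling and the tightness of $2^{-1/3}t^{-2/3}\md_{*,t}$ (resp.\ of $t^{-2/3}\md_{p,t}$), this yields the asserted convergences in law.

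\emph{Step 4: the main obstacle --- $t$-uniform parabolic decay.} The crux of the argument is the tightness used in Step 3: one must rule out, \emph{uniformly in $t$}, a competing near-maximum arising far from the origin. I would obtain this from the parabola built into the narrow-wedge centering together with the one-point upper- and lower-tail bounds for the KPZ equation \cite{utail,ltail} and a spatial modulus-of-continuity estimate \cite{cgh}: writing $g_t(x)=-x^2+\big(g_t(x)+x^2\big)$, the de-parabolized field has one-interval tail bounds uniform in $t$ (and in the location of the interval over the relevant range of $|x|$), so that $\Pr\big(\sup_{k\le|x|\le k+1}g_t(x)\ge-\tfrac12k^2\big)$ is summable in $k$ uniformly in $t$; together with a $t$-uniform lower bound on $g_t(0)$ this gives $\Pr\big(\argmax g_t\notin[-R,R]\big)\to0$ as $R\to\infty$, uniformly in large $t$. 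For the point-to-point profile the same scheme applies, since the two centering parabolas $-z^2/(2pt)$ and $-z^2/(2(1-p)t)$ sum to the strictly convex $-z^2/(2p(1-p)t)$, so the summed profile retains genuine quadratic decay after the $t^{2/3}$ rescaling and the de-parabolized remainder is a sum of two fields, each with $t$-uniform tails. This uniform control --- not available from the weak limits of Step 2 --- is the delicate point, and is exactly where the sharp KPZ tail estimates enter.
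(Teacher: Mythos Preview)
Your proposal is correct and follows essentially the same route as the paper: both reduce the modes to argmaxes of (sums of independent) narrow-wedge KPZ profiles, invoke the uniform-on-compacts convergence to (rescaled) parabolic $\operatorname{Airy}_2$ processes from \cite{qs20,vir20}, identify the limiting argmax with $\mathcal{M}$ or $\Gamma(p\sqrt2)$ via the metric composition law and geodesic uniqueness from \cite{dov}, and upgrade compact-set argmax convergence to the full line using the $t$-uniform parabolic decay of Proposition~\ref{p:kpzeq} (which the paper packages as the tail bound \eqref{tailC} in Lemma~\ref{lem1}, precisely your Step~4).
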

	
	\begin{remark}Theorem \ref{t:favpt} shows that the random mode fluctuates in the order of $t^{2/3}$. This corroborates the fact that $\cdrp$ undergoes superdiffusion as $t\to \infty$. We remark that the $\md_{*,t}$ convergence was anticipated in \cite{moreno2013endpoint} modulo a conjecture about convergence of scaled KPZ equation to the parabolic $\operatorname{Airy}_2$ process. This conjecture was later proved in \cite{vir20,qs20}. 
	\end{remark}

	The proof of Theorem \ref{t:main} relies on establishing fine properties of the partition function $\calZ(x,t):=\calZ(0,0;x,t)$, or more precisely, properties of the log-partition function $\log\calZ(x,t)$. For delta initial data, $\calZ(x,t)>0$ for all $(x,t)\in \R\times (0,\infty)$ almost surely \cite{flo}. Thus the logarithm of the partition function $\calH(x,t):=\log \calZ(x,t)$ is well-defined. It formally solves the KPZ equation:
	\begin{align}\label{eq:kpz}
		\partial_t\calH=\tfrac12\partial_{xx}\calH+\tfrac12(\partial_x\calH)^2+\xi, \qquad \calH=\calH(x,t), \quad (x,t)\in \R \times [0,\infty).
	\end{align}
	The KPZ equation was introduced in \cite{kardar1986dynamic} to study the random growing interfaces and since then it has been extensively studied in both the mathematics and the physics communities. We refer to \cite{ferrari2010random,quastel2011introduction,corwin2012kardar,quastel2015one,chandra2017stochastic,corwin2019} for partial surveys. 	 
	
	As a stochastic PDE, \eqref{eq:kpz} is ill-posed due to the presence of the nonlinear term $\frac{1}{2}(\partial_x\calH)^2$. The above notion of solutions from the logarithm of the solution of SHE is referred to as the Cole-Hopf solution. The corresponding initial data is called the narrow wedge initial data for the KPZ equation. Other notions of solutions, such as  regularity structures \cite{hairer2013solving,hairer2014theory}, paracontrolled distributions \cite{gubinelli2015paracontrolled,gubinelli2017kpz}, and energy solutions \cite{gonccalves2014nonlinear,gubinelli2018energy}, have been shown to coincide with the Cole-Hopf solution  {within the class of initial datas the theory applies}. 
	
	\medskip
	
	To prove Theorem \ref{t:main}, one needs to understand how multiple copies of the KPZ equation behave around {the maximum of their sum}. We present below our first main result that {studies the limiting behavior of sum of two independent copies of KPZ equation re-centered around {the maximizer of the sum, which we often refer to as the joint maximizer in the subsequent text} as $t\to \infty$.} 
	
	{\begin{theorem}[Bessel behavior around {the maximizer}] \label{t:bessel}	  Fix $k=1$ or $k=2$. Consider $k$ independent copies of the KPZ equation $\{\calH_i(x,t)\}_{i=1}^k$ started from the narrow wedge initial data. For each $t>0$, almost surely, the process $x\mapsto \sum_{i=1}^k \calH_i(x,t)$ has a unique maximizer, say $\mx_{k,t}$.  Furthermore, as $t\to\infty$, we have the following convergence in law
			\begin{align}\label{eq:bessel}
				R_k(x,t):=\sum_{i=1}^k \left[\calH_i(\mx_{k,t},t)-\calH_i(x+\mx_{k,t},t)\right] \stackrel{d}{\to} \calR_k(x)
			\end{align}
			in the uniform-on-compact topology. Here $\calR_k(x)$ is a two-sided Bessel process with diffusion coefficient $k$.
	\end{theorem}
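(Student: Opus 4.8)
\noindent\emph{Proof strategy.}~The plan is to combine the $\mathbf{H}$-Brownian Gibbs property of the KPZ line ensemble \cite{CH16} with the ergodicity statement of Theorem~\ref{t:ergodic} and with the classical fact that a Brownian motion conditioned (through a Doob $h$-transform with $h(x)=x$) to stay nonnegative is a $3$D Bessel process. Since $R_k(\cdot,t)$ depends only on the sum $S_t:=\sum_{i=1}^k\calH_i(\cdot,t)$, it suffices to describe the law of $S_t$ in an $O(1)$ window around its argmax as $t\to\infty$. I would first settle the existence, uniqueness and localization of $\mx_{k,t}$: each $\calH_i(\cdot,t)$ is the top curve of an independent KPZ line ensemble, so on every compact interval $S_t$ is absolutely continuous with respect to a Brownian motion and hence has at most one maximizer there, while the one- and two-sided tail bounds for the narrow-wedge KPZ equation \cite{utail,ltail,cgh} give a parabolic decay $S_t(x)\le -c\,x^2/t+(\text{lower order})$ for $|x|$ large, which supplies a global maximizer, upgrades uniqueness to all of $\R$ by a union bound over a countable cover, and shows that $t^{-2/3}\mx_{k,t}$ is tight. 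Fixing $\delta>0$, we henceforth work on an event of probability at least $1-\delta$ on which $\mx_{k,t}\in[-Mt^{2/3},Mt^{2/3}]$ for a fixed large $M$ and all large $t$.

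The core step is to compute the conditional law of $S_t$ on the \emph{symmetric random} window $[\mx_{k,t}-L,\mx_{k,t}+L]$ for a large but fixed $L$. Because this window is anchored at the random joint maximizer, one cannot apply the Gibbs property verbatim; instead I would prescribe the joint law of $\mx_{k,t}$ together with the restrictions $\calH_i|_{[\mx_{k,t}-L,\mx_{k,t}+L]}$, conditioning on the lower curves of both ensembles and on the top curves outside the window, organized through a deterministic covering of $[-Mt^{2/3},Mt^{2/3}]$ by overlapping length-$L$ cells and the event specifying which cell contains $\mx_{k,t}$. Modulo this bookkeeping, the $\mathbf{H}$-Brownian Gibbs property yields that, conditionally, the $\calH_i|_{[\mx_{k,t}-L,\mx_{k,t}+L]}$ are independent standard Brownian bridges between their conditioned endpoint values, each reweighted by the Boltzmann factor coming from the interaction with the corresponding second curve; summing, $S_t$ on the window is a Brownian bridge of diffusion coefficient $k$, reweighted by those factors, further conditioned on the event that $\mx_{k,t}$ is the global argmax of $S_t$, which on the good event is exactly the statement that $S_t$ attains its maximum over the window at the window centre.

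It remains to take limits, and this is where the main work lies. Letting $t\to\infty$ with $L$ fixed, the KPZ lower-tail estimates show that the top curve of each ensemble lies well above its second curve at $O(1)$ distances from $\mx_{k,t}$ with probability tending to $1$, so the reweighting factors tend to $1$ uniformly on $[-L,L]$ and the conditional law of $S_t$ on the window degenerates to that of a diffusion-coefficient-$k$ Brownian bridge conditioned to achieve its maximum at the centre; equivalently, on each side of the centre one obtains a diffusion-coefficient-$k$ Brownian motion started at $0$ and $h$-conditioned to stay nonnegative, with the two sides independent. (Theorem~\ref{t:ergodic} provides the matching unconditioned limit and pins down the diffusion coefficient.) Letting $L\to\infty$, this conditioned bridge converges on any fixed $[-N,N]$ to a two-sided $3$D Bessel process of diffusion coefficient $k$, namely $\calR_k$ — the $h$-transform description noted above, in its two-sided form; the interchange of the $t\to\infty$ and $L\to\infty$ limits is justified by a uniform-in-$t$ bound, again from the parabolic tail estimates, that the argmax of $S_t$ stays within distance $L$ of $\mx_{k,t}$ with probability tending to $1$ as $L\to\infty$. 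The hard part, flagged already in the introduction, is the core step: quantifying the Gibbs property precisely on a symmetric random interval around the joint maximizer, and showing that the resulting tilted, one-sidedly conditioned Brownian bridge passes to the weak limit without degeneracy; this needs quantitative control both of the Gibbs resampling on the random window and of the probability of the conditioning event, drawing on the tail estimates and on fine properties of the (Airy) line ensemble from \cite{dv18,qs20}.
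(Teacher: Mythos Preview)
Your high-level plan is sound, and for $k=2$ the idea to work directly with the sum $S_t=\calH_1+\calH_2$ rather than with the pair $(D_1,D_2)$ is in fact a legitimate simplification for Theorem~\ref{t:bessel} alone: under the free law the sum is a single Brownian bridge of diffusion coefficient $2$, so one can apply the one-bridge decomposition (Proposition~\ref{p:bbdecomp}) instead of the two-bridge one (Proposition~\ref{propA}). The paper takes the longer route through Dyson Brownian motion because it needs the stronger joint convergence $(D_1,D_2)\to(\calD_1,\calD_2)$ (Proposition~\ref{p:dyson}) for the proof of Theorem~\ref{t:main}.

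There is, however, a genuine gap in your treatment of the random window. The $\mathbf{H}$-Brownian Gibbs property applies only on deterministic intervals, and $[\mx_{k,t}-L,\mx_{k,t}+L]$ is not a stopping domain (the paper flags exactly this in Section~\ref{sec:pf}). Your fix via a covering by length-$L$ cells does not deliver what you claim: after applying Gibbs on a deterministic cell $I_j$ and conditioning on $\{\mx_{k,t}\in I_j\}$, the argmax of the bridge lands at an unknown interior point of $I_j$, not at the centre, and the conditioning still couples to the outside (it says the max over $I_j$ beats the outside max). To proceed you would need a precise path decomposition of a Brownian bridge around its argmax; this is exactly Proposition~\ref{p:bbdecomp} (following Denisov) for $k=1$ and Proposition~\ref{propA} for $k=2$, neither of which you invoke. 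The paper avoids cell-covering altogether: it applies Gibbs once on the large deterministic window $[-Mt^{2/3},Mt^{2/3}]$, uses Propositions~\ref{p:bbdecomp}/\ref{propA} to obtain two-sided Bessel bridges (resp.\ $\nonintbb$) around the argmax, and then passes through an \emph{intermediate} scale $t^{1/2}$ on which (i) the Radon--Nikodym factor tends to $1$ (Lemma~\ref{l:rn1}) and (ii) the endpoint data are of order $t^{1/4}$, so that a further diffusive zoom-in to the $O(1)$ scale yields the Bessel process via Corollary~\ref{lemmaC1} (Lemma~\ref{l:closetod}). Your single-scale scheme (fixed $L$, then $L\to\infty$) runs into a circularity: the endpoint values you must condition on are $S_t(\mx_{k,t}\pm L)-S_t(\mx_{k,t})=-R_k(\pm L,t)$, whose tightness and limiting law are precisely what you are trying to establish; the intermediate-scale argument is designed to break this loop.
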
}
	
	The next result captures the behaviors of the increments of $\calH(\cdot,t)$ and complements Theorem \ref{t:bessel}. It's a by-product of our analysis and doesn't appear in the proof of Theorem \ref{t:main}.
	
	{\begin{theorem}[Ergodicity of the KPZ equation] \label{t:ergodic} Consider the KPZ equation $\calH(x,t)$ started from the narrow wedge initial data. As $t\to\infty$, we have the following convergence in law
			\begin{align*}
				{	\calH(x,t)-\calH(0,t)} \stackrel{d}{\to} B(x)
			\end{align*}
			in the uniform-on-compact topology. Here $B(x)$ is a two-sided standard Brownian motion. 
	\end{theorem}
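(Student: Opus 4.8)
The plan is to deduce this statement from the same limiting objects and tools that drive the proof of Theorems \ref{t:main} and \ref{t:bessel}, namely the convergence of the scaled KPZ line ensemble to the Airy line ensemble \cite{qs20} together with the Brownian Gibbs property \cite{CH16,dv18}. The key point is that the statement is scale-free: we are not rescaling space or time, but looking at $\calH(x,t) - \calH(0,t)$ on a fixed compact window $[-M,M]$ as $t \to \infty$. So the heuristic is that, for large $t$, the landscape near the origin is so "flat" on a microscopic (order-one) scale compared to its $t^{1/3}$ fluctuations and $t^{2/3}$ correlation length that the only structure surviving in the increment is the Brownian Gibbs resampling law. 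Concretely, I would first record that the top curve of the scaled KPZ line ensemble, $\mathfrak{h}_t(x) := t^{-1/3}\bigl(\calH(t^{2/3}x, t) + \tfrac{t}{24}\bigr)$ (up to the precise normalization used in \cite{qs20}), converges as $t\to\infty$ to the parabolic Airy$_2$ process $\mathcal{A}$ in the uniform-on-compact topology, and that $\calH(x,t) - \calH(0,t) = t^{1/3}\bigl(\mathfrak{h}_t(t^{-2/3}x) - \mathfrak{h}_t(0)\bigr)$.

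The main step is to show that this last quantity converges to a two-sided standard Brownian motion. The natural route is a Brownian Gibbs argument carried out \emph{at the unscaled level}: the KPZ line ensemble $\{\calH_n(\cdot,t)\}_{n\ge 1}$ (with $\calH_1 = \calH$) enjoys the $\mathbf{H}$-Brownian Gibbs property with $\mathbf{H} = e^{\cdot}$, which for the top curve on an interval $[-L,L]$ says that, conditionally on the entry/exit data at $\pm L$ and on the curve $\calH_2(\cdot,t)$, the law of $\calH_1(\cdot,t)$ is that of a Brownian bridge tilted by $\exp\bigl(-\int e^{\calH_2 - \calH_1}\bigr)$. Fixing a window $[-M,M]$ and taking $L = L(t) \to \infty$ slowly, I would argue: (i) the entry/exit data at $\pm L$, after subtracting $\calH_1(0,t)$, contributes only a deterministic drift that vanishes on $[-M,M]$ because the curve is asymptotically locally constant there — quantitatively, this is where the tail estimates for the KPZ equation with narrow wedge initial data \cite{utail,ltail,cgh} enter, to control $\calH_1(\pm L,t) - \calH_1(0,t)$ and rule out steep tilts; (ii) the exponential tilt $\exp\bigl(-\int_{-L}^{L} e^{\calH_2(y,t) - \calH_1(y,t)}\,dy\bigr)$ becomes negligible as a perturbation of the Brownian bridge on $[-M,M]$, since $\calH_1 - \calH_2 \to +\infty$ in probability on compacts as $t\to\infty$ (the gap between the top two curves of the Airy line ensemble is order one while the ensemble sits at height order $t^{1/3}$ — again controlled by \cite{qs20} convergence plus line-ensemble tail bounds). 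Combining (i) and (ii), the conditional law of $\calH_1(\cdot,t) - \calH_1(0,t)$ on $[-M,M]$ converges to that of an untilted Brownian bridge pinned at $0$ at the origin on an interval $[-L,L]$ with $L\to\infty$, i.e.\ to a two-sided standard Brownian motion on $[-M,M]$; since $M$ is arbitrary this gives the claimed uniform-on-compact convergence.

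An alternative, essentially equivalent, packaging would be to quote the already-established local Bessel/flatness structure from Theorem \ref{t:bessel} (with $k=1$) and the resampling machinery developed for Theorems \ref{t:main}–\ref{t:main2}, then observe that the \emph{same} conditioning-on-a-large-interval argument, but now centering at a \emph{deterministic} point $0$ rather than at the random maximizer, removes the Bessel "reflection" and leaves a plain Brownian bridge in the limit — the technical input being identical. I expect the main obstacle to be step (i): making rigorous and uniform (in $t$) the claim that the boundary data at $\pm L(t)$ produces no surviving tilt or drift on $[-M,M]$, which requires carefully coupling the rate at which $L(t)\to\infty$ with the decay of the relevant KPZ/line-ensemble tail estimates, so that the conditioned Brownian bridge on $[-L(t),L(t)]$ genuinely converges to a two-sided Brownian motion and is not distorted by atypical endpoint behavior. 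The decorrelation needed to promote finite-dimensional convergence to uniform-on-compact convergence follows from tightness of Brownian bridges plus the $\mathbf{H}$-Brownian Gibbs property as in \cite{CH16,corwinhammond2016kpz}, and is standard once (i) and (ii) are in place.
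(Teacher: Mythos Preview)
Your proposal is correct and follows essentially the same route as the paper: apply the $\mathbf{H}$-Brownian Gibbs property on an intermediate interval around the origin, show the Radon--Nikodym tilt converges to $1$ using the order-$t^{1/3}$ gap between the first two curves of the line ensemble (controlled via convergence to the Airy line ensemble), and show the residual Brownian bridge forgets its endpoints on the fixed window $[-M,M]$. The one point worth correcting is the phrase ``$L(t)\to\infty$ slowly'': the paper works at the scaled level on the vanishing interval $(-t^{-1/6},t^{-1/6})$, i.e.\ $L(t)=t^{1/2}$ in your unscaled coordinates, and the drift in your step (i)---which is random, not deterministic---only vanishes because $L(t)\gg t^{1/3}$, since the a priori control on $\calH(\pm L,t)-\calH(0,t)$ coming from one-point tightness of $\h_t$ is merely $O(t^{1/3})$; thus $L$ must grow polynomially, sandwiched between $t^{1/3}$ and $t^{2/3}$, not slowly.
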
}
	
	\begin{remark} For a Brownian motion on a compact interval, the law of the process when re-centered around its maximum is absolutely continuous w.r.t.~Bessel process. In light of Theorem \ref{t:ergodic}, one expects the Bessel process as a limit in Theorem \ref{t:bessel}. The diffusion coefficient is $k$ because there are $k$ independent copies of the KPZ equation.
	\end{remark}
	
	\begin{remark} We prove \eqref{eq:bessel} for $k=1$ and $k=2$ only, where $k=1$ case relates to Theorem \ref{t:main2} and the $k=2$ case relates to Theorem \ref{t:main}. Our proof strategy for Theorem \ref{t:bessel} can be adapted for general $k\ge 3$ and Remark \ref{r:gen_k} explains the missing pieces for the proof of \eqref{eq:bessel} for general $k$. While Theorem \ref{t:bessel} for general $k$ is an interesting result, due to brevity and the lack of applications to our localization problem, we restrict to only $k=1,2$. 
	\end{remark}
	
A useful property in establishing the ergodicity of a given Markov process is the strong Feller property. For instance, \cite{h18} 
studied the strong Feller property for singular SPDEs to establish ergodicity for a multicomponent KPZ equation. However, \cite{h18} techniques and results are limited to only periodic boundary conditions, i.e. on torus domain, and are thus inaccessible for the KPZ equation with narrow-wedge initial data. 
	
	In addition to the strong Feller property, we can also probe the KPZ equation's ergodicity through the lens of the KPZ universality class. Often viewed as the fundamental positive temperature model of the latter, the KPZ equation shares the same $1:2:3$ scaling exponents and universal long-time behaviors {expected or proven for other members of the class}. A widely-held belief about the KPZ universality class is that under the $1:2:3$ scaling and in the large scale limit, all models in the class converge to an universal scaling limit, the KPZ fixed point \cite{dov,mqr}. This very conjecture has been recently proved for the KPZ equation in \cite{qs20,vir20}. Here we recall a special case of the statement in \cite{qs20} useful to us later. Consider the  $1:2:3$ scaling of the KPZ equation (the scaled KPZ equation)
		$$\h_t(x):=t^{-1/3}\left(\calH(t^{2/3}x,t)+\tfrac{t}{24}\right).$$
		Then $2^{1/3}\h_t(2^{1/3}x)$ converges to the parabolic $\operatorname{Airy}_2$ process as $t\to \infty$. Note that the parabolic $\operatorname{Airy}_2$ process is the marginal of the parabolic Airy Sheet, which is a canonical object in the construction of the KPZ fixed point and the related directed landscape (see \cite{dov, qs20}).

	On the KPZ fixed point level, ergodicity and behaviors around the maximum are better understood. Under the zero temperature setting, numerous results and techniques address the ergodicity question for the KPZ fixed point. For instance, due to the $1:2:3$ scaling invariance, ergodicity of the fixed point is equivalent to the local Brownian behavior (\cite[Theorem 4.14 and 4.15]{mqr}) or can be deduced in \cite{pim17} using coupling techniques applicable only in zero temperature settings.
	
	Meanwhile, \cite{dsv} showed that local Brownianity and local Bessel behaviors around the maximizer hold for any process which is absolutely continuous w.r.t.~Brownian motions on every compact set. The scaled KPZ equation possesses such property \cite{CH16} and its ergodicity question can be transformed into local Brownian behaviors of the scaled KPZ equation. Note that we have
		$$\calH(x,t)-\calH(0,t)=t^{-1/3}\left(\h_t(t^{-2/3}x)-\h_t(0)\right).$$
		However, the law of $\h_t$ changes with respect to time and the diffusive scaling precisely depends on $t$. Therefore it is unclear how to extend the soft techniques in \cite[Lemma 4.3]{dsv} for the KPZ equation to address the limiting local Brownian behaviors in above setting.
	
	{Another recent line of inquiries regarding the behavior around the maxima is the investigation of the fractal nature of exceptional times for the KPZ fixed point with multiple maximizers \cite{chhm,d22}. In \cite{chhm}, the authors computed the Hausdorff dimension of the set of times for the KPZ fixed point with at least two maximizers and was extended to the case of exactly $k$ maximizers in \cite{d22}. The latter work relied on a striking property of the KPZ fixed point where it becomes stationary in $t$ after recentering at the maximum with Bessel initial conditions. This property considerably simplified their analysis. Other initial data were then accessed through a transfer principle from \cite{sv21}. Unfortunately, analogous properties for the KPZ equation are not available.}

	\subsection{Proof Ideas}\label{sec:pf}  
	In this section we sketch the key ideas behind the proofs of our main results. For brevity, we present a heuristic argument for the proofs of Theorem \ref{t:main} and the related Theorem \ref{t:bessel} with the $k=2$ case only. The proofs for the point-to-line case (Theorem \ref{t:main2}) and the related $k=1$ case of Theorem \ref{t:bessel} and ergodicity (Theorem \ref{t:ergodic}) follow from similar ideas. Meanwhile, the methods related to the uniqueness and convergence of random modes (Theorem \ref{t:favpt}) are of a different flavor. We present them directly in Section \ref{sec:rm} as the arguments are more straightforward.  
	
	Recall from Theorem \ref{t:main} that $f_{p,t}$ denotes the quenched density of $X(pt)$ for $X\sim \cdrp(0,0;0,t)$. To simplify our discussion below, we let $p=\frac12$ and replace $t$ by $2t$. \eqref{eq:cdrp} gives us
	\begin{align*}
		f_{\frac12,2t}(x)=\frac{\calZ(0,0;x,t)\calZ(x,t;0,2t)}{\calZ(0,0;0,2t)}.
	\end{align*}
	Recall the chaos expansion for $\calZ(x,s;y,t)$ from \eqref{eq:chaos}. Note that $\calZ(0,0;x,t)$ and $\calZ(x,t;0,2t)$ are independent for using different sections of the noise $\xi$. A change of variable and symmetry yields that $\calZ(x,t;0,2t)$ is same in distribution as $\calZ(0,0;x,t)$ as a process in $x$. Thus as a process in $x$, $\calZ(0,0;x,t)\calZ(x,t;0,2t)\stackrel{d}{=}e^{\calH_1(x,t)+\calH_2(x,t)}$ were $\calH_1(x,t)$ and $\calH_2(x,t)$ are independent copies of the KPZ equation with narrow wedge initial data. This puts Theorem \ref{t:main} in the framework of Theorem \ref{t:bessel}. Viewing the density around its unique random mode $\md_{\frac12,2t}$ (that is the maximizer), we may thus write $f_{\frac12,2t}(x+\md_{\frac12,2t})$ as $$\frac{e^{-R_2(x,t)}}{\int\limits_{\R} e^{-R_2(y,t)}dy},$$
	where $R_2(x,t)$ is defined in \eqref{eq:bessel}. For simplicity, let us use the notation $\mx=\md_{\frac12,2t}$. 
	
	The rest of the argument hinges on the following two results:
	\begin{enumerate}[label=(\roman*)]
		\setlength{\itemsep}{0.5em}
		\item \label{idea1} \textit{Bessel convergence}: $R_2(x,t)$ converges weakly to $3$D-Bessel process with diffusion coefficient $2$ in the uniform-on-compact topology (Theorem \ref{t:bessel}). 
		\item \label{idea2} \textit{Controlling the tails}: $\int_{[-K,K]^c} e^{-R_2(y,t)}dy$ can be made arbitrarily small for all large $t$ by taking large $K$ (Proposition \ref{p:ctail}).
	\end{enumerate}
	
	Theorem \ref{t:main} then follows from the above two items by standard analysis. We now explain the ideas behind items \ref{idea1} and \ref{idea2} and
	our principal tool is the Gibbsian line ensemble, which is an object of integrable origin often used in probabilistic settings. More precisely, we use the \textit{KPZ line ensemble} (recalled in Proposition \ref{line-ensemble}), i.e. a set of random continuous functions whose lowest indexed curve is same in distribution as the narrow wedge solution of the KPZ equation. The law of the lowest indexed curve enjoys a Gibbs property called the $\mathbf{H}$-Brownian Gibbs property. This property states that the law of the lowest indexed curve conditioned on an interval depends only on the curve indexed one below and the starting and ending points. Furthermore, this conditional law is absolutely continuous w.r.t.~a Brownian bridge of the same starting and ending points with an explicit expression of the Radon-Nikodym derivative. 
	
	 We now recast \ref{idea1} in the language of Gibbsian line ensemble. Note that $R_2(x,t)$ is a sum of two independent KPZ equations viewed from the maximum of the sume (\eqref{eq:bessel}). Accessing its distribution requires a precise description of the conditional joint law of the top curves of two independent copies of the KPZ line ensemble on random intervals around the joint maximizer. Thus \ref{idea1} reduces to the following results, which we elaborate on individually:
		\begin{enumerate}[label=(\alph*)]
		\setlength{\itemsep}{0.5em}
		\item \label{idea3} Two Brownian bridges when viewed around the maximum of their sum can be given by two pairs of non-intersecting Brownian bridges to either side of the maximum (Proposition \ref{propA}). 
		\item \label{idea4} For a suitable $K(t)\uparrow \infty$, the Radon-Nikodym derivatives associated with the KPZ line ensembles (see \eqref{eq:bgibbs} for the precise expression of Radon-Nikodym derivative)  on the random interval $[\mx-K(t),\mx+K(t)]$ containing the maximizer goes to $1$.
	\end{enumerate}

	Combining the above two ideas, we can conclude the joint law of  \begin{align}
		    \label{d1d2}
		    (D_1(x,t),D_2(x,t)):=(\calH_1(\mx,t)-\calH_1(\mx+x,t),\calH_2(\mx+x,t)-\calH_2(\mx,t))
		\end{align} on $x\in [-K(t),K(t)]$ is close to two-sided pair of non-intersecting Brownian bridges with the same starting point and appropriate endpoints. Upon taking $t\to \infty$, one obtains a two-sided Dyson Brownian motion $(\calD_1,\calD_2)$ defined in Definition \ref{def:dbm} as a distributional limit. Proposition \ref{p:dyson} is the precise rendering of this fact. Finally a 3D-Bessel process emerges as the difference of two parts of the Dyson Brownian motion: $\calD_1(\cdot)-\calD_2(\cdot)$ (see Lemma \ref{l:dtob}).  
	
	Before expanding upon items \ref{idea3} and \ref{idea4}, let us explain the reasons behind our approach. Since our desired random interval includes the maximizer of two independenet copies or the joint maximizer, it is not a stopping domain and is inaccessible by classical properties such as the strong Gibbs property for KPZ line ensemble. Note that a similar context of the KPZ fixed point appeared in \cite{chhm}, where the authors used Gibbs property on random intervals defined to the right of the maximizer in their proof. However, \cite{chhm} relied on a path decomposition of Markov processes at certain spatial times from \cite{millar}, which states that conditioned on the maximizer, the process to the \textit{right} of the maximizer is Markovian. However in our case, the intervals around the maximum is \textit{two-sided}. Thus the abstract setup of \cite{millar} is not suited for our case. 
	Thus, the precise description of the law given in item \ref{idea3} is indispensable to our argument.
	
	Next, one needs an exact comparison of the Brownian law and KPZ law to transition between the two. Traditional tools such as stochastic monotonicity for the KPZ line ensembles help obtain one-sided bounds for monotone events. Especially for tail estimates of the KPZ equation, it reduces the problem to the setting of Brownian bridges, which can be treated classically. However, this approach only produces a one-sided bound, which is insufficient for the precise convergence we need. Hence we treat the Radon-Nikodym derivative directly to exactly compare the two laws.
	
	\begin{figure}[h!]
		\centering
		\includegraphics[width=16cm]{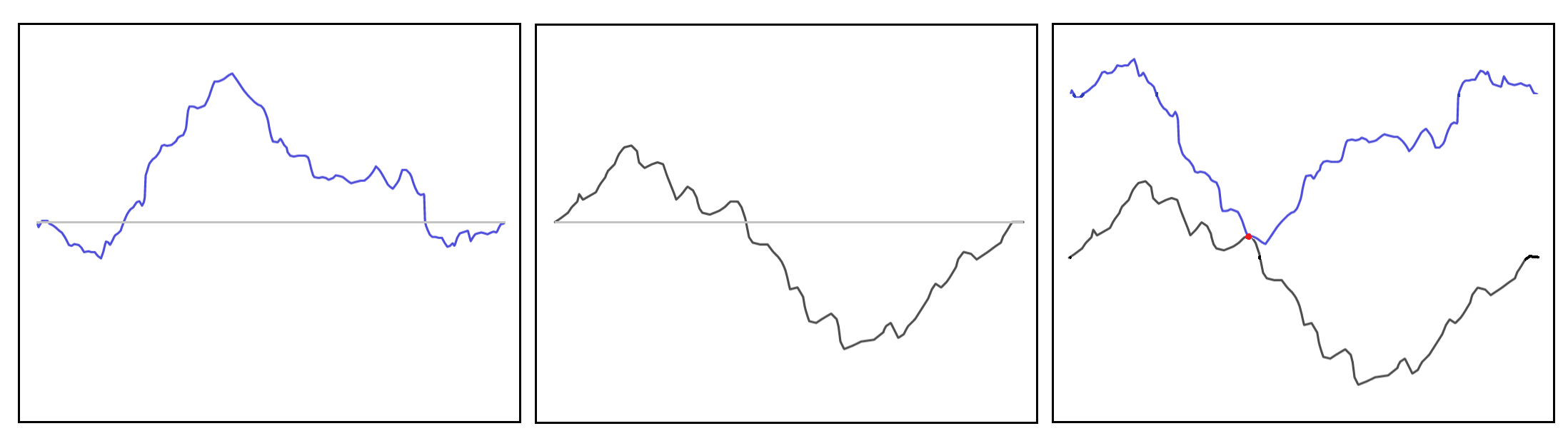}
		\caption{First idea for the proof: The first two figures depict two independent Brownian bridges `blue' and `black' on $[0,1]$ starting and ending at zero. We flip the blue one and shift it appropriately so that when it is superimposed with the black one, the blue curve always stays above the black one and touches the black curve at exactly one point. The superimposed figure is shown in third figure. The red point denotes the `touching' point or equivalently the joint maximizer. Conditioned on the max data, the trajectories on the left and right of the red points are given by two pairs of non-intersecting Brownian bridges with appropriate end points.} 
		\label{fig:idea1}
	\end{figure}

	To describe the result in item \ref{idea3}, consider two independent Brownian bridges $\bar{B}_1$ and $\bar{B}_2$ on $[0,1]$ both starting and ending at zero. See Figure \ref{fig:idea1}. Let $M=:\operatorname{argmax} (\bar{B}_1(x)+\bar{B}_2(x))$. We study the conditional law of $(\bar{B}_1,\bar{B}_2)$  given the max data: $(M,\bar{B}_1(M),\bar{B}_2(M))$. The key fact from Proposition \ref{propA} is that conditioned on the max data $$(\bar{B}_1(M)-\bar{B}_1(M-x),\bar{B}_2(M-x)-\bar{B}_2(M))_{x\in [0,M]}, \quad (\bar{B}_1(M)-\bar{B}_1(x),\bar{B}_2(x)-\bar{B}_2(M))_{x\in [M,1]}$$
	are independent and each is a non-intersecting Brownian bridge with appropriate end points (see Definition \ref{def:nibb}). The proof proceeds to show such a decomposition at the level of discrete random walks before taking diffusive limits to get the same for Brownian motions and finally for Brownian bridges. The details are presented in Section \ref{sec:localmax}.

	\begin{figure}[h!]
		\centering
		\includegraphics[width=9cm]{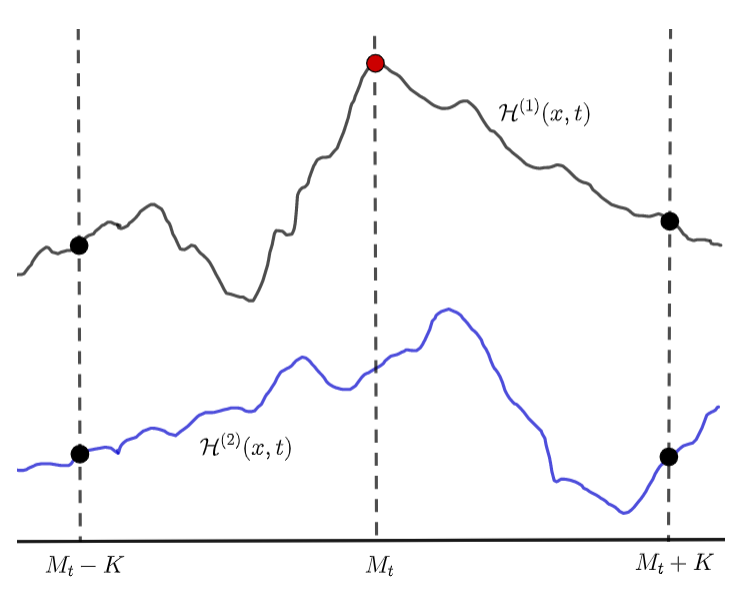}
		\caption{Second idea for the proof: For all ``good" boundary data and max data, with high probability, there is an uniform separation of order $t^{1/3}$ between the first two curves on the random interval $[M_t-K,M_t+K]$.} 
		\label{fig:idea2}
	\end{figure}

To illustrate the idea behind item \ref{idea4}, let us consider an easier yet pertinent scenario. Let $\calH^{(1)}(x,t)$ and $\calH^{(2)}(x,t))$ be the first two curves of the KPZ line ensemble. Let $M_t=\operatorname{argmax} \calH^{(1)}(x,t)$. We consider the interval $I_t:=[M_t-K,M_t+K]$. See Figure \ref{fig:idea2}. We show that
	\begin{enumerate}
		\item The maximum is not too high: $\calH^{(1)}(M_t,t)-\calH^{(1)}(M_t\pm K,t)=O(1)$,
		\item The gap at the end points is sufficiently large: $\calH^{(1)}(M_t\pm K,t)-\calH^{(2)}(M_t\pm K,t)=O(t^{1/3})$.
		\item The  fluctuations of the second curve on $I_t$ are $O(1)$.
	\end{enumerate}   Under the above favorable class of boundary data: $\calH^{(1)}(M_t\pm K,t), \calH^{(2)}(\cdot,t)$ and the max data: $(M_t,\calH^{(1)}(M_t,t))$, we show that the conditional fluctuations of the first curve are $O(1)$. This forces a uniform separation between the first two curves throughout the random interval $I_t$. Consequently the Radon-Nikodym derivative in \eqref{eq:bgibbs} converges to $1$ as $t \rightarrow \infty$. 
	
	We rely on tail estimates for the KPZ equation as well as some properties of the Airy line ensemble which are the distributional limits of the scaled KPZ line ensemble defined in \eqref{eq:htx} to conclude such a statement rigorously. Section \ref{sec:tools} contains a review of the necessary tools. Note that the rigorous argument for the Radon-Nikodym derivative in the proof of Theorem \ref{t:main} (Proposition \ref{p:dyson}) is more involved. Indeed, one needs to consider another copy of line ensemble and argue that similar uniform separation holds for both when viewed around the joint maximum $\mx$. We also take $K=K(t)\uparrow \infty$ and the separation length is consequently different.

	We have argued so far that $(D_1(x,t),D_2(x,t))$ defined in \eqref{d1d2} {jointly} converges to a two-sided Dyson Brownian motion. This convergence holds in the uniform-on-compact topology. However, this does not address the question about behavior of the tail integral in \ref{idea2} $$\int_{[-K,K]^c} e^{D_2(y,t)-D_1(y,t)}\d y.$$ 
	
	\begin{figure}[h!]
		\centering
		\includegraphics[width=12cm]{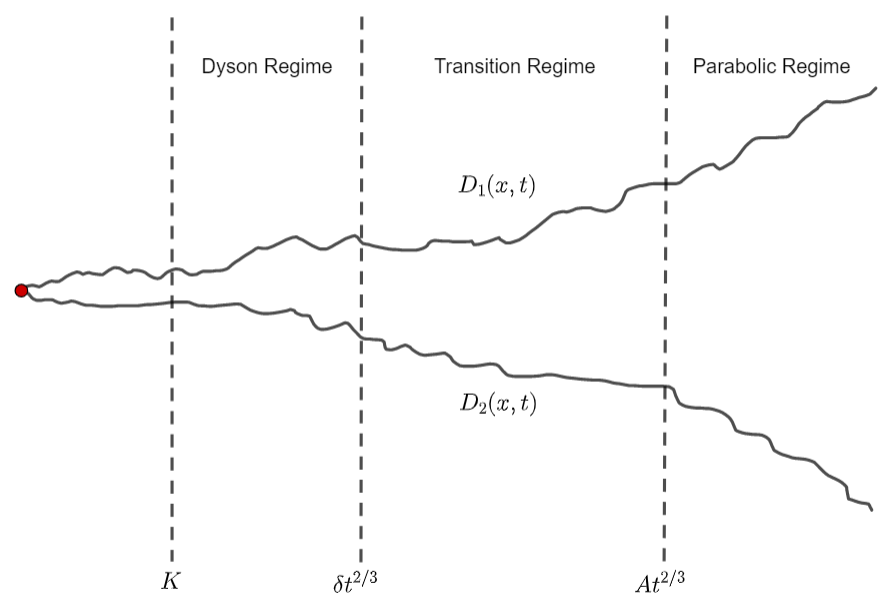}
		\caption{Third idea for the proof: The three regimes} 
		\label{fig:idea3}
	\end{figure}

	To control the tail, we divide  the tail integral into  three parts based on the range of integration (See Figure \ref{fig:idea3}):
	
	\begin{itemize}
		\item \textit{Dyson regime:} The law of $(\calD_1(x,t),\calD_2(x,t))$ on the interval $[0,\delta t^{2/3}]$ is comparable to that of the Dyson Brownian motions for small $\delta$ and for large $t$. For Dyson Brownian motions, w.h.p. $\calD_1(x)-\calD_2(x) \ge \e |x|^{1/4}$ for all large enough $|x|$. This translates to $(D_1(x,t),D_2(x,t))$ and provides a decay estimate over this interval.
		
		\item \textit{Parabolic Regime:} The maximizer $\mx$ lies in a window of order $t^{2/3}$ region w.h.p.. On the other hand, the KPZ equation upon centering has a parabolic decay: $\calH(x,t)+\frac{t}{24} \approx -\frac{x^2}{2t}+O(t^{1/3})$. Thus taking $A$ large enough ensures w.h.p. $D_1(x,t) \approx \frac{x^2}{4t}$ and $D_2(x,t) \approx -\frac{x^2}{4t}$ on the interval $[At^{2/3},\infty)$. These estimates give a rapid decay of our integral in this regime.
		
		\item \textit{Transition Regime:} Between the two regimes, we use soft arguments of non-intersecting brownian bridges to ensure that $D_1(x,t)-D_2(x,t) \ge \rho t^{1/3}$ w.h.p. uniformly on $[\delta t^{2/3},A t^{2/3}]$. 
	\end{itemize}
	
	Proposition \ref{pgamma} and Proposition \ref{p:ctail} are the precise manifestations of the above idea. Proposition \ref{pgamma} provides decay estimates in the Dyson and transition regimes for Brownian objects. Proposition \ref{p:ctail} translates the estimates in Proposition \ref{pgamma} to $D_1,D_2$ for the ``shallow tail regime" (see Figure \ref{fig:tail}). The parabolic regime or the ``deep tail" in Section \ref{sec:pfmain} is addressed in Proposition \ref{p:ctail}.

	\subsection*{Outline} The remainder of the paper is organized as follows. Section \ref{sec:tools} reviews some of the existing results related to the KPZ line ensemble and its zero temperature counterpart, the Airy line ensemble. We then  prove the existence and uniqueness of random modes in Theorem \ref{t:favpt} in Section \ref{sec:rm}. Section \ref{sec:localmax} is dedicated to the behaviors of the Brownian bridges around their joint maximum. Two important objects are defined in this Section: the Bessel bridges and the non-intersecting Brownian bridges. Several properties of these two objects are subsequently proved in Section \ref{sec:bbnibb}. The proofs of Theorems \ref{t:bessel} and \ref{t:ergodic} comprise section \ref{sec:ergbb}. Finally in Section \ref{sec:pfmain}, we complete the proofs of Theorems \ref{t:main} and \ref{t:main2}. Appendix \ref{sec:ap1} contains a convergence result about non-intersecting random walks used in Section \ref{sec:localmax}. 
	
	\subsection*{Acknowledgements}  We thank Shirshendu Ganguly and Promit Ghosal for numerous discussions in fall 2021. In particular, parts of Theorem \ref{t:favpt} were anticipated by Shirshendu Ganguly and Promit Ghosal. We thank Ivan Corwin for inputs on an earlier draft of the paper and several useful discussions. We also thank Milind Hegde and Shalin Parekh for useful discussions. The project was initiated during the authors' participation in the `Universality and Integrability in Random Matrix Theory and Interacting Particle
	Systems' research program hosted by the Mathematical Sciences Research Institute (MSRI) in Berkeley, California in fall 2021. The authors thank the program organizers for their hospitality and acknowledge the support from NSF DMS-1928930. 

	\section{Basic framework and tools}\label{sec:tools} 
	
	\subsection*{Remark on Notations} Throughout this paper we use $\Con = \Con(\alpha, \beta,\gamma, \ldots) > 0$ to denote a generic deterministic positive finite constant that may change from line to line, but dependent on the designated variables $\alpha,\beta, \gamma, \ldots$. We will often write $\Con_{\alpha}$ in case we want to stress the dependence of the constant to the variable $\alpha$. We will use serif fonts such as $\m{A}, \m{B}, \ldots$ to denote events as well as $\cdrp, \dbm \ldots$ to denote laws. The distinction will be clear from the context. The complement of an event $\m{A}$ will be denoted as $\neg \m{A}$. \\

	{In this section, we present the necessary background on the directed landscape and Gibbsian line ensembles including the Airy line ensemble and the KPZ line ensemble as well as known results on these objects that are crucial in our proofs.}
	
	\subsection{The directed landscape and the Airy line ensemble} We recall the definition of the directed landscape and several related objects from \cite{dov,dv18}. The directed landscape is the central object in the KPZ universality class constructed as a scaling limit of the Brownian Last Passage percolation (BLPP). We recall the setup of the BLPP below to define the directed landscape. 
	
	\begin{definition}[Directed landscape]
		Consider an infinite collection $B:=(B_{k}(\cdot))_{k\in \Z}$ of independent two-sided Brownian motions with diffusion coefficient $2$. For $x\le y$ and {$n\le m$}, the last passage value from $(x,m)$ to $(y,n)$ is defined by
		\begin{align*}
			B[(x,m)\to (y,n)]=\sup_{\pi} \sum_{k=n}^{m} [B_k(\pi_k)-B_k(\pi_{k-1})],
		\end{align*}
		where the supremum is over all $\pi\in \Pi_{m,n}(x,y):=\{\pi_{m} \le \cdots \le \pi_n \le \pi_{n-1} \mid \pi_{m}=x,\pi_{n-1}=y\}$. Now for any $(x,s;y,t)\in \R_{\uparrow}^4$, we denote $(x,s)_n:=(s+2xn^{-1/3},-\lfloor sn\rfloor)$ and $(y,t)_n:=(t+2yn^{-1/3},-\lfloor tn\rfloor)$ and define
		\begin{align*}
			\calL_n(x,s;y,t):=n^{1/6} B_n[(x,s)_n\to (y,t)_n]-2(t-s)n^{2/3}-2(y-x)n^{1/3}.
		\end{align*}
		The directed landscape $\calL$ is the distributional limit of $\calL_n$ as $n \rightarrow \infty$ with respect to the uniform convergence on compact subsets of $\R_{\uparrow}^4$. By \cite{dov}, the limit exists and is unique.
	\end{definition}
	
	The marginal $\mathcal{A}_1(x):=\mathcal{L}(0,0;x,1)$ is known as the parabolic $\operatorname{Airy}_2$ process. In \cite{ps02} the $\operatorname{Airy}_2$ process $\mathcal{A}_1(x)+x^2$ was constructed as the scaling limit of the polynuclear growth model. At the same time, $\mathcal{A}_1(x)$ can also be viewed as the top curve of the Airy line ensemble, which we define formally below in the approach of \cite{CH14}.
	
	\begin{definition}[Brownian Gibbs Property]\label{def:bgp} Recall the general notion of line ensembles from Section 2 in \cite{CH14}. Fix $k_1\le k_2$ with $k_1,k_2\in \N$ and an interval $(a,b)\in \R$ and two vectors $\vec{x},\vec{y} \in \R^{k_2-k_1+1}$. Given two measurable functions $f,g:(a,b)\to \R\cup \{\pm\infty\}$, let $\mathbb{P}^{k_1, k_2, (a,b), \vec{x}, \vec{y},f,g}_{\mathrm{nonint}}$ be the law of $k_2-k_1+1$ many independent Brownian bridges (with diffusion coefficient $2$) $\{B_i:[a,b]\to \R\}_{i=k_1}^{k_2}$ with $B_i(a)=x_i$ and $B_i(b)=y_i$ conditioned on the event that
		$$f(x)>B_{k_1}(x)>B_{k_1+1}(x)>\cdots>B_{k_2}(x)>g(x), \quad \mbox{for all }x\in [a,b].$$ 
		Then the $\mathbb{N}\times \mathbb{R}$ indexed line ensemble $\mathcal{L}=(\mathcal{L}_1,\mathcal{L}_2,\ldots)$ is said to enjoy the \emph{Brownian Gibbs property} if, for all $K = \{k_1,\ldots, k_2\}\subset \mathbb{N}$ and $(a,b)\subset \R$, the following distributional equality holds:
		\begin{align*}
			\mathrm{Law}\Big(\mathcal{L}_{K\times (a,b)} \text{ conditioned on }\mathcal{L}_{\mathbb{N}\times \R \backslash K \times (a,b)}\Big) = \mathbb{P}^{k_1,k_2, (a,b),\vec{x}, \vec{y}, f, g}_{\mathrm{nonint}},
		\end{align*}
		where $\vec{x}= (\mathcal{L}_{k_1}(a), \ldots , \mathcal{L}_{k_2}(a))$, $\vec{y} =(\mathcal{L}_{k_1}(b),\ldots , \mathcal{L}_{k_2}(b))$, {$\mathcal{L}_{k_1-1}=f$ (or    $\infty$ if $k_1=1$)} and  $\mathcal{L}_{k_2+1}=g$. 
	\end{definition}
	
	\begin{definition}[Airy line ensemble]\label{def:ale} The Airy line ensemble $\mathcal{A}=(\mathcal{A}_1,\mathcal{A}_2, \ldots)$ is the unique $\N\times \R$-indexed line ensemble satisfying Brownian Gibbs property whose top curve $\mathcal{A}_1(\cdot)$ is the parabolic $\operatorname{Airy}_2$ process. The existence and uniqueness of $\calA$ follow from \cite{CH14} and \cite{dm20} respectively. 
	\end{definition}
	
	The Airy line ensemble is in fact a strictly ordered line ensemble in the sense that almost surely, \begin{align}\label{eq:order}
		\mathcal{A}_k(x) >\mathcal{A}_{k+1}(x) \mbox{ for all }k\in \N, x\in \R.
	\end{align} 
	\eqref{eq:order} follows from the Brownian Gibbs property and the fact that for each $x\in \R$, $(\mathcal{A}_k(x)+x^2)_{k\ge 1}$ is equal in distribution to the Airy point process. The latter is strictly ordered. In \cite{dv18}, the authors studied several probabilistic properties of the Airy line ensembles such as tail estimates and modulus of continuity. Below we state an extension of one of such results used later in our proof.
	
	\begin{proposition}\label{p:amodcon} Fix $k\ge 1$. There exists a universal constant $\Con_k>0$ such that for all $m>0$ and $R\ge 1$ we have
		\begin{align}\label{e:amodcon}
			\Pr\left(\sup_{\substack{x\neq y\in [-R,R] \\ |x-y|\le 1}} \frac{|\mathcal{A}_k(x)+x^2-\mathcal{A}_k(y)-y^2|}{\sqrt{|x-y|}\log^{\frac12}\frac{2}{|x-y|}}\ge m\right) \le \Con_k \cdot R \exp\left(-\tfrac1\Con_k m^2\right).
		\end{align}
	\end{proposition}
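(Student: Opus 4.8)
The plan is to upgrade the known modulus-of-continuity estimate for a \emph{single} Airy curve (essentially Proposition 4.2 in \cite{dv18}, stated there for the top curve or a fixed-width window) to an arbitrary fixed index $k$, and then to promote it from a statement on a unit window to the window $[-R,R]$ by a union bound. The key structural input is that, after subtracting the parabola, the curve $\mathcal{A}_k(x)+x^2$ is stationary in a suitable sense, or at least is locally absolutely continuous with respect to Brownian motion (with diffusion coefficient $2$) with Radon--Nikodym derivatives that are uniformly controlled on compacts — this is exactly what the Brownian Gibbs property of Definition \ref{def:bgp} together with the tail estimates of \cite{dv18} provide. So the first step is to fix a reference scale: cover $[-R,R]$ by $O(R)$ overlapping unit intervals $J_1,\dots,J_N$ with $N\le \Con R$, and observe that if $x\ne y\in[-R,R]$ with $|x-y|\le 1$ then $x,y$ lie in a common $J_\ell$ (or in two adjacent ones, which can be absorbed into the constant by doubling).

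Second, on each unit interval $J_\ell$ I would invoke the single-interval Hölder estimate for $\mathcal{A}_k$: there is a universal $\Con_k$ such that
\begin{align*}
\Pr\!\left(\sup_{\substack{x\ne y\in J_\ell \\ |x-y|\le 1}} \frac{|\mathcal{A}_k(x)+x^2-\mathcal{A}_k(y)-y^2|}{\sqrt{|x-y|}\,\log^{1/2}\frac{2}{|x-y|}} \ge m\right) \le \Con_k \exp\!\left(-\tfrac1{\Con_k} m^2\right).
\end{align*}
This per-interval bound is obtained by comparing $\mathcal{A}_k$ on a slightly larger interval to a Brownian bridge via the Brownian Gibbs property — conditioning on $\mathcal{A}_{k-1}$ above and $\mathcal{A}_{k+1}$ below, the law of $\mathcal{A}_k$ is a non-intersecting Brownian bridge, which is absolutely continuous with respect to a plain Brownian bridge — combined with the one-point (and endpoint-gap) tail estimates from \cite{dv18} to control the Radon--Nikodym density, and then the classical Lévy modulus of continuity for Brownian bridge gives the $\sqrt{|x-y|}\log^{1/2}(2/|x-y|)$ rate. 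The parabola $x^2$ has Lipschitz constant $2R$ on $[-R,R]$, but $|x^2-y^2|=|x-y||x+y|\le 2R|x-y|$, which is lower order than $\sqrt{|x-y|}\log^{1/2}(2/|x-y|)$ only for $|x-y|$ bounded away from $0$; near the diagonal the parabola term is genuinely negligible, so in fact it is cleaner to keep $\mathcal{A}_k(x)+x^2$ together throughout since that combination is the one with the stationary-type Brownian comparison.

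Third, a union bound over the $N\le \Con_k R$ intervals yields
\begin{align*}
\Pr\!\left(\sup_{\substack{x\ne y\in [-R,R] \\ |x-y|\le 1}} \frac{|\mathcal{A}_k(x)+x^2-\mathcal{A}_k(y)-y^2|}{\sqrt{|x-y|}\,\log^{1/2}\frac{2}{|x-y|}} \ge m\right) \le N\cdot \Con_k \exp\!\left(-\tfrac1{\Con_k}m^2\right) \le \Con_k\, R\, \exp\!\left(-\tfrac1{\Con_k}m^2\right),
\end{align*}
after relabeling the constant, which is exactly \eqref{e:amodcon}. I expect the main obstacle to be the first genuine analytic step: establishing the single-interval estimate uniformly in the index $k$ with a truly \emph{universal} constant $\Con_k$ (depending only on $k$, not on $m$ or $R$). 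The subtlety is that controlling the Radon--Nikodym derivative of $\mathcal{A}_k$ relative to Brownian bridge requires two-sided control — both that $\mathcal{A}_{k-1}$ does not dip too low near $\mathcal{A}_k$ and that $\mathcal{A}_{k+1}$ does not rise too high — and the relevant gap estimates degrade with $k$; one must track this dependence carefully and use the $\mathcal{A}_{k-1}$-tail (which is why \cite{dv18}'s finite-$k$ tail bounds, rather than just the $k=1$ case, are needed). Once that uniform-in-$m$ Gaussian-type tail on a unit interval is in hand, the passage to $[-R,R]$ is the routine union bound sketched above, and the parabola bookkeeping is harmless.
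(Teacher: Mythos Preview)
Your plan is sound and would give the result, but the paper's proof takes a shorter and more direct route. Rather than establishing a full modulus-of-continuity estimate on each unit interval (via Brownian Gibbs comparison and L\'evy's modulus for Brownian bridge) and then union-bounding over $O(R)$ intervals, the paper quotes two ready-made ingredients from \cite{dv18}: first, the two-point tail bound (Lemma~6.1 there)
\[
\Pr\big(|\mathcal{A}_k(x)+x^2-\mathcal{A}_k(y)-y^2|\ge m\sqrt{|x-y|}\big)\le \Con_k\exp(-\tfrac1{\Con_k}m^2)\quad\text{for }|x-y|\le 1,
\]
and second, a general Kolmogorov--Chentsov-type chaining lemma (Lemma~3.3 there), applied with $d=1$, $T=[-R,R]$, $\alpha_1=\tfrac12$, $\beta_1=2$, which converts this two-point bound directly into the modulus statement with the $\sqrt{|x-y|}\log^{1/2}(2/|x-y|)$ rate and the linear-in-$R$ prefactor in a single stroke.

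The difference is mainly one of packaging. Your Step~2 (the per-interval estimate via Brownian Gibbs and Radon--Nikodym control) is essentially what \cite{dv18} already carries out to prove their Lemma~6.1, so you are proposing to redo that work and then add L\'evy's modulus on top. The paper instead cites the two-point estimate and lets the chaining lemma do the rest, avoiding any explicit Brownian-bridge comparison or RN-derivative bookkeeping in the present proof. Your union-bound step and the paper's chaining lemma both produce the factor $R$, so the final shape is the same; the paper's route is just more economical.
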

	
	\begin{proof} Fix $k\ge 1$. By \cite[Lemma 6.1]{dv18} there exists a constant $\Con_k$ such that for all $x,y\in \R$ with $|x-y|\le 1$, we have
		\begin{align*}
			\Pr\left( {|\mathcal{A}_k(x)+x^2-\mathcal{A}_k(y)-y^2|}\ge m\sqrt{x-y}\right) \le \Con_k \exp\left(-\tfrac1{\Con_k} m^2\right).
		\end{align*}
		Thus applying Lemma 3.3 in \cite{dv18} (with $d=1$, $T=[-R,R]$, $r_1=1$, $\alpha_1=\frac12$, $\beta_1=2$) and adjusting the value of $\Con_k$ yields \eqref{e:amodcon}.
	\end{proof}

	\subsection{KPZ line ensemble}
	Let $\mathcal{L}=(\mathcal{L}_1,\mathcal{L}_2,\ldots)$ be an $\mathbb{N}\times \mathbb{R}$-indexed line ensemble. Fix $k_1\le k_2$ with $k_1,k_2\in \N$ and an interval $(a,b)\in \R$ and two vectors $\vec{x},\vec{y} \in \R^{k_2-k_1+1}$.
	Given a continuous function $\mathbf{H}: \R \to [0,\infty)$ (Hamiltonian) and two measurable functions $f,g:(a,b)\to \R\cup \{\pm\infty\}$, the law $\mathbb{P}^{k_1, k_2, (a,b), \vec{x}, \vec{y}, f,g}_{\mathbf{H}}$ on $\mathcal{L}_{k_1},\ldots,\mathcal{L}_{k_2} : (a,b) \to \R$ has the following Radon-Nikodym derivative with respect to $\mathbb{P}^{k_1, k_2, (a,b), \vec{x}, \vec{y}}_{\mathrm{free}},$ the law of $k_2-k_1+1$ many independent Brownian bridges (with diffusion coefficient $1$) taking values $\vec{x}$ at time $a$ and $\vec{y}$ at time $b$: 
	\begin{align}\label{eq:bgibbs}
		\frac{d\mathbb{P}^{k_1, k_2,(a,b), \vec{x}, \vec{y}, f, g}_{\mathbf{H}}}{d\mathbb{P}^{k_1, k_2, (a,b), \vec{x}, \vec{y}}_{\mathrm{free}}}(\mathcal{L}_{k_1}, \ldots , \mathcal{L}_{k_2}) &= \frac{\exp\bigg\{- \sum_{i=k_1}^{k_2+1} \int \mathbf{H}\big(\mathcal{L}_{i}(x)- \mathcal{L}_{i-1}(x)\big) dx \bigg\} }{Z^{k_1, k_2, (a,b), \vec{x}, \vec{y}, f,g}_{\mathbf{H}}},
	\end{align}
	where  $\mathcal{L}_{k_1-1}=f$, or    $\infty$ if $k_1=1$; and  $\mathcal{L}_{k_2+1}=g$. Here, $Z^{k_1, k_2, (a,b), \vec{x}, \vec{y}, f, g}_{\mathbf{H}}$ is the normalizing constant which produces a probability measure. {We say $\mathcal{L}$ enjoys the \emph{$\mathbf{H}$-Brownian Gibbs property} if, for all $K = \{k_1,\ldots, k_2\}\subset \mathbb{N}$ and $(a,b)\subset \R$, the following distributional equality holds:
		\begin{align*}
			\mathrm{Law}\Big(\mathcal{L}_{K\times (a,b)} \text{ conditioned on }\mathcal{L}_{\mathbb{N}\times \R \backslash K \times (a,b)}\Big) = \mathbb{P}^{k_1,k_2, (a,b),\vec{x}, \vec{y}, f, g}_{\mathbf{H}} \, ,
		\end{align*}
		where $\vec{x}= (\mathcal{L}_{k_1}(a), \ldots , \mathcal{L}_{k_2}(a))$, $\vec{y} =(\mathcal{L}_{k_1}(b),\ldots , \mathcal{L}_{k_2}(b))$, and where again $\mathcal{L}_{k_1-1}=f$, or    $\infty$ if $k_1=1$; and  $\mathcal{L}_{k_2+1}=g$. }\\
	
	In the following text, we consider a specific class of $\mathbf{H}$ such that 
	\begin{align}
		\mathbf{H}_t(x)= {t^{2/3}}e^{t^{1/3}x}.   \label{eq:Ht}
	\end{align}
	The next proposition then recalls the unscaled and scaled KPZ line ensemble constructed in~\cite{CH16} with $\mathbf{H}_t$-Brownian Gibbs property.
	
	\begin{proposition}[Theorem 2.15 in \cite{CH16}] \label{line-ensemble}
		Let $t\ge 1$. There exists an $\N\times \R$-indexed line ensemble $\mathcal{H}_t =\{\mathcal{H}^{(n)}_{t}(x)\}_{n\in \N, x\in \R}$ such that:
		\begin{enumerate}[label=(\alph*), leftmargin=15pt]
			\item the lowest indexed curve $\mathcal{H}^{(1)}_{t}(x)$ is equal in distribution (as a process in $x$) to the Cole-Hopf solution $\mathcal{H}(x,t)$ of the KPZ equation started from the narrow wedge initial data and the line ensemble $\mathcal{H}_{t}$ satisfies the $\mathbf{H}_1$-Brownian Gibbs property;
			\item the scaled KPZ line ensemble $\{\mathfrak{h}^{(n)}_t(x)\}_{n\in \N,x\in \R}$, defined by 
			\begin{align}\label{eq:scaleKPZ}
				\mathfrak{h}^{(n)}_t(x) :=   t^{-1/3} \Big( \mathcal{H}^{(n)}_{t}\big(t^{2/3} x\big)+ t/24 \Big)
			\end{align}
			satisfies the $\mathbf{H}_{t}$-Brownian Gibbs property. Furthermore, for any interval $(a,b)\subset \R$ and $\e>0$, there exists $\delta>0$ such that, for all $t\ge 1$,
			\begin{align*}
				\Pr\left(Z_{\mathbf{H}_t}^{1,1,(a,b),\h_t^{(1)}(a),\h_t^{(1)}(b),\infty,\h_t^{(2)}}<\delta\right)\le \e,
			\end{align*}
			where $Z_{\mathbf{H}_t}^{1,1,(a,b),\h_t^{(1)}(a),\h_t^{(1)}(b),\infty,\h_t^{(2)}}$ is the normalizing constant defined in \eqref{eq:bgibbs}.
		\end{enumerate}
	\end{proposition}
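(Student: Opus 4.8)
\textbf{Proof proposal for Proposition~\ref{line-ensemble}.}

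The plan is to take the construction of the unscaled KPZ line ensemble $\mathcal{H}_t$ from \cite{CH16} as a black box and then transfer its properties to the scaled ensemble $\mathfrak{h}_t$ via the explicit affine change of variables in \eqref{eq:scaleKPZ}. For part (a), the existence of an $\N\times\R$-indexed line ensemble whose lowest-indexed curve has the law of the Cole-Hopf solution of the KPZ equation with narrow-wedge initial data, and which satisfies the $\mathbf{H}_1$-Brownian Gibbs property, is precisely the content of Theorem~2.15 in \cite{CH16}; I would simply cite it. The only thing worth spelling out is that ``Brownian bridge with diffusion coefficient $1$'' in the Radon-Nikodym derivative \eqref{eq:bgibbs} together with the Hamiltonian $\mathbf{H}_1(x)=e^x$ from \eqref{eq:Ht} (at $t=1$) matches the normalization used in \cite{CH16}.

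For part (b), I would first check that the scaling \eqref{eq:scaleKPZ} maps the $\mathbf{H}_1$-Brownian Gibbs property of $\mathcal{H}_t$ to the $\mathbf{H}_t$-Brownian Gibbs property of $\mathfrak{h}_t$. This is a routine but important computation: an affine time-space rescaling $x\mapsto t^{2/3}x$, $y\mapsto t^{-1/3}(y+t/24)$ sends a diffusion-coefficient-$1$ Brownian bridge to a diffusion-coefficient-$1$ Brownian bridge (the factor $t^{2/3}$ in time and $t^{-1/3}$ in space exactly cancel, since $(t^{-1/3})^2\cdot t^{2/3}=1$), and under this substitution the integral $\int \mathbf{H}_1(\mathcal{H}^{(i)}_t(x)-\mathcal{H}^{(i-1)}_t(x))\,dx$ in the exponent of \eqref{eq:bgibbs} becomes $\int t^{2/3} e^{t^{1/3}(\mathfrak{h}^{(i)}_t(x)-\mathfrak{h}^{(i-1)}_t(x))}\,dx = \int \mathbf{H}_t(\mathfrak{h}^{(i)}_t(x)-\mathfrak{h}^{(i-1)}_t(x))\,dx$, because the Jacobian $t^{2/3}$ from $dx$ and the $t^{1/3}$ inside the exponential combine to give exactly the form \eqref{eq:Ht}. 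Hence the Radon-Nikodym derivative is preserved and $\mathfrak{h}_t$ enjoys the $\mathbf{H}_t$-Brownian Gibbs property.

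The substantive part of (b) is the uniform-in-$t$ lower bound on the normalizing constant $Z_{\mathbf{H}_t}^{1,1,(a,b),\h_t^{(1)}(a),\h_t^{(1)}(b),\infty,\h_t^{(2)}}$. This I would obtain from the known one-point and regularity control on the scaled KPZ line ensemble near the origin. Specifically, $Z$ is the $\mathbb{P}_{\mathrm{free}}$-probability that a single diffusion-coefficient-$1$ Brownian bridge on $[a,b]$ from $\h_t^{(1)}(a)$ to $\h_t^{(1)}(b)$ stays above $\h_t^{(2)}(\cdot)$, weighted by $\exp(-\int \mathbf{H}_t(B(x)-\h_t^{(2)}(x))dx)$; a lower bound follows if with positive probability (uniform in $t$) the endpoints $\h_t^{(1)}(a),\h_t^{(1)}(b)$ are $O(1)$, the second curve $\h_t^{(2)}$ is $O(1)$ and well below the first on $[a,b]$, and the bridge has a not-too-small chance of staying a fixed distance above it while the Hamiltonian integral stays bounded. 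The inputs are: tightness of $\h_t^{(1)}(a),\h_t^{(1)}(b)$ and $\sup_{[a,b]}\h_t^{(2)}$ in $t$ (from the one-point tail estimates for the KPZ equation \cite{utail,ltail,cgh} and monotonicity/ordering, together with modulus-of-continuity bounds), plus a uniform lower bound on the gap $\h_t^{(1)}-\h_t^{(2)}$ at $a,b$. The main obstacle is marshalling these uniform-in-$t$ estimates cleanly; but since this is exactly the normalizing-constant bound proved in \cite{CH16} (part of their Theorem~2.15 / Proposition~3.7-type statements) I would cite it directly rather than reprove it, noting only that the stated form with $f=\infty$, $g=\h_t^{(2)}$, $K=\{1\}$, $(a,b)$ arbitrary, is the specialization we use. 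Thus the whole proposition is a citation of \cite[Theorem~2.15]{CH16} combined with the elementary scaling computation above.
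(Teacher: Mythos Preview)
Your proposal is correct in substance and, in fact, goes further than the paper does: the paper provides no proof of this proposition at all. It is stated purely as a citation of \cite[Theorem~2.15]{CH16}, followed only by a remark correcting a typo in \cite{CH16} regarding the form of $\mathbf{H}_t$. Your scaling computation for part~(b) and your sketch of the normalizing-constant bound are reasonable elaborations, but the paper treats the entire proposition as a black box and simply invokes \cite{CH16}.
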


	\begin{remark} In part (3) of Theorem 2.15 \cite{CH16} it is erroneously mentioned that the scaled KPZ line ensemble satisfies $\mathbf{H}_t$-Brownian Gibbs property with $\mathbf{H}_t(x)=e^{t^{1/3}x}$ (instead of $\mathbf{H}_t(x)=t^{2/3}e^{t^{1/3}x}$ from \eqref{eq:Ht}). This error was reported by Milind Hegde and has been acknowledged by the authors of \cite{CH16}, who are currently preparing an errata for the same.
	\end{remark}
	
	More recently, it has also been shown in \cite{dm21} that the KPZ line ensemble as defined in Proposition \ref{line-ensemble} is unique as well. We will make extensive use of this scaled KPZ line ensemble $\h_t^{(n)}(x)$ in our proofs in later sections. For $n=1$, we also adopt the shorthand notation: 
	\begin{align}\label{eq:htx}
		\h_t(x):=\h_t^{(1)}(x)= t^{-1/3}\left(\calH(t^{2/3}x,t)+\tfrac{t}{24}\right).
	\end{align}
	Note that for $t$ large, the Radon-Nikodym derivative in \eqref{eq:bgibbs} attaches heavy penalty if the curves are not ordered. Thus, intuitively at $t\to \infty$, one expects to get completely ordered curves, where the $\mathbf{H}_t$-Brownian Gibbs property will be replaced by the usual Brownian Gibbs property (see Definition \ref{def:bgp}) for non-intersecting Brownian bridges. Thus it's natural to expect the scaled KPZ line ensemble to converge to the Airy line ensemble. Along with the recent progress on the tightness of KPZ line ensemble \cite{wu21} and characterization of Airy line ensemble \cite{dm20}, this remarkable result has been recently proved in \cite{qs20}. 
	
	\begin{proposition}[Theorem 2.2 (4) in \cite{qs20}] \label{p:leconv} Consider the KPZ line ensemble and the Airy line ensemble defined in Proposition \ref{line-ensemble} and Definition \ref{def:ale} respectively. For any $k\ge 1$, we have
		\begin{align*}
			(2^{1/3}\h_t^{(i)}(2^{1/3}x))_{i=1}^k \stackrel{d}{\to} (\calA_i(x))_{i=1}^k,
		\end{align*}
		in the uniform-on-compact topology.
	\end{proposition}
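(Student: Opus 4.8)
The plan is to reproduce the strategy of \cite{qs20}: establish tightness of the sequence of scaled KPZ line ensembles, show that every subsequential limit enjoys the (ordinary) Brownian Gibbs property of Definition \ref{def:bgp} with top curve equal to the parabolic $\operatorname{Airy}_2$ process, and then invoke the characterization of the Airy line ensemble to identify the limit uniquely. Throughout, the relevant sequence is $\{(2^{1/3}\h_t^{(i)}(2^{1/3}\cdot))_{i\ge 1}\}_{t\ge 1}$, and convergence is meant in the uniform-on-compact topology on $\N\times\R$.

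\textbf{Step 1 (Tightness).} First I would show that this family of line ensembles is tight. The top curve $2^{1/3}\h_t(2^{1/3}\cdot)$ is tight because it converges as a single process (Step 3). For the lower curves one bootstraps via the $\mathbf{H}_t$-Brownian Gibbs property from Proposition \ref{line-ensemble}: conditionally on boundary data on an interval, each $\h_t^{(n)}$ is a free Brownian bridge reweighted by the Radon--Nikodym factor in \eqref{eq:bgibbs}, so one obtains two-sided control by sandwiching between non-intersecting Brownian bridges and free Brownian bridges. The one-point inputs needed to start the bootstrap --- upper and lower tail bounds on $\calH(\cdot,t)$ uniform in $t$, and hence on all $\h_t^{(n)}$ through the ensemble ordering --- come from the KPZ tail estimates \cite{utail,ltail,cgh}. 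This is in essence the content of \cite{wu21}, which I would cite. \emph{I expect this step to be the main obstacle}: the Hamiltonian $\mathbf{H}_t(x)=t^{2/3}e^{t^{1/3}x}$ becomes arbitrarily steep as $t\to\infty$, so the penalty structure degenerates and one must simultaneously prevent the lower curves from collapsing onto one another and from escaping to $-\infty$ while still extracting uniform modulus-of-continuity estimates.

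\textbf{Step 2 (Limit points are Brownian Gibbs).} Fix a subsequential limit $\mathcal{L}^{\infty}$. On a fixed interval $(a,b)$ and index set $K=\{k_1,\dots,k_2\}$, the conditional law of $(\h_t^{(n)})_{n\in K}$ has density proportional to $\exp\{-\sum_i\int \mathbf{H}_t(\mathcal{L}_i(x)-\mathcal{L}_{i-1}(x))\,dx\}$ against free Brownian bridges. Since $\mathbf{H}_t(u)\to 0$ for $u<0$ and $\mathbf{H}_t(u)\to\infty$ for $u>0$, this factor converges along the subsequence (using Step 1 to control the boundary and interior values, and using part (b) of Proposition \ref{line-ensemble} for a lower bound on the partition function) to the indicator of the non-crossing event. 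Hence $\mathcal{L}^{\infty}$ satisfies the Brownian Gibbs property and is a.s. strictly ordered.

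\textbf{Step 3 (Identifying the top curve) and Step 4 (Uniqueness).} The single-curve statement $2^{1/3}\h_t(2^{1/3}x)\stackrel{d}{\to}\mathcal{A}_1(x)$ --- convergence of the $1{:}2{:}3$-scaled KPZ equation with narrow wedge data to the parabolic $\operatorname{Airy}_2$ process --- is the main theorem of \cite{qs20} (see also \cite{vir20}), which I would cite; it fixes the top curve of $\mathcal{L}^{\infty}$. By the characterization of the Airy line ensemble due to \cite{dm20} (invoked in Definition \ref{def:ale}), any $\N\times\R$-indexed Brownian Gibbs line ensemble whose top curve is the parabolic $\operatorname{Airy}_2$ process equals the Airy line ensemble $\calA$ in law. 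Thus every subsequential limit of $(2^{1/3}\h_t^{(i)}(2^{1/3}\cdot))_{i=1}^k$ agrees with $(\calA_i)_{i=1}^k$, and together with tightness this gives the asserted convergence for every $k\ge 1$.
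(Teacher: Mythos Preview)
The paper does not prove this proposition at all: it is simply quoted verbatim as Theorem 2.2(4) of \cite{qs20} and used as a black box throughout. Your outline is an accurate high-level summary of the strategy in \cite{qs20} (tightness via \cite{wu21}, passage from the $\mathbf{H}_t$-Brownian Gibbs property to the ordinary Brownian Gibbs property, and identification of the limit through the Dimitrov--Matetski characterization \cite{dm20}), so in that sense your approach coincides with the cited source; but there is no proof in the present paper to compare against.
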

	
	The $2^{1/3}$ factor in Proposition \ref{p:leconv} corrects the different diffusion coefficient used when we define the Brownian Gibbs property and $\mathbf{H}_t$ Brownian Gibbs property. We end this section by recalling several known results and tail estimates for the scaled KPZ equation with narrow wedge initial data.
	
	\begin{proposition}\label{p:kpzeq} Recall $\h_t(x)$ from \eqref{eq:htx}. The following results hold:
		\begin{enumerate}[label=(\alph*), leftmargin=15pt]
			\item \label{p:stat} For each $t>0$, $\h_t(x)+{x^2}/{2}$ is stationary in $x$.
			\item \label{p:tail} Fix $t_0>0$. There exists a constant $\Con=\Con(t_0)>0$ such that for all $t\ge t_0$ and $m>0$ we have
			\begin{align*}
				\Pr(|\h_t(0)|\ge m) \le \Con\exp\left(-\tfrac1\Con m^{3/2}\right).
			\end{align*}
			\item \label{p:supproc} Fix $t_0>0$ and $\beta>0$. There exists  a constant $\Con=\Con({\beta},t_0)>0$ such that for all $t\ge t_0$ and $m>0$ we have
			\begin{align*}
				\Pr\left(\sup_{x\in \R} \big(\h_t(x)+\tfrac{x^2}{2}(1-\beta)\big)\ge m\right) \le \Con\exp\left(-\tfrac1{\Con} m^{3/2}\right).
			\end{align*}
		\end{enumerate}
	\end{proposition}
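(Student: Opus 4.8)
The three parts are standard facts about the narrow‑wedge KPZ equation, and I would obtain them as follows. For (a), note that the map $\h_t(x)=t^{-1/3}\big(\calH(t^{2/3}x,t)+t/24\big)$ from \eqref{eq:htx} is a deterministic rescaling, so stationarity of $x\mapsto \h_t(x)+x^2/2$ is equivalent to stationarity of $y\mapsto \calH(y,t)+\frac{y^2}{2t}$ for each fixed $t>0$. Writing $\calZ(0,0;y,t)=\hk(y,t)\,\U(y,t)$, this amounts to showing that $\U(\cdot,t)$ is stationary. Factoring the heat kernel $\hk(y,t)$ out of the product $\prod_\ell \hk(y_\ell-y_{\ell-1},s_\ell-s_{\ell-1})$ in the chaos series \eqref{eq:chaos} turns $\U(y,t)$ into a polymer partition function along a Brownian bridge from $0$ to $y$; the substitution $y_\ell=z_\ell+\tfrac{s_\ell}{t}\,y$ (bridge minus the straight line) then removes $y$ from the bridge density and replaces each $\xi(y_\ell,s_\ell)$ by $\xi\big(z_\ell+\tfrac{s_\ell}{t}y,\,s_\ell\big)$. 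Since for any constant $a$ the sheared noise $\eta^{(a)}(x,s):=\xi\big(x+\tfrac{s}{t}a,s\big)$ is again a space‑time white noise — its covariance is unchanged because the shear depends only on $s$ and the factor $\delta(s-s')$ forces equal times — the process $y\mapsto \U(y+a,t)$ is the same functional of $\eta^{(a)}$ that $y\mapsto\U(y,t)$ is of $\xi$, hence has the same law. This gives (a); the same statement is recorded in \cite{acq}.

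For (b), $\h_t(0)=t^{-1/3}\big(\calH(0,t)+t/24\big)$ is the one‑point height of the narrow‑wedge KPZ equation in its natural $1:2:3$ normalization. The bound $\Pr(\h_t(0)\ge m)\le \Con e^{-m^{3/2}/\Con}$ for $t\ge t_0$ is the (uniform in $t$) upper‑tail estimate of \cite{utail,cgh}, the $3/2$‑exponent being that of the GUE Tracy–Widom upper tail; the bound $\Pr(\h_t(0)\le -m)\le \Con e^{-m^{3/2}/\Con}$ follows from the lower‑tail estimate of \cite{ltail,cgh}, which in fact decays faster (at rate $m^{5/2}$ in the relevant window) and hence a fortiori gives an $m^{3/2}$ rate. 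Combining the two one‑sided bounds yields (b).

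For (c), I would reduce to (a) and (b) by a dyadic decomposition of $\R$ into unit intervals. Writing $\h_t(x)+\tfrac{x^2}{2}(1-\beta)=\big(\h_t(x)+\tfrac{x^2}{2}\big)-\tfrac{\beta x^2}{2}$ gives, on $[n,n+1]$, the estimate $\sup_{[n,n+1]}\big(\h_t(x)+\tfrac{x^2}{2}(1-\beta)\big)\le \sup_{[n,n+1]}\big(\h_t(x)+\tfrac{x^2}{2}\big)-\tfrac{\beta n^2}{2}$ (symmetrically for $n\le -1$). By the stationarity from (a), $\sup_{[n,n+1]}\big(\h_t(x)+\tfrac{x^2}{2}\big)$ has the same law as $\sup_{[0,1]}\big(\h_t(x)+\tfrac{x^2}{2}\big)\le \h_t(0)+\sup_{x\in[0,1]}|\h_t(x)-\h_t(0)|+\tfrac12$; the first term is controlled by (b) and the second by a spatial modulus‑of‑continuity estimate for the scaled KPZ line ensemble on compacts, uniform in $t\ge t_0$, coming from the Brownian‑bridge absolute continuity of \cite{CH16,cgh} and carrying (at least) a Gaussian tail. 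Thus $\Pr(\sup_{[0,1]}\h_t(x)\ge m)\le \Con e^{-m^{3/2}/\Con}$, and a union bound gives
\[
\Pr\Big(\sup_{x\in\R}\big(\h_t(x)+\tfrac{x^2}{2}(1-\beta)\big)\ge m\Big)\;\le\; \sum_{n\in\Z}\Con\, e^{-(m+\beta n^2/2-1/2)^{3/2}/\Con}\;\le\; \Con\, e^{-m^{3/2}/\Con},
\]
using $(a+b)^{3/2}\ge \tfrac12(a^{3/2}+b^{3/2})$ to peel off the $m$‑dependence and the convergence of $\sum_n e^{-c|n|^3}$; the resulting constant depends on $\beta$ and $t_0$, as claimed. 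The one genuine obstacle here is supplying spatial‑regularity tails for $\h_t$ on a fixed compact interval with constants that remain bounded as $t\to\infty$ (and as $t\downarrow t_0$), which is exactly where the $\mathbf{H}_t$‑Brownian Gibbs property of the scaled KPZ line ensemble and the one‑point tail inputs \cite{utail,ltail,cgh} are needed; the rest of the argument is soft.
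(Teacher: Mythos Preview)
Your proposal is correct, but it is worth noting that the paper does not actually prove this proposition: the text immediately following it simply attributes (a) to \cite[Corollary 1.3 and Proposition 1.4]{acq}, (b) to \cite{utail,ltail} (or Propositions~2.11--2.12 of \cite{cgh}), and (c) to \cite[Proposition 4.2]{cgh}. So you have in fact done more than the paper does, by sketching the content of those references.

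Your arguments match the cited sources. The shear--invariance argument you give for (a) --- writing $\calZ=\hk\cdot U$, shifting the bridge variables by the straight line, and observing that the sheared noise $\xi(x+\tfrac{s}{t}a,s)$ is again a white noise --- is exactly the mechanism behind the stationarity statement in \cite{acq}. For (b) you correctly identify that the $3/2$ upper-tail exponent from \cite{utail} and the (stronger) lower-tail bound from \cite{ltail} combine to the two-sided estimate. For (c), your stationarity-plus-dyadic-union-bound reduction is essentially how \cite[Proposition 4.2]{cgh} proceeds; you also correctly flag the one nontrivial ingredient, namely a spatial modulus-of-continuity tail for $\h_t$ on a unit interval with constants uniform in $t\ge t_0$, which is supplied there via the $\mathbf{H}_t$-Brownian Gibbs property together with the one-point tails. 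One small cosmetic point: in the final displayed sum the term $-1/2$ inside the exponent can make the argument negative when $m$ is small; this is harmless since for bounded $m$ the bound is trivial after enlarging $\Con$, but it is cleaner to restrict to $m\ge 1$ first and absorb small $m$ into the constant.
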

	The results in Proposition \ref{p:kpzeq} is a culmination of results from several papers. Part \ref{p:stat} follows from \cite[Corollary 1.3 and Proposition 1.4]{acq}. The one-point tail estimates for KPZ equation are obtained in \cite{utail,ltail}. One can derive part \ref{p:tail} from those results or can combine the statements of Proposition 2.11 and 2.12 in \cite{cgh} to get the same. Part \ref{p:supproc} is Proposition 4.2 from \cite{cgh}. 
	\section{Uniqueness and convergence of random modes} \label{sec:rm}
	In this section we prove the uniqueness of random modes that appears in Theorems \ref{t:main} and \ref{t:main2} and prove Theorem \ref{t:favpt} which claims the convergences of random modes to appropriate limits. 
	The following lemma settles the uniqueness question. 
	\begin{lemma}\label{lem1} Fix $p\in (0,1)$ and $t>0$. Recall $f_{p,t}$ and $f_{*,t}$ from Theorem \ref{t:main} and \ref{t:main2}. Then $f_{*,t}$ has almost surely a unique mode $\md_{*,t}$ and $f_{p,t}$ has almost surely a unique mode $\md_{p,t}$. Furthermore for any $t_0>0$, there exist a constant $\Con(p,t_0)>0$ such that for all $t>t_0$ we have 
		\begin{align}\label{tailC}
			\Pr(t^{-2/3}|\md_{p,t}|>m)\le \Con \exp\left(-\tfrac{1}{\Con}m^{3}\right), \quad \mbox{and} \quad \Pr(t^{-2/3}|\md_{*,t}|>m)\le \Con \exp\left(-\tfrac{1}{\Con}m^{3}\right).
		\end{align}
	\end{lemma}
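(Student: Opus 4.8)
The plan is to reduce both statements to properties of the KPZ equation already recorded in Proposition \ref{p:kpzeq}. Consider first the point-to-point case. From \eqref{eq:cdrp} we have
\begin{align*}
	f_{p,t}(x) = \frac{\calZ(0,0;x,pt)\,\calZ(x,pt;0,t)}{\calZ(0,0;0,t)},
\end{align*}
and, as explained in Section \ref{sec:pf}, for fixed $t$ the numerator equals in law (as a process in $x$) $e^{\calH_1(x,pt)+\calH_2(x,(1-p)t)}$ for two independent copies $\calH_1,\calH_2$ of the narrow-wedge KPZ equation, after a deterministic change of variables and reflection; in particular it is an almost surely continuous, strictly positive function of $x$. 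Thus $f_{p,t}$ is (up to the deterministic normalizing constant $\calZ(0,0;0,t)$) the exponential of a continuous process $\calG(x):=\calH_1(x,pt)+\calH_2(x,(1-p)t)$. The first task is to show $\calG$ attains its supremum at a unique point. Writing things in scaled coordinates via \eqref{eq:scaleKPZ}, $\calG(x)$ becomes a sum of two shifted and rescaled scaled-KPZ curves, each of which has the law of the top curve of a KPZ line ensemble; by Proposition \ref{p:kpzeq}\ref{p:stat} each is locally absolutely continuous with respect to Brownian motion, hence so is the sum, and a Brownian-type process on a compact interval has an almost surely unique argmax. Combined with the parabolic decay from Proposition \ref{p:kpzeq}\ref{p:supproc}, which guarantees the supremum is attained on a compact set and not at infinity, we conclude $\calG$ — and hence $f_{p,t}$ — has an almost surely unique mode $\md_{p,t}$. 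The point-to-line case is identical with $\calG$ replaced by a single KPZ curve plus a term coming from $\calZ(\cdot,pt;*,t)$; in fact by the very construction of the KPZ line ensemble (Proposition \ref{line-ensemble}) the narrow-wedge solution $\calH(\cdot,t)$ is itself locally absolutely continuous with respect to Brownian motion and has parabolic decay, so $f_{*,t}$ has an almost surely unique mode $\md_{*,t}$ directly.

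For the tail bound, fix $t_0>0$ and $t>t_0$, and set $m>0$. The event $\{t^{-2/3}|\md_{p,t}|>m\}$ forces the maximizer of $\calG$ to lie outside $[-mt^{2/3},mt^{2/3}]$, so in particular
\begin{align*}
	\sup_{|x|\ge m t^{2/3}} \calG(x) \ge \calG(0).
\end{align*}
I would lower bound the right side and upper bound the left side using the one-point estimate Proposition \ref{p:kpzeq}\ref{p:tail} and the parabolic supremum estimate Proposition \ref{p:kpzeq}\ref{p:supproc}. Concretely, in scaled variables the two summands of $\calG$ each satisfy $\h_s(y)+\tfrac{y^2}{2}(1-\beta)\le M$ globally with stretched-exponential probability in $M$; choosing $\beta=\tfrac12$, on this good event $\calG(x) \le t^{1/3}\big(2M - c\,(t^{-2/3}x)^2\big)$ for $|x|$ large (the constant $c>0$ absorbing the $p$-dependent rescalings), while $\calG(0)\ge -t^{1/3}(|\h_{pt}(0)|+|\h_{(1-p)t}(0)|)\cdot(\text{const})$ with stretched-exponential probability by Proposition \ref{p:kpzeq}\ref{p:tail}. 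Comparing, the displayed inequality forces $c\,m^2 \lesssim M + (\text{the two one-point quantities})$, so $\{t^{-2/3}|\md_{p,t}|>m\}$ is contained in a union of events each of probability at most $\Con\exp(-\tfrac1\Con m^{3/2}\cdot m^{3/2}) = \Con\exp(-\tfrac1\Con m^3)$ after the rescaling $M \sim c\,m^2$ turns the $M^{3/2}$ tail into an $m^3$ tail. This yields \eqref{tailC}; the point-to-line bound is the same computation with one KPZ curve in place of two.

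The main obstacle I anticipate is not the tail estimate, which is a fairly mechanical union bound once the right events are identified, but rather making the \emph{uniqueness of the mode} fully rigorous. One must argue carefully that the sum (respectively single copy) of narrow-wedge KPZ curves is absolutely continuous with respect to Brownian motion on each compact interval — this uses the Brownian Gibbs / $\mathbf{H}_t$-Brownian Gibbs structure of the KPZ line ensemble rather than just stationarity — and then invoke the classical fact that such processes have a unique argmax on a compact set (and couple this with the parabolic decay to rule out the argmax escaping to infinity, handling the supremum over a non-compact domain). A secondary subtlety is bookkeeping the $p$-dependent time rescalings $pt$ and $(1-p)t$ and the reflection/shift in the change of variables $\calZ(x,pt;0,t)\stackrel{d}{=}\calZ(0,0;\cdot,(1-p)t)$, which affects only constants but must be tracked to see that the constant $\Con$ can be taken uniform over $t>t_0$ and depending only on $p$ and $t_0$.
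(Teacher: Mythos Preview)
Your proposal is correct and follows essentially the same route as the paper: rewrite $f_{p,t}$ as the exponential of a sum of two independent scaled KPZ curves, use the $\mathbf{H}_t$-Brownian Gibbs property for uniqueness of the argmax on compacts together with parabolic decay to upgrade to $\R$, and obtain the $m^3$ tail by combining the one-point bound Proposition~\ref{p:kpzeq}\ref{p:tail} with the parabolic-supremum bound Proposition~\ref{p:kpzeq}\ref{p:supproc} at threshold $M\asymp m^2$. Two small slips to clean up: the absolute continuity with respect to Brownian motion comes from the $\mathbf{H}_t$-Brownian Gibbs property (as you yourself note in your final paragraph), not from Proposition~\ref{p:kpzeq}\ref{p:stat}; and in the point-to-line case $f_{*,t}(x)\propto e^{\calH(x,t)}$ is a single KPZ curve with no additional term.
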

	\begin{proof} We first prove the point-to-point case. Fix $p\in (0,1)$ and set $q=1-p$. Take $t>0$. Throughout the proof $\Con>0$ will depend on $p$, we won't mention this further. 
		
		Note that \eqref{eq:cdrp} implies that the density $f_{p,t}(x)$ is proportional to $\calZ(0,0;x,pt)\calZ(x,pt;0,t)$ and that $\calZ(0,0;x,pt)$ and $\calZ(x,pt;0,t)$ are independent. By time reversal property of SHE we have $\calZ(x,pt;0,t) \stackrel{d}{=} \calH(x,qt)$ as functions in $x$. Using the $1:2:3$ scaling from \eqref{eq:htx} we may write 
		\begin{align}\label{def:fptx}
			f_{p,t}(x) \stackrel{d}{=} \frac1{\til{Z}_{p,t}}\exp\left(t^{1/3}p^{1/3}\h_{pt, \uparrow}(p^{-2/3}t^{-2/3}x) + t^{1/3}q^{1/3}\h_{qt, \downarrow}(q^{-2/3}t^{-2/3}x)\right)
		\end{align}
		where $\h_{t,\uparrow}(x)$ and $\h_{t,\downarrow}(x)$ are independent copies of the scaled KPZ line ensemble $\h_t(x)$ defined in \eqref{eq:htx} and $\til{Z}_{p,t}$ is the normalizing constant. Thus it suffices to study the maximizer of  
		\begin{align}
			\label{eq:hupdwn}
			\mathcal{S}_{p,t}(x):=p^{1/3}\h_{pt, \uparrow}(p^{-2/3}x) + q^{1/3}\h_{qt, \downarrow}(q^{-2/3}x).
		\end{align}
		Note that maximizer of $f_{p,t}$ can be retrieved from that of $\mathcal{S}_{p,t}$ by a $t^{-2/3}$ scaling. \\
		
		We first claim that for all $m>0$ we have
		\begin{align}\label{lm11}
			\Pr\left(\m{A}_1\right) & \le \Con\exp\left(-\tfrac1{\Con}m^3\right), \quad \mbox{where } \m{A}_1:=\left\{\h_{pt, \uparrow}(p^{-2/3}x)>\h_{pt, \uparrow}(0) \mbox{ for some }|x|> m\right\} \\ 
			\label{lm12}
			\Pr\left(\m{A}_2\right) & \le \Con\exp\left(-\tfrac1{\Con}m^3\right), \quad \mbox{where } \m{A}_2:=\left\{\h_{qt, \downarrow}(q^{-2/3}x)>\h_{qt, \downarrow}(0) \mbox{ for some }|x|> m\right\}.
		\end{align}
		Let us prove \eqref{lm11}.  Define 
		\begin{align*}
			\m{D}_1 & : = \left\{\sup_{x \in \R} \left(\h_{pt, \uparrow}(p^{-2/3}x) + \frac{x^2}{4p^{4/3}}\right) \le \frac{m^2}{8p^{4/3}} \right\},\quad \m{D}_2 := \left\{|\h_{pt,\uparrow}(0)| \le \frac{m^2}{16p^{4/3}}\right\}.
		\end{align*}
		Note that on $\m{D}_2$, $\h_{pt,\uparrow}(0)\in [-\frac{m^2}{16p^{4/3}},\frac{m^2}{16p^{4/3}}]$, whereas on $\m{D}_1$, for all $|x|> m$ we have
		$$\h_{pt, \uparrow}(p^{-2/3}x)< \tfrac{m^2}{8p^{4/3}}-\tfrac{m^2}{4p^{4/3}} =-\tfrac{m^2}{8p^{4/3}}.$$
		Thus $\m{A}_1 \subset \neg\m{D}_1\cup \neg \m{D}_2$ where $\m{A}_1$ is defined in \eqref{lm11}. On the other hand, by Proposition \ref{p:kpzeq}\ref{p:supproc} with $\beta=\frac12$ and Proposition \ref{p:kpzeq} \ref{p:tail} we have 
		\begin{align*}
			\Pr(\m{D}_1) > 1- \Con\exp\left(-\tfrac{1}{\Con}m^{3}\right),
			\quad		\Pr(\m{D}_2) > 1- \Con\exp\left(-\tfrac{1}{\Con}m^{3}\right).
		\end{align*}
		Hence by union bound we get $\Pr(\m{A}_1) \le \Pr(\neg \m{D}_1)+\Pr(\neg\m{D}_2) \le \Con\exp(-\frac1\Con m^3)$. This proves \eqref{lm11}. Proof of \eqref{lm12} is analogous. 
		
		Now via the Brownian Gibbs property $\h_t$ is absolute continuous w.r.t.~Brownian motion on every compact interval. Hence for each $t> 0$, $\mathcal{S}_{p,t}(x)$ defined in \eqref{eq:hupdwn} has a unique maximum on any compact interval almost surely. But due to the bounds in \eqref{lm11} and \eqref{lm12}, we see that
		\begin{align}
			\label{eq:Stail}
			\Pr\left(\mathcal{S}_{p,t}(x)>\mathcal{S}_{p,t}(0) \mbox{ for some }|x|> m\right) \le \Con\exp\left(-\tfrac1\Con m^3\right).
		\end{align}
		Thus $\mathcal{S}_{p,t}(\cdot)$ has a unique maximizer almost surely. By the definitions of $f_{p,t}(x)$ and $\mathcal{S}_{p,t}(x)$ from \eqref{def:fptx} and \eqref{eq:hupdwn}, this implies $f_{p,t}(x)$ also has a unique maximizer $\md_{p,t}$ and we have that
		\begin{align}\label{eq:maxiden}
			\md_{p,t}\stackrel{d}{=}t^{2/3}\argmax_{x\in \R} \mathcal{S}_{p,t}(x).
		\end{align} 
		In view of \eqref{eq:Stail}, the above relation \eqref{eq:maxiden} leads to the first inequality in \eqref{tailC}.

		For the point-to-line case, note that via \eqref{eq:cdrp2} and \eqref{eq:htx}, $f_{*,t}(x)$ is proportional to $\exp(t^{1/3}\h_t(t^{-2/3}x))$. The proofs of uniqueness of the maximizer and the second bound in \eqref{tailC} then follow by analogous arguments. This completes the proof.
	\end{proof}
	In the course of proving the above lemma, we have also proved an important result that connects the random modes to the maximizers of the KPZ equations. We isolate this result as a separate lemma.
	\begin{lemma}\label{l:conec}
		Consider three independent copies $\calH,\calH_{\uparrow},\calH_{\downarrow}$ of the KPZ equation started from the narrow wedge initial data. The random mode $\md_{p,t}$ of $f_{p,t}$ (defined in statement of Theorem \ref{t:main}) is same in distribution as the maximizer of 
		$$\calH_{\uparrow}(x,pt)+\calH_{\downarrow}(x,qt).$$
		Similarly one has that the random mode $\md_{*,t}$ of $f_{*,t}$ (defined in statement of Theorem \ref{t:main2})  is same in distribution as the maximizer of $\calH(x,t)$.
	\end{lemma}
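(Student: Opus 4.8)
The plan is to read off the claim directly from the chain of distributional identities already established in the proof of Lemma~\ref{lem1}, but tracked at the level of the random functions $x\mapsto f_{p,t}(x)$ and $x\mapsto f_{*,t}(x)$ rather than only at the level of their maximizers.

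First I would recall from Definition~\ref{def:cdrp} and \eqref{eq:cdrp} that the quenched density $f_{p,t}(x)$ is proportional (in $x$) to $\calZ(0,0;x,pt)\,\calZ(x,pt;0,t)$, and that the two factors are independent, being built from the restrictions of the white noise $\xi$ to the disjoint time-slabs $[0,pt]$ and $[pt,t]$. Next, $\calZ(0,0;\cdot,pt)$ is by definition $e^{\calH(\cdot,pt)}$, the Cole--Hopf solution at time $pt$ with narrow-wedge data; and, by the time-reversal together with spatial-reflection symmetry of the SHE (exactly as used in the proof of Lemma~\ref{lem1}), the second factor $\calZ(x,pt;0,t)$, viewed as a process in $x$, has the same law as $e^{\calH(x,qt)}$ with $q=1-p$. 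Combining these with independence gives the process-level identity
\begin{align*}
    f_{p,t}(\cdot) \stackrel{d}{=} \frac{1}{\til{Z}_{p,t}}\exp\bigl(\calH_{\uparrow}(\cdot,pt)+\calH_{\downarrow}(\cdot,qt)\bigr),
\end{align*}
for independent narrow-wedge copies $\calH_{\uparrow},\calH_{\downarrow}$, which is precisely \eqref{def:fptx}.

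Because $u\mapsto e^{u}$ is strictly increasing, the maximizer of the right-hand side coincides pathwise with the maximizer of $x\mapsto\calH_{\uparrow}(x,pt)+\calH_{\downarrow}(x,qt)$, and both are almost surely unique by Lemma~\ref{lem1} (together with the tail bound \eqref{eq:Stail}). Since the argmax is an almost-surely continuous functional on the support of each law — thanks to path continuity and almost-sure uniqueness of the maximizer — a distributional identity of random functions upgrades to a distributional identity of their argmaxes; this yields that $\md_{p,t}$ has the law of the maximizer of $\calH_{\uparrow}(\cdot,pt)+\calH_{\downarrow}(\cdot,qt)$. For the point-to-line statement the same argument applies to the single factor $f_{*,t}(x)\propto\calZ(0,0;x,t)=e^{\calH(x,t)}$, giving that $\md_{*,t}$ has the law of the maximizer of $\calH(\cdot,t)$.

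The only step that requires genuine care — hence the main, if mild, obstacle — is the last one: upgrading a distributional identity of the two random \emph{functions} to a distributional identity of their \emph{argmaxes}. This is precisely where almost-sure uniqueness of the maximizers from Lemma~\ref{lem1} is indispensable, since without it the argmax would not be a well-defined measurable map; with it in hand, the upgrade is a routine continuous-mapping argument. Everything else is bookkeeping with the symmetries of the SHE already invoked in the proof of Lemma~\ref{lem1}.
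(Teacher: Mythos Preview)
Your proposal is correct and follows essentially the same approach as the paper: the paper does not give a separate proof of Lemma~\ref{l:conec} but simply states that it was established in the course of proving Lemma~\ref{lem1}, via exactly the process-level distributional identity \eqref{def:fptx} and the identification \eqref{eq:maxiden} that you spell out. Your added remark about upgrading the distributional identity of functions to that of their argmaxes via uniqueness is a welcome clarification of a step the paper leaves implicit.
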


	\begin{proof}[Proof of Theorem \ref{t:favpt}] Due to the identity in \eqref{eq:maxiden} we see that $t^{-2/3}\md_{p,t}$ is same in distribution as
		\begin{align*}
			\argmax_{x\in \R} S_{p,t}(x)
		\end{align*}
		where $\mathcal{S}_{p,t}(x)$ is defined in \eqref{eq:hupdwn}. By Proposition \ref{p:leconv} we see that as $t\to \infty$
		$$\mathcal{S}_{p,t}(x)\stackrel{d}{\to} 2^{-1/3}\left(p^{1/3}\calA_{1, \uparrow}(2^{-1/3}p^{-2/3}x) + q^{1/3}\calA_{1, \downarrow}(2^{-1/3}q^{-2/3}x)\right)$$
		in the uniform-on-compact topology where $\calA_{1, \uparrow}, \calA_{1,\downarrow}$ are independent parabolic $\operatorname{Airy}_2$ processes. Note that the expression in the r.h.s.~of the above equation is the same as 
		\begin{align}
			\label{ax}
			\mathcal{A}(x):=2^{-1/2}\left(\calA_{\uparrow}^{(p\sqrt{2})}(x)+\calA_{\downarrow}^{(q\sqrt{2})}(x)\right)
		\end{align} where $\calA_{\uparrow}^{(p\sqrt{2})}(x),\calA_{\downarrow}^{(q\sqrt{2})}(x)$ are independent Airy sheets of index $p\sqrt{2}$ and $q\sqrt{2}$ respectively. By Lemma 9.5 in \cite{dov} we know that $\calA(x)$ has almost surely a unique maximizer on every compact set. Thus,
		\begin{align}
			\label{mxfinite}
			\argmax_{x\in [-K,K]} \mathcal{S}_{p,t}(x) \stackrel{d}{\to} \argmax_{x\in [-K,K]} \calA(x).
		\end{align}
		Finally the decay bounds for the maximizer of $\mathcal{S}_{p,t}(x)$ from Lemma \ref{lem1} and the decay bounds for the maximizer of $\calA$ from  \cite{quastel2015tails} allow us to extend the weak convergence to the case of maximizers on the full line. However, by the definition of the geodesic of the directed landscape from Definition \ref{def:geo}, we see that $\Gamma(p\sqrt{2})\stackrel{d}{=}\argmax_{x\in \R} \calA(x).$ This concludes the proof for the point-to-point case. For the point-to-line case, following Lemma \ref{l:conec} and recalling again the scaled KPZ line ensemble from \eqref{eq:htx}, we have 
		$$2^{-1/3}t^{-2/3}\md_{*,t}=\argmax_{x\in \R} \calH(2^{1/3}t^{2/3}x,t)=\argmax_{x\in \R} \left(t^{1/3}\h_t(2^{1/3}x)-\tfrac{t}{24}\right)= \argmax_{x\in \R} 2^{1/3}\h_t(2^{1/3}x).$$
		From Proposition \ref{p:leconv} we know $2^{1/3}\h_t(2^{1/3}x)$ converges in distribution to $\calA_1(x)$ in the uniform-on-compact topology. Given the decay estimates for $\md_{*,t}$ from \eqref{tailC} and decay bounds for the maximizer of $\calA_1$ from \cite{dov}, we thus get that $\argmax_{x\in \R} 2^{1/3}\h_t(2^{1/3}x)$ converges in distribution to $\mathcal{M}$, the unique maximizer of the parabolic $\operatorname{Airy}_2$ process. This completes the proof.
	\end{proof}
	\section{Decomposition of Brownian bridges around joint maximum}\label{sec:localmax}
	
	The goal of this section is to prove certain decomposition properties of Brownian bridges around the joint maximum outlined in Proposition \ref{p:bbdecomp} and Proposition \ref{propA}. To achieve this goal, we first discuss several Brownian objects and their related properties in Section \ref{sec:brwn} which will be foundational for the rest of the paper. Then we prove Proposition \ref{p:bbdecomp} and \ref{propA} in the ensuing subsection. We refer to Figure \ref{fig:laws} for the structure and various Brownian laws convergences in this and the next sections. The notation $p_t(y):=(2\pi t)^{-1/2}e^{-y^2/(2t)}$ for the standard heat kernel will appear throughout the rest of the paper. 
	
		\begin{center}
	\small
  \begin{figure}[h]
  \begin{tikzpicture}[auto,
  	block_main/.style = {rectangle, draw=black, thick, fill=white, text width=17em, text centered, minimum height=4em, font=\small},
	block_density/.style = {rectangle, draw=black, fill=white, thick, text width=11em, text centered, minimum height=4em},
	block_rewrite/.style = {rectangle, draw=black, fill=white, thick, text width=17em, text centered, minimum height=4em},
	block_kernels/.style = {rectangle, draw=black, fill=white, thick, text width=19em, text centered, minimum height=4em},
        line/.style ={draw, thick, -latex', shorten >=0pt}]
		\node [block_rewrite] (nibm) at (3,-5.5) {Non-intersecting Brownian motions ($\nonintbm$) defined in Defintion \ref{def:nibm}.};
		\node [block_kernels] (nibb) at (3,-8.5) {Non-intersecting Brownian bridges ($\nonintbb$) defined in Defintion \ref{def:nibb}.};
		\node [block_density] (dbm) at (-5.7,-8.5) {Dyson Brownian motion ($\dbm$) defined in Definition \ref{def:dbm}};
		\node [block_density] (bes) at (-5.7,-11.5) {3D Bessel process (Definition \ref{def:bessel})};
		\node [block_density] (bb) at (3,-11.5) {Bessel bridges (Definition \ref{def:bbridge})};
		\node [block_density] (nirw) at (-5.7,-5.5) {Non-intersecting random walks};
    \begin{scope}[every path/.style=line]
		\path (nirw) -- node[above] {Diffusive limit} node[below] {Lemma \ref{propF}} (nibm);
		\path (nibm) --  node[right] {(Conditioning)} (nibb);
		\path (nibb) --node[above] {Diffusive limit} node[below] {Proposition \ref{lemmaC}} (dbm);
		\path (dbm) -- node[right] {(Taking Differences)} node[left] {Lemma \ref{l:dtob}} (bes);
		\path (bb) -- node[above] {Diffusive limit} node[below] {Corollary \ref{lemmaC1}} (bes);
		\path (nibb) -- node[right] {(Taking Differences)} node[left] {Lemma \ref{l:nibtobb}} (bb);
    \end{scope}
  \end{tikzpicture}
  \noindent\caption{Relationship between different laws used in Sections \ref{sec:localmax} and \ref{sec:bbnibb}.}
  \label{fig:laws}
  \end{figure}
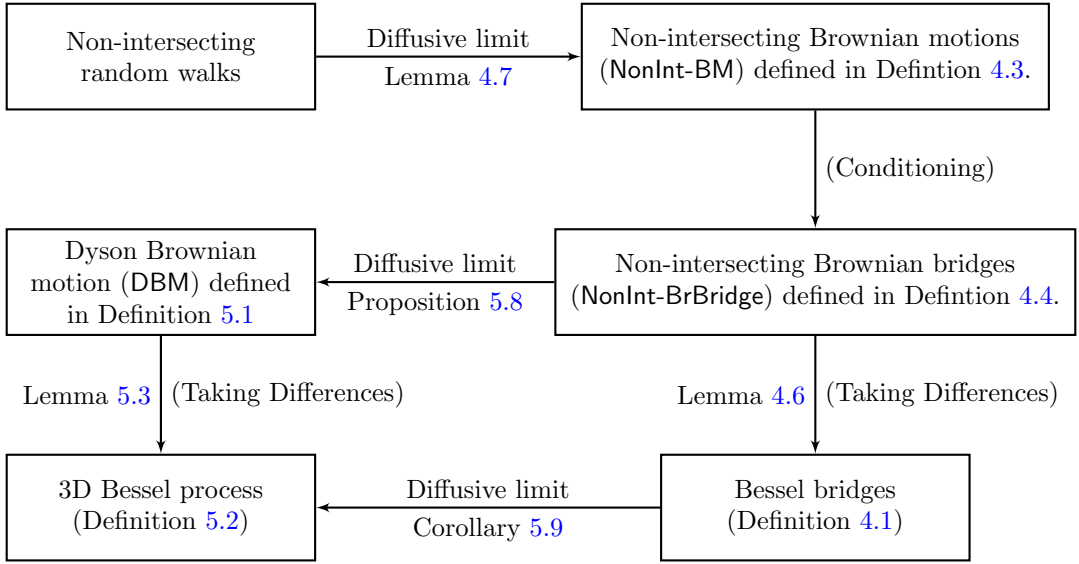
\end{center}
	
	\subsection{Brownian objects} \label{sec:brwn} In this section we recall several objects related to Brownian motion, including the Brownian meanders, Bessel bridges, non-intersecting Brownian motions and non-intersecting Brownian bridges.
	\begin{definition}[Brownian meanders and Bessel bridges] \label{def:bbridge} Given a standard Brownian motion $B(\cdot)$ on $[0,1]$, a standard Brownian meander $\bme: [0,1]\to \R$ is a process defined by
		$$\bme(x)=(1-\theta)^{-\frac12}|B(\theta+(1-\theta)x)|, \quad x\in [0,1],$$ 
		where $\theta=\sup\{x\in [0,1]\mid B(x)=0\}$.  In general, we say a process $\bme:[a,b]\to \R$ is a Brownian meander on $[a,b]$ if 
		$$\bme'(x):=(b-a)^{-\frac12}\bme(a+x(b-a)), \quad x\in [0,1]$$
		is a standard Brownian meander. A Bessel bridge $\bb$ on $[a,b]$ ending at $y>0$ is a Brownian meander $\bme$ on $[a,b]$ subject to the condition (in the sense of Doob)  $\bme(b)=y$.
	\end{definition}	
	A Bessel bridge can also be realized as conditioning a 3D Bessel process to end at some point and hence the name. As we will not make use of this fact, we do not prove this in the paper.
	
	\begin{lemma}[Transition densities for Bessel Bridge] \label{l:tdbes} Let $V$ be a Bessel bridge on $[0,1]$ ending at $a$. Then for $0<t<1$,
		\begin{align*}
			\Pr(V(t)\in d x)=\frac{x}{at}\frac{p_t(x)}{p_1(a)}[p_{1-t}(x-a)-p_{1-t}(x+a)] dx, \quad x\in [0,\infty).
		\end{align*}
		For $0<s<t<1$ and $x>0$,
		\begin{align*}
			\Pr(V(t)\in d y\mid V(s)=x)=\frac{[p_{t-s}(x-y)-p_{t-s}(x+y)][p_{1-t}(y-a)-p_{1-t}(y+a)]}{[p_{1-s}(x-a)-p_{1-s}(x+a)]}dy,\quad y\in [0,\infty).
		\end{align*}
	\end{lemma}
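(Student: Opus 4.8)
The plan is to derive the transition densities of a Bessel bridge from the well-known transition densities of a Brownian meander, by imposing the Doob $h$-transform conditioning $\bme(1) = a$ on a meander $\bme$ on $[0,1]$. First I would recall (or re-derive from Definition \ref{def:bbridge}) the transition structure of the standard Brownian meander: for $0 < s < t < 1$, the meander has the Markov property with one-step transition kernel proportional to the ``killed-at-zero'' (reflecting) heat kernel $q_{t-s}(x,y) := p_{t-s}(x-y) - p_{t-s}(x+y)$ multiplied by a ratio of functions $h_r(x)$ that make the meander survive up to time $1$; concretely $h_r(x) = \Pr(\text{BM started at }x\text{ stays positive on }[0,r])$, which by the reflection principle equals the obvious Gaussian-tail expression, and one has the explicit density $\Pr(\bme(t) \in dx) = \tfrac{x}{t} p_t(x)\cdot\tfrac{\text{(survival factor)}}{\text{(normalization)}}\,dx$. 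The cleanest route is actually to start from the joint law of $(\bme(s),\bme(t))$ for a meander and then impose $\bme(1)=a$.

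The key computation is the Doob transform. Write $\mathfrak q_r(x,y) := p_r(x-y) - p_r(x+y)$ for the transition density of Brownian motion killed on hitting $0$ (equivalently the density of a $3$D Bessel-type bridge building block). A Brownian meander on $[0,1]$ has, for $0<s<t<1$, the finite-dimensional density (up to the endpoint normalization) given by a product $\tfrac{\sqrt 2}{\sqrt\pi}\,\tfrac{x}{s}\,p_s(x)\cdot \mathfrak q_{t-s}(x,y)\cdot H_{1-t}(y)$ where $H_r(y) = \Pr_y(\text{BM}>0\text{ on }(0,r)) = \int_0^\infty \tfrac{1}{r}\,z\,p_r\big(\cdot\big)\,dz$ — more usefully, $H_r(y) = \sqrt{2/(\pi r)}\int_0^{y}\! e^{-u^2/(2r)}du \cdot(\cdots)$; the precise form is unimportant because it will cancel. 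Conditioning on $\bme(1) = a$ via Doob's $h$-transform replaces the ``survive to time $1$'' factor $H_{1-t}(y)$ by the ``be at $a$ at time $1$'' factor, which is exactly $\mathfrak q_{1-t}(y,a) = p_{1-t}(y-a) - p_{1-t}(y+a)$, and multiplies the whole density by the reciprocal of the analogous factor at the earlier time, namely $1/\mathfrak q_{1-s}(x,a)$. Thus
\begin{align*}
	\Pr(V(t)\in dy \mid V(s)=x) = \frac{\mathfrak q_{t-s}(x,y)\,\mathfrak q_{1-t}(y,a)}{\mathfrak q_{1-s}(x,a)}\,dy,
\end{align*}
which is the second claimed formula. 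For the one-time marginal, I would use the same recipe with $s \downarrow 0$: the meander marginal $\Pr(\bme(t)\in dx) = \tfrac{x}{t}\,p_t(x)\cdot(\text{const})\cdot H_{1-t}(x)\,dx$ gets its $H_{1-t}(x)$ replaced by $\mathfrak q_{1-t}(x,a)$ and divided by the normalizing constant $c(a) = \lim_{s\downarrow0}(\text{stuff}) = p_1(a)$ (the density of $\bme(1)$ at $a$, which for a meander equals $a\,p_1(a)$ up to a constant absorbed correctly); carefully tracking the constant $\sqrt{2/\pi}$ and the $a$ in the denominator yields
\begin{align*}
	\Pr(V(t)\in dx) = \frac{x}{a\,t}\,\frac{p_t(x)}{p_1(a)}\,\big[p_{1-t}(x-a) - p_{1-t}(x+a)\big]\,dx,
\end{align*}
as stated. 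One should also verify that this integrates to $1$ over $x\in(0,\infty)$, which is a consistency check that pins down all the constants — this can be done by recognizing $\tfrac{x}{t}p_t(x)\mathfrak q_{1-t}(x,a)$ as (a constant times) the density of $\bme(1)$ evaluated via the meander's marginal law, or by a direct Gaussian integral using $\int_0^\infty x\,p_t(x)p_{1-t}(x\mp a)\,dx$.

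The main obstacle is bookkeeping of the multiplicative constants and the boundary behavior at $s\downarrow 0$: the meander's initial density is singular (behaves like $x/t \cdot p_t(x)$, not like a genuine transition kernel from a point), so one cannot naively set $s=0$ in the two-time formula, and care is needed to identify $\lim_{s\downarrow 0}\mathfrak q_{1-s}(x,a)/(\text{entrance factor at }s)$ correctly as $p_1(a)/a$ (up to the universal $\sqrt{2/\pi}$). The probabilistic content — that conditioning a meander to end at $a$ is a Doob transform turning the ``stay positive forever-ish'' $h$-function into the killed-heat-kernel $h$-function $\mathfrak q_{1-t}(\cdot,a)$ — is standard once set up, so I would be terse there and spend the words instead on the constant-tracking and the normalization check.
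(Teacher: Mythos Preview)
Your approach is correct and is essentially the same as the paper's: start from the known finite-dimensional densities of the Brownian meander and apply the Doob $h$-transform to condition on $\bme(1)=a$. The paper is slightly more direct in that it quotes the full $k$-point joint density of the meander from \cite{igle} (with the killed heat kernel $g(x,y;r)=p_r(x-y)-p_r(x+y)$ and the survival factor $\Psi$) and then replaces $\Psi(x_k/\sqrt{1-t_k})$ by $g(x_k,a;1-t_k)/(a\,p_1(a))$ in one stroke, which sidesteps the $s\downarrow 0$ limit you flag as the main bookkeeping obstacle; but the substance is identical.
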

	\begin{proof} We recall the joint density formula for Brownian meander $W$ on $[0,1]$ from \cite{igle}. For $0=t_0<t_1<t_2<\cdots<t_k\le 1$:
		\begin{align*}
			\Pr(W(t_1)\in dx_1, \ldots, W(t_k)\in dx_k)= \frac{x_1}{t}p_{t_1}(x_1)\Psi(\tfrac{x_k}{\sqrt{1-t_k}})\prod_{j=1}^{k-1} g(x_j,x_{j+1};t_{j+1}-t_j) \prod_{j=1}^k dx_j
		\end{align*}
		where
		\begin{align*}
			& g(x_j,x_{j+1};t_{j+1}-t_j):=[p_{t_{j+1}-t_j}(x_j-x_{j+1})-p_{t_{j+1}-t_j}(x_j+x_{j+1})], \\ & \Psi(x):=(2/\pi)^{\frac12}\int_0^x e^{-u^2/2}du.
		\end{align*}
		The joint density is supported on $[0,\infty)^k$. We now use Doob-$h$ transform to get the joint density  for Bessel bridge on $[0,1]$ ending at $a$. For $0=t_0<t_1<t_2<\cdots<t_k<1$:
		\begin{align*}
			\Pr(V(t_1)\in dx_1, \ldots, V(t_k)\in dx_k)= \frac{x_1}{at_1}\frac{p_{t_1}(x_1)}{p_1(a)}\prod_{j=1}^{k} g(x_j,x_{j+1};t_{j+1}-t_j) \prod_{j=1}^k dx_j
		\end{align*}
		where $x_{k+1}=a$ and $t_{k+1}=1$.  Formulas in Lemma \ref{l:tdbes} is obtained easily from the above joint density formula.
	\end{proof}

	\begin{definition}[Non-intersecting Brownian motions] \label{def:nibm} We say a random continuous function $W(t)=(W_1(t),W_2(t)) : [0,1]\to \R^2$ is a pair of non-intersecting Brownian motion ($\nonintbm$ in short) if its distribution is given by the following formulas:
		\begin{enumerate}[label=(\alph*), leftmargin=15pt]
			\item We have for any $y_1, y_2 \in \R$
			\begin{align}\label{nibm1}
				\Pr(\W_1(1)\in \d y_1, \W_2(1)\in \d y_2) = \frac{\ind\{y_1>y_2\}(y_1 - y_2)p_1(y_1)p_1(y_2)}{\int_{r_1 > r_2}(r_1 - r_2)p_1(r_1)p_1(r_2) \d r_1 \d r_2}\d y_1 \d y_2.
			\end{align}
			
			\item For $0 < t < 1$, we have 
			\begin{equation}
				\label{nibm2}
				\begin{aligned}
					& \Pr(\W_1(t)\in \d y_1, \W_2(t)\in \d y_2) \\ & =\frac{\ind\{y_1>y_2\}(y_1 - y_2)p_t(y_1)p_t(y_2)\int_{r_1 > r_2}\det(p_{1-t}(y_i - r_j))_{i, j = 1}^2\d r_1 \d r_2}{t\int_{r_1 > r_2}(r_1 - r_2)p_1(r_1)p_1(r_2) \d r_1 \d r_2} \d y_1 \d y_2. 
				\end{aligned}
			\end{equation}
			\item For $0 <s< t \le1$ and $x_1 > x_2$, we have 
			\begin{equation}
				\label{nibm3}
				\begin{aligned}
					& \Pr(\W_1(t)\in \d y_1, \W_2(t)\in \d y_2|\W_1(s) = x_1, \W_2(s) = x_2)\\ & = \ind\{y_1>y_2\}\frac{\det(p_{t - s}(y_i - x_j))_{i, j = 1}^2\int_{r_1 > r_2}\det(p_{1-t}(y_i - r_j))_{i, j = 1}^2 \d r_1 \d r_2 }{\int_{r_1 > r_2}\det(p_{1-s}(x_i - r_j))_{i, j = 1}^2 \d r_1 \d r_2 }\d y_1 \d y_2.
				\end{aligned}
			\end{equation}
		\end{enumerate}
		We call $W^{[0,M]}$ a $\nonintbm$ on $[0,M]$ if $(M^{-1/2}W_1^{[0,M]}(Mx),M^{-1/2}W_2^{[0,M]}(Mx))$	is a $\nonintbm$ on $[0,1]$.
	\end{definition}
	\begin{definition}[Non intersection Brownian bridges] \label{def:nibb}
		A $2$-level non-intersecting Brownian bridge ($\nonintbb$ in short) $V=(V_1,V_2)$ on $[0,1]$ ending at $(z_1,z_2)$ with $(z_1\neq z_2)$ is a $\nonintbm$ on $[0,1]$ defined in Definition \ref{def:nibm} subject to the condition (in the sense of Doob) $V(1)=z_1, V(1)=z_2$. It is straight forward to check we have the following formulas for the distribution of $\V$:
		\begin{enumerate}[label=(\alph*), leftmargin=15pt]
			\item For $0 < t < 1$, we have 
			\begin{align*}
				\Pr(\V_1(t)\in \d y_1, \V_2(t)\in \d y_2) = \frac{\ind\{y_1>y_2\}(y_1 - y_2)p_t(y_1)p_t(y_2)}{t(z_1-z_2)p_1(z_1)p_1(z_2)}\det(p_{1-t}(y_i - z_j))_{i,j =1}^2 \d y_1 \d y_2.
			\end{align*}
			\item For $0 <s< t \le1$ and $x_1 > x_2$, we have 
			\begin{equation*}
				\begin{aligned}
					& \Pr(\V_1(t)\in \d y_1, \V_2(t)\in \d y_2|\V_1(s) = x_1, \V_2(s) = x_2) \\ & \hspace{2cm}= \frac{\det(p_{t-s}(y_i - x_j))_{i,j = 1}^2\det(p_{1-t}(y_i - z_j))_{i, j =1}^2}{\det(p_{1-s}(x_i - z_j))_{i,j =1}^2}\d y_1 \d y_2.
				\end{aligned}
			\end{equation*}
		\end{enumerate}
		Just like $\nonintbm$, we call $V^{[0,M]}$ a $\nonintbb$ on $[0,M]$ if $(\frac1{\sqrt{M}}V_1^{[0,M]}(Mx),\frac1{\sqrt{M}}V_2^{[0,M]}(Mx))$	is a $\nonintbb$ on $[0,1]$.
	\end{definition}
	
	\begin{remark} It is possible to specify the distributions for a $n$-level non-intersecting Brownian bridge. However, the notations tend to get cumbersome due to the possibility of some paths sharing the same end points. We refer to Definition 8.1 in \cite{reu} for a flavor of such formulas. We remark that in this paper we will focus exclusively on the $n=2$ case with distinct endpoints. 
	\end{remark}
	
	The following Lemma connects $\nonintbb$ with Bessel bridges.
	
	\begin{lemma}[$\nonintbb$ to Bessel bridges]\label{l:nibtobb} Let $V=(V_1,V_2)$ be a $\nonintbb$ on $[0,1]$ ending at $(z_1,z_2)$ with $z_1>z_2$. Then, as functions in $x$, we have $V_1(x)-V_2(x)\stackrel{d}{=} \sqrt2\bb(x)$ where $\bb:[0,1]\to \R$ is a Bessel bridge (see Definition \ref{def:bbridge}) ending at $(z_1-z_2)/\sqrt{2}$.
	\end{lemma}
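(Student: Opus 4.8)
The statement asserts that if $V=(V_1,V_2)$ is a $\nonintbb$ on $[0,1]$ ending at $(z_1,z_2)$ with $z_1>z_2$, then the difference process $V_1(\cdot)-V_2(\cdot)$ equals in law $\sqrt{2}\,\bb(\cdot)$ where $\bb$ is a Bessel bridge on $[0,1]$ ending at $(z_1-z_2)/\sqrt{2}$. My plan is to compare the two processes at the level of their finite-dimensional transition densities, since both are Markov processes (the $\nonintbb$ by Definition \ref{def:nibb}, and the Bessel bridge by Lemma \ref{l:tdbes}) and agreement of all transition densities together with agreement of the terminal point forces equality in law as processes on $[0,1]$. Concretely, I would set $D(t):=V_1(t)-V_2(t)$ and compute the law of $(D(t_1),\dots,D(t_k))$ from the joint density formulas in Definition \ref{def:nibb}, then check it matches the $\sqrt{2}$-rescaled Bessel-bridge formula from Lemma \ref{l:tdbes}.

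The key computational input is the following $2\times 2$ determinant identity for the Gaussian kernel: for $s>0$,
\begin{align*}
\det\!\big(p_s(a_i-b_j)\big)_{i,j=1}^2 &= p_s(a_1-b_1)p_s(a_2-b_2)-p_s(a_1-b_2)p_s(a_2-b_1).
\end{align*}
Writing $p_s(u)=(2\pi s)^{-1/2}e^{-u^2/(2s)}$ and expanding the exponents, one finds that this determinant factors as a function of the differences $a_1-a_2$ and $b_1-b_2$ times a Gaussian in the center-of-mass variables; precisely, after collecting terms,
\begin{align*}
\det\!\big(p_s(a_i-b_j)\big)_{i,j=1}^2 &= p_s(\alpha)\,p_s(\gamma)\,\big[p_s^{\mathrm{refl}}\big]\cdots
\end{align*}
— more usefully, the ratio that appears in the $\nonintbb$ transition density collapses: dividing the numerator determinants by the denominator determinant and integrating out the center-of-mass coordinate (which decouples as a Brownian bridge independent of the difference), one obtains exactly a factor of the form $p_{t-s}(x-y)-p_{t-s}(x+y)$ in the rescaled difference variable, i.e. the reflected heat kernel that characterizes the Bessel bridge. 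So the concrete steps are: (i) change variables from $(V_1,V_2)$ to center-of-mass $C=(V_1+V_2)/2$ and difference $D=V_1-V_2$; (ii) show the joint density from Definition \ref{def:nibb} factors into a Brownian-bridge density for $C$ (ending at $(z_1+z_2)/2$) times a density for $D$; (iii) identify the $D$-marginal density, using the determinant identity, with the Bessel-bridge transition densities of Lemma \ref{l:tdbes} after the scaling $D=\sqrt{2}\,\bb$ and $(z_1-z_2)=\sqrt{2}\cdot(z_1-z_2)/\sqrt{2}$; the $\sqrt{2}$ arises because $D$ has quadratic variation $2t$ (being a difference of two independent unit-variance Brownian paths) whereas $\bb$ is built from a standard Brownian meander.

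The main obstacle is purely bookkeeping: carrying out the change of variables cleanly so that the Jacobian, the indicator $\ind\{y_1>y_2\}$ (which becomes $\ind\{D>0\}$, matching the positivity of the Bessel bridge), and the Gaussian normalizations all line up, and verifying the determinant-factorization identity with the correct diffusion constants — recall Definition \ref{def:nibm} uses diffusion coefficient $1$ for each coordinate, so $D$ has diffusion coefficient $2$ and $C$ diffusion coefficient $1/2$. One should double check the single-time formula first (the $k=1$ case, comparing the displayed density in Definition \ref{def:nibb}(a) for $\V$ against $\Pr(V(t)\in dx)$ in Lemma \ref{l:tdbes} with $a=(z_1-z_2)/\sqrt{2}$), since once that and the two-time conditional formula both check out, the Markov property of both sides extends the identification to all finite-dimensional distributions, hence to equality in law of the processes, completing the proof.
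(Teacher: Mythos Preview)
Your approach is correct and genuinely different from the paper's. The paper does not compute densities at all: instead it takes two independent Gaussian random walks, conditions them not to intersect, and invokes Lemma \ref{propF} to get that the scaled pair converges to a $\nonintbm$ $W=(W_1,W_2)$. Simultaneously, it invokes the classical Iglehart result that the \emph{difference} of the same conditioned walks, after rescaling, converges to $\sqrt{2}\,\bme$. Hence $W_1-W_2\stackrel{d}{=}\sqrt{2}\,\bme$ as processes; since $\bme$ is Markov, the law of $W_1-W_2$ depends on $(W_1(1),W_2(1))$ only through their difference, and Doob-conditioning on the endpoint $(z_1,z_2)$ turns $W$ into a $\nonintbb$ and $\bme$ into the Bessel bridge ending at $(z_1-z_2)/\sqrt{2}$.

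Your direct density computation is more elementary and self-contained: the key determinant factorization $\det(p_s(a_i-b_j))_{i,j=1}^2 = p_{s/2}(\bar a-\bar b)\,[p_{2s}(\delta_a-\delta_b)-p_{2s}(\delta_a+\delta_b)]$ (with $\bar a=(a_1+a_2)/2$, $\delta_a=a_1-a_2$, etc.) cleanly decouples the center of mass from the difference, and integrating out the center of mass recovers exactly the formulas in Lemma \ref{l:tdbes}. This avoids the discrete limiting argument and the external Iglehart citation. The paper's route, on the other hand, is more conceptual and reuses machinery (Lemma \ref{propF}) already needed for Proposition \ref{propA}, so it costs almost nothing extra in their development. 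Both arguments are short; yours would make the lemma independent of the appendix and of \cite{igle}.
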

	
	The proof of Lemma \ref{l:nibtobb} is based on the following technical lemma that discusses how $\nonintbm$ comes up as a limit of non-intersecting random walks.
	
	\begin{lemma}\label{propF}
		{Let $X_j^{i}$ be i.i.d. $\operatorname{N}(0,1)$ random variables. Let $S_0^{(i)} = 0$ and $S_k^{(i)} = \sum_{j= 1}^{k}X_j^{i}.$ Consider $Y_n(t) = (Y_{n,1}(t),Y_{n,2}(t)) := (\frac{S_{nt}^{(1)}}{\sqrt{n}}, \frac{S_{nt}^{(2)}}{\sqrt{n}})$ an $\R^2$ valued process on $[0,1]$ where the in-between points are defined by linear interpolation. Then conditioned on the non-intersecting event $\Lambda_n:= \cap_{j=1}^n\{ S_j^{(1)} > S_j^{(2)}\},$
			$Y_n \stackrel{d}{\to} \W$, where $\W(t) = (\W_1(t), \W_2(t))$ is distributed as $\nonintbm$ defined in Definition \ref{def:nibm}.}
	\end{lemma}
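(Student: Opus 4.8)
The plan is to establish this convergence of conditioned random walks to $\nonintbm$ via the standard recipe: (i) identify the finite-dimensional distributions of the conditioned walk $Y_n$ in terms of non-intersecting random walk bridge probabilities, (ii) show these converge to the kernels in Definition \ref{def:nibm} using a local central limit theorem together with a Karlin--McGregor / Lindström--Gessel--Viennot determinantal formula for the non-intersection probabilities, and (iii) upgrade the finite-dimensional convergence to process-level convergence in $C([0,1],\R^2)$ by proving tightness. The determinantal structure is what makes step (ii) tractable: by the reflection principle for a pair of independent random walks, the probability that $(S^{(1)},S^{(2)})$ stays in the Weyl chamber $\{s_1>s_2\}$ up to step $n$ with prescribed endpoints is a $2\times 2$ determinant of one-step-transition sums, and after rescaling by $\sqrt n$ each entry converges (by the local CLT applied to the increments, which are already Gaussian here so the local CLT is exact after interpolation bookkeeping) to the corresponding heat-kernel entry $p_t(\cdot)$. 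The normalization $\Pr(\Lambda_n)$ rescaled by $n$ converges to $\int_{r_1>r_2}(r_1-r_2)p_1(r_1)p_1(r_2)\,dr_1\,dr_2$; this is the $n\to\infty$ limit of the discrete non-intersection probability, which can be read off from the same determinant via the asymptotics $\det(p_{1}(y_i-r_j)) \sim (y_1-y_2)(r_1-r_2)\cdot(\text{const})$ as the arguments collapse, i.e. the Vandermonde-type expansion of the determinant near coincident points.

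\textbf{Key steps in order.} First, write, for $0<t_1<\cdots<t_k=1$ with $m_\ell=\lfloor n t_\ell\rfloor$,
\begin{align*}
\Pr\big(Y_{n,1}(t_\ell)\approx y_\ell^{(1)},\,Y_{n,2}(t_\ell)\approx y_\ell^{(2)},\ \forall\ell \,\big|\, \Lambda_n\big)
= \frac{1}{\Pr(\Lambda_n)}\prod_{\ell} Q_{m_\ell-m_{\ell-1}}\big(\vec y_{\ell-1},\vec y_\ell\big),
\end{align*}
where $Q_m(\vec x,\vec y)$ is the probability that the pair walk goes from $\vec x$ to (a small box around) $\vec y$ in $m$ steps without the two coordinates crossing. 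Second, apply the Karlin--McGregor formula to express $Q_m(\vec x,\vec y) = \det\big(q_m(x_i-y_j)\big)_{i,j=1}^2$ where $q_m$ is the $m$-step one-dimensional transition density (here a centered Gaussian of variance $m$, convolved appropriately with the interpolation), valid on the chamber $x_1>x_2$, $y_1>y_2$. Third, feed in the local CLT: $\sqrt n\, q_{\lfloor ns\rfloor}(\sqrt n\, u) \to p_s(u)$ uniformly on compacts, so $n\cdot Q_{\lfloor n(t-s)\rfloor}(\sqrt n\,\vec x,\sqrt n\,\vec y)\,d\vec y \to \det(p_{t-s}(x_i-y_j))\,d\vec y$; handle the first time interval $[0,t_1]$ and the overall normalization (which involves an extra collapse to the single starting point $0$, producing the factors $(y_1-y_2)p_{t_1}(y_1)p_{t_1}(y_2)/t_1$ after an $L'H\^opital$-type expansion of the determinant near the diagonal and the $\Pr(\Lambda_n)\sim c/n$ asymptotic). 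Matching all the pieces reproduces exactly formulas \eqref{nibm1}--\eqref{nibm3}. Fourth and finally, prove tightness: the unconditioned interpolated walk $Y_n$ is tight (Donsker), and one argues that conditioning on $\Lambda_n$, an event of probability $\asymp n^{-1}$, does not destroy tightness — e.g. by a maximal-inequality / entropy-of-modulus bound showing $\Pr(\omega_{Y_n}(\delta)>\e \mid \Lambda_n) \le n\,\Pr(\omega_{Y_n}(\delta)>\e) \to 0$ as $\delta\to0$ after $n\to\infty$, using Gaussian tail bounds on the increments that beat the polynomial factor $n$.

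\textbf{Main obstacle.} The genuinely delicate point is the treatment of the normalization and the initial time-slice: both coordinates start at $0$, so the Karlin--McGregor determinant $\det(q_{m_1}(0-y_j))$ is being evaluated with coincident starting arguments, where the naive limit is $0$ and one must extract the leading-order $(y_1-y_2)$ behavior by differentiating the determinant (a Vandermonde/confluent limit), and this must be done consistently with the rate $\Pr(\Lambda_n)\asymp n^{-1}$ so that the $n$ powers cancel and the constant is exactly $\int_{r_1>r_2}(r_1-r_2)p_1(r_1)p_1(r_2)\,dr_1\,dr_2$. A second, more technical nuisance is that $Y_n$ is the \emph{linearly interpolated} walk whereas $\Lambda_n$ constrains only the integer times; one must check the interpolation does not cross the diagonal with asymptotically negligible error and that the local CLT is applied to the right object. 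Both issues are routine but bookkeeping-heavy; I would isolate the confluent-determinant computation as a small lemma and relegate the interpolation and tightness estimates to an appendix, which is in fact what the paper does (Appendix \ref{sec:ap1}).
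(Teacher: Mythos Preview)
Your overall architecture---Karlin--McGregor determinants for the finite-dimensional densities, then tightness---is the same as the paper's, but two points need repair.

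First, the rate: $\Pr(\Lambda_n)$ is of order $n^{-1/2}$, not $n^{-1}$. The difference $S^{(1)}_k-S^{(2)}_k$ is a single one-dimensional random walk, and the probability it stays positive for $n$ steps is $\sim c/\sqrt n$ by Sparre Andersen/Spitzer; the paper uses exactly this. Correspondingly it is $\sqrt n\,\Pr(\Lambda_n)$ that has a finite positive limit, and all your power-counting has to be redone accordingly.

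Second, and more importantly, your tightness argument does not work as written. The crude bound $\Pr(\omega_{Y_n}(\delta)>\e\mid\Lambda_n)\le \Pr(\Lambda_n)^{-1}\,\Pr(\omega_{Y_n}(\delta)>\e)$ is useless for fixed $\delta>0$: as $n\to\infty$ the unconditioned modulus probability converges to the strictly positive Brownian quantity $\Pr(\omega_B(\delta)>\e)$, so multiplying by $\sqrt n$ diverges. Gaussian tails of individual increments cannot rescue this, because after diffusive scaling the relevant increment bounds are uniform in $n$, not decaying. The paper instead verifies the Kolmogorov moment criterion $\Ex[|Y_{n,i}(t)-Y_{n,i}(s)|^K\mid\Lambda_n]\le \Con\,(t-s)^2$. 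For $t-s$ small (in fact $\le n^{-1/4}$) the crude $\Pr(\Lambda_n)^{-1}$ bound with a high enough power $K$ does suffice, since $(t-s)^{K/2}$ absorbs the $\sqrt n$; but for $t-s$ of order one the paper feeds the explicit Karlin--McGregor one-point density back into the moment integral and shows directly that the $K$-th moment is $O((t-s)^2)$ with no residual $n$-factor. You will need this second computation.

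A minor difference in the fdd step: to handle the coincident start, the paper does not take a confluent limit of the determinant. Instead it conditions on the first step $(X_1^1,X_1^2)=(a_1,a_2)$ with $a_1>a_2$, applies Karlin--McGregor from there, and then integrates out $(a_1,a_2)$ against $p_1(a_1)p_1(a_2)$; the factor $(y_1-y_2)$ emerges from expanding $\det(p_{\lfloor nt\rfloor-1}(a_i-y_j\sqrt n))$ in the small parameter $(a_1-a_2)/\sqrt n$. Your confluent approach would also work, but the first-step trick avoids the $L'H\hat{o}pital$ bookkeeping.
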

	
	We defer the proof of this lemma to the Appendix as it roughly follows standard calculations based on the Karlin-McGregor formula \cite{karlin1959coincidence}.

	\begin{proof}[Proof of Lemma \ref{l:nibtobb}] Let $X_j^{i}$ to be i.i.d.~$\operatorname{N}(0,1)$ random variables. Let $S_0^{(i)} = 0$ and $S_k^{(i)} = \sum_{j= 1}^{k}X_j^{i}.$ Set $Y_n(t) = (\frac{S_{nt}^{(1)}}{\sqrt{n}}, \frac{S_{nt}^{(2)}}{\sqrt{n}})$ an $\R^2$ valued process on $[0,1]$ where the in-between points are defined by linear interpolation. By Lemma \ref{propF}, conditioned on the non-intersecting event $\Lambda_n:= \cap_{j=1}^n\{ S_j^{(1)} > S_j^{(2)}\},$ $Y_n$ converges to $\W=(\W_1,\W_2)$, a $\nonintbm$ on $[0,1]$ defined in Definition \ref{def:nibm}. On the other hand, classical results from \cite{igle} tell us, $(S_{nt}^{(1)}-S_{nt}^{(2)})/\sqrt{n}$ conditioned on $\Lambda_n$ converges weakly to $\sqrt2\bme(t)$, where $\bme(\cdot)$ is a Brownian meander defined in Definition \ref{def:bbridge}. The $\sqrt2$ factor comes because $S_k^{(1)}-S_{k}^{(2)}$ is random walk with variance $2$. Thus $$W_1(\cdot)-W_2(\cdot)\stackrel{d}{=} \sqrt2\bme(\cdot).$$  From \cite{igle}, $\bme$ is known to be Markov process. Hence the law of $W_1-W_2$ depends on $(W_1(1),W_2(1))$ only through $W_1(1)-W_2(1)$. In particular conditioning on $(W_1(1)=z_1,W_2(1)=z_2)$, for any $z_1>z_2$, makes $\W$ to be a $\nonintbb$ on $[0,1]$ ending at $(z_1,z_2)$ and the conditional law of $\frac1{\sqrt2}(W_1-W_2)$ is then a Bessel bridge ending at $\frac1{\sqrt2}(z_1-z_2)$. This completes the proof.
	\end{proof}
	
	\subsection{Decomposition Results} In this section we prove two path decomposition results around the maximum for a single Brownian bridge and for a sum of two Brownian bridges. The first one is for a single Brownian bridge.
	
	\begin{proposition}[Bessel bridge decomposition] \label{p:bbdecomp}
		Let $\bar{B}:[a,b]\to \R$ be a Brownian bridge with $\bar{B}(a)=x$ and $\bar{B}(b)=y$. Let $M$ be the almost sure unique maximizer of $\bar{B}$. Consider
		$\wl: [a,M]\to \R$ defined as $\wl(x)=\bar{B}(M)-\bar{B}(M+a-x)$, and $\wr: [M,b]\to \R$ defined as $\wr(x)=\bar{B}(M)-\bar{B}(x)$. Then, conditioned on $(M,\bar{B}(M))$,
		\begin{enumerate}[label=(\alph*), leftmargin=15pt]
			\item $\wl(\cdot)$ and $\wr(\cdot)$ are independent.
			\item $\wl(\cdot)$ is a Bessel bridge on $[a,M]$ starting at zero and ending at $\bar{B}(M)-x$.
			\item $\wr(\cdot)$ is a Bessel bridge on $[M,b]$ starting at zero and ending at $\bar{B}(M)-y$.
		\end{enumerate}	
		{Recall that the Bessel bridges are defined in Definition \ref{def:bbridge}.}
	\end{proposition}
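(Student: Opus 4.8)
\textbf{Proof proposal for Proposition \ref{p:bbdecomp}.}

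The plan is to reduce everything to the well-known decomposition of a Brownian bridge at its maximum due to Williams, combined with the explicit density formula for the Bessel bridge from Lemma \ref{l:tdbes}. First I would recall that a Brownian bridge $\bar B$ on $[a,b]$ from $x$ to $y$ has an almost surely unique maximizer $M$, so the conditioning in the statement is meaningful. Next, I would observe that after conditioning on $(M, \bar B(M))$, the processes $\wl$ and $\wr$ depend on disjoint portions of the path of $\bar B$ (the pieces to the left and right of $M$ respectively), and by the strong Markov property of the Brownian bridge applied at the (randomized) time $M$ together with the fact that $M$ is a measurable function of the path, these two pieces are conditionally independent. This gives part (a). Actually, the cleanest route is to establish (b) and (c) directly by computing finite-dimensional distributions, and independence then follows for free since the joint finite-dimensional law factorizes.

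For part (b), I would compute the conditional finite-dimensional distributions of $\wl$ given $(M=m, \bar B(m)=h)$ by the standard argument: writing the bridge density as a product of heat kernels, conditioning on the event that $\bar B$ stays below level $h$ on $[a,m]$, touches $h$ at time $m$, and is a bridge to $y$ on $[m,b]$, and then extracting the marginal on $[a,m]$. Concretely, for $a = t_0 < t_1 < \dots < t_k < m$ with $x_0 = x$ and points $x_1, \dots, x_k < h$, the conditional density of $(\bar B(t_1), \dots, \bar B(t_k))$ is proportional to the probability that a Brownian bridge from $x$ to $h$ on $[a,m]$ passes through these points and stays strictly below $h$; by the reflection principle this equals a product of terms of the form $p_{t_{j+1}-t_j}(u_j - u_{j+1}) - p_{t_{j+1}-t_j}(u_j - 2h + u_{j+1})$ where $u_j = h - x_j$. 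After the change of variables $u_j = h - \bar B(m+a-t_j)$, i.e. $\wl$ evaluated at the reflected times, these are exactly the transition densities of a Bessel bridge on $[0, m-a]$ starting at $0$ and ending at $h - x$, matching Lemma \ref{l:tdbes} up to the obvious rescaling of the interval. Part (c) is identical with $x$ replaced by $y$ and no time reflection. Since the joint conditional law is the product of these two pieces (the normalization constant corresponding to the denominator in Bayes' formula, which depends only on $(m,h)$), parts (a), (b), (c) all follow simultaneously.

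The main obstacle I anticipate is handling the conditioning on the measure-zero event $\{M = m, \bar B(M) = h\}$ rigorously rather than formally. The clean way around this is to first prove the analogue of the decomposition for a Brownian bridge conditioned to have maximum \emph{at most} $h$ (or on $\{M \le m\}$-type events), which are positive-probability events, derive the densities there, and then take a limit / disintegrate; alternatively one can invoke Williams' time-reversal decomposition of the Brownian bridge at its maximum (a Brownian bridge split at its maximum is two independent Bessel-type bridges), which already packages this disintegration, and then just identify the pieces via Lemma \ref{l:tdbes}. Either way, the heart of the matter is bookkeeping with the reflection principle and the Doob $h$-transform; the reflection of the left piece (so that its ``running maximum'' constraint becomes a ``staying positive'' constraint at $0$) is the one place where one must be careful about orientation, but this is exactly the definition $\wl(x) = \bar B(M) - \bar B(M+a-x)$ built into the statement.
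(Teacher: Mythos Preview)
Your proposal is correct, and your second alternative (invoking a Williams/Denisov-type decomposition at the maximum) is essentially what the paper does, though the paper routes through Brownian \emph{motion} rather than the bridge. Specifically, the paper reduces to $a=0$, $b=1$, $x=0$, then quotes Denisov's theorem \cite{denisov}: for a Brownian motion $B$ on $[0,1]$, the rescaled left and right pieces around the maximizer are two independent Brownian meanders, independent of $M$. Conditioning a meander on its endpoint is, by Definition~\ref{def:bbridge}, exactly a Bessel bridge; and conditioning $B$ on $(M,B(M),B(1))$ simultaneously (i) turns the meanders into Bessel bridges with the stated endpoints and (ii) turns the Brownian motion into the desired Brownian bridge. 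This packaging is slick because the measure-zero disintegration you flag as the main obstacle is already absorbed into Denisov's result.

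Your first route---direct computation of finite-dimensional densities via the reflection principle and matching against Lemma~\ref{l:tdbes}---would also work and is arguably more self-contained, but it forces you to confront the disintegration on $\{M=m,\bar B(M)=h\}$ by hand (e.g.\ via your suggested limit from $\{\max\le h\}$), which is exactly the bookkeeping Denisov's theorem lets you skip. One small slip: your initial appeal to ``the strong Markov property at $M$'' does not work as stated, since the argmax is not a stopping time; but you immediately abandon this in favor of the factorization of joint densities, which is the right move.
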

	
	\begin{remark} There is a technical issue in considering the regular conditional distribution of $\wl$, $\wr$ separately as the objects are defined on intervals of random length. Instead we should always view $\wl$, $\wr$ appended together as one random function defined on the deterministic interval $[a,b]$, as done in our proofs in Sections \ref{sec:ergbb} and \ref{sec:pfmain}. However here in Proposition \ref{p:bbdecomp} (as well as in Proposition \ref{propA}), we state their distributions separately for simplicity. 
	\end{remark}
	
	\begin{proof} We will prove the result for $a=0$, $b=1$ and $x=0$; the general case then follows from Brownian scaling and translation property of bridges.  We recall the classical result of Brownian motion decomposition around maximum from \cite{denisov}. Consider a map $\Upsilon : C([0,1]) \to C([0,1])\times C([0,1])$ given by
		\begin{align*}
			(\Upsilon f)_-(t) & :=
			M^{-\frac12}[f(M)-f(M(1-t))], \quad t\in [0,1], \\ (\Upsilon f)_+(t) & :=(1-M)^{-\frac12}[f(M)-f(M+(1-M)t)],  \quad t\in [0,1],
		\end{align*}
		where $M=M(f):=\inf\{t\in [0,1] \mid f(s)\le f(t), 0\le s\le 1\}$ is the left-most maximizer of $f$. We set $(\Upsilon f)_-\equiv (\Upsilon f)_+\equiv 0$ if $M=0$ or $M=1$. We also define
		\begin{align*}
			(\Upsilon^M f)_{{-}}(t) & := M^{1/2}(\Upsilon f)_-(\tfrac{t}M), \quad t\in [0,M], \\
			(\Upsilon^M f)_{{+}}(t) & :=(1-M)^{\frac12}(\Upsilon f)_+(\tfrac{t-M}{1-M}),  \quad t\in [M,1].
		\end{align*}
		Given a standard Brownian motion $B$ on $[0,1]$, by Theorem 1 in \cite{denisov},  $\Upsilon(B)$ is independent of $M=M(B)$ and $\Upsilon(B)_-$ and $\Upsilon(B)_+$ are independent Brownian meanders on $[0,1]$. By the Brownian scaling and the fact that $\Upsilon (B)$ is independent of $M(B)$, conditioned on $M(B)$,  we see that $(\Upsilon^M B)_-$ and $(\Upsilon^M B)_+$ are independent Brownian meanders on $[0,M]$ and $[M,1]$ respectively. Observe that $(\Upsilon^M {f})_-(M)={f}(M)$ and $(\Upsilon^M {f})_+(1)={f}(M)-{f}(1)$ for any $f\in C([0,1])$.  Thus conditioning on $(B(M)=v, B(1)=y)$ is equivalent to conditioning on $((\Upsilon^M {B})_-(M)=v, (\Upsilon^M {B})_+(1)=v-y)$. By definition the conditional law of Brownian meanders upon conditioning their end points are Bessel bridges. Thus conditioning on $(M=m,B(M)=v,B(1)=y)$, we see that $(\Upsilon^M B)_-$ and $(\Upsilon^M B)_+$ are independent Bessel bridges on $[0,M]$ and $[M,1]$ ending at $v$ and $v-y$ respectively. But the law of a Brownian motion conditioned on $(M=m,B(M)=v,B(1)=y)$ is same as the law of a Brownian bridge $\bar{B}$ on $[0,1]$ starting at $0$ and ending at $y$, conditioned on $(M(\bar{B})=m,\bar{B}(M)=v)$.  Identifying $(\Upsilon^M \bar{B})_-$ and $(\Upsilon^M \bar{B})_+$ with $\wl$ and $\wr$ gives us the desired result.
	\end{proof}

	The next proposition show that for two Brownian bridges the decomposition around the joint maximum is given by non-intersecting Brownian bridges.

	\begin{proposition}[Non-intersecting Brownian bridges decomposition]\label{propA}
		Let $\bar{B}_1, \bar{B}_2: [a, b]\rightarrow \R$ be independent Brownian bridges such that $\bar{B}_i(a) = x_i, \bar{B}_i(b) = y_i$. Let $M$ be the almost sure unique maximizer of $(\bar{B}_1(x)+\bar{B}_2(x))$ on $[0,1]$. Define $\bar{V}_{\ell}(x) : [0,M-a]\to \R^2$ and $\bar{V}_{r}: [0,b-M]\to \R^2$ as follows:
		\begin{align*}
			\bar{V}_{\ell}(x) & := (\bar{B}_1(M)- \bar{B}_1(M-x), - \bar{B}_2(M)+\bar{B}_2(M-x))\\
			\bar{V}_{r}(x) & := (\bar{B}_1(M)- \bar{B}_1(M+x), - \bar{B}_2(M)+\bar{B}_2(M+x))
		\end{align*}
		Then, conditioned on $(M,\bar{B}_1(M),\bar{B}_2(M))$,
		\begin{enumerate}[label=(\alph*), leftmargin=15pt]
			\item $\bar{V}_{\ell}(\cdot)$ and $\bar{V}_{r}(\cdot)$ are independent.
			\item $\bar{V}_{\ell}(\cdot)$ is a $\nonintbb$ on $[0,M-a]$ ending at $(\bar{B}_1(M)- x_1, x_2-\bar{B}_2(M))$.
			\item $\bar{V}_{r}(\cdot)$ is a $\nonintbb$ on $[0,b-M]$ ending at $(\bar{B}_1(M)- y_1, y_2- \bar{B}_2(M))$.
		\end{enumerate}	
		Recall that $\nonintbb$s are defined in Definition \ref{def:nibb}.
	\end{proposition}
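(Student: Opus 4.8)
By Brownian scaling and the translation invariance of bridges (as in the reduction opening the proof of Proposition~\ref{p:bbdecomp}) it suffices to take $a=0$, $b=1$. The strategy parallels the single-bridge case of Proposition~\ref{p:bbdecomp} but must additionally produce the non-intersecting structure, so I will first establish the decomposition for a pair of independent random-walk bridges, where it reduces to an elementary factorization, and then pass to the diffusive limit. Concretely, let $(S^{(1)}_k)_{k=0}^{2n}$ and $(S^{(2)}_k)_{k=0}^{2n}$ be two independent Gaussian random-walk bridges with $S^{(i)}_0=x_i$ and $S^{(i)}_{2n}=y_i$. Continuity of the increments makes the maximizer $K$ of $S^{(1)}+S^{(2)}$ a.s.\ unique and (a.s.) interior. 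Write $X^{(i)}_j:=S^{(i)}_j-S^{(i)}_{j-1}$ and note two facts: (a) $\{K=k\}=A^-_k\cap A^+_k$, where $A^-_k:=\{S^{(1)}_m+S^{(2)}_m<S^{(1)}_k+S^{(2)}_k\text{ for all }m<k\}$ is measurable with respect to the left increment block $(X^{(i)}_1,\dots,X^{(i)}_k)_{i=1,2}$ and $A^+_k:=\{S^{(1)}_m+S^{(2)}_m<S^{(1)}_k+S^{(2)}_k\text{ for all }m>k\}$ with respect to the right block $(X^{(i)}_{k+1},\dots,X^{(i)}_{2n})_{i=1,2}$; and (b) conditionally on the pair $(S^{(1)}_k,S^{(2)}_k)$ these two increment blocks are independent. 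Hence, conditionally on the ``max data'' $(K,S^{(1)}_K,S^{(2)}_K)$, the two flipped halves
\begin{align*}
U^L_j:=\bigl(S^{(1)}_K-S^{(1)}_{K-j},\ S^{(2)}_{K-j}-S^{(2)}_K\bigr),\qquad U^R_j:=\bigl(S^{(1)}_K-S^{(1)}_{K+j},\ S^{(2)}_{K+j}-S^{(2)}_K\bigr)
\end{align*}
are independent, and since $U^L_{j,1}-U^L_{j,2}=(S^{(1)}_K+S^{(2)}_K)-(S^{(1)}_{K-j}+S^{(2)}_{K-j})>0$ for $1\le j\le K$ on $A^-_K$ (and likewise for $U^R$ on $A^+_K$), each of $U^L,U^R$ is a pair of Gaussian random walks pinned at both ends — $U^L$ from $(0,0)$ to $(S^{(1)}_K-x_1,\,x_2-S^{(2)}_K)$, $U^R$ from $(0,0)$ to $(S^{(1)}_K-y_1,\,y_2-S^{(2)}_K)$ — conditioned to keep the first coordinate strictly above the second in the interior. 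That is the discrete analogue of a $\nonintbb$ (Definition~\ref{def:nibb}), whose finite-dimensional laws are given by the Karlin--McGregor formula \cite{karlin1959coincidence}.

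Next, couple $S^{(i)}$ with $\bar B_i$ through common increments so that $S^{(i)}_{\lfloor 2n\,\cdot\,\rfloor}\to\bar B_i$ uniformly, a.s. Since $\bar B_1+\bar B_2$ is again a Brownian bridge, its maximizer $M$ is a.s.\ unique and lies in $(0,1)$, so $\argmax$ is a.s.\ continuous at $\bar B_1+\bar B_2$; thus $K/(2n)\to M$, and together with the uniform convergence this gives $(S^{(1)}_K,S^{(2)}_K)\to(\bar B_1(M),\bar B_2(M))$ and convergence of the rescaled flipped halves of $S^{(1)},S^{(2)}$ to $(\bar V_\ell,\bar V_r)$. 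Feeding the discrete decomposition into this limit — using Donsker together with the Karlin--McGregor computations behind Lemma~\ref{propF}, in the variant with both ends pinned so that the limit of a non-intersecting walk bridge is a $\nonintbb$ — produces items (a)--(c). To make the conditional statements rigorous one records $(\bar V_\ell,\bar V_r)$, and its discrete counterpart, as a single random element of a path space on the deterministic interval $[a,b]$ rather than as processes on random-length intervals, exactly as in the Remark following Proposition~\ref{p:bbdecomp}.

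The main obstacle is this last limit passage: one needs the \emph{conditional} law of the flipped halves given the max data — not merely the unconditioned pair — to converge to the asserted $\nonintbb$ law. The way I would handle it is to write the joint law of $(K,S^{(i)}_K,U^L,U^R)$ explicitly via Karlin--McGregor, take the limit of these densities (using a local central limit theorem to deal with the pinning, as in the proof of Lemma~\ref{propF}), and match it against the $\nonintbb$ transition densities of Definition~\ref{def:nibb}; some care is needed since the argmax location enters both the normalizing constants and the domains of integration. A softer route that bypasses the discrete step is to set $\Sigma:=\bar B_1+\bar B_2$ and $\Delta:=\bar B_1-\bar B_2$, which are independent bridges with $M=\argmax\Sigma$ measurable with respect to $\Sigma$: Proposition~\ref{p:bbdecomp} applied to $\Sigma$, together with the Markov property of the bridge $\Delta$ at the (now deterministic) time $\{M=m\}$, shows that, given $(M,\Sigma(M),\Delta(M))$, the flipped $\Sigma$-halves and $\Delta$-halves are mutually independent Bessel bridges and Brownian bridges respectively; since $\bar V_\ell=\tfrac12\bigl(W^\Sigma_\ell+W^\Delta_\ell,\ W^\Delta_\ell-W^\Sigma_\ell\bigr)$ and analogously for $\bar V_r$, and since a $\nonintbb$ is precisely a pair whose coordinate-sum is a Brownian bridge independent of whose coordinate-difference is a Bessel bridge (the converse of Lemma~\ref{l:nibtobb}), this also gives the proposition, at the price of first recording that characterization of $\nonintbb$.
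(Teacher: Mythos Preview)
Your two routes are both viable, but neither is the one the paper takes, and the paper's choice is designed precisely to dodge the ``main obstacle'' you identify. The paper does \emph{not} discretize the bridges. It first proves the analogous decomposition for two \emph{unpinned} Brownian motions (Lemma~\ref{lemmaA1}): with $M=\argmax(B_1+B_2)$, the rescaled flipped halves $(\Omega B)_+,(\Omega B)_-$ are independent $\nonintbm$'s and independent of $M$. That lemma is where the random-walk factorization (your display for $U^L,U^R$, but with genuinely i.i.d.\ increments) is used, together with Lemma~\ref{propF} to identify the marginals; because the increments are i.i.d., the discrete statement is a clean joint-law factorization \eqref{LA5}, and the passage to the limit is just continuous mapping applied to $\Omega$ --- no conditional-law convergence is needed. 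Proposition~\ref{propA} then follows in one line by conditioning the Brownian-motion result on $(M,B_i(M),B_i(1))$, using the bijection $(B_1(M),B_2(M),B_1(1),B_2(1))\leftrightarrow(V_\ell(M),V_r(1-M))$ to turn the endpoint pinning into a Doob conditioning of each $\nonintbm$, which by Definition~\ref{def:nibb} is exactly a $\nonintbb$. So where you pin first and then fight with conditional limits, the paper takes limits first in the free setting and pins afterwards.

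Your second, ``softer'' route via $\Sigma=\bar B_1+\bar B_2$ and $\Delta=\bar B_1-\bar B_2$ is correct and is genuinely different from the paper; it trades the random-walk step for Proposition~\ref{p:bbdecomp} plus a characterization of $\nonintbb$. The only missing ingredient is that characterization: the paper records only the difference marginal (Lemma~\ref{l:nibtobb}), not that $V_1+V_2$ is a Brownian bridge independent of $V_1-V_2$. This does hold, and follows from the same random-walk argument behind Lemma~\ref{l:nibtobb} (the non-intersection event $\Lambda_n$ involves only $S^{(1)}-S^{(2)}$, so the sum remains an unconditioned walk, independent of the conditioned difference); once that is written down, the map $(V_1,V_2)\mapsto(V_1{+}V_2,V_1{-}V_2)$ being a bijection finishes the identification. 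If you pursue this route you should state and prove that strengthening of Lemma~\ref{l:nibtobb} explicitly.
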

	
	As in the proof of Proposition \ref{p:bbdecomp}, to prove Proposition \ref{propA} we rely on a decomposition result for Brownian motions instead. To state the result we introduce the $\Omega$ map which encodes the trajectories of around the joint maximum of the sum of two functions.
	
	\begin{definition}\label{def:omega}
		For any $f = (f_1, f_2) \in C([0,1]\rightarrow \R^2),$ we define $\Omega f \in C([-1,1]\rightarrow \R^2)$ as follows: 
		\begin{equation*}
			(\Omega f)_1(t) = \begin{cases}
				[f_1(M) - f_1(M(1+t))]M^{-1/2} & -1 \le t \le 0\\
				[f_1(M) - f_1(M+ (1-M)t)](1-M)^{-1/2} & 0 \le t \le 1
			\end{cases}
		\end{equation*} 
		\begin{equation*}
			(\Omega f)_2(t) = \begin{cases}
				-[f_2(M) - f_2(M(1+t))]M^{-1/2} & -1 \le t \le 0\\
				-[f_2(M) - f_2(M+ (1-M)t)](1-M)^{-1/2} & 0 \le t \le 1
			\end{cases}
		\end{equation*} 
		where $M = \inf\{t \in [0,1]: f_1(s)+ f_2(s) \le f_1(t)+f_2(t), \forall s \in [0,1]\}$ is the left most maximizer. We set $(\Omega f)\equiv (0,0)$ if $M = 0$ or $1$ on $[0,1].$  With this we define two functions in $C([0,1]\to\R^2)$ as follows
		\begin{align*}
			(\Omega f)_{+}(x) &:= ((\Omega f)_1(x),(\Omega f)_2(x)), \quad x\in [0,1] \\
			(\Omega f)_{-}(x) &:= ((\Omega f)_1(-x),(\Omega  f)_2(-x)), \quad x\in [0,1].
		\end{align*}
	\end{definition}
	
	We are now ready to state the corresponding result for Brownian motions.
	
	\begin{lemma}\label{lemmaA1}
		Suppose $\B = (\B_1, \B_2)$ are independent Brownian motions on $[0,1]$ with $\B_i(0) = x_i. $ Let $$M = \argmax_{t \in [0,1]}(\B_1(t) + \B_2(t)).$$ Then $(\Omega \B)_{+}, (\Omega \B)_{-}$ are independent and distributed as non-intersecting Brownian motions on $[0,1]$ (see Definition \ref{def:nibm}). Furthermore, $(\Omega \B)_+, (\Omega \B)_{-}$ are independent of $M$.
	\end{lemma}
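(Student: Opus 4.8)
\textbf{Proof proposal for Lemma \ref{lemmaA1}.}

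The plan is to establish this via a discrete approximation through non-intersecting random walks, mirroring the strategy already used in the proof of Lemma \ref{l:nibtobb}, but now tracking the full two-dimensional trajectory split at the joint maximizer rather than just the difference. First I would recall the classical single-function decomposition: for a standard Brownian motion on $[0,1]$, the Denisov-type decomposition (Theorem 1 in \cite{denisov}, as invoked in the proof of Proposition \ref{p:bbdecomp}) says the pre- and post-argmax pieces, rescaled via the $\Upsilon$ map, are independent Brownian meanders independent of the argmax. The obstacle here is that $M$ is the argmax of the \emph{sum} $\B_1 + \B_2$, which is itself a Brownian motion with diffusion coefficient $2$; conditioning on the sum's argmax couples $\B_1$ and $\B_2$ in a way that is not captured by decomposing each coordinate separately. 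So the single-function statement cannot be applied coordinatewise, and one genuinely needs the Karlin--McGregor/non-intersecting structure to emerge.

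The discrete route I would take: let $X_j^{(i)}$ be i.i.d.\ $\operatorname{N}(0,1)$, $S_k^{(i)} = \sum_{j=1}^k X_j^{(i)}$, and $Y_n(t) = (n^{-1/2}S_{nt}^{(1)}, n^{-1/2}S_{nt}^{(2)})$ with linear interpolation, so $Y_n \to \B$ weakly. Let $N_n = \argmax_{0\le k \le n}(S_k^{(1)} + S_k^{(2)})$. Conditioned on $\{N_n = m\}$, a cyclic/reversal combinatorial argument (the discrete analogue of the Vervaat/Denisov path surgery) shows that the reversed increment walk $(S_m^{(1)} - S_{m-k}^{(1)}, -S_m^{(2)} + S_{m-k}^{(2)})_{k=0}^m$ on the left and $(S_m^{(1)} - S_{m+k}^{(1)}, -S_m^{(2)} + S_{m+k}^{(2)})_{k=0}^{n-m}$ on the right are, conditionally, two \emph{independent} walks each conditioned to satisfy the strict ordering $\{$first coordinate $>$ second coordinate at every step$\}$ — precisely the event $\Lambda$ appearing in Lemma \ref{propF}. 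The key point is that being at the joint maximum forces both reversed pieces to keep $S^{(1)} - S^{(2)}$-type coordinates positive, which is exactly the non-intersection event; the independence of the two sides is the standard consequence of the Markov property applied at the (random) time $m$, after one checks that the conditional law factorizes. One should also verify $n^{-1}N_n$ has a limiting density and that $Y_n$ conditioned appropriately is tight, so that one can pass to the limit.

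Then I would apply Lemma \ref{propF}: conditioned on the non-intersection event, the rescaled left and right pieces converge to independent copies of $\nonintbm$ on $[0,1]$ (after rescaling the random-length intervals $[0,m/n]$ and $[m/n,1]$ to $[0,1]$, which is where the $M^{-1/2}$ and $(1-M)^{-1/2}$ normalizations in Definition \ref{def:omega} come from), and these are asymptotically independent of $n^{-1}N_n \to M$. Passing $n \to \infty$ and identifying the limit of the rescaled reversed pieces with $(\Omega \B)_-$ and $(\Omega \B)_+$ gives the claim. The main obstacle I anticipate is the discrete path-surgery/combinatorial identity establishing that conditioning on $\{N_n = m\}$ turns both reversed halves into independent non-intersecting walks — making this rigorous requires a careful decomposition of the event $\{N_n = m\}$ (e.g., $\{S_k^{(1)}+S_k^{(2)} < S_m^{(1)}+S_m^{(2)}$ for $k<m$, $\le$ for $k>m\}$) and showing it splits as a product of a left-side strict-ordering-type event and a right-side event, together with handling the slight asymmetry at the maximizer itself (strict vs.\ weak inequalities, which washes out in the diffusive limit). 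A secondary technical point is justifying the interchange of the conditioning and the weak limit, for which I would use that $N_n/n$ converges jointly with $Y_n$ and a standard continuity/portmanteau argument, plus the non-degeneracy of the limiting argmax density near any fixed point in $(0,1)$.
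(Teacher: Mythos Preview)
Your proposal is correct and follows essentially the same route as the paper: the paper also passes through the discrete approximation with i.i.d.\ $\operatorname{N}(0,1)$ increments, shows by a direct computation (decomposing $\{M_n=k\}$ into left and right inequalities and using symmetry/independence of the Gaussian increments) that conditioning on $\{M_n=k\}$ factorizes the two reversed halves into independent non-intersecting walks, and then invokes Lemma~\ref{propF} together with $\Omega Z_n \Rightarrow \Omega B$ and $M_n/n \Rightarrow M$ to pass to the limit. The factorization you flag as the main obstacle is exactly equations \eqref{LA4}--\eqref{LA5} in the paper's proof.
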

	
	We first complete the proof of Proposition \ref{propA} assuming the above Lemma.

	\begin{proof}[Proof of Proposition \ref{propA}]
		Without loss of generality, we set $a = 0$ and $b=1$. Let $\B_1, \B_2: [0,1]\rightarrow \R$ be two independent Brownian bridges with $\B_i(0) = x_i$ and denote $M= \argmax_{x\in [0, 1]}\B_1(x) +\B_2(x).$ Consider
		\begin{align*}
			\V_{\ell}(x) & := (\B_1(M)- \B_1(M-x), - \B_2(M)+\B_2(M-x))_{x \in [0, M]} \\
			\V_{r}(x) & := (\B_1(M)- \B_1(M+x), - \B_2(M)+\B_2(M+x))_{x \in [0, 1-M]}
		\end{align*}
		By Lemma \ref{lemmaA1}, conditioned on $M$ {and after Brownian re-scaling}, we have
		where   $\V_r, \V_{\ell}$ are conditionally independent and $\V_r\sim W^{[0,1-M]}$ and $\V_{\ell} \sim W^{[0,M]}$ where $W^{[0,\rho]}$ denote a $\nonintbm$ on $[0,\rho]$ defined in Definition \ref{def:nibb}. To convert the above construction to Brownian bridges, we observe that the map
		$$(\B_1(M), \B_2(M), \B_1(1), \B_2(1))\leftrightarrow (\V_{r}(1-M), \V_{\ell}(M))$$ is bijective. Indeed, we have that 
		\begin{equation*}
			\begin{pmatrix}
				\B_1(M) = b_1,\ \B_2(M)=b_2 \\\B_1(1)= y_1,\ \B_2(1)=y_2
			\end{pmatrix}
			\Leftrightarrow
			\begin{pmatrix}
				\V_{r}(1-M) = (b_1-y_1, -b_2+y_2)\\\V_{\ell}(M)= (b_1-x_1, -b_2+x_2)
			\end{pmatrix}.
		\end{equation*}
		Thus conditioned on $(M=m, \B_{i}(M) = b_i, \B_i(1)= y_i)$, $\V_{r}(\cdot)$ is now a $\nonintbb$ Brownian bridge on $[0, 1-m]$ ending at $(b_1- y_1, -b_2+y_2)$ and $\V_{\ell}(\cdot)$ is a $\nonintbb$ on $[0,m]$ ending at $(b_1-x_1, -b_2+x_2)$ where both are conditionally independent of each other. But the law of a Brownian motions conditioned on $(M=m, \B_{i}(M) = b_i,\B_i(1)= y_i)$ is same as the law of a Brownian bridges $\bar{B}$ on $[0,1]$ starting at $(x_1,x_2)$ and ending at $(y_1,y_2)$, conditioned on $(M=m, \bar{B}_{i}(M) = b_i)$. Thus this leads to the desired decomposition for Brownian bridges.
	\end{proof}
	
	Let us now prove Lemma \ref{lemmaA1}. The proof of Lemma \ref{lemmaA1} follows similar ideas from \cite{denisov} and \cite{igle}. To prove such a decomposition holds, we first show it at the level of random walks. Then we take diffusive limit to get the same decomposition for Brownian motions. 
	
	\begin{proof}[Proof of Lemma \ref{lemmaA1}]
		Let $X_j^{(i)}\stackrel{i.i.d.}{\sim} \operatorname{N}(0,1), \ i = 1, 2, \  j \ge 1$ and set $S_k^{(i)} = \sum_{j= 1}^n X_j^{(i)}.$  Define $$M_n := \argmax_{k=1}^n\{S_k^{(1)}+S_k^{(2)}\},$$ and let ${A}_j^{(i)}$ be subsets of $\R$. Define
		\begin{align}\label{LA1}
			\mathcal{I}:=\Pr\bigg(\bigcap_{\substack{j=0\\i = 1, 2}}^{k-1}\{S_k^{(i)} - S_j^{(i)}\in (-1)^{i+1}A_{k-j}^{(i)}\} \cap \bigcap_{\substack{j=k+1\\i = 1, 2}}^{n}\{S_k^{(i)} - S_j^{(i)}\in (-1)^{i+1}A_{j}^{(i)}\} \cap \{M_n = k\} \bigg).
		\end{align}
		Noting that the  event $\{M_n = k\} $ is the same as $$\bigcap_{\substack{j=0 \\i = 1, 2}}^{k-1}\{S_k^{(1)}+ S_k^{(2)}> S_j^{(1)}+S_j^{(2)}\}\bigcap_{\substack{j=k+1\\i = 1, 2}}^{n}\{S_k^{(1)}+ S_k^{(2)}> S_j^{(1)}+S_j^{(2)}\},$$
		we have 
		\begin{align*}
			\text{ r.h.s of  }\eqref{LA1} & = \Pr\bigg(\bigcap_{\substack{j=0\\i = 1, 2}}^{k-1}\{S_k^{(i)} - S_j^{(i)}\in (-1)^{i+1}A_{k-j}^{(i)}, S_k^{(1)}+ S_k^{(2)}> S_j^{(1)}+S_j^{(2)}\}\notag \\ & \hspace{2cm}\cap \bigcap_{\substack{j=k+1\\i = 1, 2}}^{n}\{S_k^{(i)} - S_j^{(i)}\in (-1)^{i+1}A_{j}^{(i)},  S_k^{(1)}+ S_k^{(2)}> S_j^{(1)}+S_j^{(2)}\}\bigg).
		\end{align*}   
		We also observe that the pairs $(S_k^{(1)}- S_j^{(1)}, S_k^{(2)}- S_j^{(2)})_{j =0}^{k-1}$ and $(S_k^{(1)}- S_j^{(1)}, S_k^{(2)}- S_j^{(2)})_{j =k+1}^{n}$ are independent of each other and as $X_j^{i}$ is symmetric
		\begin{align*}
			& (S_k^{(1)}- S_j^{(1)}, S_k^{(2)}- S_j^{(2)})_{j =0}^{k-1} \stackrel{(d)}{=}(S_{k-j}^{(1)}, - S_{k-j}^{(2)})_{j=0}^{k-1} \\ & (S_k^{(1)}- S_j^{(1)}, S_k^{(2)}- S_j^{(2)})_{j =k+1}^{n} \stackrel{(d)}{=}(S_{j-k}^{(1)}, - S_{j-k}^{(2)})_{j=k+1}^{n}.
		\end{align*}
		Thus, 
		\begin{align}\label{LA4}
			\mathcal{I} =\Pr\bigg(\bigcap_{\substack{j=0\\i = 1, 2}}^{k-1}\{S_j^{(i)}\in A_{j}^{(i)}, S_j^{(1)}>S_j^{(2)}\}\bigg) \cdot \Pr\bigg( \bigcap_{\substack{j=1\\i = 1, 2}}^{n-k}\{S_j^{(i)}\in A_{j}^{(i)}, S_j^{(1)}>S_j^{(2)}\}\bigg).
		\end{align}
		Based on \eqref{LA4}, we obtain that 
		\begin{equation}
			\label{LA5}
			\begin{aligned}
				\frac{\mathcal{I}}{\Pr(M_n = k)} & =\Pr\bigg(\bigcap_{\substack{j=0\\i = 1, 2}}^{k-1}\{S_j^{(i)}\in A_{j}^{(i)}\}|\bigcap_{j=1}^k \{S_j^{(1)}>S_j^{(2)}\}) \\ & \hspace{2cm}\cdot \Pr\bigg( \bigcap_{\substack{j=1\\i = 1, 2}}^{n-k}\{S_j^{(i)}\in A_{j}^{(i)}\}|\bigcap_{j=1}^{n-k} \{S_j^{(1)}>S_j^{(2)}\}\bigg)
			\end{aligned}
		\end{equation}
		where we utilize the fact $\Pr(M_n =k) = \Pr(\cap_{j=1}^k S_j^{(1)}>S_j^{(2)})\Pr(\cap_{j=1}^{n-k} S_j^{(1)}>S_j^{(2)}).$ The above splitting essentially shows that conditioned on the maximizer, the left and right portion of the maximizer are independent non-intersecting random walks.\\
		
		We now consider $Z_n(t)= (\frac{S_{nt}^{(1)}}{\sqrt{n}}, \frac{S_{nt}^{(2)}}{\sqrt{n}})$ on $[0,1]$ where it is linearly interpolated in between. By Donsker's invariance principle, $Z_n \Rightarrow \B = (\B_1, \B_2)$ independent Brownian motions on $[0,1].$ Recall $\Omega$ from Definition \ref{def:omega}. Clearly $\Pr(\B \in \text{Discontinuity of }\Omega ) = 0,$ so 
		\begin{align*}
			(\Omega Z_n)_{+}\Rightarrow (\Omega \B)_{+} \text{ and }(\Omega Z_n)_{-}\Rightarrow (\Omega \B)_{-}.
		\end{align*} On the other hand, following \eqref{LA5} we see that conditioned on $M_n = k,$ $(\Omega Z_n)_{+} \stackrel{(d)}{=} Y_{n-k}$ and $(\Omega Z_n)_{-}\stackrel{(d)}{=}Y_k$ are independent where $Y_n(\cdot)$ is the linearly interpolated non-intersecting random walk defined in Proposition \ref{propF}.  As $k, n \rightarrow \infty,$ $Y_k(\cdot), Y_{n-k}(\cdot) \stackrel{d}{\to} \W$ where $W$ is the non-intersecting Brownian motion on $[0,1]$ defined in Definition \ref{def:nibm}. At the same time, $\frac{M_n}{n}\Rightarrow M$, which has density $\propto \frac{1}{\sqrt{t(1-t)}}$ on $[0,1].$ Thus, $(\Omega \B)_{+}, (\Omega \B)_{-}, M$ are independent and $(\Omega \B)_{+}\stackrel{(d)}{=}(\Omega \B)_{-}\stackrel{(d)}{=}\W.$
	\end{proof}
	
	\begin{remark}\label{r:gen_k}
		We expect similar decomposition results to hold for $3$ or more Brownian motions or bridges around the maximizer of their sums. More precisely, if $M$ is the maximizer of $B_1(x)+B_2(x)+B_3(x)$, where $B_i$ are independent Brownian motion on $[0,1]$, we expect the law of 
		$$(B_1(M)-B_1(M+x),B_2(M)-B_2(M+x),B_3(M)-B_3(M+x))$$
		to be again Brownian motions but their sum conditioned to be positive (its singular conditioning; so requires some care to define properly). Indeed, such a statement can be proven rigorously at the level of random walks. Then a possible approach is to take diffusive limit of random walks under conditioning and prove existence of weak limits. Due to lack of results for such conditioning event, proving such a statement require quite some technical work. Since it is extraneous for our purpose, we do not pursue this direction here.
	\end{remark}

	\section{Bessel bridges and non-intersecting Brownian bridges} \label{sec:bbnibb}
	
	In this section, we study diffusive limits and separation properties of Bessel bridges and non-intersecting Brownian bridges. The central object that appears in this section is the Dyson Brownian motion \cite{dyson1962brownian}  which are intuitively several Brownian bridges conditioned on non-intersection. In Section \ref{sec:dyson}, we recall Dyson Brownian motion and study different properties of it. In Section \ref{sec:sep} we prove a technical estimate that indicates the two parts of non-intersecting Brownian bridges have uniform separation and derive the diffusive limits of non-intersecting Brownian bridges. The precise renderings of these results are given in Proposition \ref{pgamma} and Proposition \ref{lemmaC}.
	
	\subsection{Diffusive limits of Bessel bridges and $\nonintbb$} \label{sec:dyson} We first recall the definition of Dyson Brownian motion. Although they are Brownian motions conditioned on non-intersection, since the conditioning event is singular, such an interpretation needs to be justified properly. There are several ways to rigorously define the Dyson Brownian motion, either through the eigenvalues of Hermitian matrices with Brownian motions as entries or as a solution of system of stochastic PDEs. In this paper, we recall the definition via specifying the entrance law and transition densities (see \cite{o2002representation} and \cite[Section 3]{warren2007dyson} for example). 
	
	\begin{definition}[Dyson Brownian motion] \label{def:dbm} A $2$-level Dyson Brownian motion $\calD(\cdot)=(\calD_1(\cdot),\calD_2(\cdot))$ is an $\R^2$ valued process on $[0,\infty)$ with $\calD_1(0)=\calD_2(0)=0$ and with the entrance law
		\begin{align}\label{eq:dysonent}
			\Pr\left(\calD_1(t)\in \d y_1,\calD_2(t) \in \d y_2\right)=\ind\{y_1>y_2\}\frac{(y_1-y_2)^2}{t}p_t(y_1)p_t(y_2)\d y_1 \d y_2, \quad t>0.
		\end{align}
		For $0<s<t<\infty$ and $x_1>x_2$, its transition densities are given by
		\begin{equation}
			\label{eq:dysontd}
			\begin{aligned}
				& \Pr\left(\calD_1(t)\in \d y_1,\calD_2(t) \in \d y_2\mid \calD_1(s)=x_1,\calD_2(s)=x_2\right)\\ & \hspace{2cm}=\ind\{y_1>y_2\}\frac{y_1-y_2}{x_1-x_2}\det(p_{t-s}(x_i-y_j))_{i,j=1}^2\d y_1 \d y_2.
			\end{aligned}
		\end{equation}
		The above formulas can be extended to $n$-level Dyson Brownian motions with (see \cite[Section 3]{warren2007dyson}) but for the rest of the paper we only require the $n=2$ case. So, we will refer to the $2$-level object defined above loosely as Dyson Brownian motion or $\dbm$ in short.
	\end{definition}
	We next define the Bessel processes via specifying the entrance law and transition densities which are also well known in literature (see \cite[Chapter VI.3]{revuz}). 
	\begin{definition}[Bessel Process]\label{def:bessel}  A 3D Bessel process $\mathcal{R}_1$ with diffusion coefficient $1$ is an $\R$-valued process on $[0,\infty)$ with $\mathcal{R}_1(0)=0$ and with the entrance law 
		\begin{align*}
			\Pr(\mathcal{R}_1(t)\in d y)=\tfrac{2y^2}{t}p_t(y)\d y, \quad x\in [0,\infty), \quad t>0.
		\end{align*}
		For $0<s<t<\infty$ and $x>0$, its transition densities are given by
		\begin{align*}
			\Pr(\mathcal{R}_1(t)\in d y\mid \mathcal{R}_1(s)=x)=\frac{y}{x}[p_{t-s}(x-y)-p_{t-s}(x+y)]dy,\quad y\in [0,\infty).
		\end{align*}
		More generally, $\mathcal{R}_{\sigma}(\cdot)$ is a 3D Bessel process with diffusion coefficient $\sigma>0$ if $\sigma^{-1/2}\mathcal{R}_{\sigma}(\cdot)$ is a 3D Bessel process with diffusion coefficient $1$. 
	\end{definition}
	
	In this paper we will only deal with $3$-dimensional Bessel processes. Thus we will just loosely refer to the above processes as Bessel processes. 
	
	$\dbm$ is directly linked with Bessel processes. Indeed the difference of the two paths of $\dbm$ is known (see \cite{dube} for example) to be a 3D Bessel process with diffusion coefficient $2$. This fact can be proven easily via SPDE or the Hermitian matrices interpretation of $\dbm$. Since we use this result repeatedly in later sections we record it as a lemma below. 
	
	\begin{lemma}[Dyson to Bessel]\label{l:dtob} Let $\calD=(\calD_1,\calD_2)$ be a $\dbm$. Then, as a function in $x$, we have $\calD_1(x)+\calD_2(x)\stackrel{d}{=}\sqrt2 B(x)$ and $\calD_1(x)-\calD_2(x)\stackrel{d}{=}\mathcal{R}_2(x)$ where $B(x)$ is a Brownian motion and $\mathcal{R}_2:[0,\infty)\to \R$ is  a Bessel process (see Definition \ref{def:bessel}) with diffusion coefficient $2$.
	\end{lemma}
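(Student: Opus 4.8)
The plan is to prove the stronger statement that the $\R^2$-valued process $x\mapsto(\calD_1(x)+\calD_2(x),\,\calD_1(x)-\calD_2(x))$ has the law of $(\sqrt2\,B,\,\calR_2)$ with the two coordinates independent, from which both displayed identities follow at once. The key observation is that the map $\Phi(y_1,y_2):=(y_1+y_2,\,y_1-y_2)$ is a fixed linear isomorphism, so $\Phi(\calD)$ is again a Markov process whose entrance law and transition kernel are simply the $\Phi$-pushforwards of those of the $\dbm$ in Definition \ref{def:dbm}. Since a Markov process started at a deterministic point is determined by its entrance law and transition densities, it then suffices to compute these two pushforwards and match them against the corresponding data for $\sqrt2\,B$ (namely $p_{2t}$ for the entrance law and $p_{2(t-s)}(\,\cdot\,-\,\cdot\,)$ for the transitions) and for $\calR_2$ (obtained from Definition \ref{def:bessel} through the scaling relation $\calR_2\stackrel{d}{=}\sqrt2\,\calR_1$).

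For the entrance law I would introduce $u=y_1+y_2$, $v=y_1-y_2$, note that $y_1^2+y_2^2=(u^2+v^2)/2$, that $\{y_1>y_2\}$ becomes $\{v>0\}$, and that the change of variables has Jacobian $\tfrac12$. Starting from \eqref{eq:dysonent} one then obtains
\begin{align*}
\Pr\big((\calD_1+\calD_2)(t)\in\d u,\ (\calD_1-\calD_2)(t)\in\d v\big)=\ind\{v>0\}\,\frac{v^2}{4\pi t^2}\,e^{-(u^2+v^2)/(4t)}\,\d u\,\d v,
\end{align*}
which factorizes visibly as $p_{2t}(u)\,\d u$ times the entrance law of $\sqrt2\,\calR_1(t)=\calR_2(t)$; the latter identification is checked by substituting $v=\sqrt2\,w$ in the entrance density of Definition \ref{def:bessel}.

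For the transition densities I would fix $s<t$, set $\tau=t-s$, write $u_x=x_1+x_2,\ v_x=x_1-x_2,\ u_y=y_1+y_2,\ v_y=y_1-y_2$, and use the same coordinate change together with the two quadratic-form identities $(x_1-y_1)^2+(x_2-y_2)^2=\tfrac12\big((u_x-u_y)^2+(v_x-v_y)^2\big)$ and $(x_1-y_2)^2+(x_2-y_1)^2=\tfrac12\big((u_x-u_y)^2+(v_x+v_y)^2\big)$ to expand the $2\times2$ heat-kernel determinant in \eqref{eq:dysontd} as $p_{2\tau}(u_x-u_y)\cdot(\pi\tau)^{-1/2}\big(e^{-(v_x-v_y)^2/(4\tau)}-e^{-(v_x+v_y)^2/(4\tau)}\big)$. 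Multiplying by the prefactor $\tfrac{y_1-y_2}{x_1-x_2}=\tfrac{v_y}{v_x}$ and by the Jacobian $\tfrac12$ then exhibits the transition law of $\Phi(\calD)$ as a product of the $\sqrt2\,B$ transition kernel in the $u$-variable and the $\calR_2$ transition kernel in the $v$-variable (again recognized via $v=\sqrt2\,w$ from Definition \ref{def:bessel}, with the positivity constraint $v_x,v_y>0$ matching that of the Bessel process), which completes the argument.

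I do not expect a genuine obstacle here: the statement reduces to a direct density computation. The only points needing care are the bookkeeping of Jacobian factors and normalizing constants and the $\calR_\sigma\leftrightarrow\calR_1$ rescaling when translating the Bessel kernels; it is worth emphasizing that, unlike the non-intersecting Brownian bridge case, the $\dbm$ entrance law in \eqref{eq:dysonent} is already normalized (the $(y_1-y_2)^2/t$ factor carries its own normalization), so no stray partition function survives and the factorization is exact. Alternatively, one could bypass the computation entirely by invoking the Hermitian-matrix (or interacting-SDE) representation of the $\dbm$, under which $\calD_1+\calD_2$ is the trace of a $2\times2$ Hermitian Brownian motion — manifestly a Brownian motion with diffusion coefficient $2$ — and $\calD_1-\calD_2$ is its spectral gap, a $3$D Bessel process; I take the density route since the relevant kernels are already at hand.
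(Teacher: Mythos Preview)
Your computation is correct: the change of variables $(y_1,y_2)\mapsto(y_1+y_2,y_1-y_2)$ applied to \eqref{eq:dysonent} and \eqref{eq:dysontd} factorizes exactly into the heat kernel $p_{2\tau}$ in the sum variable and the Bessel kernel with diffusion coefficient $2$ in the difference variable, and the Jacobian and rescaling bookkeeping you indicate checks out. The paper, however, does not actually prove this lemma: it records it as a known fact (citing \cite{dube}) and merely remarks that it ``can be proven easily via SPDE or the Hermitian matrices interpretation of $\dbm$'' --- precisely the alternative you mention at the end. Since the paper chose to \emph{define} the $\dbm$ through its entrance law and transition densities (Definition~\ref{def:dbm}) rather than through matrices or SDEs, your density route is in fact the more self-contained argument relative to the paper's own setup; the Hermitian-matrix shortcut, while shorter, would require importing a representation the paper does not develop.
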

	
	We end this subsection by providing two lemmas that compare the densities of $\nonintbb$ and $\dbm$.
	
	\begin{lemma}\label{lalpha}
		Suppose the pair of random variables $(\U_1, \U_2)$ has joint probability density function: 
		\begin{align}\label{eq:umar}
			\Pr(\U_1 \in \d y_1, \U_2 \in \d y_2) = \frac{(y_1-y_2)^2}{t}p_t(y_1)p_t(y_2), \ y_1 > y_2.
		\end{align}
		Conditioning on $(\U_1, \U_2), $ we consider a $\nonintbb$  $(\V_1, \V_2)$ on $[0,t]$ ending at $(\U_1, \U_2)$, see Definition \ref{def:nibb}. Then unconditionally, $(\V_1, \V_2)$ is equal in distribution as $\dbm$ $(\calD_1, \calD_2)$ on $[0,t].$ (see Definition \ref{def:dbm}).
	\end{lemma}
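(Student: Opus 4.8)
## Proof Proposal for Lemma \ref{lalpha}

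The plan is to verify the claimed distributional identity by a direct comparison of finite-dimensional distributions, exploiting the fact that both sides are Markov processes. First I would fix a partition $0 < t_1 < t_2 < \cdots < t_k < t$ and compute the joint density of $(\V_1(t_i),\V_2(t_i))_{i=1}^k$ on the event $\{V_1(t_i) > V_2(t_i) \text{ for all } i\}$ by integrating out the endpoint $(\U_1,\U_2)$. Concretely, using the transition-density formulas in Definition \ref{def:nibb} together with the density \eqref{eq:umar} for $(\U_1,\U_2)$, the joint density at $(\vec{y}^{(1)},\ldots,\vec{y}^{(k)})$ is
\begin{align*}
\int_{u_1 > u_2} \frac{(u_1-u_2)^2}{t}\,p_t(u_1)p_t(u_2) \cdot \frac{\ind\{y^{(1)}_1 > y^{(1)}_2\}(y^{(1)}_1 - y^{(1)}_2)p_{t_1}(y^{(1)}_1)p_{t_1}(y^{(1)}_2)}{t_1(u_1-u_2)p_1(\cdot)p_1(\cdot)}\det(p_{1-\cdots})\cdots\, \d u_1 \d u_2,
\end{align*}
where I have only sketched the shape; the point is that the troublesome normalizing factor $(u_1-u_2)p_1(u_1)p_1(u_2)$ appearing in the $\nonintbb$ densities is exactly cancelled, leaving a product of determinantal kernels $\det(p_{t_{j+1}-t_j}(y^{(j)}_i - y^{(j+1)}_{i'}))$ telescoping along the partition, times $\frac{(y^{(1)}_1-y^{(1)}_2)}{t_1}p_{t_1}(y^{(1)}_1)p_{t_1}(y^{(1)}_2)$ at the first time, and a final integral over $(u_1,u_2)$ that reduces to $\det(p_{t-t_k}(y^{(k)}_i - u_j))$ integrated against $(u_1-u_2)$.

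The key computational step is then to evaluate $\int_{u_1>u_2}(u_1-u_2)\det(p_{s}(y_i-u_j))_{i,j=1}^2\,\d u_1\d u_2$ and show it equals $(y_1 - y_2)$ — that is, the linear function $u_1 - u_2$ is (up to normalization) a harmonic function for the killed two-dimensional heat semigroup with the Weyl-chamber absorbing boundary, which is precisely the classical fact underlying the Karlin--McGregor / Doob $h$-transform construction of $\dbm$. Granting this identity, the integrated-out density becomes exactly
\begin{align*}
\frac{(y^{(1)}_1 - y^{(1)}_2)}{t_1}p_{t_1}(y^{(1)}_1)p_{t_1}(y^{(1)}_2)\prod_{j=1}^{k-1}\frac{y^{(j+1)}_1 - y^{(j+1)}_2}{y^{(j)}_1 - y^{(j)}_2}\det\big(p_{t_{j+1}-t_j}(y^{(j)}_i - y^{(j+1)}_{i'})\big),
\end{align*}
which telescopes to match the entrance law \eqref{eq:dysonent} at time $t_1$ followed by the transition densities \eqref{eq:dysontd} at the later times — i.e. exactly the finite-dimensional law of $\dbm$ on $[0,t]$. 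Since finite-dimensional distributions determine the law on $C([0,t])$ (both processes being continuous), this completes the proof.

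I expect the main obstacle to be purely bookkeeping: correctly tracking the several $p_1(\cdot)$ and $t$-dependent normalization factors in the $\nonintbb$ transition densities of Definition \ref{def:nibb} (which are written for the interval $[0,1]$ and must be rescaled to $[0,t]$ via the $V^{[0,M]}$ convention) and confirming that they cancel cleanly against \eqref{eq:umar}, rather than any conceptual difficulty. The one genuinely non-trivial input is the harmonicity identity $\int_{u_1>u_2}(u_1-u_2)\det(p_s(y_i-u_j))\,\d u = (y_1-y_2)$, which I would either cite from the standard Dyson/non-colliding Brownian motion literature (e.g. \cite{warren2007dyson, karlin1959coincidence}) or verify directly by expanding the determinant and using that $\int u\, p_s(y-u)\,\d u = y$ together with a symmetry/antisymmetry argument on the Weyl chamber.
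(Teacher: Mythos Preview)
Your proposal is correct and follows essentially the same route as the paper: both arguments integrate out the random endpoint $(\U_1,\U_2)$ against the density \eqref{eq:umar} in the finite-dimensional density formulas for $\nonintbb$, use the harmonicity identity $\int_{u_1>u_2}(u_1-u_2)\det(p_s(y_i-u_j))\,\d u_1\d u_2 = y_1-y_2$ to collapse the last factor, and then recognize the resulting telescoping product as the $\dbm$ finite-dimensional law. The paper justifies the key identity simply by observing that the $\dbm$ transition density \eqref{eq:dysontd} integrates to $1$, which you may find cleaner than the direct expansion you sketch.
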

	
	\begin{lemma}\label{lbeta}
		Fix $\delta, M > 0.$ Consider a $\nonintbb$ $(\V_1, \V_2)$ on $[0,1]$ ending at $(a_1, a_2)$ (see Definition \ref{def:nibb}), where $a_1 > a_2$. Then, there exists a constant $\Con_{M, \delta} > 0 $ such that for all $t \in (0, \delta)$, $y_1>y_2$ and $-M \le a_2 < a_1 \le M$,
		\begin{align}\label{la1}
			\frac{\Pr(\V_1(t)\in \d y_1, \V_2(t)\in \d y_2)}{\Pr(\calD_1(t)\in \d y_1, \calD_2(t)\in \d y_2)} \le \Con_{M, \delta},
		\end{align}
		where $(\calD_1,\calD_2)$ is a $\dbm$ defined in Definition \ref{def:dbm}.
	\end{lemma}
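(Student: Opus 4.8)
The plan is to compute the ratio of densities explicitly; it collapses to an elementary Gaussian estimate. Since $\V$ is a process on $[0,1]$, the statement is only meaningful for $\delta\le 1$, and shrinking $\delta$ only strengthens the claim, so I may assume $\delta\in(0,1)$. By the density formula for $\nonintbb$ in Definition \ref{def:nibb}, for $0<t<1$ the pair $(\V_1(t),\V_2(t))$ ending at $(a_1,a_2)$ has density
\[
\frac{\ind\{y_1>y_2\}\,(y_1-y_2)\,p_t(y_1)p_t(y_2)}{t\,(a_1-a_2)\,p_1(a_1)p_1(a_2)}\,\det\big(p_{1-t}(y_i-a_j)\big)_{i,j=1}^2,
\]
while the entrance law in Definition \ref{def:dbm} gives $(\calD_1(t),\calD_2(t))$ the density $\ind\{y_1>y_2\}\tfrac{(y_1-y_2)^2}{t}\,p_t(y_1)p_t(y_2)$. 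Dividing, the factors $p_t(y_i)$, $t^{-1}$, and one power of $(y_1-y_2)$ cancel (legitimately, since $y_1>y_2$ and $a_1>a_2$), leaving
\[
\frac{\Pr(\V(t)\in \d\vec{y})}{\Pr(\calD(t)\in \d\vec{y})}=\frac{\det\big(p_{1-t}(y_i-a_j)\big)_{i,j=1}^2}{(y_1-y_2)(a_1-a_2)\,p_1(a_1)p_1(a_2)}.
\]

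Next I would expand the $2\times2$ determinant of heat kernels. Writing $s:=1-t$ and $p_s(x)=(2\pi s)^{-1/2}e^{-x^2/(2s)}$, the identity
\[
(y_1-a_1)^2+(y_2-a_2)^2-(y_1-a_2)^2-(y_2-a_1)^2=-2(y_1-y_2)(a_1-a_2)
\]
yields, after factoring out the common Gaussian,
\[
\det\big(p_{s}(y_i-a_j)\big)_{i,j=1}^2=\frac{1}{2\pi s}\,e^{-\frac{(y_1-a_1)^2+(y_2-a_2)^2}{2s}}\Big(1-e^{-\frac{(y_1-y_2)(a_1-a_2)}{s}}\Big).
\]
Since $(y_1-y_2)(a_1-a_2)>0$, the bound $1-e^{-x}\le x$ for $x\ge 0$ gives $\det\big(p_s(y_i-a_j)\big)\le \tfrac{(y_1-y_2)(a_1-a_2)}{2\pi s^2}\,e^{-[(y_1-a_1)^2+(y_2-a_2)^2]/(2s)}\le \tfrac{(y_1-y_2)(a_1-a_2)}{2\pi s^2}$.

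Finally I would substitute this into the ratio: the factor $(y_1-y_2)(a_1-a_2)$ cancels, and using $p_1(a_1)p_1(a_2)=(2\pi)^{-1}e^{-(a_1^2+a_2^2)/2}\ge (2\pi)^{-1}e^{-M^2}$ for $|a_i|\le M$, together with $s=1-t\ge 1-\delta$, I obtain
\[
\frac{\Pr(\V(t)\in\d\vec{y})}{\Pr(\calD(t)\in\d\vec{y})}\le \frac{1}{2\pi s^2}\cdot 2\pi e^{M^2}=\frac{e^{M^2}}{(1-t)^2}\le \frac{e^{M^2}}{(1-\delta)^2}=:\Con_{M,\delta},
\]
which is the asserted uniform bound. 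There is essentially no serious obstacle here: the argument is just the Gaussian determinant identity plus $1-e^{-x}\le x$. The only points requiring care are the bookkeeping in the determinant expansion and observing that the uniformity in $\vec{y}$ is obtained precisely by discarding the factor $e^{-[(y_1-a_1)^2+(y_2-a_2)^2]/(2s)}\le 1$, after which no dependence on $\vec{y}$ remains; one should also keep track that $s=1-t$ stays bounded away from $0$, which is exactly where the restriction $t<\delta<1$ is used.
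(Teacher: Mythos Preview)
Your proof is correct and follows the same overall strategy as the paper: write the ratio explicitly using the $\nonintbb$ and $\dbm$ density formulas, expand the $2\times2$ heat-kernel determinant via the quadratic identity, and bound the remaining factor uniformly using $|a_i|\le M$ and $1-t\ge 1-\delta$.

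The only difference is in the bookkeeping. The paper factors out the \emph{off-diagonal} product $p_{1-t}(y_1-a_2)p_{1-t}(y_2-a_1)$, which produces a factor $e^{(y_1-y_2)(a_1-a_2)/(1-t)}-1$ and then requires a case split according to whether $(y_1-y_2)(a_1-a_2)\gtrless 1-t$ (invoking the inequality $\gamma(e^{1/\gamma}-1)\le e-1$ for $\gamma\ge 1$ in one case). You instead factor out the \emph{diagonal} product $p_s(y_1-a_1)p_s(y_2-a_2)$, obtaining $1-e^{-(y_1-y_2)(a_1-a_2)/s}$, for which the single global bound $1-e^{-x}\le x$ suffices. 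Your route is slightly cleaner, avoids the case analysis, and yields the marginally sharper constant $e^{M^2}/(1-\delta)^2$ in place of the paper's $2e^{M^2}/(1-\delta)^2$; otherwise the two arguments are interchangeable.
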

	
	\begin{proof}[Proof of Lemma \ref{lalpha}] To show that  $(\V_1, \V_2)$ is equal in distribution to $(\calD_1, \calD_2)$ on $[0,t],$ it suffices to show that $(\V_1, \V_2)$ has the same finite dimensional distribution as $(\calD_1,\calD_2)$ on $[0,t]$. Fix any $k \in \N$, and $0< s_1 < \ldots < s_k < t$ and $y_1>y_2$. Using Brownian scaling and the formulas from Definition \ref{def:nibb} we have
		\begin{align*}
			& \Pr\bigg(\bigcap_{i =1}^k \{\V_1(s_i) \in \d x_{i,1}, \V_2(s_i) \in \d x_{i,2}\}|U_1=y_1,U_2=y_2\bigg) \\ & \hspace{2cm} =\frac{(x_{1,1}-x_{1,2})}{s_1}p_{s_1}(x_{1,1})p_{s_1}(x_{1,2})\prod_{m=1}^{k-1}\det(p_{s_{m+1}-s_m}(x_{m+1,i}-x_{m,j}))_{i, j=1}^2\\ & \hspace{5cm}\cdot \frac{\det(p_{t-{s_k}}(x_{k,i}-y_{j}))_{i, j=1}^2}{{\frac1t(y_1-y_2)p_t(y_1)p_t(y_2)}} \prod_{i=1}^k \d x_{i,1}\d x_{i,2},
		\end{align*}
		where the above density is supported on $\{x_{i,1}>x_{i,2} \mid i=1,2,\ldots,k\}$. For convenience, in the rest of the calculations, we drop $\prod_{i=1}^k \d x_{i,1}\d x_{i,2}$ from the above formula. In view of the marginal density of $(U_1,U_2)$ given by \eqref{eq:umar}, we thus have that
		\begin{align*}
			&\Pr\bigg(\bigcap_{i =1}^k \{\V_1(s_i) \in \d x_{i,1}, \V_2(s_i) \in \d x_{i,2}\}\bigg)\\ & = \int_{y_1>y_2}\Pr\bigg(\bigcap_{i =1}^k \{\V_1(s_i) \in \d x_{i,1}, \V_2(s_i) \in \d x_{i,2}\}|U_1=y_1,U_2=y_2\bigg)\frac{(y_1-y_2)^2}{t}p_t(y_1)p_t(y_2)\d y_1 \d y_2 \\ & = \frac{(x_{1,1}-x_{1,2})}{s_1}p_{s_1}(x_{1,1})p_{s_1}(x_{1,2})\prod_{m=1}^{k-1}\det(p_{s_{m+1}-s_m}(x_{m+1,i}-x_{m,j}))_{i, j=1}^2\\ & \hspace{5cm}\cdot \int_{y_1>y_2}(y_1-y_2){\det(p_{t-{s_k}}(x_{k,i}-y_{j}))_{i, j=1}^2}\d y_1 \d y_2.
		\end{align*}
		But given the transition densities for $\dbm$ from \eqref{eq:dysontd}. we know that $$ \int_{y_1>y_2}(y_1-y_2){\det(p_{t-{s_k}}(x_{k,i}-y_{j}))_{i, j=1}^2}\d y_1 \d y_2 = x_{k,1}-x_{k,2}.$$ 
		Plugging this back we get 
		\begin{align*}
			&\Pr\bigg(\bigcap_{i =1}^k \{\V_1(s_i) \in \d x_{i,1}, \V_2(s_i) \in \d x_{i,2}\}\bigg)\\ & = \frac{(x_{1,1}-x_{1,2})^2}{s_1}p_{s_1}(x_{1,1})p_{s_1}(x_{1,2})\prod_{m=1}^{k-1}\frac{x_{m+1,1}-x_{m+1,2}}{x_{m,1}-x_{m,2}}\det(p_{s_{m+1}-s_m}(x_{m+1,i}-x_{m,j}))_{i, j=1}^2.
		\end{align*}
		Using the entrance law and transition densities formulas for $\dbm$ from Definition \ref{def:dbm}, we see that the above formula matches with the finite dimensional density formulas for $\dbm$. This completes the proof.
	\end{proof}

	\begin{proof}[Proof of Lemma \ref{lbeta}] Fix any arbitrary $y_1 > y_2$ and $t \in (0, \delta)$ Recall the density formulas for $\nonintbb$ and $\dbm$ from Definitions \ref{def:nibb} and \ref{def:dbm}. We have
		\begin{align}\label{la2}
			\text{ l.h.s of }\eqref{la1}&= \frac{\det(p_{1-t}(y_i-a_j))_{i,j=1}^2}{(y_1 - y_2)(a_1-a_2)p_1(a_1)p_1(a_2)}\\ & \label{la10} =\frac{p_{1-t}(y_1-a_2)p_{1-t}(y_2-a_1)}{(y_1 - y_2)(a_1-a_2)p_1(a_1)p_1(a_2)}\left[e^{\frac{(y_1- y_2)(a_1- a_2)}{1-t}}-1\right].
		\end{align}
		If $(y_1-y_2)(a_1-a_2) \ge 1-t$, then 
		$$\mbox{r.h.s.~of \eqref{la2}} \le \frac{\det(p_{1-t}(y_i-a_j))_{i,j=1}^2}{(1-t)p_1(a_1)p_1(a_2)} \le \tfrac1{(1-t)^2}e^{\frac{a_1^2+a_2^2}{2}} \le \tfrac{1}{(1-\delta)^2}e^{M^2}.$$
		If $(y_1-y_2)(a_1-a_2) \le 1-t$, we utilize the elementary inequality that $\gamma(e^{\frac1\gamma}-1)\le e-1$, for all $\gamma\ge 1$. Indeed, taking $\gamma=\frac{1-t}{(y_1-y_2)(a_1-a_2)}\ge 1$ in this case we have
		$$\mbox{r.h.s.~of \eqref{la10}} \le \frac{p_{1-t}(y_1-a_2)p_{1-t}(y_2-a_1)}{(1-t)p_1(a_1)p_1(a_2)}(e-1) \le \tfrac2{(1-t)^2}e^{\frac{a_1^2+a_2^2}{2}} \le \tfrac{2}{(1-\delta)^2}e^{M^2}.$$
		Combining both cases yields the desired result.
	\end{proof}

	\subsection{Uniform separation and diffusive limits} \label{sec:sep} The main goal of this subsection is to prove Proposition \ref{pgamma} and Proposition \ref{lemmaC}. Proposition \ref{pgamma} highlights a uniform separation between the two parts of the $\nonintbb$ defined in Definition \ref{def:nibb}, while Proposition \ref{lemmaC} shows $\dbm$s are obtained as diffusive limits of $\nonintbb$s.
	
	\begin{proposition}\label{pgamma} Fix $M>0$.
		Let $(\V_1^{(n)}, \V_2^{(n)})$ be a sequence of $\nonintbb$s (see Definition \ref{def:nibb}) on $[0,1]$ beginning at 0 and ending at $(a_1^{(n)},a_2^{(n)}).$ Suppose that $a_1^{(n)} - a_2^{(n)} > \frac1M$ and $|a_i^{(n)}|\le M$ for all $n$ and $i=1,2$.  Then for all $\rho > 0$, we have 
		\begin{align}\label{pgammaineq}
			\limsup_{\theta \rightarrow \infty}\limsup_{n \rightarrow \infty}\Pr\left(\int_{\theta}^{n}\exp\left(-\sqrt{n}\big[\V_1^{(n)}(\tfrac{y}{n}) - \V_2^{(n)}(\tfrac{y}{n})\big]\right)\d y \ge \rho\right) = 0.
		\end{align}	
	\end{proposition}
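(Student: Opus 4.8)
## Proof proposal for Proposition \ref{pgamma}

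\textbf{Overall strategy.} By Lemma \ref{l:nibtobb}, the difference process $\V_1^{(n)}(\cdot) - \V_2^{(n)}(\cdot)$ is distributed as $\sqrt{2}\,\bb^{(n)}(\cdot)$, where $\bb^{(n)}$ is a Bessel bridge on $[0,1]$ ending at $(a_1^{(n)}-a_2^{(n)})/\sqrt{2}$. So the whole statement is really a statement about a single Bessel bridge: writing $b^{(n)} := (a_1^{(n)}-a_2^{(n)})/\sqrt{2} \in [1/(M\sqrt 2), M\sqrt 2]$, I want to show
\begin{align}\label{eq:plan1}
  \limsup_{\theta\to\infty}\limsup_{n\to\infty}\Pr\left(\int_\theta^n \exp\left(-\sqrt{2n}\,\bb^{(n)}(\tfrac{y}{n})\right)\d y \ge \rho\right) = 0.
\end{align}
The plan is to use the explicit transition densities of the Bessel bridge from Lemma \ref{l:tdbes} to control $\bb^{(n)}(y/n)$ for $y$ in the range $[\theta, n]$, i.e.\ near the left endpoint $0$ where the bridge started. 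In that range the bridge is far from its endpoint constraint and behaves like a genuine 3D Bessel process run for a short time $y/n \in [\theta/n, 1]$; the $\sqrt{2n}$ prefactor in the exponent is exactly the diffusive rescaling that makes $\sqrt{2n}\,\bb^{(n)}(y/n)$ comparable to $\sqrt{2}$ times a 3D Bessel process evaluated at time $y$. Indeed, from Lemma \ref{l:tdbes}, for fixed $y$ and large $n$ the law of $\sqrt n\,\bb^{(n)}(y/n)$ converges (uniformly over the allowed $b^{(n)}$) to that of a 3D Bessel process at time $y$, since the bridge-correction factors $[p_{1-t}(x-a)-p_{1-t}(x+a)]/p_1(a)$ tend to $1$ as $t = y/n \to 0$.

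\textbf{Key steps.} First, I would use Markov's inequality to reduce \eqref{eq:plan1} to showing
\begin{align}\label{eq:plan2}
  \limsup_{\theta\to\infty}\limsup_{n\to\infty}\Ex\left[\int_\theta^n \exp\left(-\sqrt{2n}\,\bb^{(n)}(\tfrac{y}{n})\right)\d y\right] = 0,
\end{align}
so it suffices to get an integrable-in-$y$ upper bound on $\Ex\left[\exp(-\sqrt{2n}\,\bb^{(n)}(y/n))\right]$, uniform in $n$ and in the endpoint $b^{(n)}$, that decays as $y\to\infty$. Second, using the one-time marginal formula from Lemma \ref{l:tdbes} with $t = y/n$, I would write this expectation as an explicit integral
\begin{align*}
  \Ex\left[e^{-\sqrt{2n}\,\bb^{(n)}(y/n)}\right] = \int_0^\infty e^{-\sqrt{2n}\,x}\,\frac{x}{b^{(n)}(y/n)}\frac{p_{y/n}(x)}{p_1(b^{(n)})}\left[p_{1-y/n}(x-b^{(n)}) - p_{1-y/n}(x+b^{(n)})\right]\d x,
\end{align*}
then substitute $x = w/\sqrt{n}$ to see that this equals
\begin{align*}
  \frac{1}{b^{(n)}\,\sqrt y}\,\frac{1}{p_1(b^{(n)})}\int_0^\infty e^{-\sqrt 2\,w}\,w\,p_y(w)\left[p_{1-y/n}\big(\tfrac{w}{\sqrt n}-b^{(n)}\big) - p_{1-y/n}\big(\tfrac{w}{\sqrt n}+b^{(n)}\big)\right]\d w .
\end{align*}
For $y \le n$ the bracket is bounded above by $p_{1-y/n}(\tfrac{w}{\sqrt n}-b^{(n)}) \le (2\pi(1-y/n))^{-1/2}$; this diverges as $y \to n$, so near the right end of the range I instead bound the bracket crudely using that a difference of heat kernels with the same time parameter is bounded by $2\|p_{1-y/n}\|_\infty$, and more importantly I exploit the Bessel-bridge interpretation directly: $\Pr(\bb^{(n)}(t)\in dx)$ is a genuine probability density, so $\Ex[e^{-\sqrt{2n}\,\bb^{(n)}(y/n)}] \le 1$ trivially, and the substitution shows it is in fact $\le C_M\, y^{-1/2}\int_0^\infty e^{-\sqrt2 w} w\, p_y(w)\,\d w$ for $y$ bounded away from $n$. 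Third, the Gaussian integral $\int_0^\infty e^{-\sqrt 2 w} w\, p_y(w)\d w$ is elementary and decays like $C\,y^{-1}e^{-cy}$ (actually faster: the $e^{-w^2/(2y)}$ factor against $e^{-\sqrt2 w}w$ gives, after completing the square, a bound $\le C y^{-1/2} e^{-c y}$ for large $y$). Multiplying by the $y^{-1/2}$ prefactor gives $\Ex[e^{-\sqrt{2n}\,\bb^{(n)}(y/n)}] \le C_M\, y^{-1} e^{-cy}$ uniformly over $\theta \le y \le (1-\eta)n$, and the remaining piece $\int_{(1-\eta)n}^n$ is handled by the trivial bound $\le 1$ times $\eta n$ — wait, that does not vanish, so I instead note that for $y$ close to $n$ one has $t=y/n$ close to $1$ and the bridge is being pulled toward its endpoint $b^{(n)} = O(1)$, so $\sqrt n\,\bb^{(n)}(y/n)$ is of order $\sqrt n$ and $\Ex[e^{-\sqrt{2n}\,\bb^{(n)}(y/n)}]$ is superexponentially small; a direct estimate from Lemma \ref{l:tdbes} on $[(1-\eta)n, n]$ shows this contributes $o(1)$ as well. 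Putting the pieces together, $\Ex\left[\int_\theta^n (\cdots)\d y\right] \le C_M \int_\theta^\infty y^{-1} e^{-cy}\,\d y + o(1) \to 0$ as first $n\to\infty$ and then $\theta\to\infty$, which is \eqref{eq:plan2}.

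\textbf{Main obstacle.} The delicate point is handling the range of $y$ near $n$ (equivalently, $t = y/n$ near $1$), where the bridge's terminal conditioning dominates and the heat-kernel factor $(1-y/n)^{-1/2}$ blows up; the first-moment bound that works cleanly for $y \le (1-\eta)n$ must be replaced there by a separate estimate using the Bessel-bridge density near its endpoint. I expect this to be manageable because near $t=1$ the process is $O(1)$ in the original (unscaled) coordinates while the integrand carries the factor $e^{-\sqrt{2n}\,\bb^{(n)}(y/n)}$ with $\bb^{(n)}(y/n)$ of order one, hence the integrand is of order $e^{-c\sqrt n}$ there and the whole contribution from $y \in [(1-\eta)n, n]$ is $\le n\, e^{-c\sqrt n} = o(1)$. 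An alternative, cleaner route that avoids case analysis would be to first take the diffusive limit $n\to\infty$ turning $\sqrt n\,\bb^{(n)}(y/n)$ into $\sqrt y$ times a standard 3D Bessel process $\calR_1(y)$ (uniformly in the endpoint $b^{(n)}$, via the convergence of bridge transition kernels to Bessel kernels as the bridge length $n\to\infty$), reducing \eqref{pgammaineq} to the statement that $\int_\theta^\infty e^{-\sqrt 2\,\calR_1(y)}\,\d y \to 0$ in probability as $\theta\to\infty$, which follows immediately from the strong law of large numbers for the 3D Bessel process ($\calR_1(y)/\sqrt y \to 1$ a.s., so $e^{-\sqrt2\,\calR_1(y)}$ is a.s.\ integrable on $[1,\infty)$). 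I would use this limiting-object argument as the conceptual backbone and invoke the uniform tail bounds above only to justify the interchange of limits.
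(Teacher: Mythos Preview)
Your route via Lemma \ref{l:nibtobb} and a first-moment bound is genuinely different from the paper's proof, and it can be made to work, but as written it has a real gap. After the substitution $x=w/\sqrt n$ the prefactor is $\dfrac{\sqrt n}{b^{(n)}\,y\,p_1(b^{(n)})}$, not $\dfrac{1}{b^{(n)}\sqrt y\,p_1(b^{(n)})}$; you have silently absorbed a factor of $\sqrt n$. With the crude bound $[p_{1-y/n}(\tfrac{w}{\sqrt n}-b)-p_{1-y/n}(\tfrac{w}{\sqrt n}+b)]\le (2\pi(1-y/n))^{-1/2}$ that stray $\sqrt n$ does not cancel, and the resulting expectation bound, integrated over $[\theta,(1-\eta)n]$, blows up like $\sqrt n\,\theta^{-1/2}$. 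Also, the claim that $\int_0^\infty e^{-\sqrt2 w}\,w\,p_y(w)\,\d w$ decays like $e^{-cy}$ after completing the square is simply wrong: for large $y$ the Gaussian factor is essentially $1$ on the support of $e^{-\sqrt2 w}w$, and the integral is of order $y^{-1/2}$. The fix is to use the sharper estimate $p_s(u-b)-p_s(u+b)\le C_{M,\eta}\,u$ for $u\ge 0$ and $s\ge\eta$ (mean-value theorem), which produces an extra $w/\sqrt n$ inside the integral and cancels the $\sqrt n$ in the prefactor; one then gets $\Ex[e^{-\sqrt{2n}\,\bb^{(n)}(y/n)}]\le C_{M,\eta}\,y^{-3/2}$ uniformly on $[\theta,(1-\eta)n]$, which is integrable. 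The piece $y\in[(1-\eta)n,n]$ needs a separate (but easy) density estimate using $p_\epsilon(x-b)\le C\epsilon^{-1/2}e^{-b^2/(8\epsilon)}$ for $x\le b/2$, after which one lets $\eta\downarrow0$ at the very end. Your alternative ``take $n\to\infty$ first'' route is circular without exactly these uniform tail bounds, and the asserted law of large numbers $\calR_1(y)/\sqrt y\to 1$ is false.

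For comparison, the paper argues probabilistically rather than by first moments. It splits at $n\delta$ with $\delta$ small: on $[n\delta,n]$ it uses that $\V_1^{(n)}-\V_2^{(n)}$ is uniformly bounded below on $[\delta,1]$ with high probability, so the integrand is $\le e^{-\alpha\sqrt n}$; on $[\theta,n\delta]$ it introduces the event $\{\sqrt n[\V_1^{(n)}-\V_2^{(n)}](y/n)\ge y^{1/4}\ \forall y\}$, transfers its probability from $\nonintbb$ to $\dbm$ via the density comparison Lemmas \ref{lalpha}--\ref{lbeta}, uses the scale invariance of $\dbm$, and concludes by Motoo's theorem for the $3$D Bessel process. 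Your first-moment approach is more elementary (it avoids the $\dbm$ comparison and Motoo) and yields an explicit $y^{-3/2}$ rate, at the cost of the careful case analysis above; the paper's approach is softer and reuses its $\dbm$ machinery.
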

	
	Recall that by Lemma \ref{l:nibtobb}, the difference of the two parts of $\nonintbb$ is given by a Bessel bridge (upto a constant). Hence we can recast the above result in terms of separations between Bessel bridges from the $x$-axis as well. 
	
	\begin{corollary}\label{pgamma1} Fix $M>0$.
		Let $\bb^{(n)}$ be a sequence of Bessel bridges (see Definition \ref{def:bbridge}) on $[0,1]$ beginning at 0 and ending at $a^{(n)}.$ Suppose that $M>a_1^{(n)} > \frac1M$ for all $n$.  Then for all $\rho > 0$, we have 
		\begin{align*}
			\limsup_{\theta \rightarrow \infty}\limsup_{n \rightarrow \infty}\Pr\left(\int_{\theta}^{n}\exp\left(-\sqrt{n}\bb^{(n)}(\tfrac{y}{n})\right)\d y \ge \rho\right) = 0.
		\end{align*}	
	\end{corollary}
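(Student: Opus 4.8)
The plan is to deduce Corollary~\ref{pgamma1} from Proposition~\ref{pgamma} by way of Lemma~\ref{l:nibtobb}: the difference of the two curves of a $\nonintbb$ is, up to the fixed scalar $\sqrt2$, a Bessel bridge, so the Bessel-bridge integral in Corollary~\ref{pgamma1} can be rewritten as the $\nonintbb$ integral already controlled in Proposition~\ref{pgamma}.

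Concretely, first I would fix for each $n$ a $\nonintbb$ $(\V_1^{(n)},\V_2^{(n)})$ on $[0,1]$ beginning at $0$ and ending at $(a^{(n)}/\sqrt2,\,-a^{(n)}/\sqrt2)$ (legitimate since $a^{(n)}>0$). By Lemma~\ref{l:nibtobb}, $\tfrac{1}{\sqrt2}\bigl(\V_1^{(n)}-\V_2^{(n)}\bigr)\stackrel{d}{=}\bb^{(n)}$ as processes on $[0,1]$; moreover $\V_1^{(n)}(1)-\V_2^{(n)}(1)=\sqrt2\,a^{(n)}\in(\sqrt2/M,\sqrt2 M)$ and $|\V_i^{(n)}(1)|\le M$, so $(\V_1^{(n)},\V_2^{(n)})$ meets the endpoint hypotheses of Proposition~\ref{pgamma} uniformly in $n$. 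Consequently
\[
\int_{\theta}^{n}\exp\left(-\sqrt{n}\,\bb^{(n)}(\tfrac{y}{n})\right)\d y \;\stackrel{d}{=}\; \int_{\theta}^{n}\exp\left(-\sqrt{n/2}\,\bigl[\V_1^{(n)}(\tfrac{y}{n})-\V_2^{(n)}(\tfrac{y}{n})\bigr]\right)\d y .
\]
The only discrepancy with the integrand in Proposition~\ref{pgamma} is the factor $\sqrt{n/2}$ in the exponent in place of $\sqrt{n}$, i.e.\ a fixed multiplicative $\sqrt2$. I would remove it by the time change $y=2w$: since $\V_i^{(n)}(2w/n)=\V_i^{(n)}(w/N)$ with $N:=n/2$ and $\sqrt{n/2}=\sqrt{N}$, the right-hand side equals $2\int_{\theta/2}^{N}\exp(-\sqrt{N}\,[\V_1^{(n)}(w/N)-\V_2^{(n)}(w/N)])\,\d w$, which for even $n$ is, up to the harmless factor $2$, the quantity controlled by Proposition~\ref{pgamma} applied to the reindexed sequence $N\mapsto(\V_1^{(2N)},\V_2^{(2N)})$ (still meeting the hypotheses uniformly). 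For odd $n$ one takes $N=\lfloor n/2\rfloor$, uses $\exp(-\sqrt{n/2}\,s)\le\exp(-\sqrt{N}\,s)$ for $s\ge 0$ together with $n/N\le 3$, and reduces to the same situation. Because $\theta\to\infty$ corresponds to $\theta/2\to\infty$ (resp.\ $\theta/3\to\infty$) and a fixed constant factor in front of the integral is harmless in $\Pr(\,\cdot\,\ge\rho)$ after rescaling $\rho$, the double-$\limsup$ conclusion of Proposition~\ref{pgamma} transfers to Corollary~\ref{pgamma1}.

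I do not expect any step here to be genuinely difficult; the only point requiring a little care is precisely this bookkeeping of the scalar $\sqrt2$ and the even/odd reindexing, which is routine. As an alternative that sidesteps Proposition~\ref{pgamma} entirely, one can argue by first moments: from the explicit Bessel-bridge transition density in Lemma~\ref{l:tdbes}, after the substitution $x=\sqrt{y/n}\,u$ that scales out the $\sqrt{y/n}$ size of $\bb^{(n)}(y/n)$, one obtains $\Ex\bigl[\exp(-\sqrt{n}\,\bb^{(n)}(y/n))\bigr]\le\Con_{M}\,y^{-3/2}$ uniformly over $\theta\le y\le n$ and $a^{(n)}\in(1/M,M)$; hence $\Ex\bigl[\int_{\theta}^{n}\exp(-\sqrt{n}\,\bb^{(n)}(y/n))\,\d y\bigr]\le 2\Con_{M}\,\theta^{-1/2}$, and Markov's inequality yields $\Pr(\,\cdot\,\ge\rho)\le 2\Con_{M}/(\rho\sqrt{\theta})\to 0$ as $\theta\to\infty$, uniformly in $n$.
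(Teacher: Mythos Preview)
Your proposal is correct and follows exactly the route the paper intends: the paper does not give a separate proof of this corollary but simply notes that, via Lemma~\ref{l:nibtobb}, the difference of the two curves of a $\nonintbb$ is (up to $\sqrt2$) a Bessel bridge, so Proposition~\ref{pgamma} can be ``recast'' directly. Your careful bookkeeping of the $\sqrt2$ factor through the time change and reindexing is more detailed than what the paper supplies, and your alternative first-moment argument via Lemma~\ref{l:tdbes} is a valid independent route.
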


	\begin{proof}[Proof of Proposition \ref{pgamma}]
		
		We fix $\delta \in (0,\frac14)$. To prove the inequality in \eqref{pgammaineq}, we divide the integral from $\theta $ to $n$ into two parts: $(\theta, n \delta)$ and $[n\delta, n)$ for some $\delta\in (0,1)$ and $n$ large and prove each one separately. For the interval $(n\delta, n)$ interval, we use the fact that the non-intersecting Brownian bridges $\V_1^{(n)}(y), \V_2^{(n)}(y)$ are separated by a uniform distance when away from 0. For the smaller interval $(\theta, n\delta)$ close to 0, we define a $\m{Gap}_{n,\theta, \delta}$ event that occurs with high probability and utilize Lemmas \ref{lalpha} and \ref{lbeta} to transform the computations of $\nonintbb$ into those of the $\dbm$ to simplify the proof. \\
		
		We now fill out the details of the above road-map. First, as $(\V_1^{(n)}, \V_2^{(n)})$ are non-intersecting Brownian bridges on $[0,1]$ starting from 0 and ending at two points which are within $[-M,M]$ and are separated by at least $\frac1M$, for every $\lambda, \delta > 0$, there exists $\alpha(M,\delta,\lambda)>0$ small enough such that 
		\begin{align}\label{pg1}
			\Pr\left(\V_1^{(n)}(y) - \V_2^{(n)}(y) \ge \alpha, \forall y \in [\delta, 1]\right) \ge 1 - \lambda.
		\end{align}
		\eqref{pg1} implies that with probability at least $1-\lambda$,
		\begin{align}\label{pg2}
			\int_{n\delta}^n \exp\left(-\sqrt{n}\big[\V_1^{(n)}(\tfrac{y}{n}) - \V_2^{(n)}(\tfrac{y}{n})\big]\right)\d y \le (n-n\delta)e^{-\sqrt{n}\alpha} 
		\end{align}
		which converges to 0 as $n \rightarrow \infty.$
		Next we define the event $$\m{Gap}_{n,\theta, \delta} := \left\{\sqrt{n}\big[\V_1^{(n)}(\tfrac{y}{n}) - \V_2^{(n)}(\tfrac{y}{n})\big]\ge y^{\frac{1}{4}}, \forall y \in [\theta, n\delta]\right\}.$$ 
		We claim that $\neg \m{Gap}_{n,\theta, \delta}$ event is negligible in the sense that
		\begin{align}\label{pg5}
			\limsup_{\theta \rightarrow \infty}\limsup_{n \rightarrow \infty}\Pr(\neg\m{Gap}_{n,\theta, \delta})=0.
		\end{align}
		Note that on $\m{Gap}_{n,\theta, \delta}$ event, we have
		\begin{align}\label{pg8}
			\int_{\theta}^{n\delta}\exp\left(-\sqrt{n}\big[\V_1^{(n)}(\tfrac{y}{n}) - \V_2^{(n)}(\tfrac{y}{n})\big]\right)\d y \le \int_{\theta}^{n\delta}\exp(-y^{1/4})\d y
		\end{align}
		which goes to zero as $n\to\infty$, followed by $\theta\to \infty$. In view of the probability estimates from \eqref{pg1} and \eqref{pg2}, combining \eqref{pg5} and \eqref{pg8} yields
		\begin{align}\label{pg9}
			\limsup_{\theta \rightarrow \infty}\limsup_{n \rightarrow \infty}\Pr\left(\int_{\theta}^{n}\exp\left(-\sqrt{n}\big[\V_1^{(n)}(\tfrac{y}{n}) - \V_2^{(n)}(\tfrac{y}{n})\big]\right)\d y \ge \rho\right)\le \lambda.
		\end{align}
		Since $\lambda$ is arbitrary, \eqref{pg9} completes the proof. Hence it suffices to show \eqref{pg5}. Towards this end, by the properties of the conditional expectation, if we condition on the values of  $\V_1^{(n)}(2\delta), \V_2^{(n)}(2\delta),$ we have that 
		\begin{align}\label{pg3}
			\Pr(\neg\m{Gap}_{n,\theta, \delta}) &= \Ex\left[\Pr\left(\neg\m{Gap}_{n,\theta, \delta}|\V_1^{(n)}(2\delta), \V_2^{(n)}(2\delta)\right)\right]\notag \\&= \int_{y_1>y_2} \Pr_{y_1, y_2}(\neg\m{Gap}_{n,\theta, \delta})\Pr(\V_1^{(n)}(2\delta)\in \d y_1,\V_2^{(n)}(2\delta)\in \d y_2 ) \end{align}
		where $\Pr_{y_1,y_2}$ is the conditional law of $\nonintbb$ conditioned on $(\V_1^{(n)}(2\delta)= y_1,\V_2^{(n)}(2\delta)=y_2)$. Note that $\m{Gap}_{n,\theta, \delta}$ event depends only on the $[0,\delta]$ path of the $\nonintbb$. Thus by Markovian property of the $\nonintbb$, $\Pr_{y_1, y_2}(\m{Gap}_{n,\theta, \delta})$ can be computed by assuming the $\nonintbb$ is on $[0,2\delta]$ and ends at $(y_1,y_2)$. 
		
		On the other hand, $\Pr(\V_1^n(2\delta)\in \d y_1,\V_2^n(2\delta)\in \d y_2 )$ is the probability density function of the marginal density of $\nonintbb$ on $[0,1]$. Via Lemma \ref{lbeta}, this is comparable to the density of $(\calD_1(2\delta),\calD_2(2\delta))$, where $\calD$ follows $\dbm$ law defined in Definition \ref{def:dbm}. Thus by \eqref{la1} the r.h.s of \eqref{pg3} is bounded from above by
		\begin{align}\label{pg4}
			\text{ r.h.s of  }\eqref{pg3} &\le \Con_{M, 2\delta}\int\Pr_{y_1, y_2}(\neg\m{Gap}_{n,\theta, \delta})\Pr(\calD_1(2\delta)\in \d y_1,\calD_2(2\delta)\in \d y_2 )\d y_1 \d y_2 \notag \\& = \Con_{M, 2\delta}\cdot \Pr_{\text{Dyson}}(\neg\m{Gap}_{n,\theta, \delta}).
		\end{align}
		Here the notation $\Pr_{\text{Dyson}}$ means the law of the paths $(V_1,V_2)$ is assumed to follow $\dbm$ law. With this notation, the last equality of \eqref{pg4} follows from Lemma \ref{lalpha}. From the density formulas of $\dbm$ from Definition \ref{def:dbm}, it is clear that $\dbm$ is invariant under diffusive scaling, i.e.
		\begin{align}
			\label{eq:invdbm}
			\sqrt{n}(\calD_1(\tfrac{\cdot}{n}), \calD_2(\tfrac{\cdot}{n}))\stackrel{d}{=}(\calD_1(\cdot), \calD_2(\cdot))
		\end{align} 
		and by Lemma \ref{l:dtob}, $\calD_1(\cdot)- \calD_2(\cdot)=\mathcal{R}_2(\cdot)$, a 3D Bessel process with diffusion coefficient 2. Thus, we obtain that for any $n \in \N,$
		\begin{align}\label{pg6}
			\Pr_{\text{Dyson}}(\neg\m{Gap}_{n,\theta, \delta}) \le \Pr(\mathcal{R}_2(y) \le y^{1/4}, \  \mbox{ for some } y \in [\theta,\infty)).
		\end{align}
		Meanwhile, Motoo's theorem \cite{motoo1959proof} tells us that 
		\begin{align}\label{pg7}
			\limsup_{\theta \rightarrow \infty}\Pr(\mathcal{R}_2(y) \le y^{1/4},  \  \mbox{ for some } y \in [\theta,\infty)) = 0.
		\end{align}
		Hence \eqref{pg4}, \eqref{pg6} and \eqref{pg7} imply \eqref{pg5}. This completes the proof.
	\end{proof}

	We now state our results related to the diffusive limits of $\nonintbb$ (defined in Definition \ref{def:nibb}) and Bessel bridges (defined in Definition \ref{def:bbridge}) with varying endpoints.

	\begin{proposition}
		\label{lemmaC}
		{Fix $M>0$. Let  $\V^{(n)} = (\V_1^{(n)}, \V_2^{(n)}):[0, a_n] \rightarrow \R$ be a sequence of $\nonintbb$s (defined in Definition \ref{def:nibb}) with  $\V_i^{(n)}(0) = 0$ and $\V_i^{(n)}(a_n) = z_i^{(n)}$. Suppose that for all $n\ge 1$ and $i=1,2$, $M>a_n > \frac1{M}$ and $|z_i^{(n)}| < \frac{1}{M}$. Then as $n\to \infty$ we have:
			\begin{align*}
				\sqrt{n}\big(\V_1^{(n)}(\tfrac{t}{n}), \V_2^{(n)}(\tfrac{t}{n})\big)\stackrel{d}{\to} (\calD_1(t), \calD_2(t))
			\end{align*}
			in the uniform-on-compact topology. Here $\calD$ is a $\dbm$ defined in Definition \ref{def:dbm}. }
	\end{proposition}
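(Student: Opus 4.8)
The plan is to prove, for each fixed $T>0$, that $Y_n\Rightarrow\calD$ in $C([0,T],\R^2)$ — which gives the stated uniform-on-compact convergence — where $Y_n(t):=\sqrt n\,(\V_1^{(n)}(t/n),\V_2^{(n)}(t/n))$. By the scaling relation in Definition~\ref{def:nibb}, $Y_n$ is itself a $\nonintbb$ on $[0,na_n]$ ending at $\sqrt n(z_1^{(n)},z_2^{(n)})$, hence a Markov process; note $na_n>n/M\to\infty$. I will establish (i) convergence of finite-dimensional distributions and (ii) tightness, which together yield the claim.

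\textbf{Finite-dimensional distributions.} First I would compute the one-point law of $Y_n(t)$ from the explicit marginal formula in Definition~\ref{def:nibb}(a), rescaled to $[0,1]$. Writing $\hat z_j^{(n)}:=z_j^{(n)}/\sqrt{a_n}$ (so $|\hat z_j^{(n)}|\le M^{-1/2}$) and $s_n:=t/(na_n)\downarrow0$, the determinant $\det\big(p_{1-s_n}(y_i/\sqrt{na_n}-\hat z_j^{(n)})\big)_{i,j=1}^2$ has leading order $p_1(\hat z_1^{(n)})\,p_1(\hat z_2^{(n)})\,(y_1-y_2)(\hat z_1^{(n)}-\hat z_2^{(n)})/\sqrt{na_n}$, and the factors $p_1(\hat z^{(n)}_j)$ and $(\hat z_1^{(n)}-\hat z_2^{(n)})$ cancel against the normalizing constant; the dependence on $(z_i^{(n)},a_n)$ thus disappears and one is left with exactly the $\dbm$ entrance density $\ind\{y_1>y_2\}\tfrac{(y_1-y_2)^2}{t}p_t(y_1)p_t(y_2)$ of \eqref{eq:dysonent}. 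Pointwise convergence of these probability densities upgrades to convergence in law by Scheff\'e's lemma, so $Y_n(t)\Rightarrow\calD(t)$. For $0<t_1<\dots<t_k$, I would then use the Markov property: conditioned on $Y_n(t_k)=u$ (with $n$ large enough that $na_n>t_k$), $Y_n$ on $[0,t_k]$ is a $\nonintbb$ on $[0,t_k]$ ending at $u$, a law that is independent of $n$ and depends weakly continuously on $u$ in $\{u_1>u_2\}$ via Definition~\ref{def:nibb}(b). Combining this continuity with $Y_n(t_k)\Rightarrow\calD(t_k)$ shows that the joint law at $t_1,\dots,t_k$ converges to the mixture of these $\nonintbb$ finite-dimensional laws against the $\dbm$ entrance density; by Lemma~\ref{lalpha} that mixture is precisely the finite-dimensional law of $\calD$ at those times.

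\textbf{Tightness.} This is the crux. The Markov argument above also shows that for every $t<na_n$ the laws of $Y_n|_{[0,t]}$ and of $\calD|_{[0,t]}$ have the \emph{same} conditional law given their endpoint at time $t$ — namely a $\nonintbb$ on $[0,t]$ ending at that endpoint, which for $\calD$ is precisely the content of Lemma~\ref{lalpha}. At the same time, Lemma~\ref{lbeta} applied after rescaling to $[0,1]$ (legitimate since $|\hat z_i^{(n)}|\le M^{-1/2}$ and $t/(na_n)\to0$), combined with the diffusive invariance \eqref{eq:invdbm} of $\dbm$, provides a constant $\Con=\Con(M)$ such that, for all large $n$, the density of $Y_n(t)$ is at most $\Con$ times that of $\calD(t)$. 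Since two laws with a common conditional kernel whose endpoint marginals are comparable by a constant are themselves comparable by that constant on path space, it follows (taking $t=T$) that $\Pr\big(Y_n|_{[0,T]}\in A\big)\le\Con\,\Pr\big(\calD|_{[0,T]}\in A\big)$ for every Borel set $A$ and all large $n$. Domination by a constant multiple of the single fixed probability measure $\mathrm{Law}(\calD|_{[0,T]})$ immediately gives tightness of $\{Y_n|_{[0,T]}\}_n$.

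Convergence of finite-dimensional distributions together with tightness yields $Y_n\Rightarrow\calD$ in $C([0,T],\R^2)$ for every $T$, i.e.\ in the uniform-on-compact topology. I expect the genuinely delicate point to be the density asymptotics in the first step: one must check the cancellation of the endpoint data with enough care and uniformity over the parameters $z_i^{(n)},a_n$, which are only bounded and need not converge. The path-space domination underlying the tightness step, though the conceptual heart of the argument, is short once Lemmas~\ref{lalpha} and~\ref{lbeta} are in hand.
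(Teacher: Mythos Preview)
Your argument is correct, and both the finite-dimensional and tightness steps go through as you describe. It is worth noting, however, that your route differs from the paper's in a couple of ways.

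For finite-dimensional convergence, the paper also computes the one-point density limit (your equation labelled \eqref{tech} in the text), but then handles multi-point convergence by directly computing the limit of the \emph{transition} densities rather than by your conditioning-on-the-last-time argument combined with Lemma~\ref{lalpha}. Both work; yours is arguably cleaner once one observes that the conditional law of $Y_n|_{[0,t_k]}$ given $Y_n(t_k)=u$ is the $n$-independent $\nonintbb$ law on $[0,t_k]$ ending at $u$.

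The more substantive difference is in tightness. The paper proves a Kolmogorov-type fourth-moment bound \eqref{db5}: it conditions on the value at time $K/n$, uses Lemma~\ref{lbeta} to dominate the marginal there by that of $\dbm$, and uses Lemma~\ref{lalpha} to identify the resulting integral as the $\dbm$ fourth moment, which is then estimated via Pitman's representation $\mathcal R\stackrel d=2M-B$. Your approach uses the very same lemmas but in a sharper way: the common conditional kernel plus the marginal density bound from Lemma~\ref{lbeta} immediately gives $\Pr(Y_n|_{[0,T]}\in A)\le \Con\,\Pr(\calD|_{[0,T]}\in A)$ for all Borel $A$, so tightness is inherited from the single law of $\calD$. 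This bypasses both the moment computation and Pitman's theorem entirely. The paper's approach has the minor advantage of producing an explicit quantitative modulus bound \eqref{db5} (which is in fact quoted later in Section~\ref{sb:pfnice}), while yours gives tightness with less work but no such quantitative byproduct.
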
	
	
	In view of Lemma \ref{l:nibtobb} and Lemma \ref{l:dtob}, Proposition \ref{lemmaC} also leads to the following corollary.

	\begin{corollary}\label{lemmaC1}
		Fix $M>0$. Let  $\bb^{(n)}:[0, a_n] \rightarrow \R$ be a sequence of Bessel bridges (defined in Definition \ref{def:bbridge}) with  $\bb^{(n)}(0) = 0$ and $\bb^{(n)}(a_n) = z^{(n)}$. Suppose for all $n\ge 1$, $M>a_n > \frac1{M}$ and $|z^{(n)}| < \frac{1}{M}$. Then as $n\to \infty$ we have:
		\begin{align*}
			\sqrt{n}\bb^{(n)}(\tfrac{t}{n})\stackrel{d}{\to} \mathcal{R}_1(t)
		\end{align*}
		in the uniform-on-compact topology. Here $\mathcal{R}_1$ is a Bessel process with diffusion coefficient $1$, defined in Definition \ref{def:bessel}. 
	\end{corollary}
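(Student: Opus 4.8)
The plan is to realize each Bessel bridge $\bb^{(n)}$ as the rescaled difference of the two components of a non-intersecting Brownian bridge, then apply Proposition \ref{lemmaC} and pass the difference operation through the weak limit.

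First I would, for each $n$, put $z_1^{(n)}:=z^{(n)}/\sqrt2$ and $z_2^{(n)}:=-z^{(n)}/\sqrt2$ (note $z^{(n)}>0$ since Bessel bridges end at a positive point by Definition \ref{def:bbridge}), and let $(\V_1^{(n)},\V_2^{(n)})$ be a $\nonintbb$ on $[0,a_n]$ ending at $(z_1^{(n)},z_2^{(n)})$. Then $z_1^{(n)}>z_2^{(n)}$ and $|z_i^{(n)}|=|z^{(n)}|/\sqrt2<1/M$, so the hypotheses of Proposition \ref{lemmaC} hold, as do those of Lemma \ref{l:nibtobb} — which, although stated on $[0,1]$, extends verbatim to the interval $[0,a_n]$ by Brownian rescaling, using the scaling conventions in Definitions \ref{def:nibb} and \ref{def:bbridge}. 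Since $(z_1^{(n)}-z_2^{(n)})/\sqrt2=z^{(n)}$, Lemma \ref{l:nibtobb} gives the process-level identity $\V_1^{(n)}(\cdot)-\V_2^{(n)}(\cdot)\stackrel{d}{=}\sqrt2\,\bb^{(n)}(\cdot)$ on $[0,a_n]$. Precomposing both sides with the diffusive scaling $t\mapsto t/n$ and multiplying by $\sqrt n$ — a measurable operation on path space — preserves this equality in distribution, so that
\begin{align}\label{eq:cor-lemmaC1-step}
	\tfrac1{\sqrt2}\Big(\sqrt n\,\V_1^{(n)}(\tfrac tn)-\sqrt n\,\V_2^{(n)}(\tfrac tn)\Big)\stackrel{d}{=}\sqrt n\,\bb^{(n)}(\tfrac tn)
\end{align}
as processes on any fixed compact interval (which for $n$ large is contained in $[0,na_n]$ since $a_n>1/M$).

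Next I would apply Proposition \ref{lemmaC} to the sequence $(\V_1^{(n)},\V_2^{(n)})$, obtaining $\sqrt n(\V_1^{(n)}(t/n),\V_2^{(n)}(t/n))\stackrel{d}{\to}(\calD_1(t),\calD_2(t))$ in the uniform-on-compact topology for a $\dbm$ $\calD$. The map $(f_1,f_2)\mapsto \tfrac1{\sqrt2}(f_1-f_2)$ is continuous from $C([0,T],\R^2)$ to $C([0,T],\R)$, so by the continuous mapping theorem the left-hand side of \eqref{eq:cor-lemmaC1-step} converges in distribution to $\tfrac1{\sqrt2}(\calD_1(\cdot)-\calD_2(\cdot))$. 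By Lemma \ref{l:dtob} this limit equals in law $\tfrac1{\sqrt2}\mathcal{R}_2(\cdot)$, where $\mathcal{R}_2$ is a $3$D Bessel process with diffusion coefficient $2$, and by Definition \ref{def:bessel} one has $\tfrac1{\sqrt2}\mathcal{R}_2\stackrel{d}{=}\mathcal{R}_1$. Combined with \eqref{eq:cor-lemmaC1-step}, this yields $\sqrt n\,\bb^{(n)}(t/n)\stackrel{d}{\to}\mathcal{R}_1(t)$ in the uniform-on-compact topology.

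The argument is essentially bookkeeping, so there is no real obstacle. The only points deserving a little care are (i) the extension of Lemma \ref{l:nibtobb} from $[0,1]$ to the varying intervals $[0,a_n]$, which is immediate from the scaling definitions, and (ii) checking that Lemma \ref{l:nibtobb} furnishes a genuine process-level distributional identity, so that it is compatible with the diffusive rescaling and can be combined with the weak limit supplied by Proposition \ref{lemmaC}.
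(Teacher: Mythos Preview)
Your proposal is correct and follows exactly the route the paper intends: it derives Corollary \ref{lemmaC1} from Proposition \ref{lemmaC} via Lemma \ref{l:nibtobb} and Lemma \ref{l:dtob}, filling in the details the paper leaves implicit. The only points you flag for care (extending Lemma \ref{l:nibtobb} to $[0,a_n]$ by scaling, and that it is a process-level identity) are minor and handled correctly.
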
			
	
	\begin{proof}[Proof of Proposition \ref{lemmaC}] For convenience, we drop the superscript $(n)$ from $V_1,V_2$ and $z_i$'s. We proceed by showing convergence of one-point densities and transition densities of $\sqrt{n}(\V_1(\frac{t}{n}), \V_2(\frac{t}{n}))$ to that of $\dbm$ and then verifying the tightness of the sequence. Fix any $t>0$. For each fixed $y_1>y_2$, it is not hard to check that we have as $n\to \infty$
		\begin{align}\label{tech}
			\frac{a_n\sqrt{n}\det(p_{a_n-\frac{t}n}(\tfrac{y_i}{\sqrt{n}} - z_j))_{i,j =1}^2}{(z_1-z_2)p_{a_n}(z_1)p_{a_n}(z_2)} \to y_1-y_2.
		\end{align}
		uniformly over $a_n\in [\frac1M,M]$ and $z_1,z_2\in [-M,M]$. 
		
		Utilizing the one-point densities and transition densities formulas for $\nonintbb$ of length $1$ in Definition \ref{def:nibb}, we may perform a Brownian rescaling to get analogous formulas for $V_1,V_2$ which are $\nonintbb$ of length $a_n$. Then by a change of variable, the density of $(\sqrt{n}\V_1(\tfrac{t}{n}),\sqrt{n}\V_2(\tfrac{t}{n}))$ is given by
		\begin{align*}
			\frac{a_n(y_1 - y_2)p_t(y_1)p_t(y_2)}{t(z_1-z_2)p_{a_n}(z_1)p_{a_n}(z_2)}\sqrt{n}\det(p_{a_n-\frac{t}n}(\tfrac{y_i}{\sqrt{n}} - z_j))_{i,j =1}^2.
		\end{align*}
		Using \eqref{tech} we see that for each fixed $y_1>y_2$, the above expression goes to $\frac{(y_1-y_2)^2}{t}p_t(y_1)p_t(y_2)$ 
		which matches with \eqref{eq:dysonent}. 
		
		Similarly for the transition probability, letting $0< s< t< a_n$, $y_1 > y_2$ and $x_1 > x_2$, we have
		\begin{align}\label{db2}
			&\Pr\left(\sqrt{n}\V_1(\tfrac{t}{n})\in \d y_1, \sqrt{n}\V_2(\tfrac{t}{n})\in \d y_2\mid \sqrt{n}\V_1(\tfrac{s}{n})\in \d x_1, \sqrt{n}\V_2(\tfrac{s}{n})\in \d x_2\right) \notag \\=&\det(p_{t-s}(y_i-x_j))_{i,j =1}^2\frac{\det(p_{a_n-\frac{t}{n}}(\frac{y_i}{\sqrt{n}}-z_j))_{i,j =1}^2}{\det(p_{a_n-\frac{s}{n}}(\frac{x_i}{\sqrt{n}}-z_j))_{i,j =1}^2}\d y_1\d y_2.
		\end{align}
		Applying \eqref{tech} we see that as $n\to \infty$
		\begin{align*}
			\text{ r.h.s of  }\eqref{db2} \to \det(p_{t-s}(x_i-y_j))_{i,j=1}^2\cdot \frac{y_1-y_2}{x_1-x_2}.
		\end{align*}
		which matches with \eqref{eq:dysontd}. This verifies the finite dimensional convergence by Scheffe's theorem. For tightness we will show that there exists a constant $\Con_{K,M}>0$ such that for all $0< s< t< K$,
		\begin{align}\label{db5}
			\sum_{i = 1}^2\Ex\left[\left(\sqrt{n}\V_i(\tfrac{t}{n})-\sqrt{n}\V_i(\tfrac{s}{n})\right)^4\right] \le \Con_{K,M}(t-s)^2.
		\end{align}
		We compute the above expectation by comparing with $\dbm$ as was done in the proof of Proposition \ref{pgamma}. Using definition of the conditional expectation we have
		\begin{align*}
			& \Ex\left[\left(\sqrt{n}\V_i(\tfrac{t}{n})-\sqrt{n}\V_i(\tfrac{s}{n})\right)^4\right] \\ & =\int_{y_1>y_2}\Ex\left[\left(\sqrt{n}\V_i(\tfrac{t}{n})-\sqrt{n}\V_i(\tfrac{s}{n})\right)^4\mid \V_1(\tfrac{K}{n})=y_1,\V_1(\tfrac{K}{n})=y_2\right] \Pr(\V_1(\tfrac{K}{n})\in \d y_1,\V_2(\tfrac{K}{n})\in \d y_2 ) \\ & \le \Con_{K,M}\int_{y_1>y_2}\Ex\left[\left(\sqrt{n}\V_i(\tfrac{t}{n})-\sqrt{n}\V_i(\tfrac{s}{n})\right)^4\mid \V_1(\tfrac{K}{n})=y_1,\V_1(\tfrac{K}{n})=y_2\right] \Pr(\calD_1(\tfrac{K}{n})\in \d y_1,\calD_2(\tfrac{K}{n})\in \d y_2 )
		\end{align*}
		where the last inequality follows from {Lemma \ref{lbeta}} by taking $n$ large enough. Here $\calD=(\calD_1,\calD_2)$ follows $\dbm$ law. Due to {Lemma \ref{lalpha}} and \eqref{eq:invdbm} the last integral above is precisely $\Ex[(\calD_i(t)-\calD_i(s))^4]$. Hence it suffices to show
		\begin{align}
			\label{eq:dtight}
			\Ex[(\calD_i(t)-\calD_i(s))^4]\le \Con(t-s)^2.
		\end{align}
		By Lemma \ref{l:dtob}, we see $\sqrt{2}B(x):=\calD_1(x)+\calD_2(x)$ and $\sqrt{2}\mathcal{R}(x):=\calD_1(x)-\calD_2(x)$ are a standard Brownian motion and a 3D Bessel process with diffusion coefficient $1$ respectively. We have
		$$\Ex[(\calD_i(t)-\calD_i(s))^4] \le \Con \left[\Ex[(\mathcal{R}(t)-\mathcal{R}(s))^4]+\Ex[(B(t)-B(s))^4]\right].$$
		We have $\Ex[(B(t)-B(s))^4]=3(t-s)^2$, whereas for $\mathcal{R}(\cdot)$, we use Pitman's theorem \cite[Theorem VI.3.5]{revuz}, to get that $\mathcal{R}(t)\stackrel{d}{=}2M(t)-B(t)$, where $B$ is a Brownian motion and $M(t)=\sup_{u\le t} B(u)$.  Thus,
		\begin{align*}\Ex[(\mathcal{R}(t)-\mathcal{R}(s))^4] & \le \Con \left[\Ex[(M(t)-M(s))^4]+\Ex[(B(t)-B(s))^4]\right] \\ & \le \Con \left[\Ex\big[\big(\sup_{s\le u\le t} B(u)-B(s)\big)^4\big]+\Ex[(B(t)-B(s))^4]\right].
		\end{align*}
		Clearly both the expressions above are at most $\Con(t-s)^2$. This implies \eqref{eq:dtight} completing the proof.
	\end{proof}

	\section{Ergodicity and Bessel behavior of the KPZ equation} \label{sec:ergbb}
	
	The goal of this section is to prove Theorems \ref{t:bessel} and \ref{t:ergodic}. {As the proof of the latter is shorter and illustrates some of the ideas behind the proof of the former, we  first tackle Theorem \ref{t:ergodic} in Section \ref{sec:ergo}. After that in Section \ref{sec:dysonapp}, we state a general version of the $k=2$ case of Theorem \ref{t:bessel}, namely Proposition \ref{p:dyson}.} This proposition will then be utilized in the proof of Theorem \ref{t:main}. Finally in Section \ref{sec:last}, we show how to obtain Theorem \ref{t:bessel} from Proposition \ref{p:dyson}.
	
	\subsection{Proof of Theorem \ref{t:ergodic}} \label{sec:ergo} For clarity we divide the proof into several steps. 
	
	\medskip
	
	\noindent\textbf{Step 1.} In this step, we introduce necessary notations used in the proof and explain the heuristic idea behind the proof. 
	
	Fix any $a>0$. Consider any Borel set $A$ of $C([-a,a])$ which is also a continuity set of a two-sided Brownian motion $B(x)$ restricted to $[-a,a]$. By Portmanteau theorem, it suffices to show 
	\begin{align}\label{eq:ergtoshow}
		\Pr ((\calH(\cdot,t)-\calH(0,t)\in A) \to \Pr(B(\cdot)\in A).
	\end{align}
	For simplicity let us write $\Pr_t(A):=\Pr ((\calH(\cdot,t)-\calH(0,t)\in A)$. Using \eqref{eq:htx} we have $\calH(x,t)-\calH(0,t)=t^{1/3}(\h_t(t^{-2/3}x)-\h_t(0))$. Recall that $\h_t(\cdot)=\h_t^{(1)}(\cdot)$ can be viewed as the top curve of the KPZ line ensemble $\{\h_t^{(n)}(\cdot)\}_{n\in \N}$ which satisfies the $\mathbf{H}_t$-Brownian Gibbs property with $\mathbf{H}_t$ given by \eqref{eq:Ht}.

	Note that at the level of the scaled KPZ line ensembles we are interested in understanding the law of $\h_t^{(1)}(\cdot)$ restricted to a very small interval: $x\in [-t^{-2/3}a,t^{-2/3}a]$. At such a small scale, we expect the Radon-Nikodym derivative appearing in \eqref{eq:bgibbs} to be very close to $1$. Hence the law of top curve should be close to a Brownian bridge with appropriate end points. To get rid of the endpoints we employ the following strategy, {which is also illustrated in Figure \ref{fig:erg1} and its caption.}
	
	\begin{itemize}
		\item We start with a slightly larger but still vanishing interval $I_t:=(-t^{-\alpha},t^{-\alpha})$ with $\alpha=\frac16$ say. We show that conditioned on the end points $\h_t^{(1)}(-t^{-\alpha}), \h_t^{(1)}(t^{-\alpha})$ of the first curve and the second curve $\h_t^{(2)}$, the law of $\h_t^{(1)}$ is close to that of a Brownian bridge on $I_t$ starting and ending at $\h_t^{(1)}(-t^{-\alpha})$ and $\h_t^{(1)}(t^{-\alpha})$ respectively.
		\item  Once we probe further into an even narrower window of $[-t^{2/3}a,t^{2/3}a]$, the Brownian bridge no longer feels the effect of the endpoints and one gets a Brownian motion in the limit.  
	\end{itemize}
	\begin{figure}[h!]
		\centering
		\includegraphics[width=16cm]{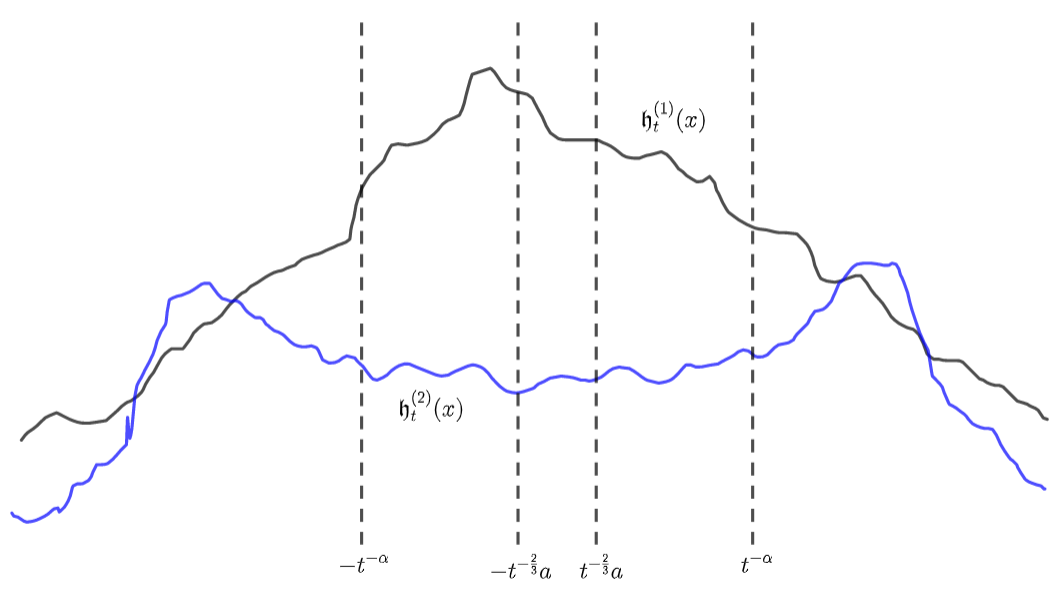}
		\caption{Illustration of the proof of Theorem \ref{t:ergodic}. In a window of $[t^{-\alpha},t^{\alpha}]$, the curves $\h_t^{(1)}(x), \h_t^{(2)}(x)$ attains an uniform gap with high probability. This allows us to show law of $\h_t^{(1)}$ on that small patch is close to a Brownian bridge. Upon zooming in a the tiny interval $[-t^{2/3}a,t^{2/3}a]$ we get a two-sided Brownian bridge as explained in \textbf{Step 1} of the proof.}
		\label{fig:erg1}
	\end{figure}

	\medskip
	
	\noindent\textbf{Step 2.} In this step and next step, we give a technical roadmap of the heuristics presented in \textbf{Step 1}. Set $\alpha=\frac16$ and consider the small interval $I_t=(t^{-\alpha},t^{-\alpha})$. Let $\mathcal{F}$ be the $\sigma$-field generated by \begin{align}
		\label{def:calf}
		\mathcal{F}:=\sigma\left(\{\h_t^{(1)}(x)\}_{x\in I_t^c}, \{\h_t^{(n)}(\cdot)\}_{n\ge 2}\right).
	\end{align} 
	Fix any arbitrary $\delta>0$ and consider the following three events:
	\begin{align}
		& \m{Gap}_t(\delta)  :=\left\{\h_t^{(2)}(-t^{-\alpha}) \le \min\{\h_t^{(1)}(t^{-\alpha}),\h_t^{(1)}(-t^{-\alpha})\}-\delta\right\}, \label{def:gap}\\
		& \m{Rise}_t(\delta)  :=\left\{\sup_{x\in I_t} \h_t^{(2)}(x)\le \tfrac14\delta+\h_t^{(2)}(-t^{-\alpha})\right\}, \label{def:rise} \\ & \m{Tight}_t({\delta})  :=\left\{-\delta^{-1} \le \h_t^{(1)}(t^{-\alpha}),\h_t^{(1)}(-t^{-\alpha})\le \delta^{-1}\right\}. \label{def:tight}
	\end{align}
	Note that all the above events are measurable w.r.t.~$\mathcal{F}$. A visual interpretation of the above events are given later in Figure \ref{fig:sink0}. Since the underlying curves are continuous almost surely, while specifying events over $I_t$, such as the $\m{Rise}_t(\delta)$ event defined in \eqref{def:rise}, one may replace $I_t$ by its closure $\bar{I}_t=[-t^{-\alpha},t^{-\alpha}]$ as well; the events will remain equal almost surely. We will often make use of this fact, and  will not make a clear distinction between $I_t$ and $\bar{I}_t$.  
	
	We set \begin{align}\label{def:fav}
		\m{Fav}_t({\delta}):=\m{Gap}_t(\delta)\cap \m{Rise}_t(\delta) \cap  \m{Tight}_t({\delta}).
	\end{align}
	The $\m{Fav}_t(\delta)$ event is a favorable event in the sense that given any $\e>0$, there exists $\delta_0\in (0,1)$ such that for all $\delta\in (0,\delta_0)$ 
	\begin{align}\label{eq:favbd}
		\liminf_{t\to\infty} \m{Fav}_{t}(\delta) \ge 1-\e. 
	\end{align}
	We will prove \eqref{eq:favbd} in \textbf{Step 4}. For the moment, we assume this and continue with our calculations. We now proceed to find tight upper and lower bounds for $\Pr_t(A) =\Pr ((\calH(\cdot,t)-\calH(0,t)\in A)$. Recall the $\sigma$-field $\calF$ from \eqref{def:calf}. Note that using the tower property of the conditional expectation we have 
	\begin{align}\label{eq:erglow}
		\Pr_t(A)=\Ex\left[\Pr_t(A\mid \mathcal{F})\right] \ge \Ex\left[\ind\{\m{Fav}_t(\delta)\}\Pr_t(A\mid \mathcal{F})\right].
	\end{align}
	\begin{align}\label{eq:ergup}
		\Pr_t(A)=\Ex\left[\Pr_t(A\mid \mathcal{F})\right] \le \Ex\left[\ind\{\m{Fav}_t(\delta)\}\Pr_t(A\mid \mathcal{F})\right]+\Pr\left(\neg\m{Fav}_t(\delta)\right).
	\end{align}
	Applying the $\mathbf{H}_t$-Brownian Gibbs property for the interval $I_t$ we have
	\begin{align}\label{eq:erg1}
		\Pr_t(A\mid \mathcal{F})   =\Pr_{\mathbf{H}_t}^{1,1,I_t,\h_t(-t^{-\alpha}),\h_t(t^{-\alpha}),\h_t^{(2)}}\left(A\right)=\frac{\Ex_{\operatorname{free},t}\left[W\ind_{A}\right]}{\Ex_{\operatorname{free},t}\left[W\right]},
	\end{align}
	where 
	\begin{align}\label{eq:ergW}
		W:=\exp\left(-t^{2/3}\int_{t^{-\alpha}}^{t^{-\alpha}} e^{t^{1/3}(\h_t^{(2)}(x)-\h_t^{(1)}(x))}\d x\right)
	\end{align}
	and $\Pr_{\operatorname{free},t}:=\Pr_{\operatorname{free}}^{1,1,I_t,\h_t(-t^{-\alpha}),\h_t(t^{-\alpha})}$ and  $\Ex_{\operatorname{free},t}:=\Ex_{\operatorname{free}}^{1,1,I_t,\h_t(-t^{-\alpha}),\h_t(t^{-\alpha})}$ are the probability and the expectation operator respectively for a Brownian bridge $B_1(\cdot)$ on $I_t$ starting at $\h_t(-t^{-\alpha})$ and ending at $\h_t(t^{-\alpha})$. Note that the second equality in \eqref{eq:erg1} follows from \eqref{eq:bgibbs}. We now seek to find upper and lower bounds for r.h.s.~of \eqref{eq:erg1}. For $W$, we have the trivial upper bound: $W\le 1$. For the lower bound, we claim that there exists $t_0(\delta)>0$, such that for all $t\ge t_0$, we have
	\begin{align}\label{eq:Wlbd}
		\ind\{\m{Fav}_t(\delta)\}\Pr_{\operatorname{free},t}(W\ge 1-\delta) \ge \ind\{\m{Fav}_t(\delta)\}(1-\delta).
	\end{align}
	Note that \eqref{eq:Wlbd} suggests that the $W$ is close to $1$ with high probability. This is the technical expression of the first conceptual step that we highlighted in \textbf{Step 1}. In the similar spirit for the second conceptual step, we claim that there exists $t_0(\delta)>0$, such that for all $t\ge t_0$, we have
	\begin{align}
		\ind\{\m{Fav}_t(\delta)\}\left|\Pr_{\operatorname{free},t}(A)-\gamma(A)\right| \le 	\ind\{\m{Fav}_t(\delta)\} \cdot \delta, \label{eq:bmconv}
	\end{align}
	where $\gamma(A):=\Pr(B(\cdot)\in A)\in [0,1]$. We remark that the l.h.s.~of \eqref{eq:Wlbd} and \eqref{eq:bmconv} are random variables measurable w.r.t.~$\mathcal{F}$. The inequalities above hold pointwise. We will prove \eqref{eq:Wlbd} and \eqref{eq:bmconv} in \textbf{Step 5} and \textbf{Step 6} respectively. We next complete the proof of the Theorem \ref{t:ergodic} assuming the above claims.
	
	\medskip
	
	\noindent\textbf{Step 3.} In this step we assume \eqref{eq:favbd}, \eqref{eq:Wlbd}, and \eqref{eq:bmconv} and complete the proof of \eqref{eq:ergtoshow}. Fix any $\e\in (0,1)$. Get a $\delta_0\in (0,1)$, so that \eqref{eq:favbd} is true for all $\delta\in (0,\delta_0)$. Fix any such $\delta\in (0,\delta_0)$. Get $t_0(\delta)$ large enough so that both \eqref{eq:Wlbd} and \eqref{eq:bmconv} hold for all $t\ge t_0$. Fix any such $t\ge t_0$. 
	
	\medskip
	
	As $W\le 1$, we note that on the event $\m{Fav}_t(\delta)$, 
	\begin{align*}
		\frac{\Ex_{\operatorname{free},t}\left[W\ind_{A}\right]}{\Ex_{\operatorname{free},t}\left[W\right]} & \ge \Ex_{\operatorname{free},t}\left[W\ind_{A}\right] \\ & \ge (1-\delta)\Pr_{\operatorname{free},t}\left({A}\cap\{W \ge 1-\delta\}\right) \\ & \ge  (1-\delta)\Pr_{\operatorname{free},t}({A})- (1-\delta)\Pr_{\operatorname{free},t}(W <1-\delta) \\ & \ge (1-\delta)\Pr_{\operatorname{free},t}({A})- (1-\delta)\delta,
	\end{align*}
	where in the last line we used \eqref{eq:Wlbd}. Plugging this bound back in \eqref{eq:erglow} we get
	\begin{align*}
		\Pr_t(A) & \ge (1-\delta)\Ex\left[\ind\{\m{Fav}_t(\delta)\}\Pr_{\operatorname{free},t}(A)\right]-(1-\delta)\delta \\ & \ge (1-\delta)\Ex\left[\ind\{\m{Fav}_t(\delta)\}\gamma(A)-\delta\right]-(1-\delta)\delta \\ & =\gamma(A)(1-\delta)\Pr(\m{Fav}_t(\delta))-2\delta(1-\delta).
	\end{align*}
	where the inequality in the penultimate line follows from \eqref{eq:bmconv}. Taking $\liminf$ both sides as $t\to \infty$, in view of \eqref{eq:favbd} we see that
	\begin{align*}
		\liminf_{t\to\infty}\Pr_t(A) \ge (1-\delta)(1-\e)\gamma(A)-2\delta(1-\delta).
	\end{align*}
	Taking $\liminf_{\delta\downarrow 0}$ and using the fact that $\e$ is arbitrary, we get that $	\liminf_{t\to\infty}\Pr_t(A)\ge \gamma(A)$. 
	Similarly for the {upper} bound, on the event $\m{Fav}_t(\delta)$ we have
	\begin{align*}
		\frac{\Ex_{\operatorname{free},t}\left[W\ind_{A}\right]}{\Ex_{\operatorname{free},t}\left[W\right]} & \le \frac{\Pr_{\operatorname{free},t}(A)}{(1-\delta)\Pr_{\operatorname{free},t}(W\ge 1-\delta)} \le \frac1{(1-\delta)^2}\Pr_{\operatorname{free},t}(A),
	\end{align*}
	where we again use \eqref{eq:Wlbd} for the last inequality. Inserting the above bound in \eqref{eq:ergup} we get
	\begin{align*}
		\Pr_t(A) & \le \frac1{(1-\delta)^2}\Ex\left[\ind\{\m{Fav}_t(\delta)\}\Pr_{\operatorname{free},t}(A)\right]+\Pr\left(\neg \m{Fav}_t(\delta)\right)  \\ & \le \frac{\delta}{(1-\delta)^2}+\frac{1}{(1-\delta)^2}\Ex\left[\ind\{\m{Fav}_t(\delta)\}\gamma(A)\right]+\Pr\left(\neg \m{Fav}_t(\delta)\right) \\ & \le \frac{\delta}{(1-\delta)^2}+\frac{1}{(1-\delta)^2}\gamma(A)+\Pr\left(\neg \m{Fav}_t(\delta)\right).
	\end{align*}
	The inequality in the penultimate line above follows from \eqref{eq:bmconv}. 
	Taking $\limsup$ both sides as $t\to \infty$, in view of \eqref{eq:favbd} we see that
	\begin{align*}
		\limsup_{t\to\infty}\Pr_t(A) \le \frac{\delta}{(1-\delta)^2}+\frac{1}{(1-\delta)^2}\gamma(A)+\e.
	\end{align*}
	As before taking $\limsup_{\delta\downarrow 0}$ and using the fact that $\e$ is arbitrary, we get that $	\limsup_{t\to\infty}\Pr_t(A)\le \gamma(A)$. With the matching upper bound for $\liminf$ derived above, we thus arrive at \eqref{eq:ergtoshow}, completing the proof of Theorem \ref{t:ergodic}. 
	
	\medskip
	
	\noindent\textbf{Step 4.}  In this step we prove \eqref{eq:favbd}. Fix any $\delta>0$. Recall the distributional convergence of KPZ line ensemble to Airy line ensemble from Proposition \ref{p:leconv}. By the Skorokhod representation theorem, we may assume that our probability space are equipped with $\mathcal{A}_1(x)$ $\mathcal{A}_2(x)$, such that as $t\to \infty$, almost surely we have
	\begin{align}\label{eq:ergsk}
		\max_{i=1,2} \sup_{x\in [-1,1]} |2^{1/3}\h_t^{(i)}(2^{1/3}x)-\mathcal{A}_i(x)| \to 0.
	\end{align}

	\begin{figure}[t]
		\centering
		\includegraphics[width=12cm]{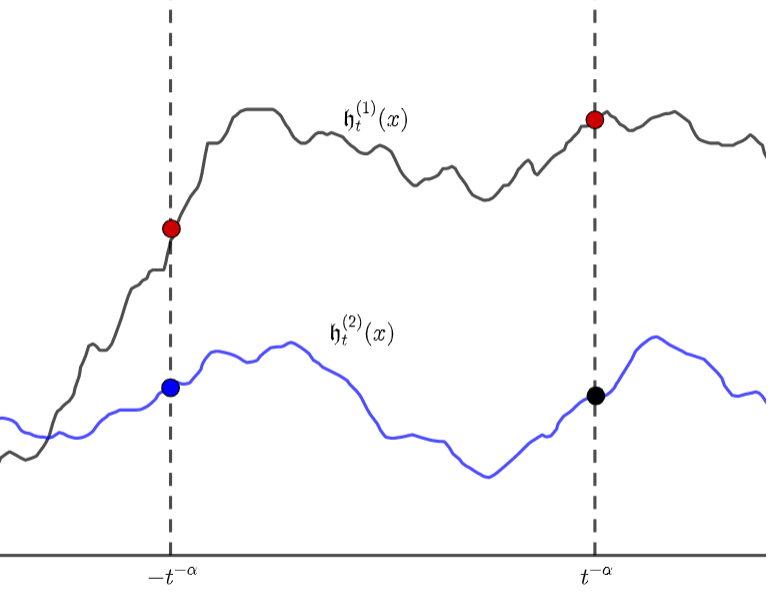}
		\caption{In the above figure $\m{Gap}_t(\delta)$ defined in \eqref{def:gap} denotes the event that the value of the blue point is smaller than the value of each of the red points at least by $\delta$, The $\m{Rise}_t(\delta)$ event defined in \eqref{def:rise} requires \textit{no} point on the whole blue curve (restricted to ${I}_t=(-t^{-\alpha},t^{-\alpha})$) exceed the value of the blue point by a factor $\frac14\delta$ (i.e., there is no significant rise). The $\m{Tight}_t(\delta)$ defined in \eqref{def:tight} event ensures the value of the red points are within $[-\delta^{-1},\delta^{-1}]$. The $\m{Fluc}_t^{(i)}(\delta)$ event defined in \eqref{eq:risei} signifies every value of every point on the $i$-th curve (restricted to ${I}_t$) is within $\frac14\delta$ distance away from its value on the left boundary: $\h_t^{(1)}(-t^{-\alpha})$. Finally, $\m{Sink}_t(\delta)$ event defined in \eqref{def:sinkerg} denotes the event that no point on the black curve (restricted to ${I}_t$) drops below the value of the red points by a factor larger than $\frac14\delta$, (i.e., there is no significant sink).} 
		\label{fig:sink0}
	\end{figure}

	For $i=1,2$, consider the event
	\begin{align}\label{eq:risei}
		\m{Fluc}_{t}^{(i)}(\delta):=\left\{\sup_{x\in I_t}|\h_t^{(i)}(x)-\h_t^{(i)}(-t^{-\alpha})|\le \tfrac14\delta\right\}.
	\end{align}
	See Figure \ref{fig:sink0} and its caption for an interpretation of this event. We claim that for every $\delta>0$,
	\begin{align}\label{eq:riselim}
		\liminf_{t\to \infty} \Pr\left(\m{Fluc}_t^{(i)}(\delta)\right)=1.
	\end{align}
	Let us complete the proof of \eqref{eq:favbd} assuming \eqref{eq:riselim}. Fix any $\e>0$.  Note that $\{|\h_t^{(1)}(-t^{-\alpha})-\h_t^{(1)}(t^{-\alpha})| \le \tfrac14\delta\} \supset \m{Fluc}_t^{(1)}(\delta)$. Recall $\m{Gap}_t(\delta)$ from \eqref{def:gap}. Observe that
	\begin{align*}
		\neg\m{Gap}_t(\delta) \cap \left\{|\h_t^{(1)}(-t^{-\alpha})-\h_t^{(1)}(t^{-\alpha})| \le \tfrac14\delta\right\} & \subset \left\{\h_t^{(2)}(-t^{\alpha})-\h_t^{(1)}(-t^{-\alpha}) \ge -\tfrac54\delta \right\} \\ & \subset \left\{  \inf_{x\in [-1,0]} [\h_t^{(2)}(x)-\h_t^{(1)}(x)] \ge -\tfrac54\delta \right\}.
	\end{align*}
	Using these two preceding set relations,  by union bound we have
	\begin{align*}
		\Pr\left(\neg\m{Gap}_t(\delta)\right) & \le \Pr\left(|\h_t^{(1)}(-t^{-\alpha})-\h_t^{(1)}(t^{-\alpha})| \ge \tfrac14\delta\right)+\Pr\left(\neg\m{Gap}_t(\delta) \cap |\h_t^{(1)}(-t^{-\alpha})-\h_t^{(1)}(t^{-\alpha})| \le \tfrac14\delta\right) \\ & \le \Pr\left(\neg \m{Rise}_t^{(1)}(\delta)\right)+\Pr\left(\inf_{x\in [-1,0]} [\h_t^{(2)}(x)-\h_t^{(1)}(x)] \ge -\tfrac54\delta\right).
	\end{align*}
	As $t\to \infty$, the first term goes to zero due \eqref{eq:riselim} and by Proposition \ref{p:leconv}, the second term goes to 
	$$\Pr\left(\inf_{x\in [-1,0]} [\mathcal{A}_2(2^{-1/3}x)-\mathcal{A}_1(2^{-1/3}x)] \ge -\tfrac5{4\cdot 2^{1/3}}\delta\right).$$
	But by \eqref{eq:order} we know Airy line ensembles are strictly ordered. Thus the above probability can be made arbitrarily small be choose $\delta$ small enough. In particular, there exists a $\delta_1\in (0,1)$ such that for all $\delta\in (0,\delta_1)$ the above probability is always less than $\frac\e2$. This forces \begin{align}\label{eq:gaplim}
		\liminf_{t\to \infty}\Pr\left(\m{Gap}_t(\delta)\right) \ge 1-\tfrac\e2.
	\end{align}
	Recall $\m{Rise}_t(\delta)$ from \eqref{def:rise}. Clearly $\m{Rise}_t(\delta)\subset \m{Fluc}_t^{(2)}(\delta)$. Thus for every $\delta>0$, \begin{align}\label{eq:rise0lim}
		\liminf_{t\to\infty} \Pr(\m{Rise}_t(\delta))=1.
	\end{align} Finally using Proposition \ref{p:kpzeq} \ref{p:stat} and \ref{p:tail} we see that $\h_t^{(1)}(t^{-\alpha}),\h_t^{(1)}(t^{-\alpha})$ are tight. Thus there exists $\delta_2\in (0,1)$ such that for all $\delta\in (0,\delta_2)$, we have 
	\begin{align}\label{eq:tightlim}
		\liminf_{t\to\infty} \Pr\left(\m{Tight}_t(\delta)\right) \ge 1-\tfrac{\e}{2}.
	\end{align}
	Combining \eqref{eq:gaplim}, \eqref{eq:rise0lim}, \eqref{eq:tightlim}, and recalling the definition of $\m{Fav}_t(\delta)$ from \eqref{def:fav}, by union bound we get \eqref{eq:favbd} for all $\delta\in (0,\min\{\delta_1,\delta_2\})$. \\
	
	Let us now prove \eqref{eq:riselim}. Recall $\m{Fluc}_t^{(i)}(\delta)$ from \eqref{eq:risei}. Define the event:
	\begin{align*}
		\m{Conv}_t(\delta):=\left\{\sup_{x\in [-1,1]}|\h_t^{(i)}(x)-2^{-1/3}\mathcal{A}_i(2^{-1/3}x)|\le \tfrac{1}{16}\delta\right\}.
	\end{align*}
	Observe that 
	\begin{align*}
		& \left\{\neg\m{Fluc}_t^{(i)}(\delta),\m{Conv}_t(\delta)\right\} \subset \left\{\sup_{|x|\le 2^{-1/3}t^{-\alpha}} \left[\mathcal{A}_i(x)-\mathcal{A}_i(-2^{-1/3}t^{-\alpha})\right] \ge \tfrac{2^{1/3}}{8}\delta\right\}.
	\end{align*}
	Thus by union bound
	\begin{align*}
		\Pr\left(\neg\m{Fluc}_t^{(i)}(\delta)\right) & \le \Pr\left(\neg \m{Conv}_t(\delta)\right)+\Pr\left(\neg\m{Fluc}_t^{(i)}(\delta),\m{Conv}_t(\delta)\right) \\ & \le \Pr\left(\neg \m{Conv}_t(\delta)\right) 
		+\Pr\left(\sup_{|x|\le 2^{-1/3}t^{-\alpha}} \left[\mathcal{A}_i(x)-\mathcal{A}_i(-2^{-1/3}t^{-\alpha})\right] \ge \tfrac{2^{1/3}}{8}\delta\right).
	\end{align*}
	By \eqref{eq:ergsk}, the first term above goes to zero as $t\to \infty$, whereas the second term goes to zero as $t\to \infty$, via modulus of continuity of Airy line ensembles from Proposition \ref{p:amodcon}.  Note that in Proposition \ref{p:amodcon} the modulus of continuity is stated for $\calA_i(x)+x^2$. However, in the above scenario since we deal with a vanishing interval $[-2^{-1/3}t^{-\alpha},2^{-1/3}t^{-\alpha}]$, the parabolic term does not play any role. This establishes \eqref{eq:riselim}.
	
	\medskip

	\noindent\textbf{Step 5.} In this step we prove \eqref{eq:Wlbd}. Let us consider the event
	\begin{align}
		\label{def:sinkerg}
		\m{Sink}_{t}(\delta):=\left\{\inf_{x\in I_t} \h_t^{(1)}(x) \ge -\tfrac14\delta+\min\{\h_t(-t^{-\alpha}),\h_t(t^{-\alpha})\}\right\}.
	\end{align}
	See Figure \ref{fig:sink0} and its caption for an interpretation of this event. Recall $\m{Gap}_t(\delta)$ and $\m{Rise}_t(\delta)$ from \eqref{def:gap} and \eqref{def:rise}. Observe that on the event $\m{Gap}_t(\delta)\cap \m{Rise}_t(\delta)$, we have $\sup_{x\in I_t} \h_t^{(2)}(x) \le \min\{\h_t(-t^{-\alpha}),\h_t(t^{-\alpha})\}-\frac34\delta$. Thus on $\m{Gap}_t(\delta)\cap \m{Rise}_t(\delta)\cap \m{Sink}_{t}(\delta)$, we have $$\inf_{x\in {I}_t} \left[\h_t^{(1)}(x)-\h_t^{(2)}(x)\right] \ge \tfrac12\delta.$$ Recall $W$ from \eqref{eq:ergW}. On the event $\{\inf_{x\in I_t} \left[\h_t^{(1)}(x)-\h_t^{(2)}(x)\right] \ge \tfrac12\delta\}$ we have the pointwise inequality
	$$W>\exp(-2t^{2/3-\alpha}e^{-\frac12t^{1/3}\delta})\ge 1-\delta,$$
	where we choose a $t_1(\delta)>0$ so that the last inequality is true for all $t\ge t_1$.  Thus for all $t\ge t_1$,
	\begin{align}\label{eq:sinkbd}
		\ind\{\m{Fav}_t(\delta)\}\Pr_{\operatorname{free},t}(W\ge 1-\delta) \ge \ind\{\m{Fav}_t(\delta)\}\Pr_{\operatorname{free},t}(\m{Sink}_t(\delta)).
	\end{align} 
	Recall that $\Pr_{\operatorname{free},t}$ denotes the law of a Brownian bridge $B_1(\cdot)$ on $I_t$ starting at $B_1(-t^{-\alpha})=\h_t(-t^{-\alpha})$ and ending at $B_1(t^{-\alpha})=\h_t(t^{-\alpha})$. Let us consider another Brownian bridge $\til{B}_1(\cdot)$ on $I_t$ starting and ending at $\min\{\h_t(-t^{-\alpha}),\h_t(t^{-\alpha})\}$. By standard estimates for Brownian bridge (see Lemma 2.11 in \cite{CH16} for example)
	\begin{align*}
		\Pr\left(\inf_{x\in I_t} \til{B}_1(x) \ge -\tfrac14\delta+\min\{\h_t(-t^{-\alpha}),\h_t(t^{-\alpha})\}\right)=1-\exp\left(-\tfrac{\delta^2}{8|I_t|}\right)=1-\exp\left(-\tfrac{\delta^2}{16}t^{\alpha}\right).
	\end{align*} 
	Note that $B_1(\cdot)$ is stochastically larger than $\til{B}_1(\cdot)$. Since the above event is increasing, we thus have $\Pr_{\operatorname{free},t}\left(\m{Sink}_t(\delta)\right)$ is at least $1-\exp\left(-\tfrac{\delta^2}{16}t^{\alpha}\right)$. We now choose $t_2(\delta)>0$, such that $1-\exp\left(-\tfrac{\delta^2}{16}t^{\alpha}\right)\ge 1-\delta$. Taking $t_0=\max\{t_1,t_2\}$, we thus get \eqref{eq:Wlbd} from \eqref{eq:sinkbd}. 
	\medskip

	\noindent\textbf{Step 6.} In this step we prove \eqref{eq:bmconv}. As before consider the Brownian bridge $B_1(\cdot)$ on $I_t$ starting at $B_1(-t^{-\alpha})=\h_t(-t^{-\alpha})$ and ending at $B_1(t^{-\alpha})=\h_t(t^{-\alpha})$. We may write $B_1$ as
	$$B_1(x)=\h_t^{(1)}(-t^{-\alpha})+\frac{x+t^{-\alpha}}{2t^{-\alpha}}(\h_t^{(1)}(t^{-\alpha})-\h_t^{(1)}(-t^{-\alpha}))+\bar{B}(x).$$
	where $\bar{B}$ is a Brownian bridge on $I_t$ starting and ending at zero. Thus,
	\begin{align}\label{eq:bmdecomp}
		t^{1/3}(B_1(t^{-2/3}x)-B_1(0))=t^{1/3}\left[\bar{B}(t^{-2/3}x)-\bar{B}(0)\right]+\tfrac12{t^{\alpha-1/3}x}(\h_t^{(1)}(t^{-\alpha})-\h_t^{(1)}(-t^{-\alpha})).
	\end{align}
	Recall that $\alpha=\frac16$. By Brownian scaling, $B_*(x):=t^{1/3}\bar{B}(t^{-2/3}x)$ is a Brownian bridge on the large interval $[-\sqrt{t},\sqrt{t}]$ starting and ending at zero. By computing the covariances, it is easy to check that as $t\to \infty$, $B_*(x)-B_*(0)$ converges weakly to a two-sided Brownian motion $B(\cdot)$ on $[-a,a]$. This gives us the weak limit for the first term on the r.h.s.~of \eqref{eq:bmdecomp}. For the second term, recall the event $\m{Tight}_t(\delta)$ from \eqref{def:tight}. As $|x|\le a$, on $\m{Tight}_t(\delta)$, we have
	$$ \tfrac12{t^{\alpha-1/3}x}(\h_t^{(1)}(t^{-\alpha})-\h_t^{(1)}(-t^{-\alpha})) \le {t^{-1/6}a}\delta^{-1}.$$
	This gives an uniform bound (uniform over the event $\m{Fav}_t(\delta))$ on the second term in \eqref{eq:bmdecomp}. Thus as long as the boundary data is in $\m{Tight}_t(\delta)$, $\Pr_{\operatorname{free},t}(A) \to \gamma(A)$ where $\gamma(A)=\Pr(B(\cdot)\in A)$. This proves \eqref{eq:bmconv}.

	\subsection{Dyson Behavior around joint maximum}\label{sec:dysonapp} In this subsection we state and prove Proposition \ref{p:dyson}.
	
	\begin{proposition}[Dyson behavior around joint maximum] \label{p:dyson}	  Fix $p\in (0,1)$. Set $q=1-p$. Consider $2$ independent copies of the KPZ equation $\calH_\uparrow(x,t)$, and $\calH_\downarrow(x,t)$, both started from the narrow wedge initial data. Let $\md_{p,t}$ be the almost sure unique maximizer of the process $x\mapsto (\calH_\uparrow(x,pt)+\calH_\downarrow(x,qt))$ which exists via Lemma \ref{lem1}. Set
		\begin{equation}
			\label{def:d1d2}
			\begin{aligned}
				D_1(x,t) & :=\calH_\uparrow(\md_{p,t},pt)-\calH_\uparrow(x+\md_{p,t},pt), \\
				D_2(x,t) & :=\calH_\downarrow(x+\md_{p,t},qt)-\calH_\downarrow(\md_{p,t},qt).
			\end{aligned}
		\end{equation}
		As $t\to\infty$, we have the following convergence in law
		\begin{align}\label{eq:dyson}
			(D_1(x,t),D_2(x,t)) \stackrel{d}{\to} (\calD_1(x),\calD_2(x))
		\end{align}
		in the uniform-on-compact topology. Here $\calD=(\calD_1,\calD_2): \R\to \R^2$ is a two-sided $\dbm$, that is, $\calD_{+}(\cdot):=\calD(\cdot)\mid_{[0,\infty)}$ and $\calD_{-}(\cdot):=\calD(-\cdot)\mid_{(-\infty,0]}$ are independent copies of $\dbm$ defined in Definition \ref{def:dbm}.
	\end{proposition}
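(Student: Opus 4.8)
The plan is to mirror the strategy of Theorem~\ref{t:ergodic} but on a growing window around the \emph{joint} maximizer of two independent KPZ line ensembles, combining the Brownian‑bridge decomposition of Proposition~\ref{propA} with the diffusive‑limit statement Proposition~\ref{lemmaC}. First I would set up the two scaled KPZ line ensembles $\h_{pt}^{(\cdot)}$ and $\h_{qt}^{(\cdot)}$ (rescaling time so that $\calH_\uparrow(\cdot,pt)$ and $\calH_\downarrow(\cdot,qt)$ become, after $1{:}2{:}3$ scaling as in \eqref{eq:htx}, the top curves of two independent copies $\{\h^{(n)}_{pt,\uparrow}\}$, $\{\h^{(n)}_{qt,\downarrow}\}$), and translate the object $(D_1,D_2)$ of \eqref{def:d1d2} into the scaled coordinates. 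In these coordinates the joint maximizer is $\mx := t^{-2/3}\md_{p,t} = \argmax_x \mathcal S_{p,t}(x)$ from \eqref{eq:hupdwn}, which by Lemma~\ref{lem1} is a.s.\ unique and satisfies the Gaussian‑type tail \eqref{tailC}, so $\mx$ lies in a window of order one in the scaled picture. On a growing interval $J_t := [\mx - K(t),\mx+K(t)]$ with $K(t)\to\infty$ slowly (e.g.\ $K(t)=t^{-\alpha}$ for a small $\alpha>0$, so that in \emph{unscaled} coordinates the interval $[\md_{p,t}\pm t^{2/3}K(t)]$ still has negligible length $t^{2/3-\alpha}$ relative to the parabolic scale $t^{2/3}$), I would condition on the $\sigma$‑field $\mathcal F$ generated by the two second curves $\h^{(2)}_{pt,\uparrow},\h^{(2)}_{qt,\downarrow}$, the boundary values of the two top curves at $\mx\pm K(t)$, and the max‑data $(\mx,\h^{(1)}_{pt,\uparrow}(\mx),\h^{(1)}_{qt,\downarrow}(\mx))$. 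The point is that $J_t$ contains the joint maximizer and is therefore \emph{not} a stopping domain, so the plain strong Gibbs property does not apply; instead one uses the $\mathbf H_t$‑Brownian Gibbs property on the \emph{deterministic} interval $I=[\mx-K(t),\mx+K(t)]$ treated as random data, exactly as in Theorem~\ref{t:ergodic}.

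Next I would run the two‑sided analogue of the ``favorable event'' argument of Theorem~\ref{t:ergodic}, \textbf{Step~4}. Define $\m{Fav}_t(\delta)$ to be the intersection of: (i) a \emph{gap} event asserting that each second curve sits below the corresponding top curve's boundary values by at least $\delta$ (which will hold w.h.p.\ because, after the scaling $2^{1/3}\h_t^{(i)}(2^{1/3}\cdot)\to\calA_i$ of Proposition~\ref{p:leconv}, the Airy line ensemble is strictly ordered by \eqref{eq:order}); (ii) a \emph{no‑rise}/\emph{fluctuation} event controlling the oscillation of each of the four curves on $J_t$ by $O(1)$ (which holds by the modulus‑of‑continuity estimate Proposition~\ref{p:amodcon} together with \eqref{eq:ergsk}, the parabola being negligible on the vanishing window $2^{-1/3}t^{-\alpha}$‑neighborhood); (iii) a \emph{tightness} event putting the boundary values of both top curves in $[-\delta^{-1},\delta^{-1}]$ (by Proposition~\ref{p:kpzeq}\ref{p:stat}--\ref{p:tail}); and, crucially, (iv) a \emph{maximum‑not‑too‑high} event $\h^{(1)}_{pt,\uparrow}(\mx)+\h^{(1)}_{qt,\downarrow}(\mx) - [\h^{(1)}_{pt,\uparrow}(\mx\pm K(t))+\h^{(1)}_{qt,\downarrow}(\mx\pm K(t))] = O(1)$, which follows from Proposition~\ref{p:kpzeq}\ref{p:supproc} (parabolic curvature) and the tail bound \eqref{eq:Stail} on the maximizer location. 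One then shows $\liminf_{t\to\infty}\Pr(\m{Fav}_t(\delta)) \ge 1-\e$ for $\delta$ small, and — as in Theorem~\ref{t:ergodic}, \textbf{Step~5} — that on $\m{Fav}_t(\delta)$ the Radon–Nikodym derivative \eqref{eq:bgibbs} for the conditional law of the pair of top curves on $J_t$ converges to $1$ in probability (the exponential penalty involves $t^{2/3}e^{-t^{1/3}\cdot(\text{gap})}$, which is killed by a gap of order $t^{1/3}$; the gap being forced to be of that order once the $O(1)$ top‑curve fluctuations are below an $O(t^{1/3})$ separation of the curves at the endpoints, itself a consequence of Proposition~\ref{p:leconv}). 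Hence, on $\m{Fav}_t(\delta)$, the conditional law of the two top curves on $J_t$ is, up to $o(1)$ total‑variation error, that of two \emph{independent} Brownian bridges with diffusion coefficient $1$ with the prescribed endpoint and max data.

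The third phase is to pass from ``two independent Brownian bridges conditioned on their max data'' to the two‑sided Dyson Brownian motion. Here I invoke Proposition~\ref{propA}: conditioned on $(M,\bar B_1(M),\bar B_2(M))$, the left and right halves $\bar V_\ell,\bar V_r$ are independent, each a $\nonintbb$ on an interval of the appropriate (here, macroscopic, order‑$K(t)^2$ in scaled‑again coordinates, or rather order‑$K(t)$ length) width with the appropriate endpoints. More precisely, after the above reduction the process $(D_1(x,t),D_2(x,t))$ for $x\in[-K(t),K(t)]$ is, up to $o(1)$ error, distributed as (left half of) a $\nonintbb$ for $x\le 0$ glued to an independent $\nonintbb$ for $x\ge 0$, started at $(0,0)$ at $x=0$, with endpoints at $x=\pm K(t)$ that are themselves random but, conditioned on $\m{Fav}_t(\delta)$, bounded and separated — precisely the hypotheses $M>a_n>1/M$, $|z_i^{(n)}|<1/M$ of Proposition~\ref{lemmaC} (after a further harmless rescaling of the bridge to unit length, under which the endpoint separation, of order one on a window of order $K(t)\to\infty$, becomes of order $1/K(t)$; one must be a little careful here and instead apply Proposition~\ref{lemmaC} with the natural normalization so that the separation stays bounded below — this is exactly why the endpoint gap and tightness events are built into $\m{Fav}_t(\delta)$). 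Applying Proposition~\ref{lemmaC} to each half, $\sqrt{K(t)}(\bar V^{(t)}_i(\cdot/K(t)))$ converges in the uniform‑on‑compact topology to $(\calD_1,\calD_2)$, a $\dbm$; doing this independently on the two sides and noting that $D_1(0,t)=D_2(0,t)=0$ yields the two‑sided Dyson limit \eqref{eq:dyson}. Finally, to remove the conditioning on $\m{Fav}_t(\delta)$ one uses $\Pr(\neg\m{Fav}_t(\delta))\to\text{(small)}$ together with a standard $\e$–$\delta$ argument (identical in structure to Theorem~\ref{t:ergodic}, \textbf{Step~3}), taking $\delta\downarrow 0$ at the end.

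\textbf{Main obstacle.} The delicate point, and the place where the argument genuinely goes beyond Theorem~\ref{t:ergodic}, is item (iv) and the surrounding bookkeeping: one must quantify the Gibbs property on the \emph{symmetric} random interval $[\mx-K(t),\mx+K(t)]$ \emph{around the joint maximizer of the sum of two line ensembles}. The maximizer makes $J_t$ a non‑stopping domain, so a crude comparison with Brownian bridges (which only gives one‑sided monotone bounds) is insufficient; one must handle the Radon–Nikodym derivative directly and, simultaneously, control both that the joint maximum is $O(1)$ above the endpoint values and that the top‑minus‑second‑curve gaps at the endpoints are of order $t^{1/3}$ — uniformly over the (growing) window. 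Making $K(t)\to\infty$ at the right rate (slow enough that the parabola and the modulus‑of‑continuity contributions are negligible, fast enough that the resulting $\nonintbb$ has room to equilibrate to $\dbm$) is the crux; I expect this to require combining Proposition~\ref{p:amodcon}, Proposition~\ref{p:kpzeq}\ref{p:supproc}, and the Airy‑ordering \eqref{eq:order} in a single quantitative estimate, which is the main technical work of the proof.
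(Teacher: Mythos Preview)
Your proposal identifies the right ingredients (Propositions~\ref{propA} and~\ref{lemmaC}, favorable events, Radon--Nikodym derivative close to $1$) but has a structural gap in the order of conditioning. You propose to apply the $\mathbf H_t$-Brownian Gibbs property directly on $J_t=[\mx-K(t),\mx+K(t)]$, acknowledging that $J_t$ is not a stopping domain but claiming this can be handled ``exactly as in Theorem~\ref{t:ergodic}''. This does not work: in Theorem~\ref{t:ergodic} the interval $(-t^{-\alpha},t^{-\alpha})$ is genuinely deterministic, whereas your $J_t$ depends on $\mx$, which depends on the top curves \emph{inside} $J_t$. No form of the Gibbs property gives the conditional law on such an interval, and Proposition~\ref{propA} is a statement about Brownian bridges, not about KPZ curves, so it cannot be invoked until one has already passed to the free Brownian law on a deterministic window.

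The paper resolves this by a three-layer conditioning that you are missing. First, it replaces the global maximizer by the local maximizer $\md_{p,t}^M$ on a fixed window $[-Mt^{2/3},Mt^{2/3}]$ (justified by~\eqref{tailC}) and applies Gibbs on the \emph{deterministic} interval $(-M,M)$ in scaled coordinates; this is the $\sigma$-field $\calF_1$ and produces two Brownian bridges weighted by $W_\uparrow W_\downarrow$. Only \emph{then}, under the free law, does one condition on the max data $\calF_2$ and invoke Proposition~\ref{propA} to obtain two-sided $\nonintbb$'s on $[-M,M]$; a further Markov step ($\calF_3$) restricts these to the small random window $K_t=(\Phi-t^{-\alpha},\Phi+t^{-\alpha})$. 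The Radon--Nikodym weight on all of $[-M,M]$ does \emph{not} go to $1$; it is split as $W_{\uparrow}=W_{\uparrow,1}W_{\uparrow,2}$ (and similarly for $\downarrow$), with the outer part $W_{\uparrow,2}$ measurable with respect to $\calF_1\cup\calF_2\cup\calF_3$ and hence cancelling between numerator and denominator, while only the inner part $W_{\uparrow,1}$ on $K_t$ is shown to be close to $1$ (Lemma~\ref{l:rn1}). Once this scaffolding is in place, your description of the favorable events and the final appeal to Proposition~\ref{lemmaC} match the paper (Lemmas~\ref{l:nice} and~\ref{l:closetod}); the gap is getting there.
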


	For clarity, the proof is completed over several subsections (Sections \ref{sb:frame}-\ref{sb:2lem}) below and we refer to Figure \ref{fig:struc} for the structure of the proof.
	
		\begin{center}
	\footnotesize
  \begin{figure}[h]
  \begin{tikzpicture}[auto,
  	block_main/.style = {rectangle, draw=black, thick, fill=white, text width=10em, text centered, minimum height=4em, font=\small},
	block_density/.style = {rectangle, draw=black, fill=white, thick, text width=11em, text centered, minimum height=4em},
	block_rewrite/.style = {rectangle, draw=black, fill=white, thick, text width=17em, text centered, minimum height=4em},
	block_kernels/.style = {rectangle, draw=black, fill=white, thick, text width=20em, text centered, minimum height=4em},
        line/.style ={draw, thick, -latex', shorten >=0pt}]
      		\node [block_main] (sb1) at (-5,4) {Recasting Proposition \ref{p:dyson} in the KPZ line ensemble framework (Section \ref{sb:frame})};
      		\node [block_main] (sb2) at (0,4) {Heuristics and outline of proof of Proposition \ref{p:dyson} (Section \ref{sb:ideas})};
      		\node [block_main] (sb3) at (5,4) {Reducing the global maximizer to the local maximizer (Section \ref{sb:gtol})};
      		\node [block_main] (sb4) at (-5,0) {Defining ``Nice" events that happen with high probability (Lemma \ref{l:nice}, Section \ref{sb:nice})};
      		\node [block_main] (sb5) at (0,0) {Conditioning w.r.t. large boundaries to obtain Brownian bridge law (Section \ref{sb:large})};
      		\node [block_main] (sb6) at (5,0) {Conditioning w.r.t. the max data and small boundaries to obtain $\nonintbb$ law (Section \ref{sb:small})};
      		\node [block_main] (sb7) at (-5,-4) {Obtaining matching upper and lower bounds for \eqref{wts1} and the desired convergence (Section \ref{sb:ublb})};
      		\node [block_main] (sb8) at (0,-4) {Proof of Lemma \ref{l:nice} (Section \ref{sb:pfnice})};
      		\node [block_main] (sb9) at (5,-4) {Proofs of Lemmas \ref{l:rn1} and \ref{l:closetod} (Section \ref{sb:2lem})};
      	
    \begin{scope}[every path/.style=line]
		\path (sb1) -- (sb2);
		\path (sb2) -- (sb3);
		\path (sb3) -- (sb4);
		\path (sb4) -- (sb5);
		\path (sb5) -- (sb6);
	    \path (sb6) -- (sb7);
		\path (sb7) -- (sb8);
		\path (sb8) -- (sb9);
    \end{scope}
  \end{tikzpicture}
  \caption{Structure of Section \ref{sec:dysonapp}.}
  \label{fig:struc}
  \end{figure}
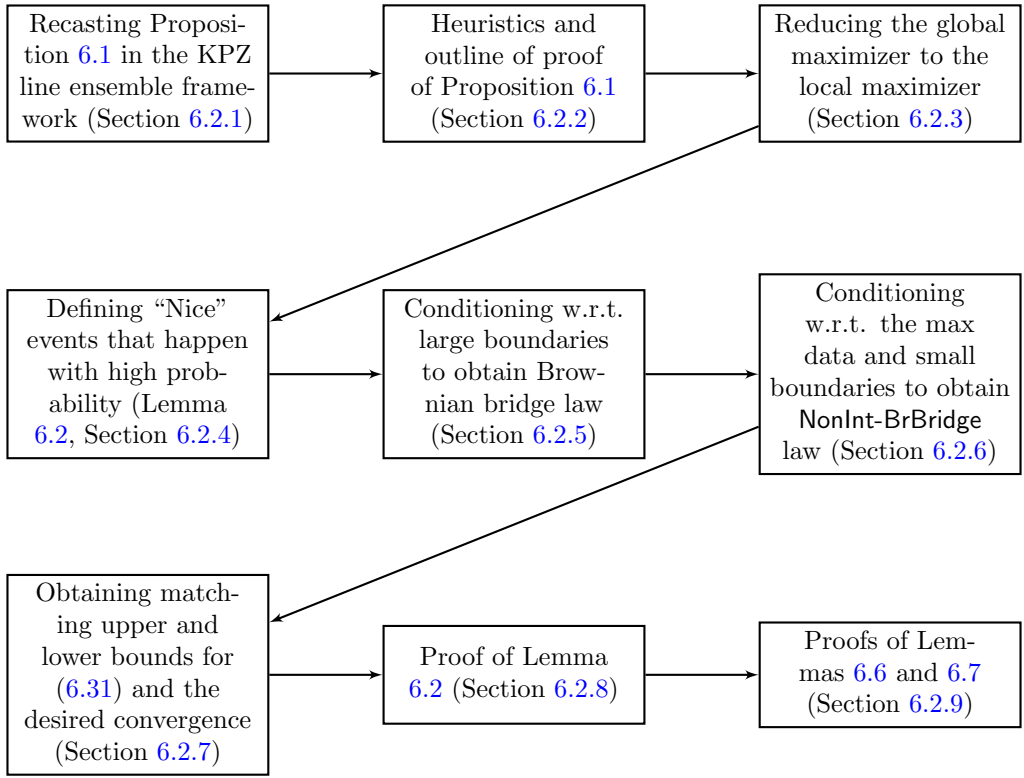
\end{center}
	
	\subsubsection{KPZ line ensemble framework} \label{sb:frame}

	In this subsection, we convert Proposition \ref{p:dyson} into the language of scaled KPZ line ensemble defined in Proposition \ref{line-ensemble}. We view $\calH_{\uparrow}(x,t)=\calH_{t,\uparrow}^{(1)}(x), \calH_{\downarrow}(x,t)=\calH_{t,\downarrow}^{(1)}(x)$ as the top curves of two (unscaled) KPZ line ensembles: $\{\calH_{t,\uparrow}^{(n)}(x),\calH_{t,\downarrow}^{(n)}(x)\}_{n\in \N,x\in\R}$. Following \eqref{eq:scaleKPZ} we define their scaled versions:
	\begin{align*}
		\h_{t,\uparrow}^{(n)}(x) & := t^{-1/3}\left(\calH_{t,\uparrow}^{(n)}(t^{2/3}x)+\tfrac{t}{24}\right), \qquad \h_{t,\downarrow}^{(n)}(x)  := t^{-1/3}\left(\calH_{t,\downarrow}^{(n)}(t^{2/3}x)+\tfrac{t}{24}\right).
	\end{align*}
	Along with the full maximizer $\md_{p,t}$, we will also consider local maximizer defined by 
	\begin{align}
		\label{eq:localmax}
		\md_{p,t}^M:=\argmax_{x\in [-Mt^{2/3},Mt^{2/3}]} (\calH_{pt,\uparrow}^{(1)}(x)+\calH_{qt,\downarrow}^{(1)}(x)), \qquad M\in[0,\infty].
	\end{align}
	For each $M>0$, $\md_{p,t}^M$ is unique almost surely by $\mathbf{H}_t$-Brownian Gibbs property. We now set
	\begin{equation}
		\label{eq:yupdown}
		\begin{aligned}
			Y_{M,t,\uparrow}^{(n)}(x) & := p^{1/3}\h_{pt,\uparrow}^{(n)}\big((pt)^{-2/3}\md_{p,t}^M\big)-p^{1/3}\h_{pt,\uparrow}^{(n)}\big(p^{-2/3}x\big), \\
			Y_{M,t,\downarrow}^{(n)}(x) & := q^{1/3}\h_{qt,\downarrow}^{(n)}\big(q^{-2/3}x\big)-q^{1/3}\h_{qt,\downarrow}^{(n)}\big((qt)^{-2/3}\md_{p,t}^M\big).
		\end{aligned}
	\end{equation}
	Taking into account of \eqref{def:d1d2} and all the above new notations, it can now be checked that for each $t>0$,
	\begin{align}\label{eq:dtoy}
		D_1(x,t)\stackrel{d}{=} t^{1/3}Y_{\infty,t,\uparrow}^{(1)}\big(t^{-2/3}(\md_{p,t}^{\infty}+x)\big), \quad D_2(x,t)\stackrel{d}{=} t^{1/3}Y_{\infty,t,\downarrow}^{(1)}\big(t^{-2/3}(\md_{p,t}^{\infty}+x)\big),
	\end{align}
	both as functions in $x$. Thus it is equivalent to verify Proposition \ref{p:dyson} for the above $Y_{\infty,t,\uparrow}^{(1)},Y_{\infty,t,\downarrow}^{(1)}$ expressions. In our proof we will mostly deal with local maximizer version, and so for convenience we define:
	\begin{align}\label{eq:dtoyM}
		D_{M,t,\uparrow}(x):{=} t^{1/3}Y_{M,t,\uparrow}^{(1)}\big(t^{-2/3}(\md_{p,t}^{M}+x)\big), \quad D_{M,t,\downarrow}(x)= t^{1/3}Y_{M,t,\downarrow}^{(1)}\big(t^{-2/3}(\md_{p,t}^{M}+x)\big).
	\end{align}
	where $Y_{M,t,\uparrow}^{(1)},Y_{M,t,\downarrow}^{(1)}$ are defined in \eqref{eq:yupdown}. We will introduce several other notations and parameters later in the proof. For the moment, the minimal set of notations introduced here facilitate our discussion of ideas and outline of the proof of Proposition \ref{p:dyson} in the next subsection. 
	
	\subsubsection{Ideas and Outline of Proof of Proposition \ref{p:dyson}} \label{sb:ideas}  Before embarking on a rather lengthy proof, in this subsection we explain the core ideas behind the proof and provide an outline for the remaining subsections.

	First we contrast the proof idea with that of Theorem \ref{t:ergodic}. Indeed, similar to the proof of Theorem \ref{t:ergodic}, from \eqref{eq:dtoy} we see that at the level $Y_{\infty,t,\uparrow}^{(1)},Y_{\infty,t,\downarrow}^{(1)}$ we are interested in understanding their laws restricted to a very small symmetric interval of order $O(t^{-2/3})$ around the point $t^{-2/3}\md_{p,t}^{\infty}$. However, the key difference from the conceptual argument presented at the beginning of the proof if Theorem \ref{t:ergodic} is that the centered point $t^{-2/3}\md_{p,t}^{\infty}$ is random and it does not go to zero. Rather by Theorem \ref{t:favpt} it converges in distribution to a nontrivial random quantity (namely $\Gamma(p\sqrt{2})$). Hence one must take additional care of this random point. This makes the argument significantly more challenging compared to that of Theorem \ref{t:ergodic}. \\

	\begin{figure}[h!]
		\centering
		\includegraphics[width=15cm]{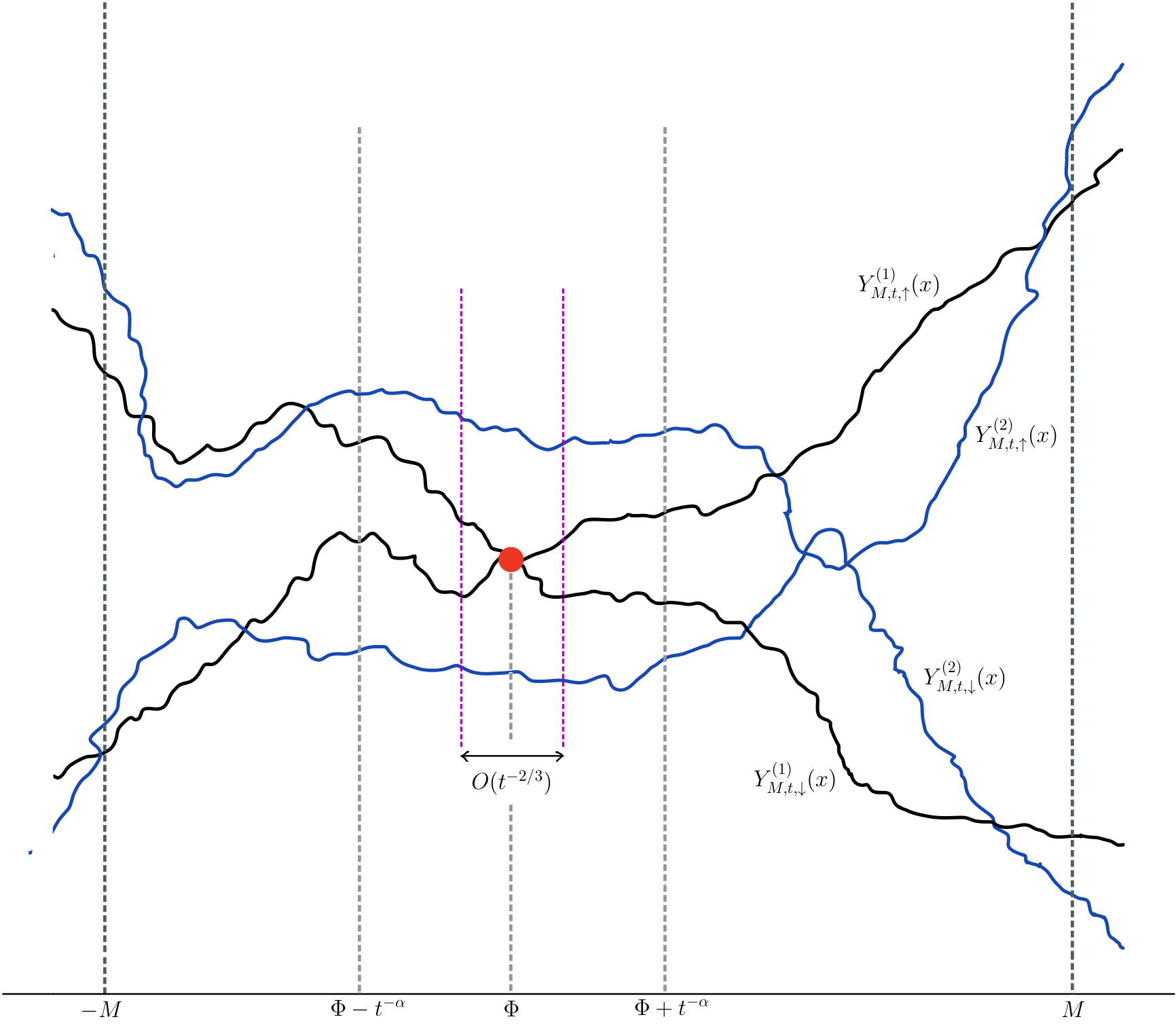}
		\caption{An overview of the proof for Proposition \ref{p:dyson}. The top and bottom black curves are $Y_{M,t,\uparrow}^{(1)}$ and $Y_{M,t,\downarrow}^{(1)}$ respectively. Note that the way they are defined in \eqref{eq:yupdown}, $Y_{M,t,\uparrow}^{(1)}(x)\ge Y_{M,t,\downarrow}^{(1)}(x)$ with equality at $x=\Phi=t^{-2/3}\md_{p,t}^{M}$ labelled as the red dot in the above figure. The blue curves are $Y_{M,t,\uparrow}^{(1)},Y_{M,t,\downarrow}^{(2)}$. There is no such ordering within blue curves. They may intersect among themselves as well as with the black curves. With $\alpha=\frac16$, we consider the interval $K_t=(\Phi-t^{-\alpha},\Phi+t^{-\alpha})$. In this vanishing interval around $\Phi$, the curves will be ordered with high probability. In fact, with high probability, there will be a uniform separation. For instance, for small enough $\delta$, we will have $Y_{M,t,\uparrow}^{(2)}(x)-Y_{M,t,\uparrow}^{(1)}(x) \ge \frac14\delta$, and $Y_{M,t,\downarrow}^{(1)}(x)-Y_{M,t,\downarrow}^{(2)}(x) \ge \frac14\delta$, for all $x\in K_t$ wth high probability. This will allow us to conclude black curves are behave approximately like two-sided $\nonintbb$s on that narrow window. Then upon going into a even smaller window of $O(t^{-2/3})$, the two-sided $\nonintbb$s turn into a two-sided $\dbm$.}
		\label{fig:dyson1}
	\end{figure}
	
	We now give a road-map of our proof. At this point, readers are also invited to look into Figure \ref{fig:dyson1} alongside the explanation offered in its caption.
	
	\begin{itemize}
		\item As noted in Lemma \ref{lem1}, the random centering $t^{-2/3}\md_{p,t}^{\infty}$ has decaying properties and can be approximated by $t^{-2/3}\md_{p,t}^{M}$ by taking large enough $M$. Hence on a heuristic level it suffices to work with the local maximizers instead. In Subsection \ref{sb:gtol}, this heuristics will be justified rigorously. We will show there how to pass from $Y_{\infty,t,\uparrow}^{(1)}, Y_{\infty,t,\downarrow}^{(1)}$ defined in \eqref{eq:dtoy} to their finite centering analogs: $Y_{M,t,\uparrow}^{(1)}, Y_{M,t,\downarrow}^{(1)}$. The rest of the proof then boils down to analyzing the laws of the latter.

		\item We now fix a $M>0$ for the rest of the proof. Our analysis will now operate with $\md_{p,t}^M$. For simplicity, let us also use the notation
		\begin{align}
			\label{def:phi}
			\Phi:=t^{-2/3}\md_{p,t}^M
		\end{align}
		for the rest of the proof. We now perform several conditioning on the laws of the curves.	Recall that by Proposition \ref{line-ensemble}, $\{\h_{pt,\uparrow}^{(n)}(\cdot)\}_{n\in \N}$ $\{\h_{qt,\downarrow}^{(n)}(\cdot)\}_{n\in \N}$ satisfy the $\mathbf{H}_{pt}$-Brownian Gibbs property and the $\mathbf{H}_{qt}$-Brownian Gibbs property respectively with $\mathbf{H}_t$ given by \eqref{eq:Ht}.  Conditioned on the end points of $\h_{pt, \uparrow}^{(1)}(\pm Mp^{-2/3})$ and $\h_{qt, \downarrow}^{(1)}(\pm Mq^{-2/3})$ and the second curves $\h_{pt, \uparrow}^{(2)}(\cdot)$ and $\h_{qt, \downarrow}^{(2)}(\cdot)$, the laws of $\h_{pt,\uparrow}^{(1)}(\cdot)$, and $\h_{pt,\uparrow}^{(1)}(\cdot)$ are absolutely continuous w.r.t.~Brownian bridges with appropriate end points. This conditioning is done in Subsection \ref{sb:large}.
		
		\item We then condition further on $\textit{{Max data}}$ : $\md_{p,t}^M, \h_{pt,\uparrow}^{(1)}((pt)^{-2/3}\md_{p,t}^M),\h_{qt,\downarrow}^{(1)}((qt)^{-2/3}\md_{p,t}^M)$. Under this conditioning, via the decomposition result in Proposition \ref{propA}, the underlying Brownian bridges mentioned in the previous point, when viewed from the joint maximizer, becomes two-sided $\nonintbb$s defined in Definition \ref{def:nibb}. This viewpoint from the joint maximizer is given by  $Y_{M,t,\uparrow}^{(1)}, Y_{M,t,\downarrow}^{(1)}$. See Figure \ref{fig:dyson1} for more details.

		\item We emphasize the fact that the deduction of $\nonintbb$s done above is only for the underlying Brownian law. One still needs to analyze the Radon-Nikodym (RN) derivative. As we are interested in the laws of $Y_{M,t,\uparrow}^{(1)}, Y_{M,t,\downarrow}^{(1)}$ on an interval of order $t^{-2/3}$ around $\Phi$, we analyze the RN derivative only on a small interval around $\Phi$.  To be precise, we consider a slightly larger yet vanishing interval of length $2t^{-\alpha}$ for $\alpha=\frac16$  around the random point $\Phi$. We show that the RN derivative on this small random patch is close to $1$. Thus upon further conditioning on the boundary data of this random small interval, the trajectories of $Y_{M,t,\uparrow}^{(1)}$ and $Y_{M,t,\downarrow}^{(1)}$ defined in \eqref{eq:yupdown} around $\Phi$ turns out to be close to two-sided $\nonintbb$ with appropriate (random) endpoints.
		\item Finally, we zoom further into a tiny interval of order $O(t^{-2/3})$ symmetric around the random point $\Phi$. Utilizing Lemma \ref{l:dtob}, we convert the two-sided $\nonintbb$s to two-sided $\dbm$s. 
	\end{itemize}

	We now provide an outline of the rest of the subsections. In Subsection \ref{sb:gtol} we reduce our proof from understanding laws around global maximizers to that of local maximizers. As explained in the above road-map, the proof follows by performing several successive conditioning. On a technical level, this requires defining several high probability events on which we can carry out our conditional analysis. These events are all defined in Subsection \ref{sb:nice} and are claimed to happen with high probability in Lemma \ref{l:nice}. We then execute the first layer of conditioning in Subsection \ref{sb:large}. The two other layers of conditioning are done in Subsection \ref{sb:small}. Lemma \ref{l:rn1} and Lemma \ref{l:closetod} are the precise technical expressions for the heuristic claims in the last two bullet points of the road-map. Assuming them, we complete the final steps of the proof in Subsection \ref{sb:ublb}. Proof of Lemma \ref{l:nice} is then presented in Subsection \ref{sb:pfnice}. Finally, in Subsection \ref{sb:2lem}, we prove the remaining lemmas: Lemma \ref{l:rn1} and \ref{l:closetod}.

	\subsubsection{Global to Local maximizer} \label{sb:gtol} We now fill out the technical details of the road-map presented in the previous subsection. Fix any $a > 0$. Consider any Borel set $A$ of $C([-a, a]\to \R^2)$ which is a continuity set of a two-sided $\dbm$ $\calD(\cdot)$ restricted to $[-a, a].$  By Portmanteau theorem, it is enough to show that 
	\begin{align}\label{wts}
		\Pr((D_1(\cdot,t),D_2(\cdot,t))\in A) \rightarrow \Pr(\calD(\cdot)\in A),
	\end{align}
	where $D_1,D_2$ are defined in \eqref{def:d1d2}. In this subsection, we describe how it suffices to check \eqref{wts} with $\md_{p,t}^M$. Recall $D_{M,t,\uparrow}(\cdot),D_{M,t,\downarrow}(\cdot)$ from \eqref{eq:dtoyM}.  We claim that for all $M>0$:
	\begin{align}
		\label{wts1}
		\lim_{t\to \infty}\Pr((D_{M,t,\uparrow}(\cdot),D_{M,t,\downarrow}(\cdot))\in A) \rightarrow \Pr(\calD(\cdot)\in A).
	\end{align}
	Note that when $\md_{p,t}^{\infty}=\md_{p,t}^{M}$,  $(D_{M,t,\uparrow}(\cdot),D_{M,t,\downarrow}(\cdot))$ is exactly equal to $$t^{1/3}Y_{\infty,t,\uparrow}^{(1)}\big(t^{-2/3}(\md_{p,t}^{\infty}+\cdot)\big) \ , \ t^{1/3}Y_{\infty,t,\downarrow}^{(1)}\big(t^{-2/3}(\md_{p,t}^{\infty}+\cdot)\big)$$ which via \eqref{eq:dtoy} is same in distribution as $D_1(\cdot,t),D_2(\cdot,t)$. Thus, 
	\begin{align*}
		{ \left|\Pr((D_{1}(\cdot,t),D_{2}(\cdot,t))\in A)-\Pr((D_{M,t,\uparrow}(\cdot),D_{M,t,\downarrow}(\cdot))\in A)\right| \le 2\Pr(\md_{p,t}\neq \md_{p,t}^M).}
	\end{align*}
	Now given any $\e>0$, by Lemma \ref{lem1}, we can take $M=M(\e)>0$ large enough so that $2\Pr(\md_{p,t}\neq \md_{p,t}^M) \le \e$. Then upon taking $t\to \infty$ in the above equation, in view of \eqref{wts1}, we see that 
	$$\limsup_{t\to\infty} \left||\Pr((D_{1}(\cdot,t),D_{2}(\cdot,t))\in A)-\Pr((\calD(\cdot)\in A)\right| \le \e.$$
	As $\e$ is arbitrary, this proves \eqref{wts}. The rest of the proof is now devoted in proving \eqref{wts1}.

	\subsubsection{Nice events} \label{sb:nice} In this subsection,
	we focus on defining several events that are collectively `nice' in the sense that they happen with high probability. We fix an $M>0$ for the rest of the proof and work with the local maximizer $\md_{p,t}^M$ defined in \eqref{eq:localmax}. We will also make use of the notation $\Phi$ defined in \eqref{def:phi} heavily in this and subsequent subsections. We now proceed to define a few events based on the location and value of the maximizer and values at the endpoints of an appropriate interval. Fix any arbitrary $\delta>0$. Let us consider the event:
	\begin{align}
		\label{eq:armx}
		\m{ArMx}(\delta):=\left\{\Phi \in [-M+\delta,M-\delta]\right\}.
	\end{align}
	The $\m{ArMx}(\delta)$ controls the location of the local maximizer $\Phi$. Set $\alpha=\frac16$. We define tightness event that corresponds to the boundary of the interval of length $2t^{-\alpha}$ around $\Phi:$
	\begin{align}
		\label{bdup}
		\m{Bd}_{\uparrow}(\delta) & :=\m{Bd}_{+,\uparrow}(\delta)\cap\m{Bd}_{-,\uparrow}(\delta), \quad
		\m{Bd}_{\downarrow}(\delta) := \m{Bd}_{+,\downarrow}(\delta)\cap\m{Bd}_{-,\downarrow}(\delta),
	\end{align}
	where
	\begin{align}
		\label{bdpmup}
		\m{Bd}_{\pm,\uparrow}(\delta)& :=\left\{\left|\h_{pt,\uparrow}^{(1)}\big(p^{-2/3}(\Phi\pm t^{-\alpha})\big)-\h_{pt,\uparrow}^{(1)}\big(\Phi p^{-2/3})\right|\le \tfrac1{\delta}t^{-\alpha/2}\right\} \\ \nonumber
		\m{Bd}_{\pm,\downarrow}(\delta)& :=\left\{\left|\h_{qt,\downarrow}^{(1)}\big(q^{-2/3}(\Phi\pm t^{-\alpha})\big)-\h_{qt,\downarrow}^{(1)}\big(\Phi q^{-2/3})\right|\le \tfrac1{\delta}t^{-\alpha/2}\right\}, 
	\end{align}
	Finally we consider the gap events that provide a gap between the first curve and the second curve for each of the line ensemble:
	\begin{align}
		\label{gapup}
		\m{Gap}_{M,\uparrow}(\delta) & := \left\{p^{1/3}\h_{pt,\uparrow}^{(1)}\big(\Phi p^{-2/3}\big) \ge  p^{1/3}\h_{pt,\uparrow}^{(2)}\big(\Phi p^{-2/3}\big)+\delta\right\}, \\ \label{gapdn}
		\m{Gap}_{M,\downarrow}(\delta) & := \left\{q^{1/3}\h_{qt,\downarrow}^{(1)}\big(\Phi q^{-2/3}\big) \ge  q^{1/3}\h_{qt,\downarrow}^{(2)}\big(\Phi q^{-2/3}\big)+\delta\right\}.
	\end{align}
	We next define the `rise' events which roughly says the second curves $\h_{pt,\uparrow}^{(1)}$ and $\h_{qt,\downarrow}^{(2)}$ of the line ensembles does not rise too much on a small interval of length $2t^{-\alpha}$ around $\Phi p^{-2/3}$ and $\Phi q^{-2/3}$ respectively. 
	\begin{align}
		\label{riseup}
		\m{Rise}_{M,\uparrow}(\delta) & :=\left\{\sup_{x\in [-t^{-\alpha},t^{-\alpha}]} p^{1/3}\h_{pt,\uparrow}^{(2)}\big(\Phi p^{-2/3}+x\big) \le  p^{1/3}\h_{pt,\uparrow}^{(2)}\big(\Phi p^{-2/3}\big)+\tfrac{\delta}{4}\right\}, \\ \label{risedn}
		\m{Rise}_{M,\downarrow}(\delta) & :=\left\{\sup_{x\in [-t^{-\alpha},t^{-\alpha}]} q^{1/3}\h_{qt,\downarrow}^{(2)}\big(\Phi q^{-2/3}+x\big) \le  q^{1/3}\h_{pt,\downarrow}^{(2)}\big(\Phi q^{-2/3}\big)+\tfrac{\delta}{4}\right\}.
	\end{align}
	$\m{Bd}$, $\m{Gap}$, $\m{Rise}$ type events and their significance are discussed later in Subsection \ref{sb:pfnice} in greater details. See also Figure \ref{fig:gap} and its caption for explanation of some of these events. We put all the above events into one final event:
	\begin{align}
		\label{def:nice}
		\Nice_{M}(\delta):=\left\{\m{ArMx}(\delta)\cap \bigcap_{x\in \{\uparrow,\downarrow\}} \m{Bd}_{x}(\delta) \cap  \m{Gap}_{M,x}(\delta) \cap  \m{Rise}_{M,x}(\delta)\right\}.
	\end{align}
	All the above events are dependent on $t$. But we have suppressed this dependence from the notations. The $\Nice_{M}(\delta)$ turns out to be a favorable event. We isolate this fact as a lemma below.
	\begin{lemma}\label{l:nice} For any $M>0$, under the above setup we have
		\begin{align}
			\label{eq:nice}
			\liminf_{\delta\downarrow 0}\liminf_{t\to\infty}\Pr_t\left(\Nice_{M}(\delta)\right)=1.
		\end{align}
	\end{lemma}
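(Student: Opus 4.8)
\textbf{Proof strategy for Lemma \ref{l:nice}.}

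The plan is to show that each of the defining events in \eqref{def:nice} holds with probability tending to $1$ as first $t\to\infty$ and then $\delta\downarrow 0$, and then conclude by a union bound. Since the two line ensembles $\{\h_{pt,\uparrow}^{(n)}\}$ and $\{\h_{qt,\downarrow}^{(n)}\}$ are independent and the arguments for the ``$\uparrow$'' and ``$\downarrow$'' families are verbatim the same (up to replacing $p$ by $q$ and $pt$ by $qt$), it suffices to treat one family, say the ``$\uparrow$'' one, and handle $\m{ArMx}(\delta)$ separately. Throughout I would use the Skorokhod representation afforded by Proposition \ref{p:leconv}, so that on a common probability space
\[
2^{1/3}\h_{pt,\uparrow}^{(i)}(2^{1/3}x)\to \calA_i^{\uparrow}(x),\qquad i=1,2,
\]
uniformly on compacts almost surely, where $\calA^{\uparrow}$ is a parabolic Airy line ensemble; similarly for the ``$\downarrow$'' family with an independent Airy line ensemble $\calA^{\downarrow}$. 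Combined with the tail bounds of Proposition \ref{p:kpzeq} and the modulus-of-continuity estimate of Proposition \ref{p:amodcon}, this is essentially all the input needed.

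\emph{The event $\m{ArMx}(\delta)$.} By Theorem \ref{t:favpt} (whose proof is already given above), $\Phi = t^{-2/3}\md_{p,t}^M$, being the argmax over $[-M,M]$ of $\mathcal{S}_{p,t}$ from \eqref{eq:hupdwn}, converges in distribution to $\argmax_{x\in[-M,M]}\calA(x)$ with $\calA$ as in \eqref{ax}, a process that almost surely attains its maximum over $[-M,M]$ at a unique interior point. Hence $\Pr(\Phi\in\{-M,M\})=0$ in the limit, and $\liminf_{\delta\downarrow 0}\liminf_{t\to\infty}\Pr(\m{ArMx}(\delta))=1$. (Strictly one should also note that $\md_{p,t}^M=\md_{p,t}^\infty$ with high probability as $M\to\infty$ by Lemma \ref{lem1}, but for a fixed $M$ the displayed convergence of the \emph{local} argmax is all that is used.)

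\emph{The $\m{Bd}$ events.} Since $\Phi$ is tight, it is enough to control, uniformly over $\Phi$ in a fixed compact set, the oscillation of $\h_{pt,\uparrow}^{(1)}$ over an interval of length $O(t^{-\alpha})$ around $\Phi p^{-2/3}$, and to show that a window of size $\tfrac1\delta t^{-\alpha/2}$ captures it with high probability. For a fixed window $\h_{pt,\uparrow}^{(1)}$ is absolutely continuous with respect to Brownian motion on compacts (Brownian Gibbs), but the cleanest route is to transfer directly to the Airy ensemble via the Skorokhod coupling: on a $t^{-\alpha}$-interval the modulus-of-continuity bound \eqref{e:amodcon} for $\calA_1$ gives oscillation $O\big(t^{-\alpha/2}\log^{1/2}t^{\alpha}\big)$, which is $\le \tfrac1\delta t^{-\alpha/2}$ for $t$ large (the $\log$ factor is absorbed), uniformly over the random center because the center lies in a compact set with high probability; the parabolic correction $x^2$ in Proposition \ref{p:amodcon} is negligible on a vanishing interval. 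This gives $\liminf_{t\to\infty}\Pr(\m{Bd}_{\pm,\uparrow}(\delta))=1$ for every fixed $\delta>0$.

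\emph{The $\m{Gap}$ and $\m{Rise}$ events.} For $\m{Gap}_{M,\uparrow}(\delta)$ one needs a \emph{strictly positive} gap between the first two scaled KPZ curves at the random location $\Phi p^{-2/3}$. Through the Skorokhod coupling this becomes a statement about $\calA_1^{\uparrow}-\calA_2^{\uparrow}$ at the limiting (still random, but independent of the gap process conditionally on being interior) point; since $\calA^{\uparrow}$ is \emph{strictly} ordered by \eqref{eq:order} and $\calA_1^{\uparrow}-\calA_2^{\uparrow}$ is a.s.\ positive and continuous, $\inf$ over a compact set of the difference is a.s.\ positive, so for $\delta$ small the event that the (rescaled) gap exceeds $\delta$ has probability close to $1$; passing back through the $2^{1/3}$ rescaling and the $p^{1/3}t^{1/3}$ prefactors, which only \emph{amplify} a positive gap, gives $\liminf_{\delta\downarrow0}\liminf_{t\to\infty}\Pr(\m{Gap}_{M,\uparrow}(\delta))=1$. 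For $\m{Rise}_{M,\uparrow}(\delta)$ one controls the oscillation of the \emph{second} curve $\h_{pt,\uparrow}^{(2)}$ over a $2t^{-\alpha}$-window around $\Phi p^{-2/3}$, again by the modulus-of-continuity bound \eqref{e:amodcon} applied to $\calA_2$ via the Skorokhod coupling: the oscillation is $O\big(t^{-\alpha/2}\log^{1/2}t^{\alpha}\big)$, which after multiplication by the $t^{1/3}$-type factors is still $o(1)$ since $\alpha=\tfrac16$ makes $t^{1/3-\alpha/2}=t^{1/4}$... wait — here one must be careful: the statement of $\m{Rise}$ is \emph{at the unscaled level}, i.e.\ the inequality is for $p^{1/3}\h^{(2)}_{pt,\uparrow}$, so no extra $t^{1/3}$ appears and the oscillation of the scaled curve is simply $o(1)\le\delta/4$ with high probability. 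Thus $\liminf_{t\to\infty}\Pr(\m{Rise}_{M,\uparrow}(\delta))=1$ for every fixed $\delta>0$. Assembling: by the union bound
\[
\Pr(\neg\Nice_M(\delta)) \le \Pr(\neg\m{ArMx}(\delta)) + \sum_{x\in\{\uparrow,\downarrow\}}\Big[\Pr(\neg\m{Bd}_x(\delta))+\Pr(\neg\m{Gap}_{M,x}(\delta))+\Pr(\neg\m{Rise}_{M,x}(\delta))\Big],
\]
and taking $\liminf_{t\to\infty}$ then $\liminf_{\delta\downarrow0}$ sends the right-hand side to $0$, which is \eqref{eq:nice}.

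\textbf{Main obstacle.} The delicate point is the $\m{Gap}$ events, because the gap must be controlled \emph{at the random point $\Phi$}, not at a deterministic location. The Airy line ensemble being strictly ordered only says the gap process is a.s.\ positive; one needs that the \emph{joint} limit (gap process, limiting argmax location) still has positive gap at the argmax with overwhelming probability. This follows because the Skorokhod coupling delivers \emph{joint} convergence of $(\h^{(1)}_{pt,\uparrow},\h^{(2)}_{pt,\uparrow})$ (hence of $\Phi$, which is a continuous functional of $\h^{(1)}_{pt,\uparrow}+\h^{(1)}_{qt,\downarrow}$ on $[-M,M]$ up to the a.s.\ uniqueness of the argmax), so $p^{1/3}t^{1/3}\big(\h^{(1)}_{pt,\uparrow}(\Phi p^{-2/3})-\h^{(2)}_{pt,\uparrow}(\Phi p^{-2/3})\big)$ converges to a quantity bounded below by $t^{1/3}$-times a positive continuous function evaluated at a point in a compact set — in particular it diverges to $+\infty$ in probability, so exceeding the fixed threshold $\delta$ is automatic in the limit. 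Making this ``joint convergence plus continuous functional'' argument airtight (in particular checking the argmax map is a.s.\ continuous at the limiting ensemble, which is exactly Lemma 9.5 of \cite{dov} used in the proof of Theorem \ref{t:favpt}) is the only non-mechanical part; everything else reduces to the tail and modulus-of-continuity estimates already recorded in Section \ref{sec:tools}.
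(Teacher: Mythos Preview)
Your overall strategy---treat each constituent event of $\Nice_M(\delta)$ separately via Skorokhod coupling to the Airy line ensemble---matches the paper for $\m{ArMx}$, $\m{Gap}$, and $\m{Rise}$, and those parts are fine (modulo some confusion about a phantom $t^{1/3}$ factor in your discussion of $\m{Gap}$; the event \eqref{gapup} is stated for the \emph{scaled} curves $\h_{pt,\uparrow}^{(i)}$ with no $t^{1/3}$, so the gap does not diverge but merely converges to a strictly positive Airy gap, which is what you actually use).

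There is, however, a genuine gap in your treatment of the $\m{Bd}$ events. You need the oscillation of $\h_{pt,\uparrow}^{(1)}$ over an interval of length $\asymp t^{-\alpha}$ to be at most $\tfrac1\delta t^{-\alpha/2}$ with high probability, for \emph{fixed} $\delta$ as $t\to\infty$. Your route---transfer to $\calA_1$ via Skorokhod and invoke Proposition~\ref{p:amodcon}---fails on two counts. First, the Skorokhod representation only gives $\sup_{|x|\le C}|\h_{pt,\uparrow}^{(1)}(x)-2^{-1/3}\calA_1(2^{-1/3}x)|\to 0$ almost surely with no rate, so the approximation error alone could swamp $t^{-\alpha/2}$. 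Second, even granting perfect coupling, the Airy modulus of continuity yields oscillation of order $t^{-\alpha/2}\sqrt{\log t^{\alpha}}$, and your claim that ``the $\log$ factor is absorbed'' into $\tfrac1\delta$ is false: $\delta$ is held fixed while $t\to\infty$, so $\sqrt{\log t^{\alpha}}$ eventually exceeds any $\tfrac1\delta$.

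The paper obtains the correct $t^{-\alpha/2}$ scale by a different mechanism: it first restricts to an auxiliary high-probability event $\m{Sp}(\lambda)$ on which the endpoints and the argmax location are tight, then conditions on $\calF_1\cup\calF_2$ so that the law of $(\h_{pt,\uparrow}^{(1)},\h_{qt,\downarrow}^{(1)})$ around $\Phi$ becomes the $\m{Nlarge}$ law of Definition~\ref{def:nlarge} (two $\nonintbb$'s via Proposition~\ref{propA}), and finally applies the fourth-moment estimate \eqref{db5} for $\nonintbb$ together with a conditional Markov inequality. This gives $\Ex_{\m{Nlarge}|2,1}[|V_{r,1}^{\m{large}}(p^{-2/3}t^{-\alpha})|^4]\le \Con_{p,\lambda}t^{-2\alpha}$ on $\m{Sp}(\lambda)$, hence $\Pr(\neg\m{Bd}_{+,\uparrow}(\delta))\le \Pr(\neg\m{Sp}(\lambda))+\Con_{p,\lambda}\delta^2$, which has the right order of limits. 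In short: the $\m{Bd}$ events require an honest \emph{quantitative} modulus-of-continuity input for the pre-limit curves, and the paper extracts it from the $\nonintbb$ description after conditioning on the max data rather than from the Airy limit.
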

	We postpone the proof of this technical lemma to Section \ref{sb:pfnice} and for the moment we continue with the current proof of Proposition \ref{p:dyson} assuming its validity.
	\subsubsection{Conditioning with respect to large boundaries} \label{sb:large} As alluded in Subsection \ref{sb:ideas}, the proof involves conditioning on different $\sigma$-fields successively. We now specify all the different $\sigma$-fields that we will use throughout the proof. Set $\alpha=\frac16$. We consider the random interval 
	\begin{align}
		\label{eq:Kt}
		K_t:=(\Phi-t^{-\alpha},\Phi+t^{-\alpha}).
	\end{align}
	Let us define:
	\begin{align}
		\calF_1 & := \sigma\left(\left\{\h_{pt, \uparrow}^{(1)}(p^{-2/3}x), \h_{qt, \downarrow}^{(1)}(q^{-2/3}x)\right\}_{x\in (-M,M)^c}, \left\{\h_{pt, \uparrow}^{(2)}(x),\h_{qt, \downarrow}^{(2)}(x)\right\}_{x\in \R}\right) \label{sf1} \\ \label{sigfields} \calF_2 & := \sigma\left(\Phi, \h_{pt,\uparrow}^{(1)}(\Phi p^{-2/3}),\h_{qt,\downarrow}^{(1)}(\Phi q^{-2/3})\right), \\ \calF_3 & :=\sigma\left(\left\{\h_{pt, \uparrow}^{(1)}(p^{-2/3}x),\h_{qt, \downarrow}^{(1)}(q^{-2/3}x)\right\}_{x\in K_t^c}\right) \label{sf3}.
	\end{align}
	In this step we perform conditioning w.r.t.~ $\calF_1$ for the expression on the l.h.s.~of \eqref{wts1}.  We denote $\Pr_t(A):=\Pr\big((D_{M,t,\uparrow}(\cdot),D_{M,t,\downarrow}(\cdot))\in A\big)$. Taking the $\Nice_{M}(\delta)$ event defined in \eqref{def:nice} under consideration, upon conditioning with $\calF_1$ we have the following upper and lower bounds:
	\begin{align}
		\label{lwbd}
		\Pr_t(A) & \ge \Pr_t(\Nice_{M}(\delta),A) = \Ex_t\left[\Pr_t(\Nice_M(\delta), A\mid \calF_1)\right], \\ \label{upbd}
		\Pr_t(A) & \le \Pr_t(\Nice_{M}(\delta),A)+\Pr_t(\neg\Nice_{M}(\delta))= \Ex_t\left[\Pr_t(\Nice_M(\delta), A\mid \calF_1)\right]+\Pr_t(\neg\Nice_{M}(\delta)).
	\end{align}
	Note that the underlying measure consists of the mutually independent $\h_{pt, \uparrow}^{(1)}(\cdot)$ and $\h_{qt, \downarrow}^{(1)}(\cdot)$ which by Proposition \ref{line-ensemble} satisfy $\textbf{H}_{pt}$ and $\textbf{H}_{qt}$ Brownian Gibbs property respectively. Applying the respectively Brownian Gibbs properties and  following \eqref{eq:bgibbs} we have 
	\begin{align}\label{eq:cond1}
		\Pr_{t}(\Nice_M(\delta), A \mid \calF_1)  = \frac{\Ex_{\operatorname{free},t}[\ind_{\Nice_M(\delta), A }W_{\uparrow}W_{\downarrow}]}{\Ex_{\operatorname{free},t}[W_{\uparrow}W_{\downarrow}]}.
	\end{align}
	Here 
	\begin{align}\label{eq:cond2}
		W_{\uparrow}:= \exp\left(-t^{2/3}\int_{-M}^{M}\exp\left(t^{1/3}\big[p^{1/3}\h_{pt, \uparrow}^{(2)}(p^{-2/3}x)- p^{1/3}\h_{pt, \uparrow}^{(1)}(p^{-2/3}x)\big]\right)\d x\right)
	\end{align}
	and 
	\begin{align}\label{eq:cond3}
		W_{\downarrow}:= \exp\left(-t^{2/3}\int_{-M}^{M}\exp\left(t^{1/3}\big[q^{1/3}\h_{qt, \downarrow}^{(2)}(q^{-2/3}x)- q^{1/3}\h_{qt, \downarrow}^{(1)}(q^{-2/3}x)\big]\right)\d x\right).
	\end{align}
	In \eqref{eq:cond1}, $\Pr_{\operatorname{free}, t}$ and $\Ex_{\operatorname{free}, t}$ are the  probability and the expectation operator respectively corresponding to the joint `free' law for $(p^{1/3}\h_{pt,\uparrow}(p^{-2/3}x),q^{1/3}\h_{qt,\downarrow}(q^{-2/3}x))_{x\in[-M,M]}$ which by Brownian scaling is given by a pair of independent Brownian bridges $(\B_1(\cdot), \B_2(\cdot))$  on $[-M,M]$ with starting points $(p^{1/3}\h_{pt, \uparrow}(-Mp^{-2/3}), q^{1/3}\h_{qt, \downarrow}(-Mq^{-2/3}))$ and endpoints $(q^{1/3}\h_{pt, \uparrow}(Mp^{-2/3}), q^{1/3}\h_{qt, \downarrow}(Mq^{-2/3})).$

	\subsubsection{Conditioning with respect to maximum data and small boundaries} \label{sb:small}
	In this subsection we perform conditioning on the numerator of r.h.s.~of \eqref{eq:cond1} w.r.t.~$\calF_2$ and $\calF_3$ defined in \eqref{sigfields} and \eqref{sf3}.  Recall that by Proposition \ref{propA}, upon conditioning Brownian bridges on $\calF_2$,  the conditional laws around the joint local maximizer $\Phi$ over $[-M,M]$ is now given by two $\nonintbb$s (defined in Definition \ref{def:nibb}) with appropriate lengths and endpoints. Indeed, based on Proposition \ref{propA}, given $\calF_1,\calF_2$, we may construct the conditional laws for the two functions on $[-M,M]$:
	
	\begin{definition}[$\m{Nlarge}$ Law]\label{def:nlarge}
		Consider two independent $\nonintbb$ $\V_{\ell}^{\m{large}}$ and $\V_{r}^{\m{large}}$ with following description:
		\begin{enumerate}
			\item $\V_{\ell}^{\m{large}}$ is a $\nonintbb$ on $[0,\Phi+M]$ ending at 
			$$\left(p^{1/3}\left[\h_{pt,\uparrow}^{(1)}(\Phi p^{-2/3})-\h_{pt,\uparrow}^{(1)}(-Mp^{-2/3})\right],q^{1/3}\left[\h_{qt,\downarrow}^{(1)}(-Mq^{-2/3})-\h_{qt,\downarrow}^{(1)}(\Phi q^{-2/3})\right]\right),$$
			\item $\V_{r}^{\m{large}}$ is a $\nonintbb$ on $[0,M-\Phi]$ ending at 
			$$\left(p^{1/3}\left[\h_{pt,\uparrow}^{(1)}(\Phi p^{-2/3})-\h_{pt,\uparrow}^{(1)}(Mp^{-2/3})\right],q^{1/3}\left[\h_{qt,\downarrow}^{(1)}(Mq^{-2/3})-\h_{qt,\downarrow}^{(1)}(\Phi q^{-2/3})\right]\right).$$
		\end{enumerate}
		We then define ${B}^{\m{large}}:[-M,M]\to \R^2$ as follows:
		\begin{align*}
			{B}^{\m{large}}(x)=\begin{cases}
				\V_{\ell}(\Phi-x) & x\in [-M,\Phi] \\
				\V_{r}(x-\Phi) & x\in [\Phi,M] 
			\end{cases}.
		\end{align*}
		We denote the expectation and probability operator under above law for ${B}^{\m{large}}$ (which depends on $\calF_1,\calF_2$) as $\Ex_{\m{Nlarge|2,1}}$ and $\Pr_{\m{Nlarge|2,1}}$.   
	\end{definition}
	Thus we may write 
	\begin{align}
		\label{part1}
		\Ex_{\operatorname{free},t}[\ind_{\Nice_M(\delta), A }W_{\uparrow}W_{\downarrow}] & = \Ex_{\operatorname{free},t}[\Ex_{\m{Nlarge|2,1}}[\ind_{\Nice_M(\delta), A }W_{\uparrow}W_{\downarrow}]].
	\end{align}
	Since $\nonintbb$s are Markovian, we may condition further upon $\calF_3$ to get $\nonintbb$s again but on a smaller interval. To precisely define the law, we now give the following definitions: 
	
	\begin{definition}[$\m{Nsmall}$ law]\label{def:nsmall}
		Consider two independent $\nonintbb$ $\V_{\ell}^{\m{small}}$ and $\V_{r}^{\m{small}}$ with the following descriptions:
		\begin{enumerate}
			\item  $\V_{\ell}^{\m{small}}$ is a $\nonintbb$ on $[0,t^{-\alpha}]$ ending at 
			$$\left(p^{1/3}\left[\h_{pt,\uparrow}^{(1)}(\Phi p^{-2/3})-\h_{pt,\uparrow}^{(1)}(p^{-2/3}(\Phi-t^{-\alpha}))\right],q^{1/3}\left[\h_{qt,\downarrow}^{(1)}(q^{-2/3}(\Phi-t^{-\alpha}))-\h_{qt,\downarrow}^{(1)}(\Phi q^{-2/3})\right]\right),$$
			\item  $\V_{r}^{\m{small}}$ is a $\nonintbb$ on $[0,t^{-\alpha}]$ ending at 
			$$\left(p^{1/3}\left[\h_{pt,\uparrow}^{(1)}(\Phi p^{-2/3})-\h_{pt,\uparrow}^{(1)}(p^{-2/3}(\Phi+t^{-\alpha}))\right],q^{1/3}\left[\h_{qt,\downarrow}^{(1)}(q^{-2/3}(\Phi+t^{-\alpha}))-\h_{qt,\downarrow}^{(1)}(\Phi q^{-2/3})\right]\right).$$
		\end{enumerate}
		We then define ${B}^{\m{small}}:[\Phi+t^{-\alpha},\Phi-t^{-\alpha}]\to \R^2$ as follows:
		\begin{align*}
			{B}^{\m{small}}(x)=\begin{cases}
				\V_{\ell}(\Phi-x) & x\in [\Phi-t^{-\alpha},\Phi] \\
				\V_{r}(x-\Phi) & x\in [\Phi,\Phi+t^{-\alpha}] 
			\end{cases}.
		\end{align*}
		We denote the the expectation and probability operators under the above law for ${B}^{\m{small}}$ (which depends on $\calF_1,\calF_2,\calF_3$) as $\Ex_{\m{Nsmall|3,2,1}}$ and $\Pr_{\m{Nsmall|3,2,1}}$ respectively. 
	\end{definition}

	We thus have
	\begin{align}
		\label{part2}
		\mbox{r.h.s.~of \eqref{part1}} & = \Ex_{\operatorname{free},t}[\ind_{\Nice_M(\delta)}\Ex_{\m{Nsmall|3,2,1}}[\ind_{A} W_{\uparrow}W_{\downarrow}]].
	\end{align}
	The $\ind_{\Nice_M(\delta)}$ comes of the interior expectation above as $\Nice_{M}(\delta)$ is measurable w.r.t.~$\calF_1\cup\calF_2\cup \calF_3$ (see its definition in \eqref{def:nice}).

	Next note that due to the definition of $W_{\uparrow}, W_{\downarrow}$ from \eqref{eq:cond2} and \eqref{eq:cond3}, we may extract certain parts of it which are measurable w.r.t.~$\calF_1\cup\calF_2\cup \calF_3$. Indeed, we can write $W_{\uparrow}=W_{\uparrow,1}W_{\uparrow,2}$ and $W_{\downarrow}=W_{\downarrow,1}W_{\downarrow,2}$ where
	\begin{align}
		\label{wu1}
		& W_{\uparrow,1}:= \exp\left(-t^{2/3}\int_{K_t}\exp\left(t^{1/3}\big[p^{1/3}\h_{pt, \uparrow}^{(2)}(p^{-2/3}x)- p^{1/3}\h_{pt, \uparrow}^{(1)}(p^{-2/3}x)\big]\right)\d x\right)\\&W_{\uparrow, 2}:= \exp\left(-t^{2/3}\int_{[-M,M]\cap K_t^c}\exp\left(t^{1/3}\big[p^{1/3}\h_{pt, \uparrow}^{(2)}(p^{-2/3}x)- p^{1/3}\h_{pt, \uparrow}^{(1)}(p^{-2/3}x)\big]\right)\d x\right), \nonumber
	\end{align}
	and
	\begin{align}
		&W_{\downarrow,1}:= \exp\left(-t^{2/3}\int_{K_t}\exp\left(t^{1/3}\big[q^{1/3}\h_{qt, \downarrow}^{(2)}(q^{-2/3}x)- q^{1/3}\h_{qt, \downarrow}^{(1)}(q^{-2/3}x)\big]\right)\d x\right). \label{wd1} \\ \nonumber
		&W_{\downarrow,2}:= \exp\left(-t^{2/3}\int_{[-M,M]\cap K_t^c}\exp\left(t^{1/3}\big[q^{1/3}\h_{qt, \downarrow}^{(2)}(q^{-2/3}x)- q^{1/3}\h_{qt, \downarrow}^{(1)}(q^{-2/3}x)\big]\right)\d x\right),
	\end{align}
	where recall $K_t$ from \eqref{eq:Kt}. The key observation is that $W_{\uparrow,2},W_{\downarrow, 2}$ are measurable w.r.t.~$\calF_1\cup\calF_2\cup \calF_3$. Thus we have
	\begin{align}
		\label{part3}
		\mbox{r.h.s.~of \eqref{part2}} & = \Ex_{\operatorname{free},t}[\ind_{\Nice_M(\delta)}W_{\uparrow,2}W_{\downarrow,2} \cdot \Ex_{\m{Nsmall|3,2,1}}[\ind_{A}W_{\uparrow,1}W_{\downarrow,1}]]. 
	\end{align}
	\begin{remark}
		It is crucial to note that in \eqref{part2} the event $\Nice_{M}(\delta)$ includes the event $\m{ArMx}(\delta)$ defined in \eqref{eq:armx}. Indeed, the $\m{ArMx}(\delta)$ event is measurable w.r.t.~$\calF_1\cup\calF_2$ and ensures that $[\Phi-t^{-\alpha},\Phi+t^{-\alpha}] \subset [-M,M]$ for all large enough $t$, which is essential for going from $\m{Nlarge}$ law to $\m{Nsmall}$ law. Thus such a decomposition is not possible for $\Ex_{\operatorname{free},t}[W_{\uparrow}W_{\downarrow}]$ which appears in the denominator of r.h.s.~of \eqref{eq:cond1}. Nonetheless, we may still provide a lower bound for $\Ex_{\operatorname{free},t}[W_{\uparrow}W_{\downarrow}]$ as follows:
		\begin{align}
			\label{deno}
			\Ex_{\operatorname{free},t}[W_{\uparrow}W_{\downarrow}] \ge \Ex_{\operatorname{free},t}[\ind_{\Nice_M(\delta)}W_{\uparrow}W_{\downarrow}] =\Ex_{\operatorname{free},t}[W_{\uparrow,2}W_{\downarrow,2}\ind_{\Nice_M(\delta)} \cdot \Ex_{\m{Nsmall|3,2,1}}[W_{\uparrow,1}W_{\downarrow,1}]]. 
		\end{align}
	\end{remark}
	
	\bigskip

	With the deductions in \eqref{part3} and \eqref{deno}, we now come to the task of analyzing $W_{\uparrow,1}W_{\downarrow,1}$ under $\m{Nsmall}$ law. The following lemma ensures that on $\Nice_M(\delta)$, $W_{\uparrow,1}W_{\downarrow,1}$ is close to $1$ under $\m{Nsmall}$ law.
	\begin{lemma}
		\label{l:rn1} There exist $t_0(\delta)>0$ such that for all $t\ge t_0$ we have
		\begin{align}\label{eq:l2}
			\ind_{\Nice_M(\delta)}\Pr_{\m{Nsmall|3,2,1}}(W_{\uparrow, 1}W_{\downarrow, 1}>1-\delta)\ge \ind_{\Nice_M(\delta)}(1- \delta).
		\end{align}
	\end{lemma}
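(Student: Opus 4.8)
\textbf{Proposal for the proof of Lemma \ref{l:rn1}.} The plan is to mimic \textbf{Step 5} of the proof of Theorem \ref{t:ergodic}, now carried out for the two-sided $\nonintbb$ picture produced by Definition \ref{def:nsmall}. Since $W_{\uparrow,1}$ and $W_{\downarrow,1}$ in \eqref{wu1}--\eqref{wd1} are exponentials of nonpositive quantities, one has the deterministic bound $W_{\uparrow,1}W_{\downarrow,1}\le 1$, so it suffices to show that under $\Pr_{\m{Nsmall|3,2,1}}$, on $\Nice_M(\delta)$, the exponents are tiny with probability at least $1-\delta$. Concretely, I would first record that $W_{\uparrow,1}$ is governed by the gap $x\mapsto t^{1/3}[p^{1/3}\h_{pt,\uparrow}^{(1)}(p^{-2/3}x)-p^{1/3}\h_{pt,\uparrow}^{(2)}(p^{-2/3}x)]$ on $K_t=(\Phi-t^{-\alpha},\Phi+t^{-\alpha})$, and likewise for $W_{\downarrow,1}$. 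On $\m{Gap}_{M,\uparrow}(\delta)\cap\m{Rise}_{M,\uparrow}(\delta)\subset\Nice_M(\delta)$ one gets $\sup_{x\in K_t}p^{1/3}\h_{pt,\uparrow}^{(2)}(p^{-2/3}x)\le p^{1/3}\h_{pt,\uparrow}^{(1)}(\Phi p^{-2/3})-\tfrac34\delta$, so the only missing ingredient is a ``sink'' bound asserting that, under $\Pr_{\m{Nsmall|3,2,1}}$, the top curve $p^{1/3}\h_{pt,\uparrow}^{(1)}$ does not rise above its value at $\Phi$ by more than $\tfrac14\delta$ on $K_t$ (and symmetrically that $q^{1/3}\h_{qt,\downarrow}^{(1)}$ does not drop below its value at $\Phi$ by more than $\tfrac14\delta$). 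Combining these yields $\inf_{x\in K_t}[p^{1/3}\h_{pt,\uparrow}^{(1)}-p^{1/3}\h_{pt,\uparrow}^{(2)}]\ge\tfrac12\delta$ and the analogous bound for the $\downarrow$ pair, whence $W_{\uparrow,1}W_{\downarrow,1}\ge\exp(-4t^{1/2}e^{-t^{1/3}\delta/2})$ using $|K_t|=2t^{-\alpha}$ and $\alpha=\tfrac16$; this is $\ge1-\delta$ for all $t\ge t_0(\delta)$.

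The sink bound translates into a statement about the two-sided $\nonintbb$ of Definition \ref{def:nsmall}: writing $[B^{\m{small}}]$ in its two coordinates, one has $p^{1/3}\h_{pt,\uparrow}^{(1)}(p^{-2/3}x)=p^{1/3}\h_{pt,\uparrow}^{(1)}(\Phi p^{-2/3})-[\V^{\m{small}}]_1(|x-\Phi|)$ and $q^{1/3}\h_{qt,\downarrow}^{(1)}(q^{-2/3}x)=q^{1/3}\h_{qt,\downarrow}^{(1)}(\Phi q^{-2/3})+[\V^{\m{small}}]_2(|x-\Phi|)$ on each half of $K_t$, so it is enough to control $\sup_{[0,t^{-\alpha}]}\|\V_{\ell}^{\m{small}}\|_\infty$ and $\sup_{[0,t^{-\alpha}]}\|\V_{r}^{\m{small}}\|_\infty$ by $\tfrac14\delta$. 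Crucially, on $\Nice_M(\delta)$ the events $\m{Bd}_{\pm,\uparrow}(\delta),\m{Bd}_{\pm,\downarrow}(\delta)$ (see \eqref{bdpmup}) are in force, so the (random, $\calF_1\cup\calF_2\cup\calF_3$-measurable) endpoints of $\V_{\ell}^{\m{small}}$ and $\V_{r}^{\m{small}}$ are deterministically bounded by $\tfrac1\delta t^{-\alpha/2}$ in each coordinate, with the top endpoint strictly above the bottom one a.s. (the strict inequality being the maximizer property of $\Phi$). Thus, pointwise on $\Nice_M(\delta)$, the required conditional probability is a statement about a $\nonintbb$ on $[0,t^{-\alpha}]$ starting at $0$ with prescribed, uniformly small endpoints.

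To finish, I would apply the diffusive rescaling $x\mapsto t^{\alpha/2}x$: each of $\V_{\ell}^{\m{small}},\V_{r}^{\m{small}}$ becomes a $\nonintbb$ on $[0,1]$ started at $0$ with endpoints $(f_1,f_2)$ satisfying $|f_i|\le C/\delta$ (a deterministic bound) and $f_1>f_2$, while the target threshold $\tfrac14\delta$ becomes $\tfrac14\delta\,t^{\alpha/2}\to\infty$. Hence it remains only to show that $\sup_{[0,1]}\|\V\|_\infty$ is tight \emph{uniformly} over the compact family of such endpoints. This follows from the decomposition $\V_1-\V_2\stackrel{d}{=}\sqrt2\,\bes$, a Bessel bridge on $[0,1]$ ending at $(f_1-f_2)/\sqrt2$ (Lemma \ref{l:nibtobb}), together with the fact that the sum $\V_1+\V_2$ is a Brownian bridge on $[0,1]$ from $0$ to $f_1+f_2$, independent of the difference (read off from the density formulas in Definition \ref{def:nibb}); standard Brownian-bridge tail bounds control $\sup|\V_1+\V_2|$ uniformly for $|f_1+f_2|\le 2C/\delta$, and the transition densities of Lemma \ref{l:tdbes} (or weak continuity of the Bessel-bridge law in its endpoint, including the degenerate limit $f_1\downarrow f_2$) control $\sup\bes$ uniformly for $f_1-f_2\in(0,\sqrt2\,C/\delta]$. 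Choosing first $\delta$ small and then $t_0(\delta)$ large so that each half's control event has probability at least $1-\delta/3$ (they are independent under $\Pr_{\m{Nsmall|3,2,1}}$, so jointly at least $1-\delta$) and so that the deterministic estimate $W_{\uparrow,1}W_{\downarrow,1}\ge1-\delta$ holds on the control event, yields \eqref{eq:l2}. The main obstacle is the uniform tightness of the $\nonintbb$ supremum over the possibly degenerating family of endpoints; it is not deep, since the rescaling makes the relevant threshold diverge so that only tightness — not quantitative rates — is needed, but it does require care in handling the $f_1\downarrow f_2$ regime, as well as attention to the bookkeeping across the successive conditionings on $\calF_1,\calF_2,\calF_3$.
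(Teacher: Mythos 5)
Your proposal is correct and follows the same overall architecture as the paper's proof: reduce via the $\m{Gap}$ and $\m{Rise}$ events to a ``sink'' bound, rewrite the sink bound as a statement about the supremum of the $\m{Nsmall}$-law $\nonintbb$'s (equivalently $V_{r,1}^{\m{small}}\le\tfrac14\delta$ together with $V_{r,2}^{\m{small}}\ge-\tfrac14\delta$ and the $\ell$-analogues), use the $\m{Bd}$ events to constrain the random endpoints by $\tfrac1\delta t^{-\alpha/2}$, and then rescale by $t^{\alpha/2}$ so that the target threshold $\tfrac14\delta t^{\alpha/2}$ diverges. The one place you diverge from the paper is the final tightness step. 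The paper invokes stochastic monotonicity of non-intersecting Brownian bridges (Lemma 2.6 of \cite{CH14}) to couple the rescaled $\nonintbb$ above by a single dominating $\nonintbb$ on $[0,1]$ with both endpoints fixed at $\tfrac1\delta$, so it needs only tightness of one fixed law's supremum; you propose instead to establish tightness uniformly over the compact family of endpoints $f_1>f_2$, $|f_i|\le C/\delta$, via the sum/difference decomposition (sum a Brownian bridge independent of the difference, which is $\sqrt2$ times a Bessel bridge as in Lemma \ref{l:nibtobb}). Your route works: the independence of $\V_1+\V_2$ and $\V_1-\V_2$ is genuine for $\nonintbb$, and one can finish either by weak continuity at $f_1\downarrow f_2$ as you suggest or, more simply, by noting that the Bessel-bridge supremum is stochastically increasing in its endpoint, which handles the degenerate corner without a separate limit argument. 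The paper's monotone-coupling shortcut is nonetheless cleaner: it collapses the whole family into one dominating law and sidesteps uniformity entirely. Two minor bookkeeping remarks: the paper splits the bound as $\Pr(W_{\uparrow,1}>\sqrt{1-\delta})\ge1-\tfrac12\delta$ and $\Pr(W_{\downarrow,1}>\sqrt{1-\delta})\ge1-\tfrac12\delta$ before taking a union bound, rather than directly targeting the product; and you only need the one-sided bounds on each coordinate of $\V^{\m{small}}$, not the full $\sup$-norm, although the stronger bound is of course sufficient.
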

	This allow us to ignore $W_{\uparrow,1}W_{\downarrow,1}$, in $\Ex_{\m{Nsmall|3,2,1}}[\ind_{A}W_{\uparrow,1}W_{\downarrow,1}]$. Hence it suffices to study $\Pr_{\m{Nsmall}|3,2,1}(A)$. The following lemma then compares this conditional probability with that of $\dbm$.
	\begin{lemma}
		\label{l:closetod}
		There exist $t_0(\delta)>0$ such that for all $t\ge t_0$ we have
		\begin{align}\label{eq:l3}
			\ind_{\Nice_M(\delta)}|\Pr_{\m{Nsmall|3,2,1}}(A)-\tau(A)|\le \ind_{\Nice_M(\delta)}\cdot \delta,
		\end{align}
		where $\tau(A):=\Pr(\calD(\cdot)\in A)$, $\calD$ being a two-sided $\dbm$ defined in the statement of Proposition \ref{p:dyson}.
	\end{lemma}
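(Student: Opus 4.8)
The plan is to prove Lemma~\ref{l:closetod} by analyzing $\Pr_{\m{Nsmall|3,2,1}}(A)$ through a sequence of reductions, mirroring (but upgrading) the strategy used in \textbf{Step 6} of the proof of Theorem~\ref{t:ergodic}. Recall that under $\m{Nsmall|3,2,1}$ the function $B^{\m{small}}$ is built from two \emph{independent} $\nonintbb$s $\V_\ell^{\m{small}},\V_r^{\m{small}}$ on $[0,t^{-\alpha}]$ with explicit (random, $\calF_1\cup\calF_2\cup\calF_3$-measurable) endpoints, while the event $A$ lives on the interval $[\Phi-a t^{-2/3},\Phi+a t^{-2/3}]$ around $\Phi$. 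After the rescaling $x\mapsto t^{-2/3}(\Phi+x)$ and multiplication by $t^{1/3}$ implicit in the definitions \eqref{eq:dtoyM}, the object whose law we must identify is $\bigl(t^{1/3}\V_r^{\m{small}}(t^{-2/3}x),t^{1/3}\V_\ell^{\m{small}}(t^{-2/3}x)\bigr)$ for $x\in[0,a]$ on one side, and the reflected version for $x\in[-a,0]$, with the two sides independent. So the whole lemma reduces to a diffusive-limit statement for a $\nonintbb$ near its starting point: if $W^{(n)}$ is a $\nonintbb$ on $[0,b_n]$ started at $0$ with endpoints staying in a compact set and $b_n$ bounded above and below, then $\sqrt n\,W^{(n)}(\cdot/n)$ converges to a $\dbm$ uniformly on compacts. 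This is \emph{exactly} Proposition~\ref{lemmaC}, with $n$ replaced by $t^{2/3}$ and $a_n$ by $t^{-\alpha}$ — we only need to check that on $\Nice_M(\delta)$ the (random) endpoints of $\V_\ell^{\m{small}},\V_r^{\m{small}}$ fall into the hypotheses of Proposition~\ref{lemmaC}, i.e. that they lie in a fixed compact set and the interval lengths $t^{-\alpha}$ are comparable to $1$ after rescaling.

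Here is the concrete sequence of steps. First, unwind the definitions: on $\Nice_M(\delta)$, the $\m{ArMx}(\delta)$ event guarantees $[\Phi-t^{-\alpha},\Phi+t^{-\alpha}]\subset[-M,M]$ for large $t$, so $\m{Nsmall}$ law is well-defined; the $\m{Bd}_{\pm,\uparrow}(\delta),\m{Bd}_{\pm,\downarrow}(\delta)$ events from \eqref{bdpmup} bound the endpoint displacements of $\V_\ell^{\m{small}},\V_r^{\m{small}}$ by $\tfrac1\delta t^{-\alpha/2}$, which after multiplication by $t^{1/3}$ and noting $\alpha=\tfrac16$ gives bounds of order $\tfrac1\delta t^{1/3-\alpha/2}=\tfrac1\delta t^{1/4}$ — wait, this is not $O(1)$. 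So one must be slightly more careful: after the \emph{correct} rescaling that appears in Proposition~\ref{lemmaC}, namely $\V^{\m{small}}$ is on $[0,t^{-\alpha}]$ and we look at $\sqrt{t^{2/3}}\,\V^{\m{small}}(t^{-2/3}\cdot)$, the natural endpoint at time $t^{-\alpha}$ on the \emph{sped-up} clock is at $\sqrt{t^{2/3}}\cdot(\text{displacement})$ evaluated at the right place; since the $\nonintbb$ is scale-invariant one rescales $[0,t^{-\alpha}]$ to $[0, t^{2/3-\alpha}] = [0,t^{1/2}]$, and on this interval the endpoint constraint from $\m{Bd}_{\pm}(\delta)$ becomes $t^{1/3}\cdot\tfrac1\delta t^{-\alpha/2} = \tfrac1\delta t^{1/4}$ in magnitude. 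Since we only examine the process on a \emph{fixed} compact window $[0,a]$ near the start, and the $\nonintbb$ on $[0,t^{1/2}]$ with endpoint of size $t^{1/4}$ (which is $o(\sqrt{t^{1/2}})$, the natural Brownian scale) looks locally like a $\nonintbb$ with endpoints at the origin, I would first prove a lemma (a minor variant of Proposition~\ref{lemmaC}) allowing the endpoints $z_i^{(n)}$ to grow as long as $z_i^{(n)}=o(\sqrt{a_n})$ and $a_n\to\infty$; the proof is the same Scheff\'e-plus-tightness argument since the technical convergence \eqref{tech} still holds in this regime (the Gaussian kernels $p_{a_n-t/n}(y_i/\sqrt n - z_j)$ are still asymptotically $p_{a_n}$ up to the determinantal correction, uniformly). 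Second, invoke this lemma for both $\V_\ell^{\m{small}}$ and $\V_r^{\m{small}}$, using the $\m{Bd}$ bounds on $\Nice_M(\delta)$ to control the endpoints; this yields convergence of the conditional law $\Pr_{\m{Nsmall|3,2,1}}$ restricted to $C([-a,a]\to\R^2)$ to the law of a two-sided $\dbm$ $\calD$, uniformly over boundary data in $\Nice_M(\delta)$. Third, since $A$ is a continuity set of $\calD|_{[-a,a]}$, the Portmanteau theorem upgrades weak convergence to convergence of $\Pr_{\m{Nsmall|3,2,1}}(A)\to\tau(A)$; the uniformity over $\Nice_M(\delta)$ boundary data (which comes from the uniformity of \eqref{tech} over compact endpoint sets and the tightness bound \eqref{db5}, both already uniform) is what lets us write the bound as the \emph{pointwise} inequality \eqref{eq:l3} for all $t\ge t_0(\delta)$.

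The main obstacle, I expect, is making the uniformity genuinely uniform: Proposition~\ref{lemmaC} is stated as a weak-convergence statement for a \emph{sequence}, but here we need, for each fixed $\delta$, a single threshold $t_0(\delta)$ beyond which \eqref{eq:l3} holds \emph{simultaneously} for every realization of the boundary data in $\Nice_M(\delta)$. This requires re-examining the proof of Proposition~\ref{lemmaC} and extracting an explicit modulus: the finite-dimensional convergence via Scheff\'e must be quantified (using the uniform convergence in \eqref{tech} over $a_n\in[t^{2/3-\alpha}\cdot\text{const}]$ — here $a_n$ is essentially deterministic of order $t^{1/2}$, which actually \emph{helps} — and over the compact endpoint set $|z_i^{(n)}|\le \tfrac1\delta t^{-\alpha/2}$, rescaled), and the tightness estimate \eqref{db5} must be seen to hold with a constant $\Con_{K,M}$ independent of the boundary data (which it is, by the comparison with $\dbm$ via Lemmas~\ref{lalpha} and \ref{lbeta}, since $\Con_{M,\delta}$ there depends only on $M,\delta$). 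A secondary technical point is the regular-conditional-distribution issue flagged in the remark after Proposition~\ref{p:bbdecomp}: $\V_\ell^{\m{small}},\V_r^{\m{small}}$ live on random-length intervals, so one must keep them appended as a single function on the deterministic interval $[\Phi-t^{-\alpha},\Phi+t^{-\alpha}]$ (equivalently, work with the concatenated process and use the $\m{ArMx}(\delta)$ event to ensure this interval sits inside $[-M,M]$) — but this is a bookkeeping matter that does not affect the limit. Everything else — the passage from $\nonintbb$ to $\dbm$ via Definition~\ref{def:dbm}, and the independence of the two sides — is already packaged in Proposition~\ref{propA} and Definition~\ref{def:nsmall}.
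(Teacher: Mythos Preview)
Your overall strategy is right—reduce to Proposition~\ref{lemmaC} after a diffusive rescaling of the two independent $\nonintbb$s $\V_\ell^{\m{small}},\V_r^{\m{small}}$, using the $\m{Bd}_{\pm,\uparrow}(\delta),\m{Bd}_{\pm,\downarrow}(\delta)$ events to control endpoints—but the rescaling you chose creates an unnecessary obstacle, and there is an arithmetic slip that makes your proposed fix inapplicable as stated.

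You rescale $[0,t^{-\alpha}]$ directly to $[0,t^{2/3-\alpha}]=[0,t^{1/2}]$ and observe that the endpoints become $O(\tfrac1\delta t^{1/4})$. You then claim $t^{1/4}=o(\sqrt{t^{1/2}})$ and propose a variant of Proposition~\ref{lemmaC} allowing $z_i^{(n)}=o(\sqrt{a_n})$. But $t^{1/4}=\sqrt{t^{1/2}}$ exactly, so the endpoints sit precisely at the diffusive scale, not below it; your $o(\sqrt{a_n})$ lemma does not cover this. One can in fact push the argument through with endpoints at scale $O(\sqrt{a_n})$, but justifying that requires redoing the density computations in a borderline regime, which is more work than needed.

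The paper sidesteps this entirely with a \emph{two-step} rescaling. First use Brownian scaling to bring $\V_r^{\m{small}}$ from $[0,t^{-\alpha}]$ to $[0,1]$ via
\[
\V_r^0(x):=t^{\alpha/2}\,\V_r^{\m{small}}(xt^{-\alpha}),
\]
which is again a $\nonintbb$ on $[0,1]$; on $\m{Bd}_\uparrow(\delta)\cap\m{Bd}_\downarrow(\delta)$ its endpoints are now bounded by $t^{\alpha/2}\cdot\tfrac1\delta t^{-\alpha/2}=\tfrac1\delta$, i.e.\ uniformly $O(1)$. Since $\alpha=\tfrac16$, one then checks
\[
\bigl(D_{M,t,\uparrow}(x),D_{M,t,\downarrow}(x)\bigr)=t^{1/3}\V_r^{\m{small}}(t^{-2/3}x)=t^{1/4}\V_r^0(t^{-1/2}x)
\]
for $x\ge0$ (and analogously with $\V_\ell^0$ for $x\le0$). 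This is exactly $\sqrt{n}\,\V_r^0(\cdot/n)$ with $n=t^{1/2}$, $a_n\equiv1$, and endpoints in $[-\tfrac1\delta,\tfrac1\delta]$, so Proposition~\ref{lemmaC} applies \emph{verbatim}—no generalization needed. The uniformity concern you raise is then handled automatically: the convergence in Proposition~\ref{lemmaC} (via \eqref{tech} and the tightness bound \eqref{db5}) is already uniform over endpoints in a fixed compact set, and on $\Nice_M(\delta)$ that compact set is $[-\tfrac1\delta,\tfrac1\delta]$.
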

	We prove these two lemmas in Section \ref{sb:2lem}. For now, we proceed with the current proof of \eqref{wts1} in the next section.
	
	\subsubsection{Matching Lower and Upper Bounds} \label{sb:ublb} In this subsection, we complete the proof of \eqref{wts1} by providing matching lower and upper bounds in the two steps below. We assume throughout this subsection that $t$ is large enough, so that \eqref{eq:l2} and \eqref{eq:l3} holds.
	
	\medskip
	
	\noindent\textbf{Step 1: Lower Bound.} We start with \eqref{lwbd}. Following the expression in \eqref{eq:cond1}, and our deductions in \eqref{part1}, \eqref{part2}, \eqref{part3} we see that
	\begin{align}
		\nonumber \Pr_t(A) & \ge \Ex_t\left[\Pr_t(\Nice_M(\delta), A\mid \calF_1)\right] \\ \label{627.0} & =\Ex \left[\frac{\Ex_{\operatorname{free},t}[\ind_{\Nice_M(\delta)}W_{\uparrow,2}W_{\downarrow,2} \cdot \Ex_{\m{Nsmall|3,2,1}}[\ind_{A}W_{\uparrow,1}W_{\downarrow,1}]] }{\Ex_{\operatorname{free},t}[W_{\uparrow}W_{\downarrow}]}\right] \\ & \ge (1-\delta)\Ex_t \left[\frac{\Ex_{\operatorname{free},t}[\ind_{\Nice_M(\delta)}W_{\uparrow,2}W_{\downarrow,2} \cdot \Pr_{\m{Nsmall|3,2,1}}(A,W_{\uparrow,1}W_{\downarrow,1}>1-\delta)] }{\Ex_{\operatorname{free},t}[W_{\uparrow}W_{\downarrow}]}\right] \label{627.1}
	\end{align}
	where in the last inequality we used the fact $W_{\uparrow,1}W_{\downarrow,1}\le 1$. Now applying Lemma \ref{l:rn1} and Lemma \ref{l:closetod} successively we get 
	\begin{align*}
		& \ind_{\Nice_M(\delta)}\Pr_{\m{Nsmall|3,2,1}}(A,W_{\uparrow,1}W_{\downarrow,1}>1-\delta) \\ & \ge \ind_{\Nice_M(\delta)}[\Pr_{\m{Nsmall|3,2,1}}(A)-\Pr_{\m{Nsmall|3,2,1}}(W_{\uparrow,1}W_{\downarrow,1}\le 1-\delta)] \\ & \ge \ind_{\Nice_M(\delta)}[\Pr_{\m{Nsmall|3,2,1}}(A)-\delta] \\ & \ge \ind_{\Nice_M(\delta)}[\tau(A)-2\delta] 
	\end{align*}
	where recall $\tau(A)=\Pr(\calD(\cdot)\in A)$. As $W_{\uparrow,1}W_{\downarrow,1}\le 1$ and probabilities are nonnegative, following the above inequalities we have
	\begin{align*}
		\ind_{\Nice_M(\delta)}\Pr_{\m{Nsmall|3,2,1}}(A,W_{\uparrow,1}W_{\downarrow,1}>1-\delta)\ge \max\{0,\tau(A)-2\delta\} \ind_{\Nice_M(\delta)}W_{\uparrow,1}W_{\downarrow,1}.
	\end{align*}
	Substituting the above bound back to \eqref{627.1} and using the fact that $W_{\uparrow,2}W_{\downarrow,2}W_{\uparrow,1}W_{\downarrow,1}=W_{\uparrow}W_{\downarrow}$, we get
	\begin{align*}
		\Pr_t(A)  & \ge (1-\delta)\max\{0,\tau(A)-2\delta\}\Ex_t\left[\frac{\Ex_{\operatorname{free},t}[\ind_{\Nice_M(\delta)}W_{\uparrow}W_{\downarrow} ]}{\Ex_{\operatorname{free},t}[W_{\uparrow}W_{\downarrow}]}\right] \\ & =  (1-\delta)\max\{0,\tau(A)-2\delta\}\Pr_t(\Nice_M(\delta)). 
	\end{align*}
	In view of Lemma \ref{l:nice}, taking $\liminf_{t\to \infty}$ followed by $\liminf_{\delta\downarrow 0}$ we get that $\liminf_{t\to\infty}\Pr_t(A) \ge \tau(A)$. This proves the lower bound. 

	\medskip
	
	\noindent\textbf{Step 2: Upper Bound.} We start with \eqref{upbd}. Using the equality in \eqref{627.0} we get
	\begin{align} \nonumber
		\Pr_t(A) & \le \Ex \left[\frac{\Ex_{\operatorname{free},t}[\ind_{\Nice_M(\delta)}W_{\uparrow,2}W_{\downarrow,2} \cdot \Ex_{\m{Nsmall|3,2,1}}[\ind_{A}W_{\uparrow,1}W_{\downarrow,1}]] }{\Ex_{\operatorname{free},t}[W_{\uparrow}W_{\downarrow}] }\right] + \Pr_t(\neg\Nice_M(\delta)) \\ & \nonumber \le \Ex \left[\frac{\Ex_{\operatorname{free},t}[\ind_{\Nice_M(\delta)}W_{\uparrow,2}W_{\downarrow,2} \cdot \Pr_{\m{Nsmall|3,2,1}}(A)] }{\Ex_{\operatorname{free},t}[W_{\uparrow}W_{\downarrow}] }\right] + \Pr_t(\neg\Nice_M(\delta)) \\ & \le (\tau(A)+\delta)\Ex \left[\frac{\Ex_{\operatorname{free},t}[\ind_{\Nice_M(\delta)}W_{\uparrow,2}W_{\downarrow,2}] }{\Ex_{\operatorname{free},t}[W_{\uparrow}W_{\downarrow}] }\right] + \Pr_t(\neg\Nice_M(\delta)). \label{627.5}
	\end{align}
	Let us briefly justify the inequalities presented above. Going from first line to second line we used the fact $W_{\uparrow,1}W_{\downarrow,1}\le 1$. The last inequality follows from Lemma \ref{l:closetod} where recall that $\tau(A)=\Pr(\calD(\cdot)\in A).$ Now note that by Lemma \ref{l:rn1}, on $\Nice_{M}(\delta)$,
	\begin{align*}
		\Ex_{\m{Nsmall|3,2,1}}[W_{\uparrow,1}W_{\downarrow,1}] & \ge \Ex_{\m{Nsmall|3,2,1}}[\ind_{W_{\uparrow,1}W_{\downarrow,1}\ge 1-\delta} \cdot W_{\uparrow,1}W_{\downarrow,1}] \\ & \ge (1-\delta)\Pr_{\m{Nsmall|3,2,1}}({W_{\uparrow,1}W_{\downarrow,1}\ge 1-\delta}) \ge (1-\delta)^2.
	\end{align*}
	Using the expression from \eqref{deno} we thus have
	\begin{align*}
		\Ex_{\operatorname{free},t}[W_{\uparrow}W_{\downarrow}] & \ge \Ex_{\operatorname{free},t}[\ind_{\Nice_M(\delta)}W_{\uparrow,2}W_{\downarrow,2} \cdot \Ex_{\m{Nsmall|3,2,1}}[W_{\uparrow,1}W_{\downarrow,1}]] \\ & \ge (1-\delta)^2\Ex_{\operatorname{free},t}[\ind_{\Nice_M(\delta)}W_{\uparrow,2}W_{\downarrow,2}].
	\end{align*}
	Going back to \eqref{627.5}, this forces
	\begin{align*}
		\mbox{r.h.s.~of \eqref{627.5}} \le \frac{\tau(A)+\delta}{(1-\delta)^2}+\Pr_t(\neg\Nice_M(\delta)).
	\end{align*}
	In view of Lemma \ref{l:nice}, taking $\limsup_{t\to\infty}$, followed by $\limsup_{\delta\downarrow 0}$ in above inequality we get that $\limsup_{t\to\infty} \Pr_t(A) \le \tau(A).$ Along with the matching lower bound obtained in \textbf{Step 1} above, this establishes \eqref{wts1}.

	\subsubsection{Proof of Lemma \ref{l:nice}} \label{sb:pfnice} Recall  from \eqref{def:nice} that $\Nice_{M}(\delta)$ event is an intersection of several kinds of events. To show \eqref{eq:nice}, it suffices to prove the same for each of the events. That is, given an event $\m{E}$ which is part of $\Nice_M(\delta)$ we will show
	\begin{align}
		\label{eq:e}
		\limsup_{\delta\to \infty}\limsup_{t\to\infty}\Pr(\m{E})=1.
	\end{align}
	Below we analyze each such possible choices for $\m{E}$ separately.
	
	\medskip
	
	\noindent\underline{$\m{ArMx}(\delta)$ \textbf{event.}} Recall $\m{ArMx}(\delta)$ event from \eqref{eq:armx}. As noted in \eqref{mxfinite}, $$\md_{p,t}^M \stackrel{d}{\to} \argmax_{x\in [-M,M]} \calA(x),$$ where $\calA$ is defined in \eqref{ax}. Since $\calA$ restricted to $[-M,M]$ is absolutely continuous with Brownian motion with appropriate diffusion coefficients, $\argmax_{x\in [-M,M]} \calA(x)\in (-M,M)$ almost surely. In other words, maximum is not attained on the boundaries almost surely. But then
	\begin{equation}
		\label{eq:armxcal}
		\begin{aligned}
			\liminf_{\delta\downarrow0}\liminf_{t\to \infty} \Pr(\m{ArMx}(\delta)) & =\liminf_{\delta\downarrow0} \Pr(\argmax_{x\in [-M,M]} \calA(x)\in [-M+\delta,M-\delta]) \notag\\ & =\Pr(\argmax_{x\in [-M,M]} \calA(x)\in (-M,M))=1.
		\end{aligned}
	\end{equation}
	This proves \eqref{eq:e} with $\m{E}\mapsto \m{ArMx}(\delta).$
	
	\bigskip
	
	\noindent\underline{$\m{Bd}_{\uparrow}(\delta),\m{Bd}_{\downarrow}(\delta)$ \textbf{events.}} We first define
	\begin{equation*}
		\begin{aligned}
			\m{Tight}_{\pm,\uparrow}(\lambda):=\left\{p^{1/3}\left|\h_{pt,\uparrow}^{(1)}(\Phi p^{-2/3})-\h_{pt,\uparrow}^{(1)}(\pm Mp^{-2/3})\right|\le \tfrac1\lambda \right\}, \\ 
			\m{Tight}_{\pm,\downarrow}(\lambda):=\left\{q^{1/3}\left|\h_{qt,\downarrow}^{(1)}(\Phi q^{-2/3})-\h_{qt,\downarrow}^{(1)}(\pm Mq^{-2/3})\right|\le \tfrac1\lambda \right\},
		\end{aligned}
	\end{equation*}
	and set
	\begin{align}
		\label{def:sp}
		\m{Sp}(\lambda):=\m{ArMx}(\lambda)\cap  \m{Tight}_{+,\uparrow}(\lambda)\cap\m{Tight}_{-,\uparrow}(\lambda)\cap\m{Tight}_{+,\downarrow}(\lambda)\cap\m{Tight}_{-,\downarrow}(\lambda)
	\end{align}
	where $\m{ArMx}(\lambda)$ is defined in \eqref{eq:armx}. We claim that 
	\begin{align}
		\label{eq:sp}
		\limsup_{\lambda\downarrow 0}\limsup_{t\to\infty} \Pr(\neg \m{Sp}(\lambda)))=0.
	\end{align}
	Let us assume \eqref{eq:sp} for the time being and consider the main task of analyzing the probability of the events  $\m{Bd}_{\uparrow}(\delta),\m{Bd}_{\downarrow}(\delta)$ defined in \eqref{bdup}. We have $\m{Bd}_{\uparrow}(\delta)=\m{Bd}_{+\uparrow}(\delta)\cap \m{Bd}_{-,\uparrow}(\delta)$ where
	$\m{Bd}_{\pm,\uparrow}(\delta)$ is defined in \eqref{bdpmup}. Let us focus on $\m{Bd}_{+,\uparrow}(\delta)$. Recall the  $\sigma$-fields $\calF_1,\calF_2$ from \eqref{sf1} and \eqref{sigfields}. As described in Subsection \ref{sb:small}, upon conditioning on $\calF_1\cup\calF_2$, the conditional law on $[-M,M]$ are given by $\m{Nlarge}$ defined in Definition \ref{def:nlarge}, which are made up of $\nonintbb$s $V_{\ell}^{\m{large}},V_{r}^{\m{large}}$ defined in Definition \ref{def:nlarge}. 
	
	Note that applying Markov inequality conditionally we have
	\begin{align*}
		&  \ind_{\m{Sp}(\lambda)}\Pr\left(\m{Bd}_{+,\uparrow}(\delta)\mid \calF_1,\calF_2\right) \\ & = \ind_{\m{Sp}(\lambda)} \cdot \Pr\left(|\h_{pt,\uparrow}^{(1)}(p^{-2/3}(\Phi+t^{-\alpha}))-\h_{pt,\uparrow}^{(1)}(\Phi p^{-2/3})| >\tfrac1{\delta}t^{-\alpha/2} \mid \calF_1,\calF_2\right) \\ & \le  \ind_{\m{Sp}(\lambda)} \cdot \delta^2 t^{2\alpha} \cdot \Ex_{\m{Nlarge}|2,1}\left[[\V_{r,1}^{\m{large}}(p^{-2/3}t^{-\alpha})]^4\right] 
	\end{align*}
	However, on $\ind_{\m{Sp}(\lambda)}$, the $\nonintbb$ has length bounded away from zero and the endpoints are tight. Applying \eqref{db5} with $K\mapsto 2, t\mapsto 1,s\mapsto 0, n\mapsto p^{2/3}t^{\alpha}, M\mapsto 1/\lambda$,  for all large enough $t$ we get $\Ex_{\m{Nlarge}|2,1}\left[[\V_{r,1}^{\m{large}}(p^{-2/3}t^{-\alpha})]^4\right] \le \Con_{p,\lambda} t^{-2\alpha}$. Thus,
	\begin{align*}
		\limsup_{t\to\infty}\Pr\left(\neg\m{Bd}_{+,\uparrow}(\delta)\right) \le \limsup_{t\to\infty}\Pr(\neg \m{Sp}(\lambda))+\delta^2 {\Con_{p,\lambda}.}
	\end{align*}
	Taking $\delta\downarrow0$, followed by $\lambda\downarrow 0$, in view of \eqref{eq:sp} we get $\limsup_{\delta\downarrow 0}\limsup_{t\to \infty} \Pr(\neg\m{Bd}_{+,\uparrow}(\delta))=0$.
	Similarly one can conclude $\limsup_{\delta\downarrow 0}\limsup_{t\to \infty} \Pr(\neg\m{Bd}_{-,\uparrow}(\delta))=0$ Thus, this two together yields $\liminf_{\delta\downarrow 0}\liminf_{t\to\infty} \Pr(\m{Bd}_{\uparrow}(\delta))=1$. By exactly the same approach one can derive that $ \Pr(\m{Bd}_{\downarrow}(\delta))$ goes to $1$ under the same iterated limit. Thus it remains to show \eqref{eq:sp}. \\

	Let us recall from \eqref{def:sp} that $\m{Sp}(\lambda)$ event is composed of four tightness events and one event about the $\argmax$. We first claim that $\limsup_{\lambda\downarrow 0}\limsup_{t\to\infty}\Pr(\m{Tight}_{x,y}(\lambda))=1$ for  each $x\in \{+,-\}$ and $y\in \{\uparrow,\downarrow\}$.  The earlier analysis of $\m{ArMx}(\lambda)$ event in \eqref{eq:armxcal} then enforces \eqref{eq:sp}. Since all the tightness events are similar, it suffices to prove any one of them say $\m{Tight}_{+,\uparrow}$. By Proposition \ref{line-ensemble} we have the distributional convergence of $2^{1/3}\h_{pt,\uparrow}^{(1)}(2^{1/3}x)$ to $\calA_1(x)$ in the uniform-on-compact topology, where $\calA_1(\cdot)$ is the parabolic $\operatorname{Airy}_2$ process. As $\Phi\in [-M,M]$, we thus have
	\begin{align*}
		\limsup_{t\to\infty}\Pr(\m{Tight}_{+,\uparrow}(\lambda)) & \le \limsup_{t\to\infty}\Pr\left(p^{1/3}\sup_{x\in [-M,M]}\left|\h_{pt,\uparrow}^{(1)}(x p^{-2/3})-\h_{pt,\uparrow}^{(1)}(Mp^{-2/3})\right|\le \tfrac1\lambda\right)   \\ & =  \Pr\left(p^{1/3}\sup_{|x|\le 2^{-1/3}M}\left|\calA_1(x p^{-2/3})-\calA_1(2^{-1/3}Mp^{-2/3})\right|\le \tfrac{2^{1/3}}\lambda\right).
	\end{align*}
	For fixed $p,M$, by tightness of parabolic $\operatorname{Airy}_2$ process on a compact interval, the last expression goes to one as $\lambda \downarrow 0$, which is precisely what we wanted to show. 
	
	\bigskip
	
	\noindent\underline{$\m{Gap}_{M,\uparrow}(\delta),\m{Gap}_{M,\downarrow}(\delta)$  \textbf{events.}}  
	Recall the definitions of $\m{Gap}_{M, \uparrow}(\delta)$ and $\m{Gap}_{M, \downarrow}(\delta)$ from\eqref{gapup} and \eqref{gapdn}.We begin with the proof of    $\m{Gap}_{M, \uparrow}(\delta)$. Let $$\m{Diff}_{M,\uparrow}(\delta):= \left\{\inf_{|x|\le M}p^{1/3}\left(\h_{pt, \uparrow}^{(1)}(p^{-2/3}x)- \h_{pt, \uparrow}^{(2)}(p^{-2/3}x)\right) \ge \delta\right\}.$$ Note that $\Phi \in [-M,M]$. 
	Thus $\m{Gap}_{M, \uparrow}(\delta) \supset \m{Diff}_{M,\uparrow}(\delta).$ Thus to show \eqref{eq:e} with $\m{E}\mapsto \m{Gap}_{M, \uparrow}(\delta)$ it suffices to prove
	\begin{align}\label{lptm}
		\liminf_{\delta \downarrow 0}\liminf_{t \rightarrow \infty}\Pr(\m{Diff}_{M,\uparrow}(\delta)) = 1,
	\end{align}
	We recall from Proposition \ref{p:leconv} the distributional convergence of the KPZ line ensemble to the Airy line ensemble in the uniform-on-compact topology. By Skorokhod representation theorem, we may assume that our probability space is equipped with $\calA_1(\cdot)$ and $\calA_2(\cdot)$ such that almost surely as $t\to \infty$
	\begin{align}\label{sep0}
		\max_{i = 1, 2}\sup_{|x|\le Mp^{-2/3}}|2^{1/3}\h_{t,\uparrow}^{(i)}(2^{1/3}x)- \calA_i(x)|\rightarrow 0.
	\end{align} 
	We thus have
	\begin{align}
		\label{sep00}
		\liminf_{t \rightarrow \infty}\Pr(\m{Diff}_{M,\uparrow}(\delta))= \Pr\left(\inf_{|x|\le M 2^{-1/3}p^{-2/3}} p^{1/3}\left(\calA_1(x)- \calA_2(x)\right) \ge 2^{1/3}\delta\right).
	\end{align}

	As the Airy line ensemble is absolutely continuous w.r.t.~non-intersecting Brownian motions, it is strictly ordered with touching probability zero (see \eqref{eq:order}). Hence r.h.s.~of \eqref{sep00} goes to zero as $\delta\downarrow 0$. This proves \eqref{lptm}. The proof is similar for $\m{Gap}_{M, \downarrow}(\delta).$ 

	\bigskip
	
	\noindent\underline{$\m{Rise}_{M,\uparrow}(\delta),\m{Rise}_{M,\uparrow}(\delta)$ \textbf{events.}} Recall $\m{Rise}_{M,\uparrow}(\delta),\m{Rise}_{M,\uparrow}(\delta)$ events from \eqref{riseup} and \eqref{risedn}. Due to their similarities, we only analyze the $\m{Rise}_{M,\uparrow}(\delta)$ event. As with the previous case, we assume that our probability space is equipped with $\calA_1(\cdot)$ and $\calA_2(\cdot)$ (first two lines of the Airy line ensemble) such that almost surely as $t\to \infty$  \eqref{sep0} holds. 
	Applying union bound we have
	\begin{align*}
		\Pr\left(\neg \m{Rise}_M(\delta)\right) & \le \Pr\left(\sup_{{|x|\le Mp^{-2/3}}} p^{1/3}|2^{1/3}\h_{pt,\uparrow}^{(2)}(2^{1/3}x)-\calA_2(x)|\ge \tfrac{\delta}{16}\right) \\ & \hspace{2cm}+\Pr\left(\neg\m{Rise}_M(\delta),\sup_{|x|\le Mp^{-2/3}} p^{1/3}|2^{1/3}\h_{pt,\uparrow}^{(2)}(2^{1/3}x)-\calA_2(x)|\le \tfrac{\delta}{16}\right) \\ & \le \Pr\left(\sup_{|x|\le Mp^{-2/3}} p^{1/3}|2^{1/3}\h_{pt,\uparrow}^{(2)}(2^{1/3}x)-\calA_2(x)|\ge \tfrac{\delta}{16}\right) \\ & \hspace{2cm}+\Pr\bigg(\sup_{\substack{x,y\in [-M,M] \\ |x-y|\le t^{-\alpha}}} p^{1/3}|\calA_2(x)-\calA_2(y)|\ge \tfrac{\delta}{8}\bigg).
	\end{align*}
	In the r.h.s.~of above equation, the first term goes to zero as $t\to\infty$ by \eqref{sep0}. The second term on the other hand goes to zero as $t\to\infty$  by modulus of continuity estimates for Airy line ensemble from Proposition \ref{p:amodcon}. This shows, $\lim_{t\to\infty} \Pr(\m{Rise}_{M,\uparrow}(\delta))=1$. Similarly one has  $\lim_{t\to\infty} \Pr(\m{Rise}_{M,\downarrow}(\delta))=1$ as well. This proves \eqref{eq:e} for $\m{E} \mapsto \m{Rise}_{M,\uparrow}(\delta),\m{Rise}_{M,\downarrow}(\delta)$. \\

	We have thus shown \eqref{eq:e} for all the events listed in \eqref{def:nice}. This establishes \eqref{eq:nice} concluding the proof of Lemma \ref{l:nice}.

	\subsubsection{Proof of Lemma \ref{l:rn1} and \ref{l:closetod}} \label{sb:2lem}

	In this subsection we prove Lemma \ref{l:rn1} and \ref{l:closetod}.
	
	\medskip
	
	\noindent\textbf{Proof of Lemma \ref{l:rn1}.} Recall $W_{\uparrow,1}$ and $W_{\downarrow,1}$ from \eqref{wu1} and \eqref{wd1} respectively. We claim that for all large enough $t$, on $\Nice_M(\delta)$ we have
	\begin{align}
		\label{l2wts}
		\Pr_{\m{Nsmall}|3,2,1}(W_{\uparrow,1}>\sqrt{1-\delta}) \ge 1-\tfrac12\delta, \quad \Pr_{\m{Nsmall}|3,2,1}(W_{\downarrow,1}>\sqrt{1-\delta}) \ge 1-\tfrac12\delta
	\end{align}
	simultaneously. \eqref{eq:l2} then follows via union bound. Hence we focus on proving \eqref{l2wts}. In the proof below we only focus on first part of \eqref{l2wts} and the second one follows analogously. We now define the `sink' event:
	\begin{align}
		\label{sinkdef}
		\m{Sink}_{\uparrow}(\delta) & := \left\{ \inf_{x\in [-t^{-\alpha},t^{-\alpha}]} p^{1/3}\h_{pt,\uparrow}^{(1)}(\Phi p^{-2/3}+x) \ge p^{1/3}\h_{pt,\uparrow}^{(1)}(\Phi p^{-2/3})-\tfrac{\delta}{4} \right\}.
	\end{align}

	\begin{figure}
		\centering
		\includegraphics[width=9cm]{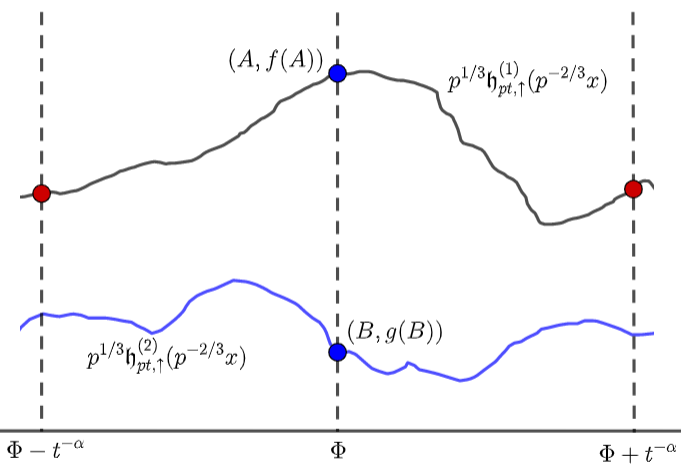}
		\caption{In the above figure we have plotted the curves $f(x):=p^{1/3}\h_{pt,\uparrow}^{(1)}(p^{-2/3}x)$ (black) and $g(x):=p^{1/3}\h_{pt,\uparrow}^{(2)}(p^{-2/3}x)$ (blue) restricted to the interval $K_t:=(\Phi-t^{-\alpha},\Phi+t^{-\alpha})$. For convenience, we have marked two blue points along with their values as $(A,f(A))$, $(B,g(B))$. $\m{Gap}_{M,\uparrow}(\delta)$ defined in \eqref{gapup} denote the event that the blue points are separated by $\delta$, i.e, $f(A)-g(B)\ge \delta$. The $\m{Rise}_{M,\uparrow}(\delta)$ defined in \eqref{riseup} ensures \textit{no} point on the blue curve (restricted to $K_t$) has value larger than  $g(B)+\frac14\delta$ (that is no significant rise). The $\m{Bd}_{\uparrow}(\delta)$ event defined in \eqref{bdup} indicates the red points on the black curve are within $[f(A)-\frac1\delta t^{-\alpha/2},f(A)+\frac1\delta t^{-\alpha/2}]$. The $\m{Sink}_{\uparrow}(\delta)$ event defined in \eqref{sinkdef} ensures that \textit{all} points on the black curve (restricted to $K_t$) have values larger than $f(A)-\frac14\delta$ (that is no significant sink). Clearly then on $\m{Sink}_{\uparrow}(\delta)\cap \m{Rise}_{M,\uparrow}(\delta)\cap  \m{Gap}_{M,\uparrow}(\delta)$ for all $x\in K_t$, we have $f(x)-g(x)\ge f(A)-\frac14\delta-g(B)-\frac14\delta \ge \frac12\delta$.}
		\label{fig:gap}
	\end{figure}
	Recall $\m{Rise}_{M,\uparrow}(\delta)$ and $\m{Gap}_{M,\uparrow}(\delta)$ from \eqref{riseup} and \eqref{gapup}. Note that on $\m{Sink}_{\uparrow}(\delta)\cap \m{Rise}_{M,\uparrow}(\delta) \cap \m{Gap}_{M,\uparrow}(\delta)$ we have uniform separation between $\h_{pt,\uparrow}^{(1)}$ and $\h_{pt,\downarrow}^{(2)}$ on the interval $p^{-2/3}{K}_t$, that is
	\begin{align}
		\label{usep}
		\inf_{x\in [\Phi-t^{-\alpha},\Phi+t^{-\alpha}]} \left[p^{1/3}\h_{pt,\uparrow}^{(1)}(p^{-2/3}x)-p^{1/3}\h_{pt,\uparrow}^{(2)}(p^{-2/3} x)\right] \ge \tfrac{\delta}{2}.
	\end{align}
	See Figure \ref{fig:gap} alongside its caption for further explanation of the above fact.
	But then \eqref{usep} forces $W_{\uparrow,1} \ge \exp(-t^{2/3}2t^{-\alpha}e^{-\frac14t^{1/3}\delta})$ which can be made strictly larger than $\sqrt{1-\delta}$ for all large enough $t$. Thus,
	\begin{align}
		\label{intm}
		\ind_{\Nice_{M}(\delta)}\Pr_{\m{Nsmall|3,2,1}}(W_{\uparrow,1}>\sqrt{1-\delta}) \ge \ind_{\Nice_{M}(\delta)}\Pr_{\m{Nsmall|3,2,1}}(\m{Sink}_{\uparrow}(\delta)).
	\end{align}
	Now we divide the sink event into two parts: $\m{Sink}_{\uparrow}(\delta)=\m{Sink}_{+\uparrow}(\delta)\cap\m{Sink}_{-,\uparrow}(\delta)$ where
	\begin{align*}
		\m{Sink}_{\pm,\uparrow}(\delta) & := \left\{ \inf_{x\in [0,t^{-\alpha}]} p^{1/3}\h_{pt,\uparrow}^{(1)}(\Phi p^{-2/3}\pm x) \ge p^{1/3}\h_{pt,\uparrow}^{(1)}(\Phi p^{-2/3})-\tfrac{\delta}{4} \right\}, 
	\end{align*}
	In view of \eqref{intm}, to prove first part of \eqref{l2wts}, it suffices to show for all large enough $t$, on $\m{Nice}_M(\delta)$ we have
	\begin{align}
		\label{l3wts}
		\Pr_{\m{Nsmall|3,2,1}}(\m{Sink}_{+,\uparrow}(\delta)) \ge 1-\tfrac{\delta}{4}, \quad \Pr_{\m{Nsmall|3,2,1}}(\m{Sink}_{-,\uparrow}(\delta)) \ge 1-\tfrac{\delta}{4}.
	\end{align}
	We only prove first part of \eqref{l3wts} below. Towards this end, recall $Y_{M,t,\uparrow}^{(1)}, Y_{M,t,\downarrow}^{(1)}$ from \eqref{eq:yupdown}. Observe that 
	\begin{align*}
		Y_{M,t,\uparrow}^{(1)}(\Phi+x)=p^{1/3}\h_{pt,\uparrow}^{(1)}(\Phi p^{-2/3})-p^{1/3}\h_{pt,\uparrow}^{(1)}(\Phi p^{-2/3}+x).
	\end{align*}
	Recall $\m{Nsmall}$ law from Definition \ref{def:nsmall}. Our discussion in Subsection \ref{sb:small} implies that under $\Pr_{\m{Nsmall}|3.2,1}$, $$(Y_{M,t,\uparrow}^{(1)}, Y_{M,t,\downarrow}^{(1)})(\Phi+\cdot)|_{[0,t^{-\alpha}]}\stackrel{d}{=} V_{r}^{\m{small}}(\cdot), \quad (Y_{M,t,\uparrow}^{(1)}, Y_{M,t,\downarrow}^{(1)})(\Phi+\cdot)|_{[-t^{-\alpha},0]}\stackrel{d}{=}V_{\ell}^{\m{small}}(-\cdot),$$
	where recall that $\V_\ell^{\m{small}}$ and $\V_{r}^{\m{small}}$ are conditionally independent $\nonintbb$ on $[0,t^{-\alpha}]$ with appropriate end points, defined in Definition \ref{def:nsmall}. In particular we have,
	\begin{align}
		\label{sinktov}
		\Pr_{\m{Nsmall|3,2,1}}(\m{Sink}_{+,\uparrow}(\delta)) =\Pr_{\m{Nsmall|3,2,1}}\left(\sup_{x\in [0,t^{-\alpha}]} V_{r,1}^{\m{small}}(x)\le \tfrac14\delta\right)
	\end{align}
	where $V_{r}^{\m{small}}=(V_{r,1}^{\m{small}},V_{r,2}^{\m{small}})$. Recall $\m{Nice}_M(\delta)$ event from \eqref{def:nice}. It contains $\m{Bd}_{\uparrow}(\delta)$ event defined in \eqref{bdup}. On this event, $-\frac1\delta \le V_{r,1}^{\m{Small}}(t^{-\alpha}),V_{r,2}^{\m{Small}}(t^{-\alpha}) \le \frac1\delta t^{-\alpha/2}$. We consider another $\nonintbb$ {$U = (U_1, U_2)$} on $[0,t^{-\alpha}]$ with non-random endpoints $U_1(t^{-\alpha})=U_2(t^{-\alpha})=\frac1\delta t^{-\alpha/2}$. On $\m{Bd}_{\uparrow}(\delta)$ event, by monotonicity of non-intersecting Brownian bridges (Lemma 2.6 in \cite{CH14}), one may couple $U=(U_1,U_2)$ and $V_{r}^{\m{small}}$ so that $U_i$ always lies above $V_{r,i}^{\m{small}}$ for $i=1,2$. Thus on $\m{Bd}_{\uparrow}(\delta)$ event,
	\begin{align*}
		\Pr_{\m{Nsmall|3,2,1}}\left(\sup_{x\in [0,t^{-\alpha}]} V_{r,1}^{\m{small}}(x)\le \lambda t^{-\alpha/2} \right) \ge 
		\Pr\left(\sup_{x\in [0,1]} t^{\alpha/2}U_1(xt^{-\alpha}) \le \lambda\right) \ge 1-\tfrac{\delta}4,
	\end{align*}
	where the last inequality is true by taking $\lambda$ large enough. This choice of $\lambda$ is possible as by Brownian scaling, $t^{\alpha/2}U_1(xt^{-\alpha}),t^{\alpha/2}U_2(xt^{-\alpha})$ is $\nonintbb$ on $[0,1]$ ending at $(\frac1\delta,\frac1\delta)$. Taking $t$ large enough one can ensure $\lambda t^{-\alpha/2} \le \frac{\delta}{4}$. Using the equality in \eqref{sinktov} we thus establish the first part of \eqref{l3wts}. The second part is analogous. This proves the first part of \eqref{l2wts}. The second part of \eqref{l2wts} follows similarly. This completes the proof of Lemma \ref{l:rn1}.
	
	\medskip
	
	\noindent\textbf{Proof of Lemma \ref{l:closetod}.} {The idea behind this proof is Proposition \ref{lemmaC}, which states that a $\nonintbb$ after Brownian rescaling converges in distribution to a $\dbm$. The following fills out the details.} 
	Recall that $$\Pr_{\m{Nsmall}|3.2,1}(A)=\Pr_{\m{Nsmall}|3.2,1}(D_{M,t,\uparrow},D_{M,t,\downarrow}(\cdot)\in A).$$
	Recall from \eqref{eq:dtoyM} that $D_{M,t,\uparrow},D_{M,t,\downarrow}$ is a diffusive scaling of $Y_{M,t,\uparrow}^{(1)}, Y_{M,t,\downarrow}^{(1)}$ when centering at $\Phi$, where $Y_{M,t,\uparrow}^{(1)}, Y_{M,t,\downarrow}^{(1)}$ are defined in \eqref{eq:yupdown}. Recall $\m{Nsmall}$ law from Definition \ref{def:nsmall}. Our discussion in Subsection \ref{sb:small} implies that under $\Pr_{\m{Nsmall}|3.2,1}$, $$(Y_{M,t,\uparrow}^{(1)}, Y_{M,t,\downarrow}^{(1)})(\Phi+\cdot)|_{[0,t^{-\alpha}]}\stackrel{d}{=} V_{r}^{\m{small}}(\cdot), \quad (Y_{M,t,\uparrow}^{(1)}, Y_{M,t,\downarrow}^{(1)})(\Phi+\cdot)|_{[-t^{-\alpha},0]}\stackrel{d}{=}V_{\ell}^{\m{small}}(-\cdot),$$
	where $\V_\ell^{\m{small}}$ and $\V_{r}^{\m{small}}$ are conditionally independent $\nonintbb$ on $[0,t^{-\alpha}]$ with appropriate end points defined in Definition \ref{def:nsmall}. Using Brownian scaling, we consider $$\V_{\ell}^0(x):=t^{\alpha/2}\V_{\ell}^{\m{small}}(x t^{-\alpha}), \quad \V_{r}^0(x):=t^{\alpha/2}\V_{r}^{\m{small}}(x t^{-\alpha}),$$
	which are now $\nonintbb$ on $[0,1]$. Note that on $\m{Bd}_{\uparrow}(\delta),\m{Bd}_{\downarrow}(\delta)$ (defined in \eqref{bdup}), we see that endpoints of $V_{\ell}^0, V_{r}^0$ are in $[-\frac1\delta,\frac1\delta]$. Thus as $\alpha=\frac16$, performing another diffusive scaling by Proposition \ref{lemmaC} we see that as $t\to \infty$
	$$t^{1/4}\V_{\ell}^0(x t^{-1/2})\ , \ t^{1/4}\V_{r}(xt^{-1/2})$$ converges to two independent copies of $\dbm$s (defined in Definition \ref{def:dbm}) in the uniform-on-compact topology. Hence we get two-sided $\dbm$ convergence for the pair $(D_{M,t,\uparrow},D_{M,t,\downarrow})$ under $\Pr_{\m{Nsmall}|3.2,1}$ as long as $\ind\{\Nice_M(\delta)\}$ holds. This proves \eqref{eq:l3}.
	
	\subsection{Proof of Theorem \ref{t:bessel}} \label{sec:last}
	
	We take $p\mapsto \frac12$ and $t\mapsto 2t$ in Proposition \ref{p:dyson}. Then by Lemma \ref{l:conec}, $\mx_{2,t}$ defined in the statement of Theorem \ref{t:bessel} is same as $\md_{\frac12,2t}$ considered in Proposition \ref{p:dyson}. Its uniqueness is already justified in Lemma \ref{lem1}. Furthermore,
	$$R_2(x,t)\stackrel{d}{=} D_1(x,t)-D_2(x,t),$$
	as functions in $x$, where $R_2(x,t)$ is defined in \eqref{eq:bessel} and $D_1,D_2$ are defined in \eqref{eq:dyson}. 
	By Proposition \ref{p:dyson} and Lemma \ref{l:dtob} we get that $D_1(x,t)-D_2(x,t)\stackrel{d}{\to} \mathcal{R}_2(x)$ in the uniform-on-compact topology. This proves Theorem \ref{t:bessel} for $k=2$ case. \\
	
	For $k=1$ case, by Lemma \ref{l:conec}, $\mx_{1,t}$ is same as $\md_{*,t}$ which is unique almost surely by Lemma \ref{lem1}. This guarantees $\mx_{1,t}$ is unique almost surely as well.  Thus we are left to show
	\begin{align}
		\label{eq:bconv}
		\calH(\mx_{1,t},t)-\calH(x+\mx_{1,t},t) \stackrel{d}{\to} \mathcal{R}_1(x).
	\end{align}
	where $\mathcal{R}_1(x)$ is a two-sided Bessel process with diffusion coefficient $1$ defined in Definition \ref{def:bessel}. The proof of \eqref{eq:bconv} is exactly similar to that of Proposition \ref{p:dyson} with few minor alterations listed below.
	
	
	\begin{enumerate}
		\item Just as in Subsection \ref{sb:frame}, one may put the problem in \eqref{eq:bconv} under the framework of KPZ line ensemble. Compared to Subsection \ref{sb:frame}, in this case, clearly there will be just one set of line ensemble.
		
		\item Given the decay estimates for $\md_{*,t}$ from Lemma \ref{lem1}, it boils down to show Bessel behavior around local maximizers. The rigorous justification follows from a soft argument analogous to what is done in Subsection \ref{sb:gtol}.
		
		\item In the spirit of Subsection \ref{sb:nice}, one can define a similar $\Nice'_M(\delta)$ event but now for a single line ensemble. $\Nice'_M(\delta)$ will contain similar events, such as: 
		\begin{itemize}
			\item control on the location of local maximizer (analog of $\m{ArMx}(\delta)$ event \eqref{eq:armx}),
			\item control on the gap between first curve and second curve at the maximizer (analog of $\m{Gap}_{M,\uparrow}(\delta)$ event \eqref{gapup}),
			\item fluctuations of the first curve on a small interval say $I$ around maximizer (analog of $\m{Rise}_{M,\uparrow}(\delta)$ event \eqref{riseup},
			\item and control on the value of the endpoints of $I$ (analog of $\m{Bd}_{\uparrow}(\delta)$ event \eqref{bdup}).
		\end{itemize}   On $\Nice'_M(\delta)$ event, the conditional analysis can be performed in the same manner. 
		
		\item Next, as in proof of Proposition \ref{p:dyson}, we proceed by three layers of conditioning. {For first layer, we use the $\mathbf{H}_t$ Brownian Gibbs property of the single line ensemble under consideration. Next, conditioning on the location and values of the maximizer, we similarly apply the same Bessel bridge decomposition result from Proposition \ref{p:bbdecomp} to convert the conditional law to that of the Bessel bridges over a large interval (see Subsection \ref{sb:large}). Finally, analogous to Subsection \ref{sb:small}, the third layer of conditioning reduces large Bessel bridges to smaller ones following the Markovian property of Bessel bridges, see Lemma \ref{l:tdbes}. }
		
		\item Since a Bessel bridge say on $[0,1]$ is a Brownian bridge conditioned to stay positive on $[0,1]$, it has the Brownian scaling property and it admits monotonicity w.r.t.~endpoints. These are two crucial tools that went into the Proof of Lemma \ref{l:rn1} in Subsection \ref{sb:2lem}. Thus the Bessel analogue of Lemma \ref{l:rn1} can be derived using {the scaling property and monotonicity stated above} in the exact same way.  Finally, the Bessel analogue of Lemma \ref{l:closetod} can be obtained from Corollary \ref{lemmaC1}. Indeed Corollary \ref{lemmaC1} ensures that small Bessel bridges converges to Bessel process under appropriate diffusive limits on the $\Nice'_{M}(\delta)$ event. 
	\end{enumerate}
	
	Executing all the above steps in an exact same manner as proof of Proposition \ref{p:dyson}, \eqref{eq:bconv} is established. This completes the proof of Theorem \ref{t:bessel}. 
	
	\section{Proof of localization theorems} \label{sec:pfmain}

	In this section we prove our main results: Theorem \ref{t:main} and Theorem \ref{t:main2}. In Section \ref{sec:7.1} we study certain tail properties (Lemma \ref{p:besselwd} and Proposition \ref{p:ctail}) of the quantities that we are interested in and prove  Theorem \ref{t:main}. Proof of Proposition \ref{p:ctail} is then completed in Section \ref{sec:7.2} along with proof of Theorem \ref{t:main2}.
	
	\subsection{Tail Properties and proof of Theorem \ref{t:main}} \label{sec:7.1}
	
	We first settle the question of finiteness of the Bessel integral appearing in the statements of Theorems \ref{t:main} and \ref{t:main2} in the following Lemma.
	
	\begin{lemma}\label{p:besselwd} Let $R_{\sigma}(\cdot)$ be a Bessel process with diffusion coefficient $\sigma>0$, defined in Definition \ref{def:bessel}. Then
		\begin{align*}
			\Pr\left(\int_{\R} e^{-R_{\sigma}(x)}\d x \ { \in (0,\infty)} \right)=1.
		\end{align*}
	\end{lemma}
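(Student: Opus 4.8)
The statement has two parts: the integral $\int_{\R} e^{-R_\sigma(x)}\,dx$ is finite almost surely, and it is strictly positive almost surely. Positivity is immediate: $R_\sigma$ is a continuous process with $R_\sigma(0)=0$, so $e^{-R_\sigma(x)}>0$ everywhere and the integral of a strictly positive continuous function over $\R$ is strictly positive (indeed, $\int_{-1}^1 e^{-R_\sigma(x)}\,dx>0$ deterministically given continuity). So the whole content is the almost sure finiteness. By Brownian scaling (Definition \ref{def:bessel}, where $\sigma^{-1/2}\mathcal{R}_\sigma$ is a standard $3$D Bessel process) it suffices to treat $\sigma=1$, and by symmetry of the two-sided process it suffices to show $\int_0^\infty e^{-R_1(x)}\,dx<\infty$ a.s.

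The plan is to exploit the well-known fact that a $3$D Bessel process is transient and grows like $\sqrt{x}$: more precisely, by Motoo's theorem (which is already invoked in the paper, see \eqref{pg7}) or by the law of the iterated logarithm for Bessel processes, one has $\liminf_{x\to\infty} R_1(x)/x^{1/4} = \infty$ almost surely — in fact the stronger statement $R_1(x) \ge x^{1/4}$ for all sufficiently large $x$ holds on an event of probability one. Concretely, fix the event
\[
\Omega_0 := \bigcup_{\theta \ge 1} \left\{ R_1(x) \ge x^{1/4} \ \text{ for all } x \ge \theta \right\},
\]
and note $\Pr(\Omega_0) = 1$ by \eqref{pg7} applied with the Bessel process $\mathcal{R}_2$ replaced by $\mathcal{R}_1$ (the argument is identical; alternatively cite Motoo's theorem \cite{motoo1959proof} directly). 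On $\Omega_0$, choose the corresponding (random) $\theta = \theta(\omega)$; then
\[
\int_0^\infty e^{-R_1(x)}\,dx = \int_0^\theta e^{-R_1(x)}\,dx + \int_\theta^\infty e^{-R_1(x)}\,dx \le \theta + \int_\theta^\infty e^{-x^{1/4}}\,dx < \infty,
\]
where the first integral is finite because $R_1$ is continuous on the compact interval $[0,\theta]$ (so the integrand is bounded) and the second is a convergent deterministic integral. Doing the same for $\int_{-\infty}^0$ and adding gives $\int_{\R} e^{-R_\sigma(x)}\,dx < \infty$ on an event of probability one after undoing the scaling.

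I do not expect any real obstacle here; the only mild subtlety is making sure the tail lower bound $R_1(x)\ge x^{1/4}$ eventually is stated for the diffusion coefficient $1$ process rather than the coefficient $2$ process used in Proposition \ref{pgamma}, but this is a cosmetic change since both are deterministic rescalings of a standard $3$D Bessel process and the exponent $1/4$ in \eqref{pg7} is chosen precisely so that $\int^\infty e^{-x^{1/4}}\,dx$ converges. An equally clean alternative, avoiding Motoo entirely, is to use Pitman's representation $R_1(x) \stackrel{d}{=} 2M(x) - B(x)$ with $B$ a standard Brownian motion and $M(x)=\sup_{u\le x}B(u)$ (as used in the proof of Proposition \ref{lemmaC}), together with the law of the iterated logarithm, to get $R_1(x)/\sqrt{x}\to\infty$ along integers and hence $R_1(x) \ge x^{1/3}$ eventually, which again makes the tail integral converge. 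I would present the Motoo-based version since that lemma is already cited in the paper.
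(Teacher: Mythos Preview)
Your proposal is correct and follows essentially the same approach as the paper: positivity via continuity of paths, and finiteness via Motoo's theorem to obtain $R_\sigma(x)\ge x^{1/4}$ for all sufficiently large $x$, then splitting the integral into a bounded compact part and a convergent tail. The only cosmetic difference is that you first reduce to $\sigma=1$ by scaling, whereas the paper works directly with general $\sigma$.
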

	
	\begin{proof} Since $R_{\sigma}(\cdot)$ has continuous paths, $\sup_{x\in [0,1]} R_{\sigma}(x)$ is finite almost surely. Thus almost surely we have
		$$\int_{\R} e^{-R_{\sigma}(x)}\d x \ge \int_{0}^1 e^{-R_{\sigma}(x)}\d x > 0.$$
		On the other hand, by the classical result from \cite{motoo1959proof} it is known that
		$$\Pr(R_{\sigma}(x) < x^{1/4} \mbox{ infinitely often})=0.$$ Thus, there exists $\Omega$ such that $\Pr(\Omega)=1$ and for all $\omega\in \Omega$, there exists $x_0(\omega)\in (0,\infty)$ such that
		\begin{align*}
			R_{\sigma}(x)(\omega) \ge x^{1/4} \mbox{ for all } x\ge x_0(\omega).
		\end{align*}
		Hence for this $\omega$,
		\begin{align*}
			{\int_0^{\infty} e^{-R_{\sigma}(x)(\omega)}\d x = \int_0^{x_0(\omega)} e^{-R_{\sigma}(x)(\omega)}\d x +\int_{x_0(\omega)}^{\infty} e^{-R_{\sigma}(x)(\omega)}\d x < x_0(\omega)+\int_{0}^{\infty} e^{-x^{1/4}}\d x <\infty.}
		\end{align*}
		This establishes that $\int_{\R} e^{-R_{\sigma}(x)}\d x$ is finite almost surely.
	\end{proof}
	
	Our next result studies the tail of the integral of the pre-limiting process.
	
	\begin{proposition}\label{p:ctail} Fix $p\in (0,1)$. Set $q=1-p$. Consider $2$ independent copies of the KPZ equation $\calH_\uparrow(x,t)$, and $\calH_\downarrow(x,t)$, both started from the narrow wedge initial data. Let $\md_{p,t}$ be the almost sure unique maximizer of the process $x\mapsto (\calH_\uparrow(x,pt)+\calH_\downarrow(x,qt))$ which exists via Lemma \ref{lem1}. Set
		\begin{equation}
			\label{def:0d1d2}
			\begin{aligned}
				D_1(x,t) & :=\calH_\uparrow(\md_{p,t},pt)-\calH_\uparrow(x+\md_{p,t},pt), \\
				D_2(x,t) & :=\calH_\downarrow(x+\md_{p,t},qt)-\calH_\downarrow(\md_{p,t},qt).
			\end{aligned}
		\end{equation} For all $\rho>0$ we have
		\begin{align}\label{eq:dyson2}
			\limsup_{K\to\infty}\limsup_{t\to\infty}\Pr\left(\int_{[-K,K]^c}e^{D_2(x,t)-D_1(x,t)}\,\d x \ge \rho\right)=0.
		\end{align}
	\end{proposition}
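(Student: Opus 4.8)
\textbf{Proof proposal for Proposition \ref{p:ctail}.}

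The plan is to follow the three-regime decomposition already outlined in Section \ref{sec:pf} (see Figure \ref{fig:idea3}): split the integral $\int_{[-K,K]^c}e^{D_2(x,t)-D_1(x,t)}\d x$ into a \emph{shallow tail} contribution over $|x|\in[K,\delta t^{2/3}]$ and a \emph{deep tail} contribution over $|x|\ge \delta t^{2/3}$, and control each separately after sending $t\to\infty$ and then $K\to\infty$. Throughout one works with the KPZ line ensemble framework from Subsection \ref{sb:frame}, recalling from \eqref{eq:dtoy}–\eqref{eq:dtoyM} that $(D_1(x,t),D_2(x,t))$ is, in law, the diffusively rescaled viewpoint of $(Y_{\infty,t,\uparrow}^{(1)},Y_{\infty,t,\downarrow}^{(1)})$ around the joint maximizer, and that by Lemma \ref{lem1} we may replace $\md_{p,t}$ by a local maximizer $\md_{p,t}^M$ on a large interval $[-Mt^{2/3},Mt^{2/3}]$ at the cost of an event of vanishing probability (as $M\to\infty$). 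So first I would reduce, via Lemma \ref{lem1}, to bounding the tail integral for the local-maximizer version $(D_{M,t,\uparrow},D_{M,t,\downarrow})$ with $M$ large.

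For the \emph{shallow tail} $|x|\in[K,\delta t^{2/3}]$: conditioning as in Subsections \ref{sb:large}–\ref{sb:small} on the $\sigma$-fields $\calF_1,\calF_2$, the $\m{Nice}_M(\delta)$ event (Lemma \ref{l:nice}) holds with high probability, and on it the law of $(Y_{M,t,\uparrow}^{(1)},Y_{M,t,\downarrow}^{(1)})$ viewed from $\Phi$ is governed — up to a Radon–Nikodym factor that is $O(1)$ on a window of length $\asymp t^{-\alpha}$ — by the $\m{Nlarge}$ law, i.e. a pair of two-sided $\nonintbb$s. The difference $D_1-D_2$ is then (by Lemma \ref{l:nibtobb}) comparable to $\sqrt2$ times a Bessel bridge, and Proposition \ref{pgamma} (equivalently Corollary \ref{pgamma1}) is exactly the statement that $\int_\theta^n e^{-\sqrt n \bb^{(n)}(y/n)}\d y$ is negligible as $n\to\infty$ then $\theta\to\infty$. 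Rescaling $[K,\delta t^{2/3}]$ to fit the $[\theta,n]$ template of Proposition \ref{pgamma} — with $n\asymp t^{1/3}$ (or the appropriate power matching the $t^{1/3}$ in \eqref{eq:bgibbs} and \eqref{eq:htx}) and $\theta\asymp K$ — and transferring from the true conditional law to the $\nonintbb$ law using the RN estimate of Lemma \ref{l:rn1} plus the density comparison Lemma \ref{lbeta}, yields that the shallow-tail integral is $\le\rho/2$ with probability $\to1$ after the iterated limit. Some care is needed where the window of length $\delta t^{2/3}$ exceeds the $t^{-\alpha}$ patch on which the RN derivative is controlled; there one invokes the \emph{transition regime} estimate, i.e. the uniform separation $D_1-D_2\ge \rho't^{1/3}$ on $[\delta t^{2/3},At^{2/3}]$ coming from soft $\nonintbb$ arguments (again Proposition \ref{pgamma} applied past the Dyson scale, using the Markov property and Lemma \ref{lbeta} to condition down).

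For the \emph{deep tail} $|x|\ge A t^{2/3}$: here one uses the parabolic decay of the KPZ equation. By Proposition \ref{p:kpzeq}\ref{p:supproc} (with small $\beta$), with high probability $\calH(x,t)+t/24 \le -\tfrac{x^2}{2t}(1-\beta) + O(t^{1/3})$ uniformly in $x$, while $\md_{p,t}$ sits in a window of order $t^{2/3}$ by Lemma \ref{lem1}; combining the upward shift at the maximizer with the parabola gives, for $A$ large, $D_1(x,t)\gtrsim \tfrac{x^2}{4t}$ and $D_2(x,t)\lesssim -\tfrac{x^2}{4t}$ on $|x|\ge At^{2/3}$, hence $D_2(x,t)-D_1(x,t)\lesssim -\tfrac{x^2}{2t}$, so $\int_{|x|\ge At^{2/3}}e^{D_2-D_1}\d x\lesssim \int_{At^{2/3}}^\infty e^{-x^2/(2t)}\d x\to 0$ as $t\to\infty$ once $A$ is fixed large. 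This part is essentially a direct consequence of the one-point and sup-process tail bounds; the subtlety is only to keep the parabola estimate uniform over the random centering $\md_{p,t}$, which is handled by intersecting with the tail event in \eqref{tailC}. Putting the shallow-tail bound ($\le\rho/2$) and the deep-tail bound ($\le\rho/2$) together, and using $\limsup_{K}\limsup_{t}\Pr(\neg\Nice_M(\delta))$ small plus $\Pr(\md_{p,t}\ne\md_{p,t}^M)$ small, gives \eqref{eq:dyson2}.

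The main obstacle I anticipate is the \emph{shallow-to-transition regime handoff}: the clean Bessel-bridge estimate of Proposition \ref{pgamma} is a statement about a single $\nonintbb$ of bounded length with bounded endpoints, whereas here the relevant curve only \emph{locally} looks like a $\nonintbb$ — and only on a $t^{-\alpha}$-scale patch where Lemma \ref{l:rn1} controls the RN derivative, not on the full $\delta t^{2/3}$-scale window. Bridging the gap requires iterating the Gibbs/Markov conditioning outward from the maximizer scale to the $\delta t^{2/3}$ scale while keeping the endpoint data tight (via a chain of tightness events analogous to $\m{Bd}$, $\m{Tight}$, $\m{Sp}$ in Subsection \ref{sb:pfnice}) and tracking the density comparison constant $\Con_{M,\delta}$ from Lemma \ref{lbeta} so it does not blow up; this is exactly the bookkeeping that Proposition \ref{pgamma} and Proposition \ref{p:ctail} are jointly designed to absorb, but it is where the argument is heaviest.
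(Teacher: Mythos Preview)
Your overall architecture matches the paper's: reduce to the local maximizer $\md_{p,t}^M$ via Lemma \ref{lem1}, split into a deep tail $|x|\gtrsim Mt^{2/3}$ handled by the parabolic decay of Proposition \ref{p:kpzeq}\ref{p:tail}--\ref{p:supproc}, and a shallow tail $|x|\in[K,Mt^{2/3}]$ handled by the $\nonintbb$ separation estimate Proposition \ref{pgamma}. The deep-tail paragraph is essentially what the paper does.

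However, for the shallow tail you are working much harder than necessary, and the ``main obstacle'' you identify is an artifact of this overcomplication. You propose to control the Radon--Nikodym derivative via Lemma \ref{l:rn1} on the $t^{-\alpha}$ patch and then worry about extending this control out to scale $\delta t^{2/3}$ by ``iterating the Gibbs/Markov conditioning outward.'' The paper sidesteps this entirely: since \eqref{eq:dyson2} is a \emph{one-sided} bound, one does not need the RN derivative to be close to $1$. After conditioning on $\calF_1$, one simply uses $W_\uparrow W_\downarrow\le 1$ in the numerator and the uniform-in-$t$ lower bound on the normalizing constant from Proposition \ref{line-ensemble}(b) in the denominator, obtaining
\[
\Pr\big(\Gamma_{t,M,K}\ge\tfrac\rho4,\ \m{Sp}(\lambda)\big)\le \lambda+\tfrac{1}{\delta}\,\Ex\big[\Pr_{\operatorname{free},t}(\Gamma_{t,M,K}\ge\tfrac\rho4,\ \m{Sp}(\lambda))\big].
\]
Now condition further on $\calF_2$: under the free law this yields exactly the $\m{Nlarge}$ law of Definition \ref{def:nlarge}, i.e.\ a genuine $\nonintbb$ on the \emph{entire} interval $[0,M-\Phi]$ (not a small patch). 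On the simple event $\m{Sp}(\lambda)$ of \eqref{def:sp} --- which controls only the location of $\Phi$ and the four large-boundary values --- the length and endpoints of this $\nonintbb$ are uniformly bounded, and Proposition \ref{pgamma} applies directly to $\int_K^{Mt^{2/3}-\md_{p,t}}e^{-t^{1/3}[V_{r,1}^{\m{large}}(t^{-2/3}x)-V_{r,2}^{\m{large}}(t^{-2/3}x)]}\d x$ after the obvious rescaling. There is no handoff to manage, no iteration of patches, and no need for $\Nice_M(\delta)$, Lemma \ref{l:rn1}, or Lemma \ref{lbeta} at this stage; the three-scale picture (Dyson/transition/parabolic) is already entirely encapsulated inside the proof of Proposition \ref{pgamma}.
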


	As a corollary, we derive that for any $p\in (0,1)$ the $pt$-point density of point-to-point $\cdrp$ of length $t$ indeed concentrates in a microscopic region of size $O(1)$ around the favorite point.

	\begin{corollary}\label{cor:tight} Recall the definition of $\cdrp$ and the notation $\Pr^{\xi}$ from Definition \ref{def:cdrp}. Fix $p\in (0,1)$. Suppose $X \sim \cdrp(0,0;0,t)$. Consider $\md_{p,t}$ the almost sure unique mode of $f_{p,t}$, the quenched density of $X(pt)$. We have
		\begin{align*}
			\limsup_{K\to\infty}\limsup_{t\to \infty}\Pr^{\xi}\left(|X(pt)-\md_{p,t}|\ge K\right)=0, \mbox{ in probability}.
		\end{align*}
	\end{corollary}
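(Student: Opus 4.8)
\textbf{Proof proposal for Corollary \ref{cor:tight}.}

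The plan is to express the quenched probability $\Pr^{\xi}(|X(pt)-\md_{p,t}|\ge K)$ directly in terms of the quenched density $f_{p,t}$ and then invoke the two main inputs established just above: the Dyson (hence Bessel) convergence in Proposition \ref{p:dyson} together with Lemma \ref{l:dtob}, and the tail control in Proposition \ref{p:ctail}. Recall from \eqref{eq:cdrp} that $f_{p,t}$ is proportional to $\calZ(0,0;x,pt)\calZ(x,pt;0,t)$, and as argued in the proof of Theorem \ref{t:main} (and in Lemma \ref{l:conec}), as a process in $x$ this equals in distribution $e^{\calH_\uparrow(x,pt)+\calH_\downarrow(x,qt)}$ for two independent KPZ copies. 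Writing the density centered at its mode $\md_{p,t}$ and using the notation $D_1, D_2$ from \eqref{def:0d1d2}, we have
\begin{align*}
\Pr^{\xi}(|X(pt)-\md_{p,t}|\ge K) \stackrel{d}{=} \frac{\int_{[-K,K]^c} e^{-[D_1(x,t)-D_2(x,t)]}\,\d x}{\int_{\R} e^{-[D_1(x,t)-D_2(x,t)]}\,\d x}.
\end{align*}
So the corollary reduces to showing that the numerator can be made uniformly small (in probability, for large $t$) by choosing $K$ large, while the denominator stays bounded away from $0$ with high probability.

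For the numerator, this is exactly the content of Proposition \ref{p:ctail}: for every $\rho>0$, $\limsup_{K\to\infty}\limsup_{t\to\infty}\Pr(\int_{[-K,K]^c} e^{D_2(x,t)-D_1(x,t)}\,\d x\ge \rho)=0$. For the denominator, I would proceed as follows. Fix $a>0$ and restrict the integral to $[-a,a]$: $\int_{\R} e^{D_2(x,t)-D_1(x,t)}\,\d x \ge \int_{-a}^a e^{-R_2(x,t)}\,\d x$ where $R_2(x,t)=D_1(x,t)-D_2(x,t)$. By Proposition \ref{p:dyson} and Lemma \ref{l:dtob}, $R_2(\cdot,t)\stackrel{d}{\to}\calR_2(\cdot)$, a two-sided $3$D Bessel process with diffusion coefficient $2$, in the uniform-on-compact topology; since $y\mapsto \int_{-a}^a e^{-y(x)}\,\d x$ is a continuous bounded-below functional on $C([-a,a])$, the continuous mapping theorem gives $\int_{-a}^a e^{-R_2(x,t)}\,\d x \stackrel{d}{\to}\int_{-a}^a e^{-\calR_2(x)}\,\d x$. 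The limit is a strictly positive random variable almost surely (by continuity of $\calR_2$, as in the proof of Lemma \ref{p:besselwd}), hence for any $\eta>0$ there is $c>0$ with $\Pr(\int_{-a}^a e^{-\calR_2(x)}\,\d x>c)\ge 1-\eta$, and by weak convergence $\liminf_{t\to\infty}\Pr(\int_{\R} e^{D_2(x,t)-D_1(x,t)}\,\d x>c)\ge 1-\eta$.

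Combining the two pieces via a union bound: on the event $\{\int_{[-K,K]^c} e^{D_2-D_1}\,\d x<\rho\}\cap\{\int_{\R} e^{D_2-D_1}\,\d x>c\}$, the displayed quenched probability is at most $\rho/c$, so for any $\e>0$, choosing $c$ (via $\eta=\e/2$), then $K$ and the threshold on $t$ (via Proposition \ref{p:ctail} with $\rho=c\e/2$... actually with $\rho$ small enough that $\rho/c<\e$), we get $\limsup_{K\to\infty}\limsup_{t\to\infty}\Pr^{\xi}(\Pr^{\xi}(|X(pt)-\md_{p,t}|\ge K)>\e)=0$, which is convergence in probability to $0$. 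I do not expect a serious obstacle here, as both ingredients are already in place; the only mild care needed is the bookkeeping of the two nested $\limsup$'s and the fact that the statement is "in probability" over the white noise $\xi$ — i.e. one must phrase everything in terms of the annealed probability of the quenched event being large, rather than pointwise in $\xi$. The main conceptual work — the tail estimate of Proposition \ref{p:ctail} and the Bessel limit — has been done upstream, so this corollary is essentially an assembly step.
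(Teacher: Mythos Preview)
Your proposal is correct and follows essentially the same approach as the paper: express the quenched probability as a ratio of integrals of $e^{D_2-D_1}$, control the numerator by Proposition \ref{p:ctail}, and bound the denominator away from zero via the Dyson/Bessel convergence (Proposition \ref{p:dyson} and Lemma \ref{l:dtob}) together with positivity of the limiting Bessel integral. The only cosmetic difference is that the paper lower-bounds the denominator by $\int_{[-K,K]}$ and then passes to the iterated limit, whereas you fix a single compact window $[-a,a]$; both variants work.
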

	
	One also has the analogous version of Proposition \ref{p:ctail} involving one single copy of the KPZ equation viewed around its maximum. This leads to a similar corollary about tightness of the quenched endpoint distribution for point-to-line $\cdrp$ (see Definition \ref{def:cdrp2}) when re-centered around its mode. The details are skipped for brevity.\\
	
	The proof of Proposition \ref{p:ctail} is heavily technical and relies on the tools as well as notations from Proposition \ref{p:dyson}. For clarity, we first prove Corollary \ref{cor:tight} and Theorem \ref{t:main} assuming the validity of Proposition \ref{p:ctail}. The proof of Proposition \ref{p:ctail} is then presented in Section \ref{sec:7.2}.
	
	\begin{proof}[Proof of Corollary \ref{cor:tight}] We have $\calZ(0,0;x,pt)\stackrel{d}{=}e^{\calH_{\uparrow}(x,pt)}$ and by time reversal property $\calZ(x,pt;0,t)\stackrel{d}{=} e^{\calH_{\downarrow}(x,qt)}$ as functions in $x$, where $\calH_{\uparrow}, \calH_{\downarrow}$ are independent copies of KPZ equation started from narrow wedge initial data. The uniqueness of the mode $\md_{p,t}$ for $f_{p,t}$ is already settled in Lemma \ref{lem1}. Thus, the quenched density of $X(pt)-\md_{p,t}$ is given by
		\begin{align}\label{dfnf}
			f_{p,t} (x+\md_{p,t})= \frac{\exp(D_2(x,t)-D_1(x,t))}{\int\limits_{\R} \exp(D_2(y,t)-D_1(y,t))\d y},
		\end{align}
		where $D_i(x, t), i =1, 2$ are defined in \eqref{def:d1d2}. Thus,
		\begin{align}
			\label{final}
			\Pr^{\xi}\left(|X(pt)-\md_{p,t}|\ge K\right)=\frac{\int\limits_{[-K,K]^c} e^{D_2(x,t)-D_1(x,t)}\,\d x}{\int\limits_{\R} e^{D_2(x,t)-D_1(x,t)}\,\d x} \le \frac{\int\limits_{[-K,K]^c} e^{D_2(x,t)-D_1(x,t)}\,\d x}{\int\limits_{[-K,K]} e^{D_2(x,t)-D_1(x,t)}\,\d x}.
		\end{align}
		Notice that by by \eqref{eq:dyson2} the numerator of r.h.s.~of \eqref{final} goes to zero in probability under the iterated limit $\limsup_{t\to\infty}$ followed by $\limsup_{K\to\infty}$. Whereas due to Proposition \ref{p:dyson}, under the iterated limit, the denominator converges in distribution to $\int_{\R} e^{-R_2(x)}\d x$ which is strictly positive by Lemma \ref{p:besselwd}. Thus overall the r.h.s.~of \eqref{final} goes to zero in probability under the iterated limit. This completes the proof.
	\end{proof}


	\begin{proof}[Proof of Theorem \ref{t:main}] Fix any $p \in (0,1).$ Set $q=1-p$.  Recall from \eqref{dfnf} that 
		\begin{align}\label{dfnf2}
			f_{p,t} (x+\md_{p,t})= \frac{\exp(D_2(x,t)-D_1(x,t))}{\int\limits_{\R} \exp(D_2(y,t)-D_1(y,t))\d y}
		\end{align}
		where $D_i(x, t), i =1, 2$ are defined in \eqref{def:d1d2}. Note that by Proposition \ref{p:dyson}, 
		a continuous mapping theorem immediately implies that for any $K< \infty$
		\begin{align}\label{cc1}
			\frac{\exp(D_2(x,t)- D_1(x,t) )}{\int_{-K}^K\exp(D_2(y,t)- D_1(y,t))\d y }\stackrel{d}{\rightarrow} \frac{e^{-\mathcal{R}_2(x)}}{\int_{-K}^K e^{-\mathcal{R}_2(y)}\d y}
		\end{align} in the uniform-on-compact topology. Here $\mathcal{R}_2$ is a 3D Bessel process with diffusion coefficient $2$. For simplicity, we denote 
		\begin{align*}
			\g_{t}(x): = \exp(D_2(x,t) - D_1(x,t)) \text{ and } \g(x) = \exp(-\mathcal{R}_2(x)).
		\end{align*}
		We can then rewrite \eqref{dfnf2} as product of four factors:
		\begin{align*}
			f_{p,t}(x + \calM_{p,t})= \frac{\g_{t}(x)}{\int_{\R}\g_{t}(y)\d y}=\frac{\int_{-K}^K \g_{t}(y) \d y}{\int_{\R} \g_{t}(y)\d y}\cdot \frac{\int_{\R}\g(y) \d y}{\int_{-K}^K \g(y)\d y}\cdot \frac{\int_{-K}^K \g(y) \d y}{\int_{\R} \g(y)\d y}\cdot\frac{ \g_{t}(x)}{\int_{-K}^K \g_{t}(y)\d y}.
		\end{align*}
		Corollary \ref{cor:tight} ensures 
		$$\frac{\int_{-K}^K \g_t(y) \d y}{\int_{\R} \g_t(y) \d y}= \Pr^{\xi}(|X(pt)-\md_{p,t}|\le K)\stackrel{p}{\to} 1$$ as $t\to \infty$ followed by $K\to \infty$. Lemma \ref{p:besselwd} with $\sigma = 2$ yields that $\int_{[-K,K]^c}\g(y)\d y= \int_{[-K,K]^c}e^{-\calR_2(y)}\d y \stackrel{p}{\rightarrow} 0$ as $K \rightarrow \infty.$ Thus as $K \rightarrow \infty$
		\begin{align*}
			\frac{\int_{\R}\g(y) \d y}{\int_{-K}^K \g(y)\d y} \stackrel{p}{\rightarrow}1.
		\end{align*} Meanwhile, \eqref{cc1} yields that as $t \rightarrow \infty,$ 
		\begin{align*}
			\frac{\int_{-K}^K \g(y)\d y}{\int_{\R}\g(y)\d y}\cdot\frac{ \g_t(x)}{\int_{-K}^K \g_t(y)\d y} \stackrel{d}{\rightarrow} \frac{\int_{-K}^K \g(y)\d y}{\int_{\R}\g(y)\d y}\cdot\frac{ \g(x)}{\int_{-K}^K \g(y)\d y} = \frac{\g(x)}{\int_{\R} \g(y)\d y}.
		\end{align*}
		in the uniform-on-compact topology. Thus, overall we get that $f_{p,t}(x+\md_{p,t}) \stackrel{d}{\to} \frac{\g(x)}{\int_{\R} \g(y)\d y},$ in the uniform-on-compact topology. This establishes \eqref{e:main}, completing the proof of Theorem \ref{t:main}.
	\end{proof}

	\subsection{Proof of Proposition \ref{p:ctail} \label{sec:7.2} and Theorem \ref{t:main2}} Coming to the proof of Proposition \ref{p:ctail}, we note that the setup of Proposition \ref{p:ctail} is same as that of Proposition \ref{p:dyson}. Hence all the discussions pertaining to Proposition \ref{p:dyson} are applicable here. In particular, to prove Proposition \ref{p:ctail}, we will be using few notations and certain results from the proof of Proposition \ref{p:dyson}.

	\begin{proof}[Proof of Proposition \ref{p:ctail}] Fix any $M>0$. The proof of \eqref{eq:dyson} proceeds by dividing the integral into two parts depending on the range: 
		\begin{align}\tag{Deep Tail} \label{dtail}
			U_1 & :=[-t^{2/3}M-\md_{p,t},t^{2/3}M-\md_{p,t}]^c, \\  \tag{Shallow Tail} \label{stail} U_2 & :=[K,K]^c \cap [-t^{2/3}M-\md_{p,t},t^{2/3}M-\md_{p,t}],
		\end{align}
		and controlling each of them individually. See Figure \ref{fig:tail} for details. In the following two steps, we control these two kind of tails respectively.

		\begin{figure}[h!]
			\centering
			\includegraphics[width=14cm]{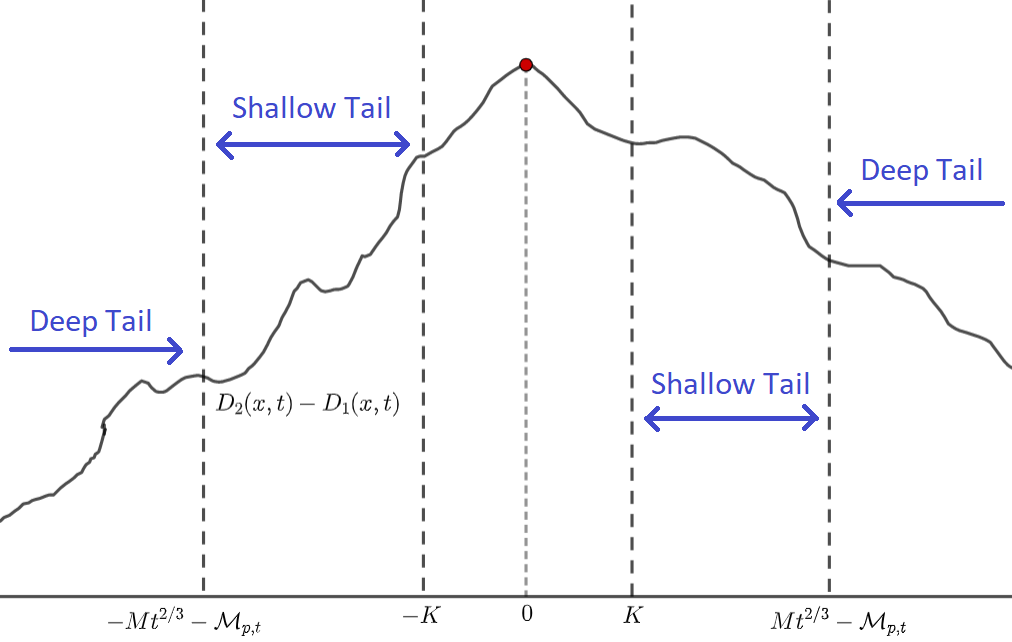}
			\caption{Illustration for the proof of Proposition \ref{p:ctail}. In \ref{dtail} region we use parabolic decay of KPZ line ensemble, and in \ref{stail} we use non-intersecting Brownian bridge separation estimates from Proposition \ref{pgamma}.}
			\label{fig:tail}
		\end{figure}
		
		\medskip

		\noindent\textbf{Step 1.} In this step, we control the \ref{dtail} region: $U_1$. The goal of this step is to show
		\begin{align}
			\label{7wts}
			\limsup_{t\to \infty} \Pr\left(\int_{U_1}e^{D_2(x,t)-D_1(x,t)}\,\d x \ge \tfrac{\rho}{2}\right) \le \Con \exp(-\tfrac1\Con M^3),
		\end{align}
		for some constant $\Con=\Con(p)>0$. We now recall the framework of KPZ line ensemble discussed in Subsection \ref{sb:frame}. We define
		\begin{align}\label{cc4}
			\mathcal{S}_{p, t}(x) : = p^{1/3}\h^{(1)}_{pt, \uparrow}(p^{-2/3}x)+ q^{1/3}\h^{(1)}_{qt, \downarrow}(q^{-2/3}x)
		\end{align} 
		where $\h_{t,\uparrow},\h_{t,\downarrow}$ are scaled KPZ line ensembles corresponding to $\calH_{\uparrow}, \calH_{\downarrow}$, see \eqref{eq:htx}.	Observe that 
		\begin{align*}
			D_2(x,t)-D_1(x,t) \stackrel{d}{=} t^{1/3}\left[ \mathcal{S}_{p,t}(t^{-2/3}(x+\md_{p,t}))-\sup_{z\in \R} \mathcal{S}_{p,t}(z)\right],
		\end{align*}	
		where $D_1,D_2$ are defined in \eqref{def:0d1d2}.
		Thus we have
		\begin{align*}
			\int_{U_1}\exp(D_2(x,t)-D_1(x,t))\d x \stackrel{d}{=}  \int_{|x|\ge M}\exp\left(t^{1/3}\left[ \mathcal{S}_{p,t}(x)-\sup_{z\in \R} \mathcal{S}_{p,t}(z)\right]\right)\d x
		\end{align*}
		where $U_1$ is defined in \eqref{dtail}. Towards this end, we define two events
		\begin{align*}
			\m{A}:=\left\{\sup_{z\in \R}\mathcal{S}_{p,t}(z) \le  -\tfrac{M^2}{4}\right\}, \quad
			\m{B}:=\left\{\sup_{x\in \R}\left(\mathcal{S}_{p,t}(x)+x^2 \right)>\tfrac{M^2}{4}\right\},
		\end{align*}
		Note that on $\neg A \cap \neg B$, for all $|x|\ge M$, we have
		\begin{align*}
			\mathcal{S}_{p,t}(x)-\sup_{z\in \R}\mathcal{S}_{p,t}(z) \le \tfrac{M^2}{4}+\tfrac{M^2}{4}-x^2 \le \tfrac{M^2}{2}-\tfrac{3M^2}{4}-\tfrac{x^2}{4} \le -\tfrac{M^2}{4}-\tfrac{x^2}{4}.
		\end{align*}
		This forces
		\begin{align*}
			\int_{|x|\ge M}\exp\left(t^{1/3}\left[ \mathcal{S}_{p,t}(x)-\sup_{z\in \R} \mathcal{S}_{p,t}(z)\right]\right)\d x \le \int_{[-M, M]^c} \exp\left(-t^{1/3}(\tfrac{M^2}{2}+\tfrac{y^2}{4})\right)\d y,
		\end{align*}
		which goes to zero as $t\to \infty.$ Hence $\mbox{l.h.s.~of \eqref{7wts}} \le \Pr(\neg\m{A})+\Pr(\neg\m{B})$. Hence it suffices to show
		\begin{align}
			\label{aevent}
			\Pr(\neg\m{A}) \le \Con\exp\left(-\tfrac1\Con M^3\right), \quad \Pr(\neg\m{B}) \le \Con\exp\left(-\tfrac1\Con M^3\right).
		\end{align}
		
		To prove the first part of \eqref{aevent}, note that
		\begin{align*}
			\Pr\left(\neg A\right) & \le \Pr\left(\mathcal{S}_{p,t}(0) \le  -\tfrac{M^2}4\right) \\ & \le \Pr\left(p^{1/3}\h_{pt,\uparrow}^{(1)}(0) \le -\tfrac{M^2}{8}\right)+\Pr\left(q^{1/3}\h_{qt,\downarrow}^{(1)}(0) \le -\tfrac{M^2}{8}\right) \le  \Con\exp(-\tfrac1\Con {M^3}).
		\end{align*}
		where the last inequality follows by Proposition \ref{p:kpzeq} \ref{p:tail}, for some constant $\Con=\Con(p)>0$. This proves the first part of \eqref{aevent}. For the second part of \eqref{aevent},	following the definition of $\mathcal{S}_{p,t}(x)$ from \eqref{cc4}, and using the elementary inequality $\frac1{4p}+\frac1{4q}\ge 1$ by a union bound we have
		\begin{equation}
			\label{cc7}
			\begin{aligned}
				\Pr\left(\sup_{x\in \R}\left(\mathcal{S}_{p,t}(x)+x^2 \right)>\tfrac{M^2}{4}\right) & \le \Pr\left(\sup_{x\in \R} \left(p^{1/3}\h_{pt,\uparrow}^{(1)}(p^{-2/3}x)+\tfrac{x^2}{4p}\right)>\tfrac{M^2}{8}\right) \\ & \hspace{1cm}+\Pr\left(\sup_{x\in \R} \left(q^{1/3}\h_{qt,\uparrow}^{(1)}(q^{-2/3}x)+\tfrac{x^2}{4q}\right)>\tfrac{M^2}{8}\right). 
			\end{aligned}
		\end{equation}
		Applying Proposition \eqref{p:kpzeq} \ref{p:supproc} with $\beta=\frac12$, we get that  each of the terms on r.h.s.~of \eqref{cc7} are at most $\Con\exp(-\frac1\Con M^3)$ where $\Con=\Con(p)>0$. This establishes the second part of \eqref{aevent} completing the proof of \eqref{7wts}.
		
		\medskip
		
		\noindent\textbf{Step 2.} In this step, we control the \ref{stail} region: $U_2$. We first lay out the heuristic idea behind the \ref{stail} region controls. We recall the nice event $\m{Sp}(\lambda)$ from \eqref{def:sp} which occurs with high probability. Assuming $\m{Sp}(\lambda)$ holds, we apply the  the $\mathbf{H}_t$ Brownian Gibbs property of the KPZ line ensembles, and analyze the desired integral $$\int_{U_2} e^{D_2(x,t)-D_1(x,t)}\d x$$ under the `free' Brownian bridge law. Further conditioning on the information of the maximizer converts the free law into the law of the $\nonintbb$ (defined in Definition \ref{def:nibb}). On $\m{Sp}(\lambda)$, we may apply Proposition \ref{pgamma} to obtain the desired estimates for the `free' law. One then obtain the desired estimates for KPZ law using the lower bound for the normalizing constant from Proposition \ref{line-ensemble} (b). \\
		
		We now expand upon the technical details. In what follows we will only work with the right tail:
		\begin{align*}
			U_{+,2}:=[-t^{2/3}M-\md_{p,t},t^{2/3}M-\md_{p,t}] \cap [K,\infty)=[K,t^{2/3}M-\md_{p,t}]
		\end{align*}
		and the argument for the left part of the shallow tail is analogous. Note that we also implicitly assumed $t^{2/3}M-\md_{p,t}\ge K$ above. Otherwise there is nothing to prove.  As before we utilize the the notations defined in Subsection \ref{sb:frame}. Recall the local maximizer $\md_{p,t}^M$ defined in \eqref{eq:localmax}. 
		Recall $Y_{M,t,\uparrow}^{(1)}, Y_{M,t,\downarrow}^{(1)}$ from \eqref{eq:yupdown}. Set
		\begin{align}
			\label{eqgamma}
			\Gamma_{t,M,K} & :=\int_K^{Mt^{2/3}-\calM_{p, t}} e^{-t^{1/3}\left[Y_{M, t, \uparrow}^{(1)}(t^{-2/3}(\calM_{p,t}^M + x)) - Y_{M, t, \downarrow}^{(1)}(t^{-2/3}(\calM_{p,t}^M + x))\right]}\d x \\ & \nonumber =\int_K^{Mt^{2/3}-\calM_{p, t}} \exp(-D_{M,t,\uparrow}(x)+D_{M,t,\downarrow}(x))\d x,
		\end{align}
		where the last equality follows from the definition of $D_{M,t,\uparrow},D_{M,t,\downarrow}$ from \eqref{eq:dtoyM}. Recall that the only difference between $D_1,D_2$ (defined in \eqref{eq:dtoy}) and $D_{M,t,\uparrow},D_{M,t,\downarrow}$ is that former is defined using the global maximizer $\md_{p,t}$ and the latter by local maximizer $\md_{p,t}^M$. However, Lemma \ref{lem1} implies that with probability at least $1 - \Con\exp(-\frac{1}{\Con}M^3),$ we have
		$\calM_{p, t}= \calM_{p,t}^M$. Next, fix $\lambda>0$. Consider $\m{Sp}(\lambda)$ event defined in \eqref{def:sp}. We thus have
		\begin{align}\label{c16}
			\Pr\left(\int_{U_{+,2}} e^{D_2(x,t)-D_1(x,t)}\d x \ge \frac{\rho}{4}\right) \le \Con\exp(-\tfrac{1}{\Con}M^3) + \Pr(\neg \m{Sp}(\lambda))+ \Pr\left(\Gamma_{t, M, K} \ge \tfrac{\rho}{4}, \m{Sp}(\lambda)\right).
		\end{align}
		
		We recall the $\sigma$-fields $\calF_1, \calF_2$ defined in \eqref{sf1} and \eqref{sigfields}. We first condition on $\calF_1$. As noted in Subsection \ref{sb:large}, since $\h_{pt,\uparrow}^{(1)}$ and $\h_{qt,\downarrow}^{(1)}$ are independent,  applying $\mathbf{H}_{pt}$ and $\mathbf{H}_{qt}$ Brownian Gibbs property from Proposition \ref{line-ensemble} for $\h_{pt,\uparrow}^{(1)}$, $\h_{qt,\downarrow}^{(1)}$ respectively we have
		\begin{align}\label{c18}
			\Pr\left(\Gamma_{t, M, K} \ge \tfrac{\rho}{2}, \m{Sp}(\lambda)\right) = \Ex\left[\frac{\Ex_{\operatorname{free},t}[ \ind_{\Gamma_{t, M, K} \ge \tfrac{\rho}{4}, \m{Sp}(\lambda)} W_{\uparrow}W_{\downarrow}]}{\Ex_{\operatorname{free},t}[W_{\uparrow}W_{\downarrow}]}\right],
		\end{align}
		where $W_{\uparrow}$, $W_{\downarrow}$ are defined in \eqref{eq:cond2} and \eqref{eq:cond3}. Here $\Pr_{\operatorname{free}, t}$ and $\Ex_{\operatorname{free}, t}$ are the  probability and the expectation operator respectively corresponding to the joint `free' law for $(p^{1/3}\h_{pt,\uparrow}(p^{-2/3}x)$, and  $q^{1/3}\h_{qt,\downarrow}(q^{-2/3}x))_{x\in[-M,M]}$ which by Brownian scaling is given by a pair of independent Brownian bridges $(\B_1(\cdot), \B_2(\cdot))$  on $[-M,M]$ with starting points $(p^{1/3}\h_{pt, \uparrow}(-Mp^{-2/3}), q^{1/3}\h_{qt, \downarrow}(-Mq^{-2/3}))$ and endpoints $(q^{1/3}\h_{pt, \uparrow}(Mp^{-2/3}), q^{1/3}\h_{qt, \downarrow}(Mq^{-2/3})).$

		In addition, from the last part of Proposition \ref{line-ensemble} we know that for any given $\lambda> 0$, there exists $\delta(M,p,\lambda)>0$ such that 
		\begin{align}\label{c19}
			\Pr(\Ex_{\operatorname{free},t}[W_{\uparrow}W_{\downarrow}] > \delta) \ge 1- \lambda.
		\end{align}
		Since the weight $W_{\uparrow}W_{\downarrow}\in [0,1],$ \eqref{c18} and \eqref{c19} give us 
		\begin{align}\label{c20}
			\mbox{r.h.s.~of \eqref{c16}} \le \Con\exp(-\tfrac{1}{\Con}M^3) + \Pr(\neg \m{Sp}(\lambda))+\lambda+{\frac{1}{\delta}}\Ex\left[\Pr_{\operatorname{free},t}\left(\Gamma_{t, M, K} \ge \tfrac{\rho}{4}, \m{Sp}(\lambda)\right)\right].
		\end{align}

		Next we condition on $\calF_2$ defined in \eqref{sigfields}. By Proposition \ref{propA}, upon conditioning the free measure of two Brownian bridges when viewed around the maximizer are given by two $\nonintbb$ (defined in Definition \ref{def:nibb}). The precise law is given by ${\m{Nlarge}}$ law defined in Definition \ref{def:nlarge}. Note that $\m{Sp}(\lambda)$ is measurable w.r.t.~$\calF_1\cup\calF_2$.  By Reverse Fatou's Lemma and the tower property of conditional expectations, we obtain that 
		\begin{align}
			\nonumber & \limsup_{K \rightarrow \infty}\limsup_{t\rightarrow \infty}\Ex\left[\Pr_{\operatorname{free},t}\left(\Gamma_{t, M, K} \ge \tfrac{\rho}{4}, \m{Sp}(\lambda)\right)\right] \\ \label{c22} & \le \Ex\left[\limsup_{K \rightarrow \infty}\limsup_{t\to\infty}\ind_{\m{Sp}(\lambda)}\Pr_{\m{Nlarge}|2,1}\left(\Gamma_{t, M, K} \ge \tfrac{\rho}{4}\right)\right].
		\end{align}
		Following the Definition \ref{def:nlarge} and \eqref{eqgamma} we see that under $\m{Nlarge}$ law,
		\begin{align}
			\label{largebb}
			\Gamma_{t,M,K} \stackrel{d}{=} \int_K^{Mt^{2/3}-\calM_{p, t}} e^{-t^{1/3}\left[V_{r,1}^{\m{large}}(t^{-2/3}x)-V_{r,2}^{\m{large}}(t^{-2/3}x)\right]}\d x.
		\end{align}
		where $V_{r}^{\m{large}}=(V_{r,1}^{\m{large}},V_{r,2}^{\m{large}})$ is a {$\nonintbb$} 
		defined in Definition \ref{def:nlarge}. 
		Now notice that by the definition in \eqref{def:sp}, on the $\m{Sp}(\lambda)$ event, the length of the Brownian bridges considered are bounded from below and above and the end points are tight. Following the equality in distribution in \eqref{largebb}, the technical result of Proposition \ref{pgamma} precisely tells us that the term inside the expectation of r.h.s.~of \eqref{c22} is zero. Thus, going back to \eqref{c20} we get that 
		\begin{align*}
			\limsup_{K\to\infty}\limsup_{t\to\infty}\Pr\left(\int_{U_{+,2}} e^{D_2(x,t)-D_1(x,t)}\d x \ge \frac{\rho}{4}\right) \le  \Con\exp(-\tfrac{1}{\Con}M^3) + \limsup_{t\to\infty}\Pr(\neg \m{Sp}(\lambda))+\lambda.
		\end{align*}
		Taking $\limsup_{\lambda \downarrow 0}$, in view of \eqref{eq:sp}, we get that last two terms in r.h.s.~of the above equation are zero. Similarly one can show the same bound for the integral under $U_{-,2}:=[-t^{2/3}M-\md_{p,t},t^{2/3}M-\md_{p,t}] \cap (-\infty,-K]$. Together with \eqref{7wts}, we thus have
		\begin{align*}
			\limsup_{K\to\infty}\limsup_{t\to\infty}\Pr\left(\int_{[-K,K]^c} e^{D_2(x,t)-D_1(x,t)}\d x \ge \rho \right) \le  \Con\exp(-\tfrac{1}{\Con}M^3).
		\end{align*}
		Taking $M\to \infty$ we get \eqref{eq:dyson2} completing the proof.
	\end{proof}

	\begin{proof}[Proof of Theorem \ref{t:main2}]  Recall from \eqref{eq:cdrp2} that 
		\begin{align*}
			f_{*,t} (x)= \frac{\calZ(0,0;x,t)}{\calZ(0,0;*,t)} = \frac{e^{\calH(x,t)}}{\int_{\R} e^{\calH(y,t)}\d y}.
		\end{align*}
		The uniqueness of the mode $\md_{*,t}$ for $f_{*,t}$ is already proved in Lemma \ref{lem1}. Thus, we have
		\begin{align*}
			f_{*,t} (x+\md_{*,t})= \frac{\exp\left(\calH(\md_{*,t}+x,t)-\calH(\md_{*,t},t)\right)}{\int\limits_{\R} \exp\left(\calH(\md_{*,t}+y,t)-\calH(\md_{*,t},t)\right)\d y}.
		\end{align*}
		Just like in Proposition \ref{p:ctail}, we claim that
		\begin{align}
			\label{eq:bessel2}
			\limsup_{K\to\infty}\limsup_{t\to\infty}\Pr\left(\int_{[-K,K]^c} e^{\calH(\md_{*,t}+y,t)-\calH(\md_{*,t},t)}\d y \ge \rho \right)=0.
		\end{align}
		The proof of \eqref{eq:bessel2} is exactly same as that of \eqref{eq:dyson2}, {where we divide the integral in \eqref{eq:bessel2} into a deep tail and a shallow tail and bound them individually. } To avoid repetition, we just add few pointers for the readers. Indeed the two key steps of proof of Proposition \ref{p:ctail} that bound the deep and shallow tails can be carried out for the \eqref{eq:bessel2} case. The deep tail regime follows an exact similar strategy as Step 1 of the proof of Proposition \ref{p:ctail} and utilizes the same parabolic decay of the KPZ equation from Proposition \ref{p:kpzeq}. The analogous shallow tail regime also follows in a similar manner by using the uniform separation estimate for Bessel bridges from Corollary \ref{pgamma1}. \\
		
		Now note that by Theorem \ref{t:bessel} with $k=1$, we have
		\begin{align}
			\label{convb}
			\calH(\md_{*,t}+x,t)-\calH(\md_{*,t},t) \stackrel{d}{\to} \mathcal{R}_1(x),
		\end{align}
		in the uniform-on-compact topology. Here $\mathcal{R}_1$ is a 3D-Bessel process with diffusion coefficient $1$. With the tail decay estimate in \eqref{eq:bessel2} and the same for the Bessel process from Proposition \ref{p:besselwd}, in view of \eqref{convb} one can show $f_{*,t}(x+\md_{*,t})\to \frac{e^{-\mathcal{R}_1(x)}}{\int_{\R} e^{-\mathcal{R}_1(y)}\d y}$ in the uniform-on-compact topology by following the analogous argument from the proof of Theorem \ref{t:main}.  This completes the proof.
	\end{proof}
	
	\appendix
	
	\section{Non-intersecting random walks}\label{sec:ap1}

	In this section we prove Lemma \ref{propF} that investigates the convergence of non-intersecting random walks to non-intersecting brownian motions. We remark that similar types of Theorems are already known in the literature such as \cite{eichelsbacher2008ordered}, where the authors considered random walks to start at different locations. Since our walks starts at the same point, additional care is required. \\
	
	We now recall Lemma \ref{propF} for readers' convenience.
	
	\begin{lemma}\label{1propF}
		{Let $X_j^{i}$ be i.i.d. $\operatorname{N}(0,1)$ random variables. Let $S_0^{(i)} = 0$ and $S_k^{(i)} = \sum_{j= 1}^{k}X_j^{i}.$ Consider $Y_n(t) = (Y_{n,1}(t),Y_{n,2}(t)) := (\frac{S_{nt}^{(1)}}{\sqrt{n}}, \frac{S_{nt}^{(2)}}{\sqrt{n}})$ an $\R^2$ valued process on $[0,1]$ where the in-between points are defined by linear interpolation. Then conditioned on the non-intersecting event $\Lambda_n:= \cap_{j=1}^n\{ S_j^{(1)} > S_j^{(2)}\},$
			$Y_n \stackrel{d}{\to} \W$, where $\W(t) = (\W_1(t), \W_2(t))$ is distributed as $\nonintbm$ defined in Definition \ref{def:nibm}.}
	\end{lemma}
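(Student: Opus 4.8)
The plan is to establish the two standard ingredients: convergence of the finite-dimensional distributions of the conditioned process $Y_n$ to those of $\W$ prescribed in Definition \ref{def:nibm}, and tightness of the laws of $Y_n$ in $C([0,1]\to\R^2)$. Write $D_k:=S_k^{(1)}-S_k^{(2)}$, a mean-zero random walk with step variance $2$, so that $\Lambda_n=\bigcap_{j=1}^n\{D_j>0\}$. First I would record the exact Sparre--Andersen value of $\Pr(\Lambda_n)$ (for a continuous symmetric step law it does not depend on the distribution), which in particular gives $\Pr(\Lambda_n)\sim \Con\, n^{-1/2}$; this is the normalization that must reappear in the limit.

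For the finite-dimensional distributions, fix $0<t_1<\cdots<t_k<1$ and targets $z_1,\dots,z_k\in\R^2$ with $z_{i,1}>z_{i,2}$. Conditioning on the first increment $(S_1^{(1)},S_1^{(2)})=(x_1,x_2)$ --- which on $\Lambda_n$ necessarily satisfies $x_1>x_2$, so the two walks sit at distinct points --- the Karlin--McGregor coincidence formula \cite{karlin1959coincidence} expresses $\Pr\bigl(S_j^{(1)}>S_j^{(2)} \text{ for } 2\le j\le n,\ S_{n t_i}\approx z_i\sqrt n \text{ for all } i \bigr)$ as a product of Karlin--McGregor transition determinants through the points $z_i\sqrt n$ times a terminal survival factor $q_m(y_1,y_2):=\Pr_{(y_1,y_2)}(\text{walks stay ordered for } m \text{ steps})$ evaluated at the last target. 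Dividing by $\Pr(\Lambda_n)$ and integrating against the first-step Gaussian density yields an \emph{exact} formula for the conditional f.d.d.\ of $Y_n$, and the limit then rests on two asymptotic inputs applied to this formula. In the diffusive regime --- transition determinants of $n$-step blocks evaluated at points of order $\sqrt n$ --- a local central limit theorem gives $\sqrt n\, p_{n\tau}(\sqrt n\, w)\to p_\tau(w)$ and converts $q_{n(1-t_k)}(z_k\sqrt n)$ into the Brownian survival probability $\int_{r_1>r_2}\det\bigl(p_{1-t_k}(z_{k,i}-r_j)\bigr)_{i,j=1}^2\,\d r$; a Scheff\'e-type argument then upgrades the convergence to total variation on each finite time set.

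The delicate point, which I expect to be the main obstacle, is the \emph{coalescence at the start}: after the first increment the gap between the walks is of order $1$, hence $o(\sqrt n)$, so the leading transition determinant $\det\bigl(p_{n t_1}(z_1\sqrt n-x_j)\bigr)$ has two nearly-equal columns and degenerates under diffusive scaling. One must divide by $x_1-x_2$ first, recognize the resulting difference quotient as a column derivative, and show that integrating this against the first-step density together with the prefactor $\Pr(\Lambda_n)^{-1}\sim\Con\, n^{1/2}$ produces exactly the Vandermonde factor $(z_{1,1}-z_{1,2})$, the Gaussian $p_{t_1}(z_{1,1})p_{t_1}(z_{1,2})$, the denominator $t_1$, and the normalizing constant $\int_{r_1>r_2}(r_1-r_2)p_1(r_1)p_1(r_2)\,\d r$ of Definition \ref{def:nibm}(b). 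This is precisely where starting at the common point forces work beyond \cite{eichelsbacher2008ordered}; quantitatively it amounts to a renewal estimate for the difference walk, $\Pr_x(D_j>0,\ 1\le j\le m)\sim \Con\, V(x)\, m^{-1/2}$ with $V(x)\sim \Con\, x$ as $x\to\infty$, uniform on the relevant range of $x$, combined with a local CLT for $D$ conditioned to stay positive. Carrying this through at all target times gives the general determinantal/ratio formulas of Definition \ref{def:nibm}.

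For tightness I would use that $\Pr(\Lambda_n)\ge \Con\, n^{-1/2}$ together with a splitting of $[0,1]$. On $[1/2,1]$, condition additionally on $Y_n(1/2)$: with conditional probability tending to one the walks are separated by at least $\varepsilon\sqrt n$ at time $n/2$, and from such a configuration the event of staying ordered on $[n/2,n]$ has probability bounded below, so the Donsker fourth-moment bound $\Ex\bigl[(Y_{n,i}(t)-Y_{n,i}(s))^4\bigr]\le\Con(t-s)^2$ for the unconditioned interpolated walk survives the conditioning up to a bounded factor; the interval $[0,1/2]$ is handled identically by time reversal. The resulting Kolmogorov criterion, together with the f.d.d.\ convergence, yields $Y_n\stackrel{d}{\to}\W$. (Alternatively, tightness can be read directly off the Karlin--McGregor representation by bounding fourth moments of increments of the conditioned walk, as in the literature on ordered walk bridges.)
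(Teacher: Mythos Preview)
Your finite-dimensional-distribution argument is essentially the paper's: condition on the first step so the walks sit at distinct points $a_1>a_2$, apply Karlin--McGregor, then pass to the limit. The one simplification you do not exploit is that the increments are exactly Gaussian, so the $m$-step kernel is literally the heat kernel $p_m$. The paper writes the near-coalescence determinant in closed form,
\[
\det\bigl(p_{n-1}(a_i - y_j\sqrt n)\bigr)_{i,j=1}^2
= p_{n-1}(a_1-y_2\sqrt n)\,p_{n-1}(a_2-y_1\sqrt n)\Bigl(e^{\frac{\sqrt n (a_1-a_2)(y_1-y_2)}{n-1}}-1\Bigr),
\]
and reads off both the pointwise limit $t^{-1}p_t(y_1)p_t(y_2)(a_1-a_2)(y_1-y_2)$ and an integrable envelope directly, so that dominated convergence against the first-step Gaussian density finishes the job. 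The renewal estimate and conditioned local CLT you invoke for the ``column-derivative'' step are not needed here; your framing would work but is heavier than the exact-Gaussian computation warrants.

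Your tightness argument has a real gap on $[0,\tfrac12]$. On $[\tfrac12,1]$ the idea is sound: drop $\ind_{\Lambda_{n/2}^n}$ in the numerator, use independence of $\Lambda_{n/2}$ from the increments after time $n/2$, and compare $\Pr(\Lambda_n)\ge \Con^{-1}\Pr(\Lambda_{n/2})$ to get $\Ex[|Y_n(t)-Y_n(s)|^4\mid\Lambda_n]\le \Con(t-s)^2$. But ``identically by time reversal'' does not carry this to $[0,\tfrac12]$: under $S_k\mapsto S_n-S_{n-k}$ the event $\{D_j>0\ \forall j\}$ becomes $\{\hat D_k<D_n\ \forall k\}$, not an event of the same type, and in any case for $s,t\in[0,\tfrac12]$ the increment $Y_n(t)-Y_n(s)$ and the event $\Lambda_{n/2}$ live on the \emph{same} segment of the walk and cannot be decoupled by your trick; no symmetry of the conditioned process sends the coalescing start to a well-separated configuration. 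The paper (and your parenthetical alternative) instead proves a Kolmogorov bound with a high exponent $K$ and a case split on $|t-s|$: for $|t-s|\le n^{-1/4}$ the crude inequality $\Ex[\,\cdot\mid\Lambda_n]\le\Pr(\Lambda_n)^{-1}\Ex[\,\cdot\,]\le \Con\sqrt n\,\Ex[\,\cdot\,]$ already yields $\Con(t-s)^2$ once $K$ is large, while for $|t-s|\ge n^{-1/4}$ the moment $\Ex[|Z_{nt}^{(i)}-Z_{ns}^{(i)}|^5\mid\Lambda_n]$ is computed directly from the Karlin--McGregor density via the same Gaussian determinant identity used in the f.d.d.\ step.
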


	\begin{proof}[Proof of Lemma \ref{1propF}] To show weak convergence, it suffices to show finite dimensional convergence and tightness. Based on the availability of exact joint densities for non-intersecting random walks from Karlin-McGregor formula \cite{karlin1959coincidence}, the verification of weak convergence is straightforward. So, we only highlight major steps of the computations below. 
		
		\medskip

		\noindent\textbf{Step 1. One point convergence at $t=1$}. Note that
		\begin{align*}
			\Pr\left(|{\sqrt{n} Y_{n,i}(t)}-S_{\lfloor nt \rfloor }^{(i)}|>\sqrt{n}\e \mid \Lambda_n \right)\le \frac1{\Pr(\Lambda_n)}\Pr\left(|X_{\lfloor nt \rfloor+1}|>\sqrt{n}\e\right) \le \tfrac{\Con}{\e^2 \sqrt{n}}
		\end{align*}
		The last inequality above follows by Markov inequality and the  classical result that $\Pr(\Lambda_n) \ge \tfrac{\Con}{\sqrt{n}}$ in Spitzer \cite{spitzer1960tauberian}. Thus it suffices to show finite dimensional convergence for the cadlag process: 
		\begin{align}
			\label{t0}
			(Z_{nt}^{(1)},Z_{nt}^{(2)}):=\frac1{\sqrt{n}}(S_{\lfloor nt \rfloor }^{(1)},S_{\lfloor nt \rfloor }^{(2)}).
		\end{align}
		We assume that $n$ large enough so that $\frac{n-1}{M\sqrt{n}} \ge 1$ for some $M>0$ to be chosen later. When $t = 1$,  applying the Karlin-McGregor formula, we obtain that $$\Pr(Z_{n}(1)\in \d y_1, Z_{n}(1)\in \d y_2| \Lambda_n)=\tau_n \cdot f_{n,1}(y_1,y_2)\d y_1\d y_2$$ where
		\begin{align*}
			f_{n,1}(y_1,y_2):=\int\limits_{a_1> a_2}p_1(a_1)p_1(a_2)\det(p_{n-1}(a_i -y_j\sqrt{n}))_{i, j =1}^2\d a_1 \d a_2,
		\end{align*} 
		and
		\begin{align}
			\label{taun}
			\tau_n^{-1}:=\int_{r_1 > r_2}\int_{a_1> a_2}p_1(a_1)p_1(a_2)\det(p_{n-1}(a_i -r_j\sqrt{n}))_{i, j =1}^2\d a_1 \d a_2 \d r_1 \d r_2.
		\end{align}
		Note that here the Karlin-McGregor formula, after we have conditioned on the first step of the random walks with $X_1^1 = a_1 > X_1^2 = a_2.$
		
		We will now show that $\frac{(n-1)^2}{\sqrt{n}}\tau_n^{-1}$ and $\frac{(n-1)^2}{\sqrt{n}}f_{n,1}(y_1,y_2)$ converges to a nontrivial limit. Observe that
		\begin{equation}
			\label{heatker}
			\begin{aligned}
				\tfrac{(n-1)^2}{\sqrt{n}}\det(p_{n-1}(a_i -y_j\sqrt{n}))_{i, j =1}^2 &= (n-1)p_{n-1}(a_1 -y_2\sqrt{n})p_{n-1}(a_2 -y_1\sqrt{n})\\ & \hspace{2.5cm}\cdot \tfrac{n-1}{\sqrt{n}}[e^{\frac{\sqrt{n}(a_1-a_2)(y_1 - y_2)}{n-1}} - 1].
			\end{aligned} 
		\end{equation}
		Thus, as $n\to \infty$, we have
		\begin{align}\label{heatker2}
			\tfrac{(n-1)^2}{\sqrt{n}}\det(p_{n-1}(a_i -y_j\sqrt{n}))_{i, j =1}^2 &\to p_1(y_1)p_1(y_2)(a_1-a_2)(y_1-y_2). 
		\end{align}
		
		Next we proceed to find a uniform bound for the expression in \eqref{heatker}. Not that for $x,r\ge 1$, one has the elementary inequality $x^r\ge x^r-1\ge r(x-1)$. 
		Now taking $r=\frac{n-1}{M\sqrt{n}}$ and $x=\exp(\frac{\sqrt{n}}{n-1}(a_1-a_2)(y_1-y_2)$ we get
		\begin{align}\nonumber
			\mbox{r.h.s.~of \eqref{heatker}} & \le \frac1{2\pi}\exp\left(-\tfrac{(a_1-y_2\sqrt{n})^2}{2n-2}-\tfrac{(a_2-y_1\sqrt{n})^2}{2n-2}+\tfrac1{M}(a_1-a_2)(y_1-y_2)\right) \\ \nonumber & \le \frac1{2\pi}\exp\left(-\tfrac{y_2^2}{4}-\tfrac{y_1^2}{4}+\tfrac1{M}(a_1-a_2)(y_1-y_2)+\tfrac1M(|a_1y_2|+|a_2y_1|)\right) \\ \label{a01} & \le \frac1{2\pi}\exp\left(-\tfrac{y_2^2}{4}-\tfrac{y_1^2}{4}+ \tfrac{2(a_1^2+y_1^2+a_2^2+y_2^2)}{M})\right),
		\end{align}
		where the last inequality follows by several application of the elementary inequality $|xy| \le \frac12(x^2+y^2)$. One can choose $M$ large enough so that the uniform bound in \eqref{a01} is integrable w.r.t.~the measure $p_1(a_1)p_1(a_2)\d a_1\d a_2$. With the pointwise limit from \eqref{heatker2},  by dominated convergence theorem we have
		\begin{align*}
			\tfrac{(n-1)^2}{\sqrt{n}}f_{n,1}(y_1,y_2)	& =\tfrac{(n-1)^2}{\sqrt{n}}\int_{a_1> a_2}p_1(a_1)p_1(a_2)\det(p_{n-1}(a_i -y_j\sqrt{n}))_{i, j =1}^2\d a_1 \d a_2 \\ & \hspace{2cm}\to p_1(y_1)p_1(y_2)(y_1-y_2)\int_{a_1> a_2}(a_1-a_2)p_1(a_1)p_1(a_2)\d a_1 \d a_2.
		\end{align*}
		Similarly one can compute the pointwise limit for the integrand in $\tau_n^{-1}$ (defined in \eqref{taun}) and the uniform bound in \eqref{a01} works for the denominator as well. We thus have
		\begin{align}
			\label{taulim}
			& \tfrac{(n-1)^2}{\sqrt{n}}\tau_n^{-1} \to \int_{a_1> a_2}\int_{r_1>r_2}p_1(r_1)p_1(r_2)(r_1-r_2)(a_1-a_2)p_1(a_1)p_1(a_2)\d a_1 \d a_2\d r_1 \d r_2.
		\end{align}
		Plugging these limits back in \eqref{t0}, we arrive at \eqref{nibm1} (the one point density formula for $\nonintbm$) as the limit for \eqref{t0}. 
		
		\medskip
		
		\textbf{Step 2. One point convergence at $0<t<1$.} When $0 < t < 1$, with the Karlin-Mcgregor formula, we similarly obtain 	
		\begin{align}\label{t2}
			&\Pr(Z_{nt}^{(1)}\in \d y_1, Z_{nt}^{(2)}\in \d y_2 \mid \Lambda_n) = \tau_n \cdot f_{n,t}(y_1,y_2)\d y_1 \d y_2
		\end{align} 
		where $\tau_n$ is defined in \eqref{taun} and
		\begin{equation}
			\label{a02}
			\begin{aligned}
				f_{n,t}(y_1,y_2) & =\int_{r_1 > r_2}\int_{a_1> a_2}p_1(a_1)p_1(a_2)\left[\det(p_{\lfloor nt \rfloor -1}(a_i -y_j\sqrt{n}))_{i, j =1}^2\right. \\ & \hspace{3cm} \left. n\cdot \det(p_{n-\lfloor nt \rfloor} (\sqrt{n}y_i - \sqrt{n}r_j))_{i, j = 1}^2\right] \d a_1 \d a_2 \d r_1 \d r_2.
			\end{aligned}
		\end{equation}
		One  can check that as $n\to \infty$, we have 
		\begin{align*}
			n^{3/2}\det(p_{\lfloor nt \rfloor -1}(a_i -y_j\sqrt{n}))_{i,j=1}^2 & \to \tfrac1t p_t(y_1)p_t(y_2)(a_1-a_2)(y_1-y_2), \\  n\cdot \det(p_{n-\lfloor nt \rfloor} (\sqrt{n}y_i - \sqrt{n}r_j))_{i, j = 1}^2 & \to \det(p_{1-t} (y_i - r_j))_{i, j = 1}^2.
		\end{align*}
		One can provide uniformly integrable bound for the integrand in $f_{n,t}(y_1,y_2)$ in a similar fashion. Thus by dominated convergence theorem,
		\begin{align*}
			n^{3/2}f_{n,t}(y_1,y_2) & \to \tfrac1tp_t(y_1)p_t(y_2)(y_1-y_2)\int_{a_1> a_2}p_1(a_1)p_1(a_2)(a_1-a_2) \d a_1 \d a_2 \\ & \hspace{3cm} \int_{r_1 > r_2}  \det(p_{1-t} (y_i - r_j))_{i, j = 1}^2 \d r_1 \d r_2. 
		\end{align*}
		Using \eqref{taulim} we get that $\tau_n \cdot f_{n,t}(y_1,y_2)$ converges to \eqref{nibm2}, the one point density formula for $\nonintbm$. 
		
		\medskip
		
		\noindent\textbf{Step 3. Transition density convergence.}	For the transition densities, let $0 < t_1 < t_2 < 1,$ and fix $x_1>x_2$. Another application of Karlin-McGregor formula tells us
		
		\begin{equation}
			\label{trst1}
			\begin{aligned}
				&\Pr(Z_{nt_2}^{(1)}\in \d y_1, Z_{nt_2}^{(2)}\in \d y_2 \mid Z_{nt_1}^{(1)}=x_1, Z_{nt_1}^{(2)}=x_2) \\ & = n\det(p_{\lfloor nt_2 \rfloor-\lfloor nt_1 \rfloor}(\sqrt{n}y_i - \sqrt{n}x_j))_{i, j = 1}^2 \\ & \hspace{2cm}\cdot \frac{\int\limits_{r_1 > r_2}\det(p_{n-\lfloor nt_2 \rfloor}(\sqrt{n}y_i - \sqrt{n}r_j))_{i, j = 1}^2 \d r_1 \d r_2 \d y_1 \d y_2}{\int\limits_{r_1 > r_2}\det(p_{n-\lfloor nt_1 \rfloor }(\sqrt{n}x_i - \sqrt{n}r_j))_{i, j = 1}^2 \d r_1 \d r_2 }.
			\end{aligned}
		\end{equation}
		One can check as $n\to \infty$ 
		\begin{align*}
			\text{ r.h.s of  }\eqref{trst1} \to  \frac{\det(p_{t_2 - t_1}(y_i - x_j))_{i, j = 1}^2\int_{r_1 > r_2}\det(p_{1-t_2}(y_i - r_j))_{i, j = 1}^2 \d r_1 \d r_2 \d y_1 \d y_2}{\int_{r_1 > r_2}\det(p_{1-t_1}(x_i - r_j))_{i, j = 1}^2 \d r_1 \d r_2 }
		\end{align*}
		which is same as transition densities for $\nonintbm$ as shown in \eqref{nibm3}. This proves finite dimensional convergence.

		\medskip
		
		\textbf{Step 4. Tightness.} To show tightness, by Kolmogorov tightness criterion, it suffices to show there exist $K>0$ and $n_0\in \N$ such that for all $n\ge n_0$
		\begin{align}\label{tgt}
			\Ex\left[|Y_{n,i}(t)-Y_{n,i}(s)|^{K}\mid \Lambda_n\right] \le \Con_{K,n_0} \cdot (t-s)^2
		\end{align}
		holds for all $0\le s<t\le 1.$ \\
		
		Recall that $\Pr(\Lambda_n)\ge \frac{\Con}{\sqrt{n}}$. For $t-s \le \frac1n$ with $K\ge5$ we have 
		\begin{align*}
			\Ex\left[|Y_{n,i}(t)-Y_{n,i}(s)|^{K}\mid \Lambda_n\right] & \le \Con \cdot \sqrt{n}\Ex\left[|Y_{n,i}(t)-Y_{n,i}(s)|^{K}\right] \\ & \le \Con \cdot \sqrt{n}\frac{(nt-ns)^K}{n^{K/2}}\Ex[|X_1^1|^K] \le \Con n^{\frac{1-K}{2}}(nt-ns)^2 \le \Con_K (t-s)^2.
		\end{align*}
		Thus we may assume $t-s \ge 1/n$. Then it is enough to show \eqref{tgt} for $Z_{nt}^{(i)}$ (defined in \eqref{t0}) instead. Note that if $t-s \in [n^{-1},{n^{-1/4}}]$, we may take $K$ large enough so $\frac14(K-4) \ge 1$. Then we have 
		\begin{align*}
			\Ex\left[|Z_{nt}^{(i)}-Z_{ns}^{(i)}|^{K}\mid \Lambda_n\right] & \le \Con \cdot \sqrt{n}\Ex\left[|Z_{nt}^{(i)}-Z_{ns}^{(i)}|^{K}\right] \\ & \le \Con \cdot \sqrt{n}(t-s)^{K/2} \le \Con \cdot n^{1/2-(K-4)/8}(t-s)^2 
		\end{align*}
		where in the last line we used the fact $(t-s)^{(K-4)/2} \le n^{-(K-4)/8}$. As $\frac14(K-4) \ge 1$, we have $\Ex\left[|Z_{nt}^{(i)}-Z_{ns}^{(i)}|^{K}\mid \Lambda_n\right] \le \Con(t-s)^2$ in this case. So, we are left with the case $t-s \ge n^{-1/4}$. \\
		
		Let us assume $t=0$, $s\ge n^{-\frac14}$. As $ns \ge n^{3/4} \to \infty$, we will no longer make the distinction between $ns$ and $\lfloor ns \rfloor$ in our computations. We use the pdf formula from \eqref{t2} and \eqref{a02} to get
		\begin{equation}
			\label{a03}
			\begin{aligned}
				\Ex[|Z_{ns}^{(i)}|^5] & \le \tau_n\int_{y_1>y_2}|y_i|^5\int_{r_1 > r_2}\int_{a_1>a_2}p_1(a_1)p_1(a_2)\det(p_{ns  -1}(a_i -y_j\sqrt{n}))_{i,j=1}^2 \\ & \hspace{3cm} \left. n\cdot \det(p_{n- ns} (\sqrt{n}y_i - \sqrt{n}r_j))_{i, j = 1}^2\right] \d a_1 \d a_2 \d r_1 \d r_2 \d y_1 \d y_2. 
			\end{aligned}
		\end{equation}
		For the last determinant we may use
		\begin{align*}
			& n\cdot \det(p_{n- ns} (\sqrt{n}y_i - \sqrt{n}r_j))_{i, j = 1}^2 \d r_1 \d r_2 \\ & \le n\cdot p_{n-ns} (\sqrt{n}y_1 - \sqrt{n}r_1) p_{n- ns} (\sqrt{n}y_2 - \sqrt{n}r_2) \d r_1 \d r_2
		\end{align*}
		which integrates to $1$ irrespective of the value of $y_1,y_2$. Thus
		\begin{align}
			\label{a04}
			\mbox{r.h.s.~of \eqref{a03}} & \le \tau_n\int_{y_1>y_2}\hspace{-0.2cm}|y_i|^5 \int_{a_1,a_2}p_1(a_1)p_1(a_2)\det(p_{ ns -1}(a_i -y_j\sqrt{n}))_{i,j=1}^2 \d a_1 \d a_2 \d y_1 \d y_2.
		\end{align}
		Making the change of variable $y_i=\sqrt{s} z_i$ and setting $m=ns$, we have $$\mbox{r.h.s.~of \eqref{a04}}
		\le  \tau_n \cdot s^{\frac{5}2+1}\mathcal{I}_m,$$
		where
		\begin{align*}
			\mathcal{I}_m:=\int_{z_1>z_2}|z_i|^5\int_{a_1>a_2}p_1(a_1)p_1(a_2)\det(p_{m-1}(a_i -z_j\sqrt{m}))_{i,j=1}^2 \d a_1 \d a_2 \d z_1 \d z_2.
		\end{align*}
		We claim that $\frac{(m-1)^2}{\sqrt{m}}{\mathcal{I}_m} \le \Con$ for some universal constant $\Con>0$. Clearly this integral is finite for each $m$. And by exact same approach in \textbf{Step 1}, one can show as $m\to \infty$, $$
		\frac{(m-1)^2}{\sqrt{m}}\mathcal{I}_m:=\int_{z_1>z_2}|z_i|^5\int_{a_1>a_2}p_1(z_1)p_1(z_2)p_1(a_1)p_1(a_2)(a_1-a_2)(z_1-z_2) \d a_1 \d a_2 \d z_1 \d z_2.$$  Thus, $\frac{(m-1)^2}{\sqrt{m}}\mathcal{I} \le \Con$ for all $m\ge 1$. Thus following \eqref{a03}, \eqref{a04}, in view of the above estimate we get
		\begin{align*}
			\Ex[|Z_{ns}^{(i)}|^5] \le \Con \tau_n  \frac{\sqrt{m}}{(m-1)^2} s^{\frac{5}{2}+1}.
		\end{align*}
		However, by \textbf{Step 1}, $n^{3/2}\tau_n^{-1}$ converges to a finite positive constant. As $m=ns$, we thus get that the above term is at most $\Con \cdot s^2$. The case $t\neq 0$ can be checked similarly using the formulas from \eqref{t2} and \eqref{a02} as well as transition densities formula \eqref{trst1}. This completes the proof.
	\end{proof}

	\bibliographystyle{alphaabbr}		
	\bibliography{paths}
\end{document}